\pdfoutput=1
\documentclass[12pt,a4paper,reqno]{amsart}
\usepackage{amssymb}

\usepackage{amscd}
\usepackage{enumerate}
\usepackage{graphicx}
\usepackage{siunitx}
\usepackage{tikz-cd}
\usepackage{color}
\usetikzlibrary{arrows}
\numberwithin{equation}{section}

\usepackage{mathabx}

\usepackage{mathtools}
\usepackage[tableposition=top]{caption}
\usepackage{booktabs,dcolumn}



\DeclareMathOperator*\MPZ{MPZ}



\DeclareMathOperator\swan{swan}
\DeclareMathOperator*\cond{cond}
\DeclareMathOperator*\Gal{Gal}

\newcommand{\FT}{\mathrm{FT}} 
\DeclareMathOperator{\hypk}{Kl}
\newcommand{\Kl}{\mathcal{K}\ell}
\DeclareCaptionLabelSeparator{separation}{:\quad}
\captionsetup{
  font=small,
  labelfont=sc,
  labelsep=separation,
  width=0.8\textwidth
}

\DeclareFontFamily{OT1}{rsfs}{}
\DeclareFontShape{OT1}{rsfs}{n}{it}{<-> rsfs10}{}
\DeclareMathAlphabet{\mathscr}{OT1}{rsfs}{n}{it}

\addtolength{\textwidth}{3 truecm}
\addtolength{\textheight}{1 truecm}
\setlength{\voffset}{-.6 truecm}
\setlength{\hoffset}{-1.3 truecm}
     
\theoremstyle{plain}

\newtheorem{theorem}{Theorem}[section]
\newtheorem{proposition}[theorem]{Proposition}
\newtheorem{lemma}[theorem]{Lemma}
\newtheorem{corollary}[theorem]{Corollary}

\newtheorem{claim}[theorem]{Claim}

\theoremstyle{definition}

\newtheorem{definition}[theorem]{Definition}
\newtheorem{remark}[theorem]{Remark}

\newtheorem{example}[theorem]{Example}

\newcommand\F{\mathbb{F}}

\newcommand\R{\mathbb{R}}
\newcommand\Z{\mathbb{Z}}
\newcommand\N{\mathbb{N}}
\newcommand\C{\mathbb{C}}
\newcommand\Q{\mathbb{Q}}
\newcommand\eps{\varepsilon}

\newcommand\Scal{\mathcal{S}}
\newcommand\Dcal[2]{\mathcal{D}^{({#1})}({#2})}
\newcommand\Dcone[1]{\mathcal{D}({#1})}
\newcommand{\DI}[3]{\mathcal{D}_{{#1}}^{({#2})}({#3})}
\newcommand{\DIone}[2]{\mathcal{D}_{{#1}}({#2})}
\newcommand{\emt}{\mathrm{EMT}}
\newcommand{\uple}[1]{\text{\boldmath${#1}$}}

\newcommand\TypeI{\operatorname{Type}_{\operatorname{I}}}
\newcommand\TypeII{\operatorname{Type}_{\operatorname{II}}}
\newcommand\TypeIII{\operatorname{Type}_{\operatorname{III}}}
\renewcommand\P{\mathbb{P}}

\newcommand\rk{\mathrm{rk}}
\newcommand\Fr{\mathrm{Fr}}
\newcommand\tr{\mathrm{tr}}

\newcommand\ov{\overline}
\newcommand\wcheck{\widecheck}
\newcommand\mcF{\mathcal{F}}
\newcommand\mcG{\mathcal{G}}
\newcommand\mcK{\mathcal{K}}

\newcommand\mcL{\mathcal{L}}
\newcommand\Gm{\mathbb{G}_m}
\newcommand\Af{\mathbb{A}^1}
\newcommand\Pl{\mathbb{P}^1}
\newcommand\Aa{\mathbb{A}}
\DeclareMathOperator{\Tr}{Tr}
\newcommand\Fp{\F_{p}}
\newcommand\Fpt{\F_{p}^{\times}}

\def\stacksum#1#2{{\stackrel{{\scriptstyle #1}}
{{\scriptstyle #2}}}}

\newcommand{\piar}{\pi_1}
\newcommand{\pige}{\pi_1^{g}}
\newcommand\sumsum{\mathop{\sum\sum}\limits}
\newcommand\multsum{\mathop{\sum\cdots\sum}\limits}
\newcommand\trpsum{\mathop{\sum\sum\sum}\limits}
\newcommand\Rr{\mathbb{R}}
\newcommand\Zz{\mathbb{Z}}
\newcommand\Nn{\mathbb{N}}
\newcommand\Cc{\mathbb{C}}

\newcommand{\ra}{\rightarrow}

\newcommand{\GL}{\mathrm{GL}}

\newcommand{\ftd}{\mathcal{I}}
\renewcommand{\sim}{\asymp} 

\renewcommand{\lessapprox}{\llcurly}
\renewcommand{\gtrapprox}{\ggcurly}
\renewcommand{\phi}{\varphi}
\newcommand{\onef}{\mathbf{1}}


\usepackage{hyperref}


\begin{document}

\title[Equidistribution of primes in arithmetic progressions]{New equidistribution estimates of Zhang type}

\author{D.H.J. Polymath}
\address{http://michaelnielsen.org/polymath1/index.php}

\subjclass[2010]{11P32}

\begin{abstract} We prove distribution estimates for primes in
  arithmetic progressions to large smooth squarefree moduli, with
  respect to congruence classes obeying Chinese Remainder Theorem
  conditions, obtaining an exponent of distribution $\frac{1}{2} +
  \frac{7}{300}$.
\end{abstract}


\maketitle

\setcounter{tocdepth}{1}
\tableofcontents


\section{Introduction}\label{sec:intro}

In May 2013, Y. Zhang~\cite{zhang} proved the existence of infinitely
many pairs of primes with bounded gaps. In particular, he showed that
there exists at least one $h\geq 2$ such that the set
$$
\{p\text{ prime }\,\mid\, p+h\text{ is prime}\}
$$
is infinite.  (In fact, he showed this for some even $h$ between $2$ and $7 \times 10^7$, although the precise value of $h$ could not be extracted from his method.)
\par
Zhang's work started from the method of Goldston, Pintz and Y\i ld\i
r\i m~\cite{gpy}, who had earlier proved the bounded gap property,
conditionally on distribution estimates concerning primes in
arithmetic progressions to \emph{large moduli}, i.e., beyond the reach
of the Bombieri--Vinogradov theorem.
\par
Based on work of Fouvry and Iwaniec \cite{ft, fi,fi-2,fik-3} and Bombieri, Friedlander and
Iwaniec \cite{bfi,bfi-2,bfi-3}, distribution estimates going beyond the Bombieri--Vinogradov range for arithmetic functions such as the von Mangoldt function were already
known. However, they involved restrictions concerning the residue
classes which were incompatible with the method of Goldston, Pintz and
Y\i ld\i r\i m.
\par
Zhang's resolution of this difficulty proceeded in two stages.  First, he isolated a weaker distribution estimate
that sufficed to obtain the bounded gap property (still involving the
crucial feature of going beyond the range accessible to the
Bombieri--Vinogradov technique), where
(roughly speaking) only smooth\footnote{\ I.e, friable.}  moduli were
involved, and the residue classes had to obey strong multiplicative
constraints (the possibility of such a weakening
had been already noticed by Motohashi and Pintz~\cite{mp}). Secondly, and more significantly, Zhang then proved such a distribution estimate.
\par
This revolutionary achievement led to a flurry of activity. In
particular, the \textsc{Polymath8} project was initiated by T. Tao
with the goal first of understanding, and then of improving and
streamlining, where possible, the argument of Zhang. This was highly
successful, and through the efforts of a number of people, reached a
state in October 2013, when the first version of this paper \cite{poly8-original}
established the bounded gap property in the form
$$
\liminf (p_{n+1}-p_n)\leq 4680,
$$
where $p_n$ denotes the $n$-th prime number.
\par
However, at that time, J. Maynard~\cite{maynard-new} obtained another
conceptual breakthrough, by showing how a modification of the
structure and of the main term analysis of the method of Goldston,
Pintz and Y\i ld\i r\i m was able to establish not only the bounded
gap property using only the Bombieri-Vinogradov theorem (in fact the
bound
$$
\liminf (p_{n+1}-p_n)\leq 600
$$
obtained was significantly better than the one obtained by \textsc{Polymath8}), but
also the bounds
$$
\liminf (p_{n+k}-p_n)<+\infty
$$
for any fixed $k\geq 1$ (in a quantitative way), something which was
out of reach of the earlier methods, even for $k=2$. (Similar results
were obtained independently in unpublished work of T. Tao.)
\par
Because of this development, a part of the \textsc{Polymath8} paper
became essentially obsolete.  Nevertheless, the distribution estimate
for primes in arithmetic progressions are not superceded by the new
method, and it has considerable interest for analytic number
theory. Indeed, it is the best known result concerning primes in
arithmetic progressions to large moduli without fixing the residue
class. (When the class is fixed, the best results remain those of
Bombieri, Friedlander and Iwaniec, improving on those of Fouvry and
Iwaniec~\cite{fi-2,bfi}.)  The results here are also needed to obtain the best known bounds on $\liminf (p_{n+k}-p_n)$ for large values of $k$; see \cite{poly8b}.
\par
The present version of the work of \textsc{Polymath8} therefore
contains only the statement and proof of these estimates.  We note
however that some of the earlier version is incorporated in our subsequent
paper~\cite{poly8b}, which builds on Maynard's method to further
improve many bounds concerning gaps between primes, both conditional
and unconditional.  Furthermore, the original version of this paper, and the history
of its elaboration, remain available online~\cite{poly8-original}.
\par
\medskip
\par
Our main theorem is:

\begin{theorem}\label{th-main}
  Let $\theta=1/2+7/300$.  Let $\eps>0$ and $A\geq 1$ be fixed real
  numbers. For all primes $p$, let $a_p$ be a fixed invertible
  residue class modulo $p$, and for $q\geq 1$ squarefree, denote by
  $a_q$ the unique invertible  residue class modulo $q$ such that
  $a_q\equiv a_p$ modulo all primes $p$ dividing $q$.
\par
There exists $\delta>0$, depending only on $\eps$, such that for
$x\geq 1$, we have
$$
\sum_{\substack{q\leq x^{\theta-\eps}\\q\text{ $x^{\delta}$-smooth,
      squarefree}}} \Bigl|\psi(x;q,a_q)-\frac{x}{\varphi(q)}\Bigr| \ll
\frac{x}{(\log x)^A},
$$
where the implied constant depends only on $A$, $\eps$ and $\delta$,
and in particular is independent of the residue classes $(a_p)$.
\end{theorem}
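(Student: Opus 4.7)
The plan is to follow the now-standard Zhang--Polymath strategy: reduce the sum over smooth squarefree moduli to three ``atomic'' distribution estimates of Type I, Type II, and Type III, and then handle each through a combination of Cauchy--Schwarz, Poisson summation modulo the smooth modulus, and bounds on complete exponential sums over $\Fp$. Concretely, I would first absorb the absolute values by choosing coefficients $c_q$ with $|c_q|=1$ (depending on the sign of $\psi(x;q,a_q)-x/\varphi(q)$), so the target becomes
\[
\sum_{q\leq x^{\theta-\eps}} c_q \Bigl(\psi(x;q,a_q)-\frac{x}{\varphi(q)}\Bigr) \ll \frac{x}{(\log x)^A},
\]
where $q$ ranges over $x^\delta$-smooth squarefree integers. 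A Heath--Brown identity of sufficiently large order (say $K=3$) then decomposes $\Lambda(n)$ on $[x,2x]$ into $O((\log x)^{2K})$ sums of Dirichlet convolutions of at most $2K$ factors, each either supported on small integers (and hence ``smooth'' after dyadic decomposition) or of a very rough shape. Sorting the resulting pieces by the sizes of the variables shows that it suffices to establish, for each admissible size vector, an estimate of one of the following forms: a Type I/II bilinear estimate $\sum_q c_q \sum_{n \equiv a_q} (\alpha \ast \beta)(n) \approx \text{main term}$, or a Type III estimate involving a triple convolution $\alpha\ast\beta\ast\gamma$ where $\beta$ and $\gamma$ are supported on dyadic intervals of comparable, relatively small size.

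For the Type I and Type II contributions I would apply the dispersion method of Linnik--Bombieri--Fouvry--Iwaniec: after Cauchy--Schwarz in the smooth variable and opening the square, the diagonal is acceptable and the off-diagonal reduces to incomplete exponential sums modulo $q$. Here the smoothness hypothesis is essential: one factors $q=rs$ with $r,s$ in prescribed dyadic ranges, then applies Poisson summation modulo $s$ to turn the incomplete sum into a complete sum modulo $s$ weighted by Fourier coefficients. The resulting complete sums are (normalized) Kloosterman sums whose square-root cancellation (Weil bound) is applied pointwise, and one must further exploit cancellation among the Fourier coefficients in $r$ -- sometimes via a second Cauchy--Schwarz and a second Poisson, yielding incomplete Kloosterman-type sums whose estimation requires Deligne's Riemann Hypothesis and the formalism of trace functions of $\ell$-adic sheaves, following Fouvry--Kowalski--Michel. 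The uniformity in $(a_p)$ is automatic since all arguments are pointwise in the residue classes.

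The Type III sums are the principal obstacle and drive the value $\theta=1/2+7/300$. After Cauchy--Schwarz and two applications of Poisson summation (modulo two factors of the factorization $q=rst$ of the smooth modulus), the main term is governed by correlation sums of a hyper-Kloosterman sum $\hypk_3$ modulo $p$ with an additive or multiplicative twist; concretely, one needs square-root cancellation for sums of the form $\sum_{y \in \Fp} \hypk_3(y;p) \hypk_3(y+h;p) e_p(\cdots)$. Such estimates lie outside the Weil range and rely on Katz's computation of the geometric monodromy group of the $\hypk_3$ sheaf together with Deligne's theorem applied on the appropriate $3$-dimensional variety. Once all three families of estimates are in place with quantitative exponents matching those announced in the MPZ-type conjecture, optimizing the admissible parameters $(\theta,\delta)$ in the combinatorial decomposition gives the numerical gain from Zhang's $1/1168$ up to $7/300$. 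The hardest technical step is clearly the Type III contribution: every saving there translates linearly into the final exponent, and verifying the hypotheses of the Katz--Deligne machinery requires a careful non-degeneracy analysis of the relevant sheaves, which I would expect to occupy the bulk of the work.
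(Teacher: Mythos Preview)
Your overall architecture matches the paper's: Heath--Brown identity, combinatorial sorting into Type~I/II/III, dispersion for the bilinear pieces, and hyper-Kloosterman correlations via Deligne for Type~III. Minor quibbles: $K=3$ in the Heath--Brown identity is too small (the paper needs $1/K<2\sigma$ with $\sigma$ just above $1/10$, so $K\ge 6$; the paper takes $K=10$), and in Type~III the modulus is factored as $q=rs$, not $rst$.

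The genuine gap is in your treatment of Type~I. You describe it as ``Poisson modulo $s$, then Weil pointwise on the resulting complete Kloosterman sums.'' That is essentially Zhang's Type~I argument and is \emph{not} strong enough to reach $\theta=1/2+7/300$. The paper's key innovation here is the $q$-van der Corput $A$-process of Heath--Brown and Graham--Ringrose (Proposition~4.12 and Corollary~4.16): after completion one still has an \emph{incomplete} sum in a second variable, and one exploits the dense divisibility of the modulus to Weyl-difference that sum, lowering the effective modulus before applying Weil. This, together with a more careful placement of Cauchy--Schwarz (splitting $q_1=u_1v_1$ before squaring), is what drives the improved Type~I numerology. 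Moreover, the strongest Type~I estimate (Theorem~2.8(iii), proved in Section~8) itself requires Deligne-level input: a two-variable incomplete sum of a trace function $K_f$ built from $\hypk_3$ (Theorem~6.14, Corollary~6.23). So Deligne enters twice, not only in Type~III.

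Consequently your diagnosis that ``Type~III is the principal obstacle and drives $7/300$'' is off. The exponent $7/300$ arises from the \emph{intersection} of the Type~I constraint $\frac{160}{3}\varpi+16\delta+\frac{34}{9}\sigma<1$ (and its companion $64\varpi+18\delta+2\sigma<1$) with the Type~III lower bound $\sigma>\frac{1}{18}+\frac{28}{9}\varpi+\frac{2}{9}\delta$; both are binding, and the advanced Type~I work is at least as delicate as Type~III. If you implement your sketch as written---Weil pointwise for Type~I, Deligne only for Type~III---you will recover something closer to Zhang's original exponent, not $7/300$.
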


In this statement, we have, as usual, defined
$$
\psi(x;q,a)=\sum_{\substack{n\leq x\\n=a\ (q)}}\Lambda(n),
$$
where $\Lambda$ is the von Mangoldt function.  In \cite{zhang}, Zhang establishes a weaker form of Theorem \ref{th-main}, with
  $\theta=1/2+1/584$, and with the $a_q$ required to be roots of a polynomial $P$ of the form $P(n) := \prod_{1 \leq j \leq k; j \neq i} (n+h_j-h_i)$ for a fixed admissible tuple $(h_1,\dots,h_k)$ and $i=1,\dots,k$.





In fact, we will prove a number of variants of this bound. These
involve either weaker restrictions on the moduli
(``dense-divisibility'', instead of smoothness, which may be useful in
some applications), or smaller values of $\theta>1/2$, but with
significantly simpler proofs.  In particular, although the full strength of
Theorem~\ref{th-main} depends in crucial ways on applications of
Deligne's deepest form of the Riemann Hypothesis over finite fields,
we show that, for a smaller value of $\theta>1/2$, it is possible to
obtain the same estimate by means of Weil's theory of exponential sums
in one variable over finite fields.
\par
\medskip
\par
The outline of this paper is as follows: in the next section, we
briefly outline the strategy, 
starting from the work of Bombieri, Fouvry, Friedlander, and Iwaniec
(in chronological order, \cite{fi, fi-2, fi-3, bfi, bfi-2, bfi-3,
  fik-3}), and explaining Zhang's innovations. These involve different 
types of estimates of bilinear or trilinear nature, which we present
in turn. All involve estimates for exponential sums over finite
fields.  We therefore survey the relevant theory, separating that part
depending only on one-variable character sums of Weil type
(Section~\ref{exp-sec}), and the much deeper one which depends on Deligne's
form of the Riemann Hypothesis (Section~\ref{deligne-sec}).  In both
cases, we present the formalism in sometimes greater generality than
strictly needed, as these results are of independent interest and
might be useful for other applications.

\subsection{Overview of proof}\label{ssec-outline}

We begin with a brief and informal overview of the methods used in
this paper.
\par
Important work of Fouvry and Iwaniec~\cite{fi, fi-2} and of Bombieri,
Friedlander and Iwaniec~\cite{bfi,bfi-2, bfi-3} had succeeded, in some
cases, in establishing distribution results similar to
Theorem~\ref{th-main}, in fact with $\theta$ as large as $1/2+1/14$,
but with the restriction that the residue classes $a_p$ are obtained
by reduction modulo $p$ of a fixed integer $a\geq 1$.
\par
Following the techniques of Bombieri, Fouvry, Friedlander and Iwaniec,
Zhang used the Heath-Brown identity \cite{hb-ident} to reduce the
proof of (his version of) Theorem~\ref{th-main} to the verification of
three families of estimates, which he called ``Type I'', ``Type II'',
and ``Type III''.  These estimates were then reduced to exponential
sum estimates, using techniques such as Linnik's dispersion method,
completion of sums, and Weyl differencing. Ultimately, the exponential
sum estimates were established by applications of the Riemann
Hypothesis over finite fields, in analogy with all previous works of
this type.  The final part of Zhang's argument is closely related to
the study of the distribution of the ternary divisor function in
arithmetic progressions by Friedlander and Iwaniec~\cite{fi-3}, and
indeed the final exponential sum estimate that Zhang uses already
appears in their work (this estimate was proved by Birch and Bombieri
in the Appendix to~\cite{fi-3}).  An important point is that, by using
techniques that are closer to some older work of Fouvry and Iwaniec
\cite{fi}, Zhang avoids using the spectral theory of automorphic
forms, which is a key ingredient in~\cite{fi-2} and~\cite{bfi}, and
one of the sources of the limitation to a fixed residue in these
works.

Our proof of Theorem \ref{th-main} follows the same general strategy
as Zhang's, with improvements and refinements.
\par
First, we apply the Heath-Brown identity \cite{hb-ident} in Section
\ref{heath-brown-sec}, with little change compared with Zhang's
argument, reducing to the ``bilinear'' (Types I/II) and ``trilinear''
(Type III) estimates. 

For the Type I and Type II estimates, we follow the arguments of Zhang
to reduce to the task of bounding incomplete exponential sums similar
to
$$ \sum_{N<n\leq 2N}
e\Bigl(\frac{c_1\bar{n}+c_2\overline{n+l}}{q}\Bigr),
$$
(where $e(z)=e^{2i\pi z}$ and $\bar{x}$ denotes the inverse of $x$
modulo $q$) for various parameters $N, c_1, c_2, l, q$.  We obtain
significant improvements of Zhang's numerology at this stage, by
exploiting the smooth (or at least densely divisible) nature of $q$,
using the $q$-van der Corput $A$-process of Heath-Brown
\cite{heath-hybrid} and Graham-Ringrose \cite{graham}, combined with
the Riemann Hypothesis for curves over finite fields. Additional gains
are obtained by optimizing the parameterizations of sums prior to
application of the Cauchy-Schwarz inequality. In our strongest Type I
estimate, we also exploit additional averaging over the modulus by
means of higher-dimensional exponential sum estimates, which now do
depend on the deep results of Deligne.  We refer to Sections
\ref{exp-sec}, \ref{typei-ii-sec} and \ref{typei-advanced-sec} for
details of these parts of the arguments.

Finally, for the Type III sums, Zhang's delicate argument \cite{zhang}
adapts and improves the work of Friedlander and Iwaniec~\cite{fi-3} on
the ternary divisor function in arithmetic progressions.  As we said,
it ultimately relies on a three-variable exponential sum estimate that
was proved by Birch and Bombieri in the Appendix to \cite{fi-3}.
Here, we proceed slightly differently, inspired by the streamlined
approach of Fouvry, Kowalski, and Michel \cite{FKM3}.  Namely, in
Section~\ref{typeiii-sec} we show how our task can be reduced to
obtaining certain correlation bounds on hyper-Kloosterman sums.  These
bounds are established in Section~\ref{deligne-sec}, by fully
exploiting the formalism of ``trace functions'' over finite fields
(which relies on Deligne's second, more general proof of the Riemann
Hypothesis over finite fields~\cite{WeilII}).  The very general
techniques presented in Section~\ref{deligne-sec} are also used in the
proof of the strongest Type I estimate in Section
\ref{typei-advanced-sec}, and we present them in considerable detail
in order to make them more accessible to analytic number theorists.

\subsection{About this project}

This paper is part of the \emph{Polymath project}, which was launched
by Timothy Gowers in February 2009 as an experiment to see if research
mathematics could be conducted by a massive online collaboration.
The current project (which was administered by Terence Tao) is the eighth
project in this series.  Further information on the Polymath project can be
found on the web site \url{michaelnielsen.org/polymath1}.  Information
about this specific project may be found at
\begin{center}
\small{\url{michaelnielsen.org/polymath1/index.php?title=Bounded\_gaps\_between\_primes}}
\end{center}
and a full list of participants and their grant acknowledgments may be
found at
\begin{center}
\small{\url{michaelnielsen.org/polymath1/index.php?title=Polymath8\_grant\_acknowledgments}}
\end{center}
\par
We thank John Friedlander for help with the references.  We are
indebted to the multiple referees of the first version of this paper
for many cogent suggestions and corrections.

\subsection{Basic notation}\label{notation-sec}

We use $|E|$ to denote the cardinality of a finite set $E$, and $\onef_E$ to denote the indicator function of a set $E$, thus $\onef_E(n)=1$ when $n \in E$ and $\onef_E(n)=0$ otherwise.  

All sums and products will be over the natural numbers $\N := \{1,2,3,\ldots\}$ unless otherwise specified, with the exceptions of sums and products over the variable $p$, which will be understood to be over primes.

The following important asymptotic notation will be in use throughout
most of the paper; when it is not (as in
Section~\ref{deligne-sec}), we will mention this explicitly.

\begin{definition}[Asymptotic notation]\label{asym}  We use $x$ to denote a large real parameter, which one should think of as going off to infinity; in particular, we will implicitly assume that it is larger than any specified fixed constant. Some mathematical objects will be independent of $x$ and referred to as \emph{fixed}; but unless otherwise specified we allow all mathematical objects under consideration to depend on $x$ (or to vary within a range that depends on $x$, e.g. the summation parameter $n$ in the sum $\sum_{x \leq n \leq 2x} f(n)$).  If $X$ and $Y$ are two quantities depending on $x$, we say that $X = O(Y)$ or $X \ll Y$ if one has $|X| \leq CY$ for some fixed $C$ (which we refer to as the \emph{implied constant}), and $X = o(Y)$ if one has $|X| \leq c(x) Y$ for some function $c(x)$ of $x$ (and of any fixed parameters present) that goes to zero as $x \to \infty$ (for each choice of fixed parameters).  We use $X \lessapprox Y$ to denote the estimate $X \leq x^{o(1)} Y$, $X \sim Y$ to denote the estimate $Y \ll X \ll Y$, and $X \approx Y$ to denote the estimate $Y \lessapprox X \lessapprox Y$.  Finally, we say that a quantity $n$ is of \emph{polynomial size} if one has $n = O(x^{O(1)})$.

If asymptotic notation such as $O()$ or $\lessapprox$ appears on the left-hand side of a statement, this means that the assertion holds true for any specific interpretation of that notation.  For instance, the assertion $\sum_{n=O(N)} |\alpha(n)| \lessapprox N$ means that for each fixed constant $C>0$, one has $\sum_{|n| \leq CN} |\alpha(n)|\lessapprox N$.
\end{definition}

If $q$ and $a$ are integers, we write $a|q$ if $a$ divides $q$. 
\par
If $q$ is a natural number and $a \in \Z$, we use $a\ (q)$ to denote
the congruence class
$$ a\ (q) := \{ a+nq: n \in \Z \}$$
and let $\Z/q\Z$ denote the ring of all such congruence classes $a\
(q)$. The notation $b=a\ (q)$ is synonymous to $b \in \, a \ (q)$. We
use $(a,q)$ to denote the greatest common divisor of $a$ and $q$, and
$[a,q]$ to denote the least common
multiple.\footnote{When $a,b$ are real numbers, we will also
  need to use $(a,b)$ and $[a,b]$ to denote the open and closed
  intervals respectively with endpoints $a,b$.  Unfortunately, this
  notation conflicts with the notation given above, but it should be
  clear from the context which notation is in use.  Similarly for the notation $\overline{a}$ for $a \in \Z/q\Z$, and the notation $\overline{z}$ to denote the complex conjugate of a complex number $z$.}
  More generally, we let $(q_1,\ldots,q_k)$ denote the
greatest simultaneous common divisor of $q_1,\ldots,q_k$.  We note in particular that $(0,q) = q$ for any natural number $q$. 
Note that $a \mapsto (a,q)$ is periodic with period $q$, and so we may
also define $(a,q)$ for $a \in\Z/q\Z$ without ambiguity.  We also let
$$ (\Z/q\Z)^\times := \{ a\ (q): (a,q)=1 \}$$
denote the primitive congruence classes of $\Z/q\Z$.  More generally,
for any commutative ring $R$ (with unity) we use $R^\times$ to denote the
multiplicative group of units.  If $a \in (\Z/q\Z)^\times$, we use $\overline{a}$ to denote the inverse of $a$ in $\Z/q\Z$.

\par
For any real number $x$, we write $e(x) := e^{2\pi i x}$.  We denote
$e_q(a):=e(\frac{a}{q}) = e^{2\pi i a/q}$ (see also the conventions
concerning this additive character at the beginning of
Section~\ref{exp-sec}).

We use the following standard arithmetic functions:
\begin{itemize}
\item[(i)] $\phi(q) := |(\Z/q\Z)^\times|$ denotes the Euler totient function of $q$.
\item[(ii)] $\tau(q) := \sum_{d|q} 1$ denotes the divisor function of $q$.
\item[(iii)] $\Lambda(q)$ denotes the von Mangoldt function of $q$, thus $\Lambda(q)=\log p$ if $q$ is a power of a prime $p$, and $\Lambda(q)=0$ otherwise.
\item[(iv)] $\theta(q)$ is defined to equal $\log q$ when $q$ is a prime, and $\theta(q)=0$ otherwise.
\item[(v)] $\mu(q)$ denotes the M\"obius function of $q$, thus $\mu(q) = (-1)^k$ if $q$ is the product of $k$ distinct primes for some $k \geq 0$, and $\mu(q)=0$ otherwise.
\item[(vi)] $\Omega(q)$ denotes the number of prime factors of $q$ (counting multiplicity).
\end{itemize}

The \emph{Dirichlet convolution} $\alpha \star \beta \colon \N \to \C$
of two arithmetic functions $\alpha,\beta \colon \N \to \C$ is defined
in the usual fashion as
$$ \alpha\star\beta(n) := \sum_{d|n} \alpha(d)
\beta\left(\frac{n}{d}\right) =\sum_{ab=n}{\alpha(a)\beta(b)}.$$

Many of the key ideas in Zhang's work (as well as in the present article) concern the
uniform distribution of arithmetic functions in arithmetic
progressions. For any function $\alpha \colon \N \to \C$ with finite
support (that is, $\alpha$ is non-zero only on a finite set) and any
primitive congruence class $a\ (q)$, we define the (signed)
\emph{discrepancy} $\Delta(\alpha; a\ (q))$ to be the quantity
\begin{equation}\label{disc-def}
  \Delta(\alpha; a\ (q)) := \sum_{n = a\ (q)} \alpha(n) - 
  \frac{1}{\phi(q)} \sum_{(n,q)=1} \alpha(n).
\end{equation}

There are some additional concepts and terminology that will be used in multiple sections of this paper.  These are listed in Table \ref{tab:notation}.

\begin{table}[htbp]\caption{Notation and terminology}
\begin{center}
\begin{tabular}{r p{7cm} p{3cm}}
\toprule
$\varpi$ & Level of distribution & \S \ref{sec-technical} \\
$\delta$ & Smoothness/dense divisibility parameter & \S \ref{sec-technical} \\
$i$ & Multiplicity of dense divisibility & Definition \ref{mdd-def}\\
$\sigma$ & Type I/III boundary parameter & Definition \ref{type-def}\\
\midrule
$\MPZ^{(i)}[\varpi,\delta]$ & MPZ conjecture for densely divisible moduli & Claim \ref{mpzm-def} \\
$\TypeI^{(i)}[\varpi,\delta,\sigma]$ & Type I estimate & Definition \ref{type-def} \\
$\TypeII^{(i)}[\varpi,\delta]$ & Type II estimate & Definition \ref{type-def} \\
$\TypeIII^{(i)}[\varpi,\delta,\sigma]$ & Type III estimate & Definition \ref{type-def} \\
\midrule
$\Scal_I$ & Squarefree products of primes in $I$ & Definition \ref{sfi} \\
$P_I$ & Product of all primes in $I$ & Definition \ref{sfi} \\
${\mathcal D}^{(i)}(y)$ & $i$-tuply $y$-densely divisible integers & Definition \ref{mdd-def}\\
$\FT_q(f)$ & Normalized Fourier transform of $f$ & \eqref{ftq-def} \\
\midrule
& Coefficient sequence at scale $N$ & Definition \ref{Coef}\\
& Siegel-Walfisz theorem & Definition \ref{Coef} \\
& (Shifted) smooth sequence at scale $N$ & Definition \ref{Coef} \\
\bottomrule
\end{tabular}
\end{center}
\label{tab:notation}
\end{table}

We will often use the following simple estimates for the divisor
function $\tau$ and its powers.

\begin{lemma}[Crude bounds on $\tau$]\label{divisor-crude}\ 
\begin{enumerate}[(i)]
\item\label{div-bound} (Divisor bound) One has
\begin{equation}\label{divisor-bound}
\tau(d) \lessapprox 1
\end{equation}
whenever $d$ is of polynomial size.  In particular, $d$ has $o(\log
x)$ distinct prime factors.
\item\label{taud-bound} One has
\begin{equation}\label{taud}
\sum_{d \leq y} \tau^C(d) \ll y \log^{O(1)} x 
\end{equation}
for any fixed $C>0$ and any $y > 1$ of polynomial size.  
\item\label{talc-bound} More generally, one has
\begin{equation}\label{talc}
\sum_{\substack{d \leq y\\ d = a\ (q)}} \tau^C(d) \ll \frac{y}{q} \tau^{O(1)}(q) \log^{O(1)} x + x^{o(1)}
\end{equation}
for any fixed $C>0$, any residue class $a\ (q)$ (not necessarily primitive), and any $y>1$ of polynomial size.
\end{enumerate}
\end{lemma}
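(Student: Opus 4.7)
The plan is to handle the three parts in sequence, building from the classical divisor bound up to the arithmetic-progression version.

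For part (i), the classical estimate $\tau(d) \ll_\eps d^\eps$ (immediate from multiplicativity, since $(a+1) p^{-a\eps} \leq 1$ for all but finitely many primes $p$) applied with $d = O(x^{O(1)})$ yields $\tau(d) \leq x^{O(\eps)}$ for every fixed $\eps > 0$, which is exactly the assertion $\tau(d) \lessapprox 1$. The bound $\omega(d) = o(\log x)$ on the number of distinct prime factors then follows from the elementary inequality $2^{\omega(d)} \leq \tau(d)$.

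For part (ii), $\tau^C$ is multiplicative with $\tau^C(p^a) = (a+1)^C$, and its Dirichlet series factors as $\zeta(s)^{2^C} G(s)$ where $G$ converges absolutely in $\mathrm{Re}(s) > 1/2$. A standard Perron/Selberg--Delange argument then delivers $\sum_{n \leq y} \tau^C(n) \ll y (\log y)^{2^C - 1}$, which is absorbed into $y \log^{O(1)} x$ whenever $y$ is of polynomial size. Alternatively, one can exploit the pointwise bound $\tau^C(n) \leq 2^{C\Omega(n)}$ (from $(a+1)^C \leq 2^{aC}$ for $a \geq 0$) and estimate $\sum_{n \leq y} 2^{C\Omega(n)}$ by Rankin's trick.

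For part (iii), the strategy is to peel off the common factor of $a$ and $q$ and then invoke a Shiu-type bound in the resulting coprime progression. Set $g = (a,q)$, $q_0 = q/g$, $a_0 = a/g$, so that $(a_0, q_0) = 1$. Every $d \equiv a \mods{q}$ satisfies $g \mid d$, and writing $d = gd'$ forces $d' \equiv a_0 \mods{q_0}$ with $(d', q_0) = 1$. The submultiplicative inequality $\tau^C(gd') \leq \tau^C(g)\tau^C(d')$ then reduces the task to estimating
$$ \sum_{\substack{d' \leq y/g \\ d' \equiv a_0\ (q_0)}} \tau^C(d'). $$
A Shiu-type estimate bounds this by $\ll (y/g)/\phi(q_0) \cdot \log^{O(1)} y$ in the regime $y/g \geq q_0$. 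Combined with $1/\phi(q_0) \ll \tau(q_0)/q_0$, the identity $gq_0 = q$, and the crude bounds $\tau^C(g), \tau(q_0) \leq \tau^{O(1)}(q)$, this contributes $(y/q)\,\tau^{O(1)}(q)\,\log^{O(1)} x$. In the remaining regime $y/g < q_0$ there are only $O(1)$ admissible $d'$, each of polynomial size, so by (i) their total contribution is $\lessapprox 1$, which is absorbed into the $x^{o(1)}$ error term.

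The main obstacle is the Shiu-type bound invoked in (iii), but this is a standard and well-documented result; its proof parallels the multiplicative Dirichlet-series analysis in (ii), with additional care needed to control the Euler factors at primes dividing $q_0$.
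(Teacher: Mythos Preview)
Your proposal is correct and follows essentially the same route as the paper: parts (i) and (ii) are standard results (the paper simply cites \cite{mv-book}), and for (iii) both you and the paper reduce to a primitive residue class by extracting the common factor $g=(a,q)$, handle the short range $y<q$ via the divisor bound, and then invoke Shiu's Brun--Titchmarsh inequality for multiplicative functions. Your write-up is simply more explicit about the bookkeeping (the submultiplicativity $\tau^C(gd')\le\tau^C(g)\tau^C(d')$ and the estimate $1/\phi(q_0)\ll\tau(q_0)/q_0$) than the paper's terse citation.
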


\begin{proof}  For the divisor bound \eqref{divisor-bound}, see
  e.g. \cite[Theorem 2.11]{mv-book}.  For the bound \eqref{taud}, see
  e.g. \cite[Corollary 2.15]{mv-book}.  Finally, to prove the bound
  \eqref{talc}, observe using \eqref{divisor-bound} that we may factor
  out any common factor of $a$ and $q$, so that $a\ (q)$ is primitive.
  Next, we may assume that $q \leq y$, since the case  $q>y$ is
  trivial by \eqref{divisor-bound}.  The claim now follows from the Brun-Titchmarsh inequality for multiplicative functions (see \cite{shiu} or \cite{barban}).
\end{proof}

Note that we have similar bounds for the higher divisor functions
$$ \tau_k(n) := \sum_{d_1,\ldots,d_k: d_1 \ldots d_k = n} 1$$
for any fixed $k \geq 2$, thanks to the crude upper bound $\tau_k(n) \leq \tau(n)^{k-1}$.

The following elementary consequence of the divisor bound will also be useful:

\begin{lemma}\label{ram-avg} Let $q\geq 1$ be an integer.  Then for
  any $K \geq 1$ we have
$$ 
\sum_{1 \leq k \leq K} (k,q) \leq K\tau(q).
$$
In particular, if $q$ is of polynomial size, then we have
$$ 
\sum_{a \in \Z/q\Z} (a,q) \lessapprox q,
$$
and we also have
$$ 
\sum_{|k| \leq K} (k,q) \ll Kq^{\eps} + q
$$
for any fixed $\eps>0$ and arbitrary $q$ (not necessarily of polynomial size).
\end{lemma}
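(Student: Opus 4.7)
The plan is to establish the first inequality by a divisor–partition argument, then deduce the other two as direct corollaries.

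For the inequality $\sum_{1 \leq k \leq K}(k,q) \leq K\tau(q)$, I would write $(k,q) = d$ and note that the common value $d$ must divide $q$. Partitioning the sum according to this value gives
$$
\sum_{1 \leq k \leq K}(k,q) = \sum_{d \mid q} d \cdot \#\{1 \leq k \leq K : (k,q) = d\} \leq \sum_{d \mid q} d \cdot \Big\lfloor \frac{K}{d} \Big\rfloor \leq \sum_{d \mid q} K = K\tau(q),
$$
since the number of multiples of $d$ in $[1,K]$ is at most $K/d$. (Alternatively, one can use the identity $(k,q) = \sum_{e \mid (k,q)} \phi(e)$ and swap the order of summation, which yields the same bound.)

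For the second estimate, I would simply take $K = q$: the claim $\sum_{1 \leq k \leq q}(k,q) \leq q\,\tau(q)$ together with the divisor bound $\tau(q) \lessapprox 1$ from Lemma~\ref{divisor-crude}\eqref{div-bound} (valid since $q$ is of polynomial size) yields $\sum_{a \in \Z/q\Z}(a,q) \lessapprox q$, after identifying the sum over residue classes with a sum over representatives $1 \leq k \leq q$.

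For the third estimate, I would split the range $|k| \leq K$ into $k = 0$, $1 \leq k \leq K$, and $-K \leq k \leq -1$. The first contributes $(0,q) = q$, while by symmetry the other two ranges each contribute $\sum_{1 \leq k \leq K}(k,q) \leq K\tau(q)$. Using the classical unconditional bound $\tau(q) \ll_\eps q^\eps$ (valid for any fixed $\eps > 0$ and any $q$, not just those of polynomial size), we obtain
$$
\sum_{|k| \leq K} (k,q) \leq q + 2K\tau(q) \ll Kq^{\eps} + q,
$$
as required. There is no serious obstacle here; the only mild subtlety is to remember that the $q^{o(1)}$ form of the divisor bound needs $q$ to be of polynomial size (for the second estimate), whereas the $q^{\eps}$ form holds unconditionally (which is what makes the third estimate work for arbitrary $q$).
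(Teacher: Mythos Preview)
Your proof is correct and essentially identical to the paper's: the paper bounds $(k,q) \leq \sum_{d \mid (k,q)} d$ and swaps the order of summation to get $\sum_{d \mid q} d \cdot \lfloor K/d \rfloor \leq K\tau(q)$, which is precisely your divisor-partition argument (and your $\phi$-based alternative is the exact-identity version of the same swap). The paper leaves the second and third assertions implicit, and your derivations of them from the first inequality plus the appropriate form of the divisor bound are exactly what is intended.
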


\begin{proof}  We have
$$
(k,q)\leq \sum_{d|(q,k)} d
$$
and hence
$$
\sum_{1 \leq k \leq K}(k,q) \leq \sum_{d|q} \sum_{\substack{1 \leq k
    \leq K\\d|k}} d \leq K \tau(q).
$$
\end{proof}


\section{Preliminaries}\label{sec-technical}

\subsection{Statements of results}

In this section we will give the most general statements that we
prove, and in particular define the concept of ``dense divisibility'',
which weakens the smoothness requirement of Theorem~\ref{th-main}.

\begin{definition}[Multiple dense divisibility]\label{mdd-def}  Let $y \geq 1$.  For each natural number $i \geq 0$, we define a notion of \emph{$i$-tuply $y$-dense divisibility} recursively as follows:
\begin{itemize}
\item[(i)] Every natural number $n$ is $0$-tuply $y$-densely divisible.
\item[(ii)] If $i \geq 1$ and $n$ is a natural number, we say that $n$
  is $i$-tuply $y$-densely divisible if, whenever $j,k \geq 0$ are
  natural numbers with $j+k=i-1$, and $1 \leq R \leq yn$, one can find
  a factorisation 
\begin{equation}\label{eq-dd-factor}
  n = qr,\quad\quad\text{ with } y^{-1}R\leq r\leq R
\end{equation}
such that $q$ is $j$-tuply $y$-densely
divisible and $r$ is $k$-tuply $y$-densely divisible.
\end{itemize}
We let $\Dcal{i}{y}$ denote the set of $i$-tuply $y$-densely divisible
numbers.  We abbreviate ``$1$-tuply densely divisible'' as ``densely
divisible'', ``$2$-tuply densely divisible'' as ``doubly densely
divisible'', and so forth; we also abbreviate $\Dcal{1}{y}$ as
$\Dcone{y}$, and since we will often consider squarefree densely
divisible integers with prime factors in an interval $I$, we will
denote
\begin{equation}\label{eq-ddiv-interval}
  \DI{I}{j}{y}=\Scal_I\cap \Dcal{j}{y}.
\end{equation}
\end{definition}

A number of basic properties of this notion will be proved at the beginning of
Section~\ref{dhl-mpz-2-proof}, but the intent is that we want to have
integers which can always be factored, in such a way that we can
control the location of the divisors. For instance, the following fact
is quite easy to check: any $y$-smooth integer is also $i$-tuply
$y$-densely divisible, for any $i\geq 0$ (see Lemma~\ref{fq} (iii) for
details).

\begin{definition}\label{sfi} For any set $I \subset \R$ (possibly depending on $x$), let $\Scal_I$ denote the set
  of all squarefree natural numbers whose prime factors lie in $I$.  If $I$ is also a bounded set (with the bound allowed to depend on $x$),
 we let $P_I$ denote the
  product of all the primes in $I$, thus in this case $\Scal_I$ is the set of divisors of $P_I$.
\end{definition}

For every fixed $0 < \varpi < \frac{1}{4}$ and $0 < \delta <
\frac{1}{4}+\varpi$ and every natural number $i$, we let
$\MPZ^{(i)}[\varpi,\delta]$ denote the following claim:

\begin{claim}[Modified Motohashi-Pintz-Zhang estimate,
  {$\MPZ^{(i)}[\varpi,\delta]$}]\label{mpzm-def} Let $I \subset \R$ be
  a bounded set, which may vary with $x$, and let $Q \lessapprox
  x^{1/2+2\varpi}$.  If $a$ is an integer coprime to $P_I$, and $A \geq 1$ is
  fixed, then
\begin{equation}\label{qq-mpz-mod}
  \sum_{\substack{q \leq Q\\ q \in \DI{I}{i}{x^\delta}}}
  |\Delta(\Lambda\onef_{[x,2x]}; a\ (q))| \ll x \log^{-A} x.
\end{equation}
\end{claim}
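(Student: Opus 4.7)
The plan is to follow the Zhang--Polymath strategy sketched in Section~\ref{ssec-outline}. First I would apply the Heath-Brown identity \cite{hb-ident} to $\Lambda\onef_{[x,2x]}$, expressing it as a polylogarithmic combination of Dirichlet convolutions $\alpha_1 \star \cdots \star \alpha_k$ with $k$ bounded, each factor supported at a scale $N_j$ satisfying $N_1\cdots N_k \sim x$ and $N_j \lessapprox x^{1/k}$, the ``high'' factors being smooth coefficient sequences. After dyadic decomposition and grouping these factors into two or three blocks according to a chosen boundary parameter $\sigma$, the bound \eqref{qq-mpz-mod} reduces to a finite list of instances of the estimates $\TypeI^{(i)}[\varpi,\delta,\sigma]$, $\TypeII^{(i)}[\varpi,\delta]$, and $\TypeIII^{(i)}[\varpi,\delta,\sigma]$ of Definition~\ref{type-def}. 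The combinatorial bookkeeping at this stage is essentially identical to that in \cite{zhang}.

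Next I would attack the Type I and Type II estimates via Linnik's dispersion method: expand the square of the discrepancy (or open it with an appropriate phase), apply Cauchy--Schwarz in a smooth variable, and complete the resulting incomplete sums modulo $q$ by Fourier inversion, reducing to the task of bounding exponential sums of the shape
\[
\sum_{N < n \le 2N} e_q(c_1\bar n + c_2\overline{n+\ell})
\]
and their relatives. The dense divisibility hypothesis $q \in \Dcal{i}{x^\delta}$ enters decisively here: it produces factorisations $q = q_1 q_2$ with $q_1, q_2$ in prescribed dyadic ranges, which permits the $q$-van der Corput $A$-process of Heath-Brown \cite{heath-hybrid} and Graham--Ringrose \cite{graham}. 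That process squares out the sum into blocks of length controlled by $q_1$ and reduces to complete sums modulo $q_2$, estimated in turn by the Weil bound for curves over $\F_{q_2}$. The multiplicity parameter $i$ governs how many times this factorisation trick can be iterated, and therefore determines the admissible pair $(\varpi,\delta)$.

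For the Type III estimate I would follow the streamlined route of Fouvry--Kowalski--Michel \cite{FKM3} rather than the Birch--Bombieri approach of \cite{fi-3}: expanding the triple convolution, completing the sums, and extracting the arithmetic core reduces the problem to correlation bounds for hyper-Kloosterman sums of the form
\[
\sum_x \hypk_3(f(x); q)\,\overline{\hypk_3(g(x); q)},
\]
which are established in Section~\ref{deligne-sec} via the trace-function formalism built on top of Deligne's Weil~II. Once again the dense divisibility of $q$ provides the modulus factorisation needed to make these estimates applicable to the moduli appearing in \eqref{qq-mpz-mod}.

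The principal obstacle is the Type I estimate at its sharpest range. Balancing the number $i$ of Graham--Ringrose iterations against the cumulative losses from the Cauchy--Schwarz and completion steps is what ultimately pins down the admissible exponent $\varpi$, and pushing it up to the value corresponding to $\theta = 1/2 + 7/300$ in Theorem~\ref{th-main} forces one to exploit additional averaging over the modulus $q$ itself. Converting that averaging into genuine cancellation requires passing from the one-variable Weil bounds to the multivariable exponential sum bounds of Section~\ref{deligne-sec}, which rest on the deepest form of Deligne's Riemann Hypothesis over finite fields; this is the step where the argument genuinely leaves the realm of classical analytic number theory.
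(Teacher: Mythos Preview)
Your plan matches the paper's approach essentially step for step: Heath-Brown identity followed by the combinatorial lemma (Lemma~\ref{comlem}) to reduce to Type~I/II/III estimates, dispersion plus $q$-van der Corput plus Weil for Types~I/II (Section~\ref{typei-ii-sec}), the Fouvry--Kowalski--Michel route via hyper-Kloosterman correlations for Type~III (Section~\ref{typeiii-sec}), and Deligne-based multivariable bounds for the strongest Type~I estimate (Section~\ref{typei-advanced-sec}). One small slip: in the Heath-Brown decomposition only the $\mu_{\le}$ factors are forced to scale $\lessapprox x^{1/K}$, not all factors --- the $\onef$ and $L$ factors can sit at arbitrarily large scales, which is exactly what supplies the long smooth sequences needed for the Type~0 and Type~III cases.
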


We will prove the following cases of these estimates:

\begin{theorem}[Motohashi-Pintz-Zhang type estimates]\label{mpz-est}\ 
\begin{itemize}
\item[(i)]  We have $\MPZ^{(4)}[\varpi,\delta]$ for any fixed $\varpi,\delta>0$ such that $600 \varpi + 180 \delta < 7$.
\item[(ii)] We can prove $\MPZ^{(2)}[\varpi,\delta]$ for any fixed
  $\varpi,\delta>0$ such that $168 \varpi + 48 \delta < 1$, without
  invoking any of Deligne's results~\cite{WeilI, WeilII} on the
  Riemann Hypothesis over finite fields.
\end{itemize}
\end{theorem}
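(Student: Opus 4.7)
The plan is to reduce both parts of Theorem \ref{mpz-est} to a family of bilinear (``Type I'' and ``Type II'') and trilinear (``Type III'') estimates via a combinatorial decomposition of $\Lambda$, and then verify the resulting estimates by the dispersion method combined with exponential sum bounds over finite fields. The combinatorial reduction will be essentially uniform for the two parts; the difference lies in which estimates are needed and which machinery is used to prove them.

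First, I would apply the Heath-Brown identity to $\Lambda \onef_{[x,2x]}$ to decompose it as a bounded linear combination of Dirichlet convolutions whose factors are either smooth at given scales or coefficient sequences supported on intervals. Classifying the resulting summands by the dyadic sizes of their factors reduces the claim $\MPZ^{(i)}[\varpi,\delta]$ to finitely many estimates of the form $\TypeI^{(i)}[\varpi,\delta,\sigma]$, $\TypeII^{(i)}[\varpi,\delta]$, and $\TypeIII^{(i)}[\varpi,\delta,\sigma]$ for an appropriate choice of the boundary parameter $\sigma$. This $\sigma$ is the crucial design parameter: for part (ii), I would pick $\sigma$ just above $1/3$, so that the Heath-Brown decomposition produces no Type III sums at all, which is precisely what allows us to bypass higher-dimensional étale machinery; for part (i) one takes $\sigma$ smaller, at the cost of having to handle Type III contributions as well.

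For the Type I/II estimates I would follow and sharpen Zhang's dispersion-method framework. After a Cauchy--Schwarz step that replaces one of the coefficient sequences by its square-integral envelope, one expands and completes the inner sum by Poisson summation modulo a product of moduli; isolating the diagonal contribution leaves an off-diagonal that is controlled by an incomplete exponential sum of the shape $\sum_{n} e_q(c_1 \bar n + c_2 \overline{n+\ell})$. The decisive improvement over \cite{zhang} comes from exploiting $q \in \DI{I}{i}{x^\delta}$: dense divisibility lets us factor $q = q_1 q_2$ with $q_1$ at a prescribed dyadic scale, and thus iterate the $q$-van der Corput $A$-process of Heath-Brown and Graham-Ringrose, each iteration yielding a square-root saving on a shorter complete sum that is ultimately bounded by Weil's estimate for one-variable character sums (Section~\ref{exp-sec}). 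Tracking losses carefully gives the linear constraint $168\varpi + 48\delta < 1$ for part (ii), whose only deep input is the Riemann Hypothesis for curves.

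For part (i), the same strategy is pushed further in two directions: additional averaging over the modulus $q$ leads to higher-dimensional exponential sums whose bounds require Deligne's Riemann Hypothesis (these feed into a sharper Type I estimate in Section~\ref{typei-advanced-sec}), and the Type III contribution (Section~\ref{typeiii-sec}) is reduced, following the approach of \cite{FKM3}, to correlation bounds on hyper-Kloosterman sums that will be established in Section~\ref{deligne-sec} through the trace-function formalism. I expect the main obstacle to be the numerology optimization, which has to simultaneously balance the number of Cauchy--Schwarz applications, the number of van der Corput iterations afforded by the multiplicity $i$ of dense divisibility, the completion losses in passing from incomplete to complete sums, and the sizes of the coefficient sequences. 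It is this balancing, together with the new trace-function inputs, that produces the improved constraint $600\varpi + 180\delta < 7$ for part (i); the analysis is substantially more delicate than for part (ii) because the Deligne-type estimates have to be deployed exactly at the parameter range where Weil-only methods no longer suffice.
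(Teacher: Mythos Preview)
Your overall strategy matches the paper's: apply the Heath-Brown identity and the combinatorial Lemma~\ref{comlem} to reduce $\MPZ^{(i)}$ to Type I/II/III estimates, then verify those via dispersion, completion of sums, the $q$-van der Corput $A$-process, and (for part (i)) Deligne-type input. However, there is a concrete numerical error that breaks part (ii).

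You propose to take $\sigma$ ``just above $1/3$'' so that no Type III sums arise. The actual threshold is $\sigma>1/6$, not $1/3$: in Lemma~\ref{sum} the Type III alternative requires $2\sigma\le t_i\le 1/2-\sigma$, which is only possible when $\sigma\le 1/6$, and Lemma~\ref{comlem} accordingly states that $\TypeIII^{(i)}$ may be omitted once $\sigma>1/6$. Your choice $\sigma>1/3$ is not merely suboptimal but fatal: the Type I$^{(2)}$ estimate from Theorem~\ref{newtype}\eqref{typeI2} requires $56\varpi+16\delta+4\sigma<1$, and with $\sigma>1/3$ one already has $4\sigma>4/3>1$, so no $\varpi,\delta>0$ would be admissible. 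The paper instead takes $\sigma$ just above $1/6$; plugging $\sigma\downarrow 1/6$ into $56\varpi+16\delta+4\sigma<1$ gives exactly $56\varpi+16\delta<1/3$, i.e.\ $168\varpi+48\delta<1$, which is the stated constraint for part (ii). The Type II condition $68\varpi+14\delta<1$ is then automatic since $68<168$ and $14<48$. So the numerology of part (ii) is pinned down by the interplay between the $\sigma>1/6$ cutoff for avoiding Type III and the Type I$^{(2)}$ constraint; with the wrong threshold this derivation collapses.

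For part (i) your description is qualitatively right (one must take $\sigma$ below $1/6$ and handle Type III via hyper-Kloosterman correlations), and the paper does exactly this, combining $\TypeI^{(4)}$, $\TypeII^{(1)}$, and $\TypeIII^{(1)}$ from Theorem~\ref{newtype} and optimising over $\sigma$.
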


The statement $\MPZ^{(i)}[\varpi,\delta]$ is easier to establish as
$i$ increases.  If true for some $i\geq 1$, it implies
$$
  \sum_{\substack{q \leq x^{1/2+2\varpi-\eps}\\q\text{
        $x^{\delta}$-smooth, squarefree}}}
  |\Delta(\Lambda\onef_{[x,2x]}; a\ (q))| \ll x \log^{-A} x
$$
for any $A\geq 1$ and $\eps>0$. Using a dyadic decomposition and the
Chinese Remainder Theorem, this shows that Theorem~\ref{mpz-est} (i)
implies Theorem~\ref{th-main}.

\subsection{Bilinear and trilinear estimates}

As explained, we will reduce Theorem~\ref{mpz-est} to bilinear or
trilinear estimates.  In order to state these precisely, we
introduction some further notation.

\begin{definition}[Coefficient sequences]\label{Coef}  A \emph{coefficient sequence} is a finitely supported sequence $\alpha \colon \N \to \R$ (which may depend on $x$) that obeys the bounds
\begin{equation}\label{alpha-bound}
|\alpha(n)| \ll \tau^{O(1)}(n) \log^{O(1)}(x)
\end{equation}
for all $n$ (recall that $\tau$ is the divisor function).
\begin{itemize}
\item[(i)]  A coefficient sequence $\alpha$ is said to be \emph{located at scale $N$} for some $N \geq 1$ if it is supported on an interval of the form $[cN, CN]$ for some $1 \ll c < C \ll 1$.
\item[(ii)] A coefficient sequence $\alpha$ located at scale $N$ for
  some $N \geq 1$ is said to \emph{obey the Siegel-Walfisz theorem},
  or to \emph{have the Siegel-Walfisz property}, if one has
\begin{equation}\label{sig}
| \Delta(\alpha \onef_{(\cdot,r)=1}; a\ (q)) | \ll \tau(qr)^{O(1)} N \log^{-A} x 
\end{equation}
for any $q,r \geq 1$, any fixed $A$, and any primitive residue class $a\ (q)$.  
\item[(iii)]  A coefficient sequence $\alpha$ is said to be
  \emph{shifted smooth at scale $N$} for some $N \geq 1$ if it has the form $\alpha(n) = \psi(\frac{n-x_0}{N})$ for some smooth function $\psi \colon \R \to \C$ supported on an interval $[c,C]$ for some fixed $0 < c < C$, and some real number $x_0$, with $\psi$ obeying the derivative bounds
\begin{equation}\label{soso}
|\psi^{(j)}(x)| \ll \log^{O(1)} x
\end{equation}
for all fixed $j \geq 0$, where the implied constant may depend on
$j$, and where $\psi^{(j)}$ denotes the $j^{\operatorname{th}}$ derivative of $\psi$.
If we can take $x_0=0$, we call $\alpha$ \emph{smooth at scale $N$};
note that such sequences are also located at scale $N$.
\end{itemize}
\end{definition}

Note that for a coefficient sequence $\alpha$ at scale $N$, an
integer $q\geq 1$ and a primitive residue class $a\ (q)$, we have the
trivial estimate
\begin{equation}\label{eq-delta-trivial}
\Delta(\alpha;a\ (q))\ll \frac{N}{\varphi(q)}(\log x)^{O(1)}.
\end{equation}
In particular, we see that the Siegel-Walfisz property amounts to a
requirement that the sequence $\alpha$ be uniformly equidistributed in
arithmetic progressions to moduli $q\ll (\log x)^A$ for any $A$. In
the most important arithmetic cases, it is established using the
methods from the classical theory of $L$-functions.

\begin{definition}[Type I,II,III estimates]\label{type-def}  Let $0 <
  \varpi < 1/4$, $0 < \delta < 1/4+\varpi$, and $0 < \sigma < 1/2$ be
  fixed quantities, and let $i \geq 1$ be a fixed natural number.  We
  let $I$ be an arbitrary bounded subset of $\R$ and define $P_I =
  \prod_{p \in I} p$ as before. Let $a\ (P_I)$ be a primitive
  congruence class.
\begin{itemize}
\item[(i)]  We say that $\TypeI^{(i)}[\varpi,\delta,\sigma]$ holds if, for any $I$ and $a\ (P_I)$ as above, any quantities $M,N \gg 1$ with
\begin{equation}\label{lin} M N \sim x 
\end{equation}
and
\begin{equation}\label{xss}
 x^{1/2-\sigma} \lessapprox N \lessapprox x^{1/2-2\varpi-c}
\end{equation}
for some fixed $c>0$, any $Q \lessapprox x^{1/2+2\varpi}$, and any
coefficient sequences $\alpha,\beta$ located at scales $M,N$
respectively, with $\beta$ having the Siegel-Walfisz property, we have
\begin{equation}\label{alphabet}
 \sum_{\substack{q\leq Q\\q \in \DI{I}{i}{x^\delta}}} |\Delta(\alpha \star \beta; a\ (q))|
\ll x \log^{-A} x
\end{equation}
for any fixed $A>0$.  (Recall the definition~(\ref{eq-ddiv-interval})
of the set $\DI{I}{i}{x^{\delta}}$.)
\item[(ii)]  We say that $\TypeII^{(i)}[\varpi,\delta]$ holds if, for any $I$ and $a\ (P_I)$ as above, any quantities $M,N \gg 1$ obeying \eqref{lin} and
\begin{equation}\label{xss-2}
 x^{1/2-2\varpi-c} \lessapprox N \lessapprox x^{1/2}
\end{equation}
for some sufficiently small fixed $c>0$, any $Q \lessapprox
x^{1/2+2\varpi}$, and any coefficient sequences $\alpha,\beta$ located
at scales $M,N$ respectively, with $\beta$ having the Siegel-Walfisz
property, we have \eqref{alphabet} for any fixed $A>0$.
\item[(iii)] We say that $\TypeIII^{(i)}[\varpi,\delta,\sigma]$ holds if, for
  any $I$ and $a\ (P_I)$ as above, for any quantities $M, N_1,N_2,N_3 \gg
  1$ which satisfy the conditions
\begin{gather}
 M N_1 N_2 N_3 \sim x
\nonumber\\
\label{n1-o}
N_1 N_2, N_1 N_3, N_2 N_3 \gtrapprox x^{1/2+\sigma}
\\
\label{n2-o}
x^{2\sigma} \lessapprox N_1,N_2,N_3 \lessapprox x^{1/2-\sigma},
\end{gather}
for any coefficient sequences $\alpha,\psi_1,\psi_2,\psi_3$ located at scales $M$, $N_1$, $N_2$, $N_3$, respectively, with $\psi_1,\psi_2,\psi_3$ smooth, and
finally for any $Q \lessapprox x^{1/2+2\varpi}$, we have
\begin{equation}\label{psi}
\sum_{\substack{q\leq Q\\q \in \DI{I}{i}{x^\delta}}}
  |\Delta(\alpha \star \psi_1 \star \psi_2 \star \psi_3; a\ (q))|\\
\ll x \log^{-A} x
\end{equation}
for any fixed $A>0$.
\end{itemize}
\end{definition}

Roughly speaking, Type I estimates control the distribution of
Dirichlet convolutions $\alpha,\beta$ where $\alpha,\beta$ are rough
coefficient sequences at moderately different scales, Type II
estimates control the distribution of Dirichlet convolutions
$\alpha,\beta$ where $\alpha,\beta$ are rough coefficient sequences at
almost the same scale, and Type III estimates control the distribution
of Dirichlet convolutions $\alpha\star\psi_1\star\psi_2\star\psi_3$ where
$\psi_1,\psi_2,\psi_3$ are smooth and $\alpha$ is rough but supported
at a fairly small scale.  

In Section~\ref{heath-brown-sec}, we will use the Heath-Brown identity
to reduce $\MPZ^{(i)}[\varpi,\delta]$ to a combination of
$\TypeI^{(i)}[\varpi,\delta,\sigma]$, $\TypeII^{(i)}[\varpi,\delta]$,
and $\TypeIII^{(i)}[\varpi,\delta,\sigma]$:

\begin{lemma}[Combinatorial lemma]\label{comlem} Let $i \geq 1$ be a
  fixed integer, and let $0 < \varpi < \frac{1}{4}$, $0 < \delta <
  \frac{1}{4}+\varpi$, and $\frac{1}{10} < \sigma < \frac{1}{2}$ be
  fixed quantities with $\sigma > 2\varpi$, such that the estimates
  $\TypeI^{(i)}[\varpi,\delta,\sigma]$,
  $\TypeII^{(i)}[\varpi,\delta]$, and
  $\TypeIII^{(i)}[\varpi,\delta,\sigma]$ all hold.  Then
  $\MPZ^{(i)}[\varpi,\delta]$ holds.

Furthermore, if $\sigma > 1/6$, then the hypothesis $\TypeIII^{(i)}[\varpi,\delta,\sigma]$ may be omitted.
\end{lemma}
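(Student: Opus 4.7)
The plan is to apply the Heath-Brown identity and reduce the sum defining $\MPZ^{(i)}[\varpi,\delta]$ to a bounded number of pieces, each of which is a Dirichlet convolution amenable to one of the Type I, II, or III hypotheses. First I would fix a large integer $K$ (say $K = 1/\sigma$, rounded up, so certainly $K \leq 10$) and write, for $n \in [x,2x]$,
$$
\Lambda(n) = \sum_{j=1}^{K}(-1)^{j-1}\binom{K}{j}\sum_{\substack{n = m_1\cdots m_j n_1\cdots n_j\\ m_1,\dots,m_j\leq x^{1/K}}} \mu(m_1)\cdots\mu(m_j)\,\log(n_1).
$$
Each inner sum is a Dirichlet convolution of at most $2K$ arithmetic functions. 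Performing a smooth dyadic decomposition on each factor introduces an $O(\log^{O(1)} x)$ loss and replaces the problem by $O(\log^{O(1)} x)$ pieces of the form $\alpha_1 \star \cdots \star \alpha_k$ where each $\alpha_\ell$ is a coefficient sequence located at some dyadic scale $N_\ell$, with $\prod_\ell N_\ell \asymp x$, the first $j$ indices coming from Möbius (``rough'') factors with $N_\ell \leq x^{1/K}$ and the remaining $j$ indices coming from smooth factors at unrestricted scales.

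The heart of the argument is then a combinatorial classification of the resulting scale tuples $(N_1,\dots,N_k)$. Given such a tuple, I would search for a partition $\{1,\dots,k\} = A \sqcup B$ such that $N := \prod_{\ell \in B} N_\ell$ satisfies either $x^{1/2-\sigma} \lessapprox N \lessapprox x^{1/2-2\varpi-c}$ (reducing to $\TypeI^{(i)}[\varpi,\delta,\sigma]$) or $x^{1/2-2\varpi-c} \lessapprox N \lessapprox x^{1/2}$ (reducing to $\TypeII^{(i)}[\varpi,\delta]$). Grouping the factors in $A$ into $\alpha$ and those in $B$ into $\beta$, one must verify that $\beta$ carries the Siegel-Walfisz property; this is standard when $\beta$ contains at least one Möbius-type factor or one smooth factor, since the Siegel-Walfisz estimate is preserved under convolution with coefficient sequences of controlled size. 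The key combinatorial claim is that if \emph{no} such Type I/II partition exists, then (using $\sigma > 2\varpi$ and the constraint $\sigma > 1/10$) one can identify three distinguished smooth-factor scales $N_{a}, N_{b}, N_c$ lying in $[x^{2\sigma}, x^{1/2-\sigma}]$ with pairwise products $\gtrapprox x^{1/2+\sigma}$, absorb all remaining factors into a single rough sequence $\alpha$ at scale $M = x / (N_a N_b N_c)$, and apply $\TypeIII^{(i)}[\varpi,\delta,\sigma]$.

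For the final clause, if $\sigma > 1/6$ then $2\sigma > 1/2 - \sigma$, so the interval $[x^{2\sigma}, x^{1/2-\sigma}]$ is empty and the Type III configuration simply cannot arise: any tuple of scales that avoids Type I/II must have some scale outside this window, and hence must be groupable into a Type I or Type II partition. Summing the $O(\log^{O(1)} x)$ resulting bounds against a suitably large choice of the constant $A$ in each of the Type I/II/III conclusions yields \eqref{qq-mpz-mod}, i.e., $\MPZ^{(i)}[\varpi,\delta]$.

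The main obstacle I expect is not any single analytic input but the combinatorial bookkeeping in the case analysis: one must verify exhaustivity, namely that every tuple $(N_1,\dots,N_k)$ with $\prod N_\ell \asymp x$ and at least $j$ of the scales bounded by $x^{1/K}$ admits \emph{either} a Type I/II grouping \emph{or} the Type III configuration with all three pairwise-product and range constraints satisfied. A convenient device is to sort the scales in decreasing order and greedily accumulate the largest ones into $B$ until its product first exceeds $x^{1/2-\sigma}$; the resulting $B$-product then lies in an interval whose upper endpoint is controlled by the size of the next scale added, and one shows that if this lands in the ``forbidden'' gap $(x^{1/2-2\varpi-c}, x^{1/2+2\varpi})$ avoided by Types I/II, then at least three scales must lie in $[x^{2\sigma}, x^{1/2-\sigma}]$, producing the Type III configuration. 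All other steps (Heath-Brown, dyadic decomposition, Siegel-Walfisz propagation) are standard once this dichotomy is in hand.
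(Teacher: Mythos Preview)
Your outline is broadly right---Heath-Brown identity, (finer-than-)dyadic decomposition, and a combinatorial classification of the scale tuples---but there is a genuine gap. Your trichotomy (Type I, Type II, Type III) is not exhaustive: you are missing the ``Type 0'' case in which a single factor already has scale $N_\ell \gtrapprox x^{1/2+\sigma}$. In your greedy accumulation, if the largest scale alone exceeds $x^{1/2+\sigma}$, the first nontrivial partial product is already above $x^{1/2}$, so no Type I/II partition exists (these require the smaller side to sit in $[x^{1/2-\sigma}, x^{1/2}]$); and there is no reason to find three scales in $[x^{2\sigma}, x^{1/2-\sigma}]$ with the pairwise-product constraints, so Type III need not apply either (take e.g.\ scales $x^{0.9}, x^{0.1}$). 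This case genuinely occurs, for instance from the $j=1$ term of the Heath-Brown identity when the $L$-factor is long. The paper handles it directly: such a large factor is necessarily smooth (since $1/2+\sigma > 2\sigma > 1/K$ rules out the M\"obius-truncated factors), and one applies Poisson summation to that smooth factor to get equidistribution modulo $q$ with an error saving a power of $x$, using $\sigma > 2\varpi$ to beat the modulus range $q \lessapprox x^{1/2+2\varpi}$. None of the Type I/II/III hypotheses are invoked here.

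A second, smaller point: the paper packages the combinatorial step as a clean lemma about real numbers $t_1,\dots,t_n \geq 0$ summing to $1$ (writing $N_\ell \sim x^{t_\ell}$), proving that one of four alternatives holds---Type 0 ($\exists\, t_i \geq 1/2+\sigma$), Type I/II (a partition with both sides in $(1/2-\sigma,1/2+\sigma)$), or Type III (three indices with $2\sigma \leq t_i \leq 1/2-\sigma$ and pairwise sums $\geq 1/2+\sigma$). The proof goes via a ``powerful element'' argument rather than a greedy one, and the $\sigma > 1/6$ clause is then immediate from $2\sigma > 1/2-\sigma$. Your greedy sketch could be made to work once Type 0 is separated out, but as written the exhaustivity claim (``if no Type I/II partition exists then three scales lie in $[x^{2\sigma},x^{1/2-\sigma}]$ with pairwise products $\gtrapprox x^{1/2+\sigma}$'') is false without first excluding Type 0.
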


As stated earlier, this lemma is a simple consequence of the
Heath-Brown identity, a dyadic decomposition (or more precisely, a
finer-than-dyadic decomposition), some standard analytic number theory
estimates (in particular, the Siegel-Walfisz theorem) and some
elementary combinatorial arguments.

In Zhang's work~\cite{zhang}, the claims
$\TypeI[\varpi,\delta,\sigma]$, $\TypeII[\varpi,\delta]$,$
\TypeIII[\varpi,\delta,\sigma]$ are (implicitly) proven with
$\varpi=\delta=1/1168$ and $\sigma = 1/8 - 8 \varpi$.  In fact, if one
optimizes the numerology in his arguments, one can derive
$\TypeI[\varpi,\delta,\sigma]$ whenever $44 \varpi + 12\delta + 8
\sigma < 1$, $\TypeII[\varpi,\delta]$ whenever $116\varpi+20\delta <
1$, and $\TypeIII[\varpi,\delta,\sigma]$ whenever $\sigma >
\frac{3}{26} + \frac{32}{13}\varpi + \frac{2}{13}\delta$ (see
\cite{pintz} for details).  We will obtain the following improvements
to these estimates, where the dependency with respect to $\sigma$ is
particularly important:

\begin{theorem}[New Type I, II, III estimates]\label{newtype} Let
  $\varpi,\delta,\sigma > 0$ be fixed quantities.
\begin{enumerate}[(i)]
\item\label{typeI1} If $54\varpi + 15\delta + 5 \sigma < 1$, then $\TypeI^{(1)}[\varpi,\delta,\sigma]$ holds.
\item\label{typeI2}  If $56\varpi + 16\delta + 4 \sigma < 1$, then $\TypeI^{(2)}[\varpi,\delta,\sigma]$ holds.
\item\label{typeI4} If $\frac{160}{3} \varpi + 16\delta + \frac{34}{9} \sigma < 1$ and $64\varpi + 18 \delta + 2\sigma < 1$, then $\TypeI^{(4)}[\varpi,\delta,\sigma]$ holds.
\item\label{typeII1}  If $68\varpi + 14 \delta < 1$, then $\TypeII^{(1)}[\varpi,\delta]$ holds.
\item\label{typeIII1} If $\sigma > \frac{1}{18}  + \frac{28}{9} \varpi + \frac{2}{9} \delta$ and $\varpi<1/12$, then $\TypeIII^{(1)}[\varpi,\delta,\sigma]$ holds.
\end{enumerate}
The proofs of the claims in \eqref{typeI4} and \eqref{typeIII1}
require Deligne's work on the Riemann Hypothesis over finite fields,
but the claims in \eqref{typeI1}, \eqref{typeI2} and \eqref{typeII1}
do not.
\end{theorem}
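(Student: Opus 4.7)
All five assertions follow a common two-stage template: an analytic reduction (Linnik's dispersion method together with Cauchy--Schwarz) turns each Type I/II/III estimate into the task of bounding short incomplete exponential sums modulo $q$, after which dense divisibility of $q$ is combined with the Riemann Hypothesis over finite fields to close the bound. The plan is to treat all five cases uniformly at the analytic stage and to differ only in which exponential-sum input is appealed to at the end.

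For the Type I estimates \eqref{typeI1}, \eqref{typeI2} and the Type II estimate \eqref{typeII1}, I would first apply the dispersion method to open the square $|\Delta(\alpha\star\beta;a\ (q))|^2$, using the Siegel--Walfisz hypothesis on $\beta$ to discard the diagonal contribution. What remains, after completion of sums, is (up to standard manipulations) a family of incomplete sums of the shape
$$
\sum_{N < n \leq 2N} e_q(c_1 \bar n + c_2 \overline{n+l})
$$
and minor variants. Using the factorisation $q = q_1 q_2$ supplied by $i$-tuply $x^\delta$-dense divisibility, I would invoke the $q$-van der Corput $A$-process of Heath-Brown and Graham--Ringrose: a Cauchy--Schwarz against the $q_1$-shift converts this into a complete sum modulo $q_2$, which is then estimated by the Weil bound for exponential sums over curves (Section~\ref{exp-sec}). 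The three different numerical inequalities arise from different placements of the absolute value, from whether one uses single ($i=1$) or double ($i=2$) dense divisibility, and from the distinct scale constraints \eqref{xss} versus \eqref{xss-2} in the Type I and Type II regimes; the exponents are balanced so that every Cauchy--Schwarz loss is paid for by the square-root cancellation gained at the end.

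For the stronger Type I estimate \eqref{typeI4}, the same framework applies, but one additionally averages over the modulus $q$ itself before the final Cauchy--Schwarz, and iterates the van der Corput step (hence the use of $4$-tuply dense divisibility). After Poisson summation, the resulting sums become multi-variable complete exponential sums whose square-root cancellation is furnished by Deligne's general Riemann Hypothesis via the trace-function formalism of Section~\ref{deligne-sec}; the two separate inequalities in \eqref{typeI4} correspond to the two parameter regimes in which the argument closes. For the Type III estimate \eqref{typeIII1}, I would instead follow the approach of Fouvry--Kowalski--Michel \cite{FKM3}: three successive Poisson summations (one per smooth factor $\psi_j$) re-express the discrepancy as a trilinear sum whose dual variables carry hyper-Kloosterman trace functions $\hypk_3(\,\cdot\,;q)$, and a Cauchy--Schwarz plus one $q$-van der Corput step (using single dense divisibility) reduces matters to correlation bounds on $\hypk_3$, which are established in Section~\ref{deligne-sec} via \'etale sheaf theory.

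The main obstacle will be twofold. Analytically, one must carefully track all parameters $M, N, Q, q_1, q_2$ through each Cauchy--Schwarz, van der Corput and Poisson step, so that the final optimisation produces exactly the stated linear inequalities in $\varpi, \delta, \sigma$; any slack at an intermediate step immediately degrades the exponents. Algebro-geometrically, for the multi-variable sums in \eqref{typeI4} and the hyper-Kloosterman correlations in \eqref{typeIII1} one must verify that the relevant \'etale sheaves are geometrically irreducible with controlled conductor, so that Deligne's theorem does provide square-root cancellation uniformly in $q$; this is where the purely elementary (Weil) proofs of \eqref{typeI1}, \eqref{typeI2}, \eqref{typeII1} give way to the deeper machinery needed for \eqref{typeI4} and \eqref{typeIII1}.
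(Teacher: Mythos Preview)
Your plan is essentially correct and tracks the paper's approach closely: dispersion method plus dense-divisibility factorisation reducing to incomplete exponential sums, with Weil-level input for \eqref{typeI1}, \eqref{typeI2}, \eqref{typeII1} and Deligne-level input for \eqref{typeI4}, \eqref{typeIII1}. Two small corrections of mechanism: the Siegel--Walfisz hypothesis is not what kills the diagonal in the dispersion square (that is handled by choosing the factor $r$ slightly below $N$ so the $\ell=0$ term is small) but rather enters via Bombieri--Vinogradov to dispose of the ``expected main term'' piece $\Delta_2$; and in the Type~III argument the single dense-divisibility factorisation $q=rs$ is used for a Cauchy--Schwarz over the $r$-variable (after which one completes in $h$ and invokes the $\hypk_3$ correlation bound directly), not for a $q$-van der Corput shift.
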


In proving these estimates, we will rely on the following general
``bilinear'' form of the Bombieri-Vinogradov theorem (the principle of
which is due to Gallagher~\cite{gallagher} and
Motohashi~\cite{motohashi}).

\begin{theorem}[Bombieri-Vinogradov theorem]\label{bvt}  Let $N,M \gg
  1$ be such that $NM \sim x$ and $N \geq x^\eps$ for some fixed
  $\eps>0$.  Let $\alpha,\beta$ be coefficient sequences at scale
  $M,N$ respectively such that $\beta$ has the Siegel-Walfisz property.  Then for any fixed $A>0$ there exists a fixed $B>0$ such that
$$ \sum_{q \leq x^{1/2} \log^{-B} x} \sup_{a \in (\Z/q\Z)^\times} 
|\Delta(\alpha\star\beta; a\ (q))| \ll x \log^{-A} x.$$
\end{theorem}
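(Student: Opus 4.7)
The plan is to execute the classical Gallagher--Motohashi argument, reducing the distribution problem to an application of the multiplicative large sieve inequality. First I would expand the congruence $n \equiv a\ (q)$ in Dirichlet characters modulo $q$; since $(a,q)=1$, the principal character contribution cancels the subtracted mean in the discrepancy, yielding
\[
\Delta(\alpha\star\beta;\, a\ (q)) = \frac{1}{\phi(q)} \sum_{\chi \neq \chi_0 \bmod q} \bar\chi(a)\, A(\chi)\, B(\chi),
\]
with $A(\chi) = \sum_m \alpha(m)\chi(m)$ and $B(\chi) = \sum_n \beta(n)\chi(n)$, using that Dirichlet characters are completely multiplicative. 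Since $|\bar\chi(a)| \leq 1$, taking the supremum in $a$ is harmless, and it suffices to bound $\sum_{q \leq Q} \phi(q)^{-1} \sum_{\chi \neq \chi_0} |A(\chi) B(\chi)|$ with $Q = x^{1/2}\log^{-B}x$. Next I would factor every non-principal $\chi$ through its inducing primitive character $\chi^*$ of conductor $q^* \mid q$ with $q^*>1$, absorbing the discrepancies $A(\chi)-A(\chi^*)$ (and analogously for $B$) by a M\"obius expansion controlled by Lemma~\ref{divisor-crude}, thereby reindexing by primitive characters at conductors $q^* \leq Q$ weighted by a harmless divisor factor $\sum_{d \leq Q/q^*} 1/\phi(dq^*) \ll \log x/\phi(q^*)$.

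I would then split the conductor range at a threshold $q^* = \log^C x$. For the small conductors $q^* \leq \log^C x$, I would not invoke the large sieve; instead, opening the convolution and summing in $n$ first, the Siegel--Walfisz hypothesis~\eqref{sig} on $\beta$ gives $|\Delta(\alpha\star\beta; a\ (q))| \ll \tau(q)^{O(1)} x \log^{-A_0} x$ for every fixed $A_0$, which sums over $q^* \leq \log^C x$ to a negligible contribution once $A_0$ is taken sufficiently large compared to $A$ and $C$. For the large conductors $\log^C x < q^* \leq Q$, I would perform a dyadic decomposition $q^* \sim Q'$, apply Cauchy--Schwarz in $(q^*,\chi^*)$ to separate $|A(\chi^*)|$ from $|B(\chi^*)|$, and invoke the multiplicative large sieve
\[
\sum_{q \leq Q'} \frac{q}{\phi(q)} \sideset{}{^*}\sum_{\chi \bmod q} \Bigl|\sum_m a_m \chi(m)\Bigr|^2 \ll (Q'^2 + M) \sum_m |a_m|^2
\]
applied to both $\alpha$ and $\beta$, together with the second-moment bounds $\sum |\alpha|^2 \ll M \log^{O(1)} x$ and $\sum |\beta|^2 \ll N \log^{O(1)} x$ that follow from~\eqref{alpha-bound}.

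Assembling these bounds and using $MN \sim x$, the contribution from each dyadic piece is
\[
\ll \frac{(MN)^{1/2}}{Q'}\bigl((Q'^2 + M)(Q'^2 + N)\bigr)^{1/2} \log^{O(1)} x \ll \Bigl(Q' + \sqrt{M} + \sqrt{N} + \frac{x^{1/2}}{Q'}\Bigr) x^{1/2} \log^{O(1)} x.
\]
The terms $\sqrt{M}$ and $\sqrt{N}$ are each $\leq x^{(1-\eps)/2}$ thanks to the hypothesis $N \geq x^\eps$ (which also forces $M \leq x^{1-\eps}$), giving a power saving; the term $x^{1/2}/Q'$ is $\leq x^{1/2}\log^{-C}x$ by the lower cutoff. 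The genuinely delicate point is the term $x^{1/2} Q'$: when $Q' \sim x^{1/2}$ both error terms in the large sieve inequality are of the critical order $x$, and the only available saving comes from the restriction $Q \leq x^{1/2}\log^{-B}x$; this is precisely why the Bombieri--Vinogradov range cannot go beyond $x^{1/2}$ without additional structural input. Summing over the $O(\log x)$ dyadic pieces and choosing $C$ and $B = B(A)$ large enough in terms of $A$ then completes the proof.
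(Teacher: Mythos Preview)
The paper does not prove this theorem; it simply cites \cite[Theorem~0]{bfi} and remarks that the two ingredients are the Siegel--Walfisz property and the large sieve inequality for Dirichlet characters. Your outline is precisely this standard Gallagher--Motohashi argument, so the approach matches.

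There is, however, one slip in your final accounting. You assert that both $\sqrt{M}$ and $\sqrt{N}$ are $\leq x^{(1-\eps)/2}$ ``thanks to the hypothesis $N \geq x^\eps$.'' The hypothesis $N \geq x^\eps$ together with $MN \sim x$ indeed forces $M \lessapprox x^{1-\eps}$, hence $\sqrt{M} \lessapprox x^{(1-\eps)/2}$. But it gives no upper bound on $N$: the stated hypotheses allow $M \asymp 1$ and $N \asymp x$, in which case the term $x^{1/2}\sqrt{N} \asymp x$ carries no saving whatsoever, and the large-sieve step fails. To make your argument go through verbatim you need the symmetric condition $M \geq x^\eps$ (equivalently $N \leq x^{1-\eps}$), or at minimum $M \geq \log^{C}x$ for $C$ large in terms of $A$, so that $x^{1/2}\sqrt{N} = x/\sqrt{M} \ll x\log^{-A}x$. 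In every application the paper makes of Theorem~\ref{bvt} both scales are in fact $\geq x^{\eps'}$ for some fixed $\eps'>0$, so this is a cosmetic issue rather than a structural one; but as written your sentence about $\sqrt{N}$ is not justified by the hypotheses you were given.
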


See \cite[Theorem 0]{bfi} for the proof. Besides the assumption of the
Siegel-Walfisz property, the other main ingredient used to establish Theorem \ref{bvt} is the large sieve
inequality for Dirichlet characters, from which the critical
limitation to moduli less than $x^{1/2}$ arises.

The Type I and Type II estimates in Theorem \ref{newtype} will be
proven in Section \ref{typei-ii-sec}, with the exception of the more
difficult Type I estimate \eqref{typeI4} which is proven in Section
\ref{typei-advanced-sec}.  The Type III estimate is established in
Section \ref{typeiii-sec}.  In practice, the estimate in Theorem
\ref{newtype}\eqref{typeI1} gives inferior results to that in Theorem
\ref{newtype}\eqref{typeI2}, but we include it here because it has a
slightly simpler proof. 

The proofs of these estimates involve essentially all the methods that
have been developed or exploited for the study of the distribution of
arithmetic functions in arithmetic progressions to large moduli, for
instance the dispersion method, completion of sums, the Weyl
differencing technique, and the $q$-van der Corput $A$ process. All
rely ultimately on some estimates of (incomplete) exponential sums
over finite fields, either one-dimensional or
higher-dimensional. These final estimates are derived from forms of
the Riemann Hypothesis over finite fields, either in the (easier) form
due to Weil~\cite{weilrh}, or in the much more general form due to
Deligne~\cite{WeilII}.


\subsection{Properties of dense divisibility}\label{dhl-mpz-2-proof}

We present the most important properties of the notion of multiple
dense divisibility, as defined in Definition \ref{mdd-def}.  Roughly
speaking, dense divisibility is a weaker form of smoothness which
guaranteees a plentiful supply of divisors of the given number in any
reasonable range, and multiple dense divisibility is a hereditary
version of this property which also partially extends to some factors
of the original number.


\begin{lemma}[Properties of dense divisibility]\label{fq} Let $i \geq
  0$ and $y \geq 1$.
\begin{itemize}
\item[(0)] If $n$ is $i$-tuply $y$-densely divisible, and $y_1\geq y$,
  then $n$ is $i$-tuply $y_1$-densely divisible. Furthermore, if
  $0\leq j\leq i$, then $n$ is $j$-tuply $y$-densely divisible.
\item[(i)] If $n$ is $i$-tuply $y$-densely divisible, and $m$ is a
  divisor of $n$, then $m$ is $i$-tuply $y (n/m)$-densely
  divisible. Similarly, if $l$ is a multiple of $n$, then $l$ is
  $i$-tuply $y (l/n)$-densely divisible.
\item[(ii)] If $m,n$ are $y$-densely divisible, then $[m,n]$ is also $y$-densely divisible.
\item[(iii)] Any $y$-smooth number is $i$-tuply $y$-densely divisible.
\item[(iv)]  If $n$ is $z$-smooth and squarefree for some $z \geq y$,
  and 
\begin{equation}\label{eq-hyp-iv}
\prod_{\substack{p|n\\p \leq y}} p \geq \frac{z^i}{y},
\end{equation}
then $n$ is $i$-tuply $y$-densely divisible.  
\end{itemize}
\end{lemma}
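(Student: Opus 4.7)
The plan is to prove parts (0), (i), (iii), (iv) by a simultaneous induction on the multiplicity parameter $i$, with part (ii) standing alone; all arguments manipulate the recursive factorization of Definition~\ref{mdd-def}.

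For part (0), monotonicity in $y$ splits into two cases. For $R \leq yn$ the factorization guaranteed by the hypothesis works directly, with the inductive hypothesis at a smaller index promoting its factors from $y$- to $y_1$-dense divisibility. For $yn < R \leq y_1 n$ the trivial factorization $n = 1 \cdot n$ suffices since $1$ is $j$-tuply $y_1$-densely divisible at every level. Monotonicity in $i$ is a downward induction on the target index: apply the $i$-tuply property with the pair $(j{+}1, k)$ in place of $(j, k)$ and invoke the inductive version of (0) to downgrade $q$ from $(j{+}1)$-tuply to $j$-tuply. Part (iii) is a greedy construction: for $y$-smooth $n$ and $R \leq yn$, multiply the primes of $n$ (each $\leq y$) into a running product $r$ as long as $r \leq R$; the halting condition forces $r \in [R/y, R]$, or else the primes are exhausted and $r = n$, possible only when $R \geq n$. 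Both $r$ and $q = n/r$ are $y$-smooth and so multiply densely divisible by the inductive hypothesis.

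Part (i) rests on the valuation identity: for any divisor $b$ of $n = mm'$, the quantity $d := \gcd(b, m)$ satisfies $d \mid m$ and $b/d \mid m'$ (compare $p$-adic valuations prime by prime, using $v_p(b/d) = \max(0, v_p(b) - v_p(m)) \leq v_p(m')$). Reducing to $R < m$ (since $R \geq m$ is trivial via $r = m$, $q = 1$), apply $i$-tuply $y$-dense divisibility of $n$ at range $R$ to obtain $n = ab$ with $b \in [R/y, R]$; then $r := \gcd(b, m)$ divides $m$, lies in $[b/m', b] \subseteq [R/(ym'), R]$, and $q := m/r$ divides $a$ with $a/q = m'/(b/d) \leq m'$. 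Applying (i) inductively to the factorizations of $a$ (with divisor $q$) and $b$ (with divisor $r$), then upgrading via part (0), yields the claimed $ym'$-dense divisibility. Part (ii) is proved directly: WLOG $m \leq n$, let $d = \gcd(m, n)$ and $L = mn/d$. For $R \leq yn$ apply dense divisibility of $m$ (if $R \leq ym$) or of $n$ (if $ym < R \leq yn$) to obtain a divisor of $L$ in $[R/y, R]$. For $R \in (yn, yL]$, rescale to $R' := Rd/n \in (yd, ym] \subseteq [1, ym]$, apply dense divisibility of $m$ at $R'$ to obtain $r_m \in [R'/y, R']$, and observe that $s := r_m n/d$ divides $L$ and lies precisely in $[R/y, R]$.

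Part (iv), the most delicate and the principal obstacle, uses the decomposition $n = n_1 n_2$ with $n_1 := \prod_{p \mid n,\ p \leq y} p$; the hypothesis reads $n_1 \geq z^i/y$. Induct on $i$. For $R \geq n$ take $r = n$, $q = 1$ (the recursive hypothesis $n_1 \geq z^k/y$ on $r$ follows from $n_1 \geq z^i/y$). For $R < n$ and $R \leq ny/z^j$, choose $r_2 \mid n_2$ greedily by multiplying primes of $n_2$ (each $\leq z$) until $r_2 \geq Rz^j/(n_1 y)$, placing $r_2$ in $[Rz^j/(n_1 y), Rz^{j+1}/(n_1 y)]$; then apply part (iii) to the $y$-smooth $n_1$ at parameter $R/r_2 \in [n_1 y/z^{j+1}, n_1 y/z^j] \subseteq [1, yn_1]$ to obtain $r_1 \mid n_1$ in $[R/(yr_2), R/r_2]$. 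The calibration $n_1 \geq z^{j+k+1}/y$ ensures both $r_1 \geq z^k/y$ (from the lower endpoint $n_1/z^{j+1}$) and $n_1/r_1 \geq z^j/y$ (from the upper endpoint $n_1 y/z^j$), so $r = r_1 r_2$ and $q = n/r$ satisfy the hypothesis of (iv) at levels $k$ and $j$ respectively, and the inductive hypothesis concludes. For the edge case $R \in (ny/z^j, n)$, where the greedy step cannot reach its threshold, take instead $r := n_2 s$ with $s \mid n_1$ supplied by part (iii) applied to $n_1$ at parameter $R/n_2$; here $q = n_1/s$ is itself $y$-smooth (hence $j$-tuply $y$-densely divisible by part (iii) with no size condition needed), while the strict bound $s \geq R/(yn_2) > n_1/z^j \geq z^k/y$ secures the (iv) hypothesis for $r$ at level $k$. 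The coordinated interplay between the range condition on $r$ and the twin lower bounds $r_1 \geq z^k/y$, $n_1/r_1 \geq z^j/y$ is tight, and the hypothesis $n_1 \geq z^i/y$ is essentially sharp for it to succeed.
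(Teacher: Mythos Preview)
Your proofs of (0), (i), (ii), (iii) are correct and follow essentially the same recursive scheme as the paper; your factorization in (i) via $r := \gcd(b,m)$ is a minor variant of the paper's $r_1 := b/\gcd(b,m_1)$, and your three-case split in (ii) differs cosmetically from the paper's two-case split, but the content is the same.

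For (iv) your argument is structurally different from the paper's and mostly works, but there is one genuine gap. In your main case $R \le ny/z^j$ you assert that the greedy product places $r_2$ in the interval $[T, zT]$ with $T = Rz^j/(n_1 y)$, and from the upper bound $r_2 \le zT$ you deduce $R/r_2 \ge n_1 y/z^{j+1}$ and hence $r_1 \ge n_1/z^{j+1} \ge z^k/y$. However, when $T < 1/z$ the greedy process halts immediately at $r_2 = 1$, and then $r_2 = 1 > zT$, so the chain to $r_1 \ge z^k/y$ breaks. Concretely, with $y=z$ large, $n_1$ a product of many small primes with $n_1 \ge z^i/y$, and $R$ small (say $R < z$), one gets $r_1 \le R < z \le z^k$ for $k \ge 1$, so the hypothesis of (iv) at level $k$ for $r = r_1 r_2 = r_1$ fails. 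The fix is immediate: in this sub-case $r = r_1$ divides $n_1$ and is therefore $y$-smooth, so part (iii) gives $k$-tuply $y$-dense divisibility of $r$ directly, with no size condition needed. Your bound $n_1/r_1 \ge z^j/y$ for $q$ survives, since it only used $r_2 \ge T$ (hence $r_1 \le R/r_2 \le n_1 y/z^j$), which does hold.

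By way of comparison, the paper avoids this edge entirely by first splitting the $y$-smooth part $s_1$ into three pre-calibrated pieces $n_1 n_2 n_3$ with $z^j/y \le n_1 \le z^j$ and $z^k/y \le n_2 \le z^k$, then casing on whether $R$ lies below $n_1$, above $n/n_2$, or in between; the required lower bounds on the $y$-smooth parts of $q$ and $r$ are then built in from the start rather than recovered a posteriori from the greedy overshoot. Your approach is arguably more direct, but it does require the patch above.
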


\begin{proof}  
  We abbreviate ``$i$-tuply $y$-divisible'' in this proof by the
  shorthand ``$(i,y)$-d.d.''.
\par
(0) These monotony properties are immediate from the definition.
\par
Before we prove the other properties, we make the following
remark: in checking that an integer $n$ is $(i,y)$-d.d., it suffices
to consider parameters $R$ with $1\leq R\leq n$ when looking for
factorizations of the form~(\ref{eq-dd-factor}): indeed, if $n<R\leq yn$, the
factorization $n=qr$ with $r=n$ and $q=1$ satisfies the condition
$y^{-1}R\leq r\leq R$, and $r=n$ is $(j,y)$-d.d.\ (resp.\ $q=1$ is
$(k,y)$-d.d.) whenever $j+k=i-1$. We will use this reduction in (i),
(ii), (iii), (iv) below.
\par
(i) We prove the first part by induction on $i$. For $i=0$, the
statement is obvious since every integer is $(0,y)$-d.d.\ for every
$y\geq 1$. Now assume the property holds for $j$-tuply dense
divisibility for $j<i$, let $n$ be $(i,y)$-d.d., and let $m\mid n$
be a divisor of $n$. We proceed to prove that $m$ is $(i,ym_1)$-d.d.
\par
We write $n=mm_1$. Let $R$ be such that $1\leq R\leq m$, and let $j$,
$k\geq 0$ be integers with $j+k=i-1$. Since $R\leq n$, and $n$ is
$(i,y)$-d.d., there exists by definition a factorization $n=qr$ where
$q$ is $(j,y)$-d.d., $r$ is $(k,y)$-d.d., and $y/R\leq r\leq y$. Now we
write $m_1=n_1n'_1$ were $n_1=(r,m_1)$ is the gcd of $r$ and $m_1$. We
have then a factorization $m=q_1r_1$ where
$$
q_1=\frac{q}{n'_1},\quad\quad r_1=\frac{r}{n_1},
$$
and we check that this factorization satisfies the condition required
for checking that $m$ is $(i,ym_1)$-d.d. First, we have
$$
\frac{R}{ym_1}\leq \frac{r}{m_1}\leq \frac{r}{n_1}=r_1\leq R,
$$
so the divisor $r_1$ is well-located. Next, by induction applied to
the divisor $r_1=r/n_1$ of the $(k,y)$-d.d.\ integer $r$, this integer
is $(k,yn_1)$-d.d., and hence by (0), it is also
$(k,ym_1)$-d.d. Similarly, $q_1$ is $(j,yn'_1)$-d.d., and hence also
$(j,ym_1)$-d.d. This finishes the proof that $m$ is $(i,ym_1)$-d.d.
\par
The second part of (i) is similar and left to the reader.
\par
To prove (ii), recall that $y$-densely divisible means $(1,y)$-densely
divisible. We may assume that $m\leq n$. Denote $a=[m,n]n^{-1}$. Now
let $R$ be such that $1\leq R\leq [m,n]$. If $R\leq n$, then a
factorization $n=qr$ with $Ry^{-1}\leq r\leq R$, which exists since
$n$ is $y$-d.d., gives the factorization $[m,n]=aqr$, which has the
well-located divisor $r$. If $n<R\leq [m,n]$, we get
$$
1\leq \frac{n}{a}\leq \frac{R}{a}\leq n
$$
and therefore there exists a factorization $n=qr$ with $R(ay)^{-1}\leq
r\leq Ra^{-1}$. Then $[m,n]=q(ar)$ with $Ry^{-1}\leq ar\leq R$. Thus
we see that $[m,n]$ is $y$-d.d.
\par
We now prove (iii) by induction on $i$. The case $i=0$ is again
obvious, so we assume that (iii) holds for $j$-tuply dense
divisibility for $j<i$. Let $n$ be a $y$-smooth integer, let $j$,
$k\geq 0$ satisfy $j+k=i-1$, and let $1\leq R\leq n$ be given. Let $r$
be the largest divisor of $n$ which is $\leq R$, and let
$q=n/r$. Since all prime divisors of $n$ are $\leq y$, we have
$$
Ry^{-1}\leq r\leq R,
$$
and furthermore both $q$ and $r$ are $y$-smooth. By the induction hypothesis, $q$ is
$(j,y)$-d.d.\ and $r$ is $(k,y)$-d.d., hence it follows that $n$ is
$(i,y)$-d.d.
\par
We now turn to (iv).  The claim is again obvious for $i=0$.  Assume
then that $i=1$.  Let $R$ be such that $1 \leq R \leq n$.
Let 
$$
s_1=\prod_{\substack{p\mid n\\p\leq y}}p,\quad\quad
r_1=\prod_{\substack{p|n\\ p>y}} p.
$$ 
Assume first that $r_1\leq R$. Since $n/r_1=s_1$ is $y$-smooth, it is
$1$-d.d., and since $1\leq Rr_1^{-1}\leq s_1$, we can factor
$s_1=q_2r_2$ with $R(r_1y)^{-1}\leq r_2\leq Rr_1^{-1}$. Then
$n=q_2(r_1r_2)$ with
$$
Ry^{-1}\leq r_1r_2\leq R.
$$
So assume that $r_1>R$. Since $n$ and hence $r_1$ are $z$-smooth, we
can factor $r_1=r_2q_2$ with $Rz^{-1}\leq r_2\leq R$.
Let $r_3$ be the smallest divisor of $s_1$ such that $r_3 r_2\geq
Ry^{-1}$, which exists because $s_1r_2\geq zy^{-1}r_2\geq Ry^{-1}$ by
the assumption~(\ref{eq-hyp-iv}). Since $s_1$ is $y$-smooth, we have
$r_3 r_2\leq R$ (since otherwise we must have $r_3\neq 1$, hence $r_3$
is divisible by a prime $p\leq y$, and $r_3 p^{-1}$ is a smaller
divisor with the required property $r_3 p^{-1}r_2>Ry^{-1}$, contradicting the minimality of $r_3$). Therefore
$n=q(r_3r_2)$ with 
$$
\frac{R}{y}\leq r_3 r_2\leq R,
$$
as desired.
\par
Finally we consider the $i>1$ case.  We assume, by induction, that
(iv) holds for integers $j<i$. Let $j,k \geq 0$ be such that
$j+k=i-1$.  By assumption, using the notation $r_1$, $s_1$ as above,
we have
$$
s_1\geq z^iy^{-1}=z^j\cdot z^k \cdot \frac{z}{y}.
$$
We can therefore write $s_1=n_1n_2n_3$ where
\begin{equation}\label{eq-induct}
z^j y^{-1} \leq n_1 \leq z^j,\quad\quad
z^k y^{-1} \leq n_2 \leq z^k
\end{equation}
and thus
$$ n_3 \geq \frac{z}{y}.
$$
Now we divide into several cases in order to find a suitable
factorization of $n$.  Suppose first that $n_1 \leq R \leq
n/n_2$. Then 
$$
1 \leq \frac{R}{n_1} \leq \frac{n}{n_1 n_2}
$$ 
and the integer $n/(n_1n_2)=r_1n_3$ satisfies the assumptions of (iv)
for $i=1$. Thus, by the previous case, we can find a factorization
$r_1n_3= q'r'$ with $y^{-1} \frac{R}{n_1} \leq r' \leq
\frac{R}{n_1}$. We set $r = n_1 r'$ and $q = n_2 q'$, and observe that
by~(\ref{eq-induct}), $r$ (resp. $q$) satisfies the assumption of (iv)
for $i=j$ (resp. $i=k$). By induction, the factorization $n=qr$ has
the required property.

Next, we assume that $R < n_1$.  Since $n_1$ is $y$-smooth, we can
find a divisor $r$ of $n_1$ such that $y^{-1} R \leq r \leq R$. Then
$q=n/r$ is a multiple of $n_2$, and therefore it satisfies
$$
\prod_{\substack{p|q\\p \leq y}} p \geq n_2\geq z^k y^{-1}.
$$
By induction, it follows that $q$ is $(k,y)$-d.d. Since $r$ is
$y$-smooth, $q$ is also $(j,y)$-d.d.\ by (iii), and hence the factorization $n=qr$
is suitable in this case.

Finally, suppose that $R > n/n_2$, i.e., that $nR^{-1}<n_2$.  We then
find a factor $q$ of the $y$-smooth integer $n_2$ such that
$n(Ry)^{-1} \leq q \leq nR^{-1}$. Then the complementary factor $r =
n/q$ is a multiple of $n_1$, and therefore it satisfies
$$
\prod_{\substack{p|r\\ p \leq y}} p \geq z^j y^{-1},
$$
so that $r$ is $(j,y)$-d.d.\ by induction, and since $q$ is also $(j,y)$-d.d.\ by
(iii), we also have the required factorization in this case.
\end{proof}


\section{Applying the Heath-Brown identity}\label{heath-brown-sec}

The goal of this and the next sections is to prove the assumption
$\MPZ^{(i)}[\varpi,\delta]$ (Definition~\ref{mpzm-def}) for as wide a
range of $\varpi$ and $\delta$ as possible, following the outline in
Section~\ref{ssec-outline}. The first step, which we implement in this
section, is the proof of Lemma \ref{comlem}.  We follow standard
arguments, particularly those in \cite{zhang}. The main tool is the
Heath-Brown identity, which is combined with a purely combinatorial
result about finite sets of non-negative numbers.  We begin with the
latter statement:

\begin{lemma}\label{sum}  
  Let $1/10 < \sigma < 1/2$, and let $t_1,\ldots,t_n$ be non-negative
  real numbers such that $t_1+\cdots+t_n=1$.  Then at least one of the
  following three statements holds:
\begin{itemize}
\item[(Type 0)] There is a $t_i$ with $t_i \geq 1/2 + \sigma$.
\item[(Type I/II)] There is a partition $\{1,\ldots,n\} = S \cup T$ such that
$$ \frac{1}{2} - \sigma < \sum_{i \in S} t_i \leq \sum_{i \in T} t_i < \frac{1}{2} + \sigma.$$
\item[(Type III)] There exist distinct $i,j,k$ with $2\sigma \leq t_i
  \leq t_j \leq t_k \leq 1/2-\sigma$ and 
\begin{equation}\label{eq-type-iii-last}
t_i+t_j,\ t_i+t_k,\ t_j+t_k \geq \frac{1}{2} + \sigma.
\end{equation}
\end{itemize}
Furthermore, if $\sigma > 1/6$, then the Type III alternative cannot
occur.
\end{lemma}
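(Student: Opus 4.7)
My plan is to argue by contradiction, assuming that neither the Type 0 nor the Type I/II alternative holds, and then deducing the Type III alternative. The driving observation is that, under the failure of Type I/II, no subset of $\{t_1,\ldots,t_n\}$ can sum into the open interval $(1/2-\sigma,\ 1/2+\sigma)$, since such a subset together with its complement would realize a Type I/II partition. Applying this to singletons, combined with the failure of Type 0 (which gives $t_i < 1/2+\sigma$ for all $i$), forces $t_i \leq 1/2-\sigma$ for every $i$.

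Next, I would sort $t_1 \geq t_2 \geq \cdots \geq t_n$, consider the partial sums $S_k = t_1+\cdots+t_k$, and locate the unique index $m$ with $S_{m-1} \leq 1/2-\sigma$ and $S_m \geq 1/2+\sigma$ (such an $m$ exists because the partial sums traverse $[0,1]$ while avoiding the forbidden interval). The jump forces $t_m = S_m - S_{m-1} \geq 2\sigma$, and by monotonicity the elements $t_1,\ldots,t_m$ all lie in $[2\sigma,\ 1/2-\sigma]$. Since each $t_i \leq 1/2-\sigma$ and $\sum_i t_i = 1$, one has $m \geq 2$; when $m = 2$, a further partial-sum argument (tracking $t_1 + \sum_{j \in U} t_j$ for $U \subseteq \{3,\ldots,n\}$ as it crosses the forbidden interval) produces some $j \geq 3$ with $t_j \geq 2\sigma$, so in every case at least three candidates lie in $[2\sigma,\ 1/2-\sigma]$.

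The heart of the proof is to select three such candidates whose three pairwise sums all exceed $1/2+\sigma$. My plan is to take the triple $\{1,2,m\}$ (or $\{1,2,j\}$ when $m=2$) and show, by applying the forbidden-interval constraint to each of the two-element subsets $\{1,2\}$, $\{1,m\}$, $\{2,m\}$, that all three pair sums are forced onto the high side of the forbidden interval, i.e., $\geq 1/2+\sigma$. I expect the main obstacle to be precisely this case analysis: the naive ``top three'' choice $\{1,2,3\}$ need not work, since $t_2+t_3$ may fall below $1/2-\sigma$, so the argument must crucially exploit the jump index $m$ to guarantee large pair sums. If every potential triple were to fail the pairwise bound, one would recover a subset whose sum lands in $(1/2-\sigma,\ 1/2+\sigma)$ (by augmenting or swapping starting from the failing pair), contradicting the no Type I/II assumption.

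Finally, the concluding assertion is immediate: the Type III condition demands $t_i \in [2\sigma,\ 1/2-\sigma]$, but this interval is empty when $\sigma > 1/6$, whence Type III cannot occur in that range.
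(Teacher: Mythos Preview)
Your overall strategy—assume Types 0 and I/II fail so that every subset sum avoids $(1/2-\sigma,\,1/2+\sigma)$, and locate three indices via a partial-sum jump—is sound in outline and close to the paper's, but there is a real gap: you never invoke the hypothesis $\sigma>1/10$, and without it the argument cannot close.

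Concretely, in your case $m\geq 3$ you propose to show that $t_1+t_2$ lands on the high side of the forbidden interval. But the very definition of $m\geq 3$ gives $S_2=t_1+t_2\leq S_{m-1}\leq \tfrac12-\sigma$, so $t_1+t_2$ is on the \emph{low} side; the forbidden-interval constraint on $\{1,2\}$ gives you nothing further. What actually rules out $m\geq 3$ is arithmetic: from $t_1+t_2\leq \tfrac12-\sigma$ and $t_1\geq t_2$ one gets $t_2\leq \tfrac14-\tfrac{\sigma}{2}$, while the jump at $m$ forces $t_m\geq 2\sigma$ with $t_m\leq t_2$; combining yields $2\sigma\leq \tfrac14-\tfrac{\sigma}{2}$, i.e.\ $\sigma\leq \tfrac1{10}$, contradicting the hypothesis. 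The same missing ingredient afflicts your $m=2$ case: for the pairs $\{1,j\}$ and $\{2,j\}$ you only know their sums avoid the forbidden interval, not that they exceed $\tfrac12+\sigma$. The fix is to note $t_1,t_2,t_j\geq 2\sigma$, hence each such pair sum is $\geq 4\sigma>\tfrac12-\sigma$ (again exactly the inequality $\sigma>\tfrac1{10}$), which then forces the high side.

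The paper packages this via a different device: it calls an index \emph{powerful} if adding it can turn some small set into a large one, and shows there are exactly three powerful indices, any two of which form a large pair; the key step is the same inequality $4\sigma>\tfrac12-\sigma$. Your jump-index route can be salvaged once you insert that inequality in both places above (and thereby discover that the case $m\geq 3$ is vacuous). Your handling of the final claim for $\sigma>\tfrac16$ is correct.
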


\begin{proof}  We dispose of the final claim first: if $\sigma>1/6$, then $2\sigma > 1/2 - \sigma$, and so the inequalities $2\sigma \leq t_i \leq t_j \leq t_k \leq 1/2-\sigma$ of the Type III alternative are inconsistent.  

Now we prove the main claim.  Let $\sigma$ and $(t_1,\ldots,t_n)$ be
as in the statement. We assume that the Type 0 and Type I/II
statements are false, and will deduce that the Type III statement
holds.
\par
From the failure of the Type 0 conclusion, we know that 
\begin{equation}\label{ti-bound}
t_i < \frac{1}{2} + \sigma
\end{equation}
for all $i=1,\ldots,n$.  From the failure of the Type I/II conclusion,
we also know that, for any $S \subset \{1,\ldots,n\}$, we have
$$
\sum_{i \in S} t_i\notin
\Bigl(\frac{1}{2}-\sigma,\frac{1}{2}+\sigma\Bigr)
$$
since otherwise we would obtain the conclusion of Type I/II by taking
$T$ to be the complement of $S$, possibly after swapping the role of
$S$ and $T$.
\par
We say that a set $S \subset \{1,\ldots,n\}$ is \emph{large} if
$\sum_{i \in S} t_i \geq \frac{1}{2}+\sigma$, and that it is
\emph{small} if $\sum_{i \in S} t_i \leq \frac{1}{2} - \sigma$. Thus,
the previous observation shows that every set $S \subset
\{1,\ldots,n\}$ is either large or small, and also (from
\eqref{ti-bound}) that singletons are small, as is the empty set.
Also, it is immediate that the complement of a large set is small, and
conversely (since $t_1+\cdots+t_n=1$).
\par
Further, we say that an element $i \in \{1,\ldots,n\}$ is
\emph{powerful} if there exists a small set $S \subset \{1,\ldots,n\}
\backslash \{i\}$ such that $S \cup \{i\}$ is large, i.e., if $i$ can
be used to turn a small set into a large set. Then we say that an
element $i$ is \emph{powerless} if it is not powerful. Thus, adding or
removing a powerless element from a set $S$ cannot alter its smallness
or largeness, and in particular, the union of a small set and a set of
powerless elements is small.

We claim that there exist exactly three powerful elements. First,
there must be at least two, because if $P$ is the set of powerless
elements, then it is small, and hence its complement is large, and
thus contains at least two elements, which are powerful. But picking
one of these powerful $i$, the set $\{i\}\cup P$ is small, and
therefore its complement also has at least two elements, which
together with $i$ are three powerful elements.

Now, we observe that if $i$ is powerful, then $t_i\geq 2\sigma$, since
the gap between a large sum $\sum_{j \in S \cup \{i\}} t_j$ and a
small sum $\sum_{j \in S} t_j$ is at least $2\sigma$. In particular,
if $i\neq j$ are two powerful numbers, then
$$
t_i+t_j\geq 4\sigma>\frac{1}{2}-\sigma,
$$
where the second inequality holds because of the assumption
$\sigma>1/10$. Thus the set $\{i,j\}$ is not small, and is therefore
large. But then if  $\{i,j,k,l\}$ was a set of four powerful elements,
it would follow that
$$
1=t_1+\cdots+t_n\geq (t_i+t_j)+(t_k+t_l)\geq
2\Bigl(\frac{1}{2}+\sigma\Bigr)>1,
$$
a contradiction.

Let therefore $i$, $j$, $k$ be the three powerful elements. We may
order them so that $t_i\leq t_j\leq t_k$. We have
$$
2\sigma\leq t_i\leq t_j\leq t_k\leq \frac{1}{2}-\sigma
$$
by~(\ref{ti-bound}) and the previous argument, which also shows that
$\{i,j\}$, $\{i,k\}$ and $\{j,k\}$ are large, which
is~(\ref{eq-type-iii-last}).
\end{proof}

\begin{remark}\label{type-iv} For $1/10 < \sigma \leq 1/6$, the Type
  III case can indeed occur, as can be seen by considering the
  examples $(t_1,t_2,t_3) = (2\sigma, 1/2-\sigma, 1/2-\sigma)$.  The
  lemma may be extended to the range $1/14 < \sigma < 1/2$, but at the
  cost of adding two additional cases (corresponding to the case of
  four or five powerful elements respectively):
\begin{itemize}
\item[(Type IV)] There exist distinct $i,j,k,l$ with $2\sigma \leq t_i \leq t_j \leq t_k \leq t_l \leq 1/2-\sigma$ and $t_i+t_l \geq 1/2 + \sigma$.
\item[(Type V)] There exist distinct $i,j,k,l,m$ with $2\sigma \leq t_i \leq t_j \leq t_k \leq t_l \leq t_m \leq 1/2-\sigma$ and $t_i+t_j+t_k \geq 1/2 + \sigma$.
\end{itemize}
We leave the verification of this extension to the reader. Again, for
$1/14 < \sigma \leq 1/10$, the Type IV and Type V cases can indeed
occur, as can be seen by considering the examples $(t_1,t_2,t_3,t_4) =
(2\sigma, 2\sigma, 1/2-3\sigma, 1/2-\sigma)$ and
$(t_1,t_2,t_3,t_4,t_5) = (2\sigma,2\sigma,2\sigma,2\sigma,1-8\sigma)$.
With this extension, it is possible to extend Lemma \ref{comlem} to
the regime $1/14 < \sigma < 1/2$, but at the cost of requiring
additional ``Type IV'' and ``Type V'' estimates as hypotheses.
Unfortunately, while the methods in this paper do seem to be able to
establish some Type IV estimates, they do not seem to give enough Type
V estimates to make it profitable to try to take $\sigma$ below
$1/10$. 
\end{remark}

To apply Lemma \ref{sum} to distribution theorems concerning the von
Mangoldt function $\Lambda$, we recall the Heath-Brown identity
(see~\cite{hb-ident} or~\cite[Prop. 13.3]{ik}).

\begin{lemma}[Heath-Brown identity]\label{hbi}  For any $K \ge 1$, we have the identity
\begin{equation}\label{heath}
  \Lambda =\sum_{j=1}^K (-1)^{j-1} \binom{K}{j} \mu_{\leq}^{\star j} 
  \star \onef^{\star (j-1)} \star L
\end{equation}
on the interval $[x,2x]$, where $\onef$ is the constant function
$\onef(n) := 1$, $L$ is the logarithm function $L(n) := \log n$,
$\mu_{\leq}$ is the truncated M\"obius function
$$ \mu_{\leq}(n) := \mu(n) \onef_{n \leq (2x)^{1/K}},$$
and where we denote $f^{\star j} = f \star \ldots \star f$ the
$j$-fold Dirichlet convolution of an arithmetic function $f$, i.e.,
$$
f^{\star j}(n):=\multsum_{a_1\cdots a_j=n}{f(a_1)\cdots f(a_j)}.
$$
\end{lemma}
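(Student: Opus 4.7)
The strategy is to identify a combinatorial identity of arithmetic functions that reduces to the stated one on $[x,2x]$ modulo an error supported on integers $>2x$. The key auxiliary function is
$$h := \delta - \mu_{\leq}\star \mathbf{1},$$
where $\delta$ denotes the Dirichlet identity ($\delta(1)=1$ and $\delta(n)=0$ otherwise). The plan has two parts: a vanishing input for $h$, and an algebraic manipulation that packages it into the Heath-Brown form.

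First I would verify that $h$ vanishes on $[1,(2x)^{1/K}]$. Indeed $h(1)=1-1=0$, and for $1<n\leq (2x)^{1/K}$ every divisor $d\mid n$ satisfies $d\leq (2x)^{1/K}$, so $\mu_{\leq}(d)=\mu(d)$ and $(\mu_{\leq}\star\mathbf{1})(n)=\sum_{d\mid n}\mu(d)=0=\delta(n)$. Next I would argue by pigeonhole that $h^{\star K}$ vanishes on $[1,2x]$: in any factorization $n=n_1\cdots n_K$ of an integer $n\leq 2x$ at least one factor satisfies $n_i\leq n^{1/K}\leq (2x)^{1/K}$, so $h(n_i)=0$ and the corresponding summand in $h^{\star K}(n)$ dies. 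Since the values $h^{\star K}(d)$ for $d\leq 2x$ control $(h^{\star K}\star f)(n)$ for $n\leq 2x$ and any $f$, we also get that $\mu\star h^{\star K}\star L$ vanishes on $[1,2x]$.

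For the algebraic step, apply the binomial theorem to rewrite
$$h^{\star K}=(\delta-\mu_{\leq}\star\mathbf{1})^{\star K}=\delta-\sum_{j=1}^{K}(-1)^{j-1}\binom{K}{j}(\mu_{\leq}\star\mathbf{1})^{\star j},$$
so that $\delta = h^{\star K}+\sum_{j=1}^{K}(-1)^{j-1}\binom{K}{j}(\mu_{\leq}\star\mathbf{1})^{\star j}$. Convolve both sides with $\mu$ and use the identity $\mu\star\mathbf{1}=\delta$ to collapse exactly one factor of $\mathbf{1}$ in each $(\mu_{\leq}\star\mathbf{1})^{\star j}$, yielding
$$\mu=\mu\star h^{\star K}+\sum_{j=1}^{K}(-1)^{j-1}\binom{K}{j}\mu_{\leq}^{\star j}\star \mathbf{1}^{\star(j-1)}.$$
Finally convolve with $L$ and recognize $\mu\star L=\Lambda$; on $[x,2x]$ the error $\mu\star h^{\star K}\star L$ vanishes by the previous paragraph, producing the claimed identity.

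There is no real obstacle here beyond careful bookkeeping: the identity is a purely formal consequence of the factorization $\mu\star\mathbf{1}=\delta$ and the binomial expansion. The one point that requires attention is the asymmetry between $\mu_{\leq}^{\star j}$ and $\mathbf{1}^{\star(j-1)}$ in the final formula, which arises because exactly one factor of $\mathbf{1}$ is absorbed by the single factor of $\mu$ produced via $\Lambda=\mu\star L$; this must be tracked through the convolution to avoid an off-by-one error in the exponent of $\mathbf{1}$.
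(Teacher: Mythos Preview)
Your proof is correct and essentially the same as the paper's: both arguments rest on a pigeonhole vanishing on $[1,2x]$, a binomial expansion, and the identities $\mu\star\onef=\delta$ and $\mu\star L=\Lambda$. The only cosmetic difference is that the paper writes $\mu_>:=\mu-\mu_{\leq}$ and expands $\mu_>^{\star K}\star\onef^{\star(K-1)}\star L$ directly, whereas you work with $h=\delta-\mu_{\leq}\star\onef=\mu_>\star\onef$ and expand $h^{\star K}$ before convolving with $\mu\star L$; since $h^{\star K}=\mu_>^{\star K}\star\onef^{\star K}$, the two calculations are the same up to when one cancels the extra factors of $\onef$ against $\mu$.
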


\begin{proof} Write $\mu = \mu_{\leq} + \mu_>$, where $\mu_>(n) :=
  \mu(n) \onef_{n > (2x)^{1/K}}$.  Clearly the convolution
$$ \mu_>^{\star K} \star \onef^{\star K-1} \star L$$
vanishes on $[1,2x]$.  Expanding out $\mu_> = \mu - \mu_\leq$ and
using the binomial formula, we conclude that
\begin{equation}\label{mang}
  0 = \sum_{j=0}^K (-1)^{j} \binom{K}{j} \mu^{\star (K-j)} \star 
  \mu_{\leq}^{\star j} \star \onef^{\star (K-1)} \star L
\end{equation}
on $[x,2x]$.  Since Dirichlet convolution is associative, the standard
identities $\Lambda = \mu \star L$ and $\delta = \mu \star \onef$
(where the Kronecker delta function $\delta(n) := \onef_{n=1}$ is the
unit for Dirichlet convolution) show that the $j=0$ term of
\eqref{mang} is
$$
\mu^{\star K}\star \onef^{\star (K-1)}\star L=\mu\star L=\Lambda.
$$
For all the other terms, we can use commutativity of Dirichlet
convolution and (again) $\mu\star \onef=\delta$ to write
$$
\mu^{\star K-j} \star \mu_{\leq}^{\star j} \star \onef^{\star
  K-1} \star L=
 \mu_{\leq}^{\star j} \star \onef^{\star (j-1)} \star L,
$$
so that we get \eqref{heath}.
\end{proof}

We will now prove Lemma \ref{comlem}, which the reader is invited to
review.  Let $i,\varpi,\delta,\sigma$ satisfy the hypotheses of that
lemma, and let $A_0 > 0$ be fixed. By the definition of
$\MPZ^{(i)}(\varpi,\delta)$, which is the conclusion of the lemma, it
suffices to show that for any $Q \lessapprox x^{1/2+2\varpi}$, any
bounded set $I\subset (0+\infty)$ and any residue class $a\ (P_I)$, we
have
\begin{equation}\label{show}
  \sum_{q \in {\mathcal Q}} |\Delta(\Lambda \onef_{[x,2x]}; a\ (q))| 
  \ll x \log^{-A_0+O(1)} x,
\end{equation}
where 
\begin{equation}\label{eq-Q}
{\mathcal Q} := \{ q \leq Q: q \in \DI{I}{i}{x^\delta} \}
\end{equation}
(recalling the definition~(\ref{eq-ddiv-interval})) and the $O(1)$
term in the exponent is independent of $A_0$.

Let $K$ be any fixed integer with 
\begin{equation}\label{1k}
\frac{1}{K} < 2\sigma
\end{equation} 
(e.g. one can take $K=10$).  We apply Lemma \ref{hbi} with this value of $K$.  By the triangle inequality, it suffices to show that
\begin{equation}\label{sss}
 \sum_{q \in {\mathcal Q}} |\Delta((\mu_{\leq}^{\star j} \star 1^{\star j-1} \star L) \onef_{[x,2x]}; a\ (q))| \ll x \log^{-A_0/2+O(1)} x
\end{equation}
for each $1 \leq j \leq K$, which we now fix.

The next step is a finer-than-dyadic decomposition (a standard idea
going back at least to Fouvry \cite{fouvry} and Fouvry-Iwaniec
\cite{fi-2}).  We denote $\Theta:=1+\log^{-A_0}x$.  Let $\psi \colon
\R \to \R$ be a smooth function supported on $[-\Theta,\Theta]$ that
is equal to $1$ on $[-1,1]$
and obeys the derivative estimates
$$ |\psi^{(m)}(x)| \ll \log^{mA_0} x$$
for $x\in\R$ and any fixed $m \geq 0$, where the implied constant
depends only on $m$.  We then have a smooth partition of unity
$$ 1 = \sum_{N \in {\mathcal D}} \psi_N(n)$$
indexed by the multiplicative semigroup 
$$ {\mathcal D} := \{ \Theta^m\colon  m \in \N \cup \{0\} \}$$
for any natural number $n$, where
$$ \psi_N(n) := \psi\Bigl( \frac{n}{N}\Bigr) - \psi\Bigl( \frac{\Theta
  n}{N}\Bigr)
$$ 
is supported in $[\Theta^{-1}N,\Theta N]$.  We thus have decompositions
$$
1 = \sum_{N \in {\mathcal D}} \psi_N,\quad\quad \mu_{\leq} = \sum_{N
  \in {\mathcal D}} \psi_N \mu_{\leq}, \quad\quad L = \sum_{N \in
  {\mathcal D}} \psi_N L.
$$
For $1 \leq j \leq K$, we have
\begin{multline*}
  (\mu_{\leq}^{\star j} \star 1^{\star (j-1)} \star L) \onef_{[x,2x]}
  = \multsum_{N_1,\ldots,N_{2j} \in {\mathcal D}} \{(\psi_{N_1} \mu_{\leq}
  ) \star \cdots \star (\psi_{N_j} \mu_{\leq} ) \\
\quad\quad\quad\quad\quad\quad\quad\quad
  \star \psi_{N_{j+1}} \star \cdots \star \psi_{N_{2j-1}} \star
  \psi_{N_{2j}} L\} \onef_{[x,2x]}
\\   =\multsum_{N_1,\ldots,N_{2j} \in {\mathcal D}} \log(N_{2j})
  \{(\psi_{N_1} \mu_{\leq}) \star \cdots \star (\psi_{N_j} \mu_{\leq})\\
  \star \psi_{N_{j+1}} \star \cdots \star \psi_{N_{2j-1}} \star
  \psi'_{N_{2j}}\} \onef_{[x,2x]}
\end{multline*}
where $\psi'_N := \psi_N \frac{L}{\log N}$ is a simple variant of
$\psi_N$.

For each $N_1, \ldots, N_{2j}$, the summand in this formula vanishes
unless
\begin{equation}\label{naj}
  N_1,\ldots,N_j \ll x^{1/K}
\end{equation}
and
$$
\frac{x}{\Theta^{2K}}
\leq N_1 \cdots N_{2j} \leq 2x\Theta^{2K}.
$$
In particular, it vanishes unless
\begin{equation}\label{nudge}
  x\Bigl(1-O\Bigl(\frac{1}{\log^{A_0}x}\Bigr)\Bigr)
  \leq N_1 \cdots N_{2j} \leq 
  2x\Bigl(1+O\Bigl(\frac{1}{\log^{A_0}x }\Bigr)\Bigr).
\end{equation}
We conclude that there are at most 
\begin{equation}\label{eq-nb-summands}
  \ll \log^{2j(A_0+1)}x
\end{equation}
tuples $(N_1,\ldots, N_{2j})\in\mathcal{D}^{2j}$ for which the summand is
non-zero. Let $\mathcal{E}$ be the set of these tuples. We then
consider the arithmetic function
\begin{multline}
  \alpha= \multsum_{(N_1,\ldots, N_{2j})\in\mathcal{E}} \log(N_{2j})
  \{(\psi_{N_1} \mu_{\leq} ) \star \cdots \star ( \psi_{N_j} \mu_{\leq})
  \star \psi_{N_{j+1}} \star \cdots \star \psi_{N_{2j-1}}
  \star \psi'_{N_{2j}}\}\\
  - (\mu_{\leq}^{\star j} \star 1^{\star j-1} \star L) \onef_{[x,2x]}.
\end{multline}
Note that the cutoff $\onef_{[x,2x]}$ is only placed on the second term in the definition of $\alpha$, and is not present in the first term.
\par
By the previous remarks, this arithmetic function is supported on
$$
[x(1-O(\log^{-A_0}x)),x]\cup [2x,2x(1+O(\log^{-A_0} x))]
$$
and using the divisor bound and trivial estimates, it satisfies
$$
\alpha(n)\ll \tau(n)^{O(1)}(\log n)^{O(1)},
$$
where the exponents are bounded independently of $A_0$. In particular,
we deduce from Lemma \ref{divisor-crude} that
$$
\Delta(\alpha; a\ (q))\ll x\log^{-A_0+O(1)} x
$$
for all $q\geq 1$. Using the estimate~(\ref{eq-nb-summands}) for the
number of summands in $\mathcal{E}$, we see that, in order to
prove~(\ref{sss}), it suffices to show that
\begin{equation}\label{tap}
\sum_{q\in\mathcal{Q}}|\Delta(\alpha_1\star\cdots\star \alpha_{2j}; a\
(q))|\ll x\log^{-A}x
\end{equation}
for $A>0$ arbitrary, where each $\alpha_i$ is an arithmetic function
of the form $\psi_{N_i} \mu_{\leq }$, $\psi_{N_i}$ or $\psi'_{N_i}$,
where $(N_1,\ldots, N_{2j})$ satisfy~(\ref{naj}) and~(\ref{nudge}).

We now establish some basic properties of the arithmetic functions
$\alpha_k$ that may occur. For a subset $S\subset \{1\ldots, 2j\}$, we
will denote  by
$$
\alpha_S:=\underset{k\in S}{\bigstar}{\alpha_k}
$$
the convolution of the $\alpha_k$ for $k\in S$.

\begin{lemma}\label{facts}  Let $1 \leq k \leq 2j$ and $S \subset
  \{1,\ldots,2j\}$. The following facts hold:
\begin{itemize}
\item[(i)] Each $\alpha_k$ is a coefficient sequence located at scale
  $N_k$, and more generally, the convolution $\alpha_S$ is a
  coefficient sequence located at scale $\prod_{k \in S} N_k$.
\item[(ii)] If $N_k \gg x^{2\sigma}$, then $\alpha_k$ is smooth at
  scale $N_k$.
\item[(iii)] If $N_k \gg x^\eps$ for some fixed $\eps>0$, then
  $\alpha_k$ satisfies the Siegel-Walfisz property.  More generally,
  $\alpha_S$ satisfies the Siegel-Walfisz property if $\prod_{k \in S}
  N_k \gg x^\eps$ for some fixed $\eps>0$.
\item[(iv)]  $N_1 \ldots N_{2j} \sim x$.
\end{itemize}
\end{lemma}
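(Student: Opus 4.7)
The four claims are essentially bookkeeping consequences of the construction of the $\alpha_k$, so my plan is to dispatch them in order, with (iii) being the only point that requires slightly more care.

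\textbf{For (i) and (iv),} I would first observe that $\Theta = 1 + \log^{-A_0} x$, and so each of the cutoffs $\psi_{N_k}$, $\psi_{N_k}\mu_\leq$, $\psi'_{N_k}$ is supported in $[\Theta^{-1}N_k, \Theta N_k] \subset [\tfrac{1}{2}N_k, 2N_k]$ for $x$ large. The pointwise bounds $|\psi_{N_k}(n)|, |\psi'_{N_k}(n)| \ll \log^{O(1)} x$ and $|\psi_{N_k}\mu_\leq(n)| \leq 1$ give the coefficient-sequence bound $|\alpha_k(n)| \ll \tau(n)^{O(1)}\log^{O(1)} x$ trivially. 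For $\alpha_S$, the support is contained in $[\prod_{k \in S} \Theta^{-1}N_k, \prod_{k \in S} \Theta N_k]$, which is still of the form $[c\prod N_k, C\prod N_k]$ because $\Theta^{|S|} = 1 + O(\log^{-A_0}x)$, and the pointwise bound follows from the submultiplicativity of $\tau$ together with the standard estimate $\sum_{n_1 \cdots n_{|S|} = n} 1 \leq \tau_{|S|}(n) \leq \tau(n)^{|S|-1}$. Claim (iv) is immediate from~\eqref{nudge}, since $\Theta^{2K} = 1 + o(1)$.

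\textbf{For (ii),} the key observation is that by hypothesis~\eqref{1k} we have $x^{1/K} < x^{2\sigma}$, and by~\eqref{naj} every factor of the form $\psi_{N_k}\mu_\leq$ satisfies $N_k \ll x^{1/K}$. Hence if $N_k \gg x^{2\sigma}$, then necessarily $k > j$, so $\alpha_k$ is either $\psi_{N_k}$ or $\psi'_{N_k}$, which are smooth at scale $N_k$ by construction (the derivative bounds~\eqref{soso} follow from the explicit formula for $\psi$ and chain rule applied to $\psi(n/N)$ or $\psi(n/N)\log n/\log N$).

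\textbf{For (iii),} I would handle the single-factor case first. If $\alpha_k = \psi_{N_k}$ or $\psi'_{N_k}$ is smooth with $N_k \gg x^\eps$, then $\Delta(\alpha_k \onef_{(\cdot,r)=1}; a\ (q))$ is the discrepancy of a smooth sum, which by Poisson summation (or direct Möbius inversion to remove $\onef_{(\cdot,r)=1}$, followed by evaluation of the resulting smooth sums) decays faster than any power of $\log x$. If $\alpha_k = \psi_{N_k}\mu_\leq$, then the Siegel–Walfisz property is the classical Siegel–Walfisz theorem for the Möbius function, smoothed by $\psi_{N_k}$ and restricted by $(\cdot,r)=1$ via Möbius inversion over $d \mid r$; the $\tau(r)^{O(1)}$ loss comes from this inversion.

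\textbf{For the convolution case in (iii),} if $\prod_{k\in S}N_k\gg x^\eps$ then at least one $k_0 \in S$ has $N_{k_0} \gg x^{\eps/|S|}$, so $\alpha_{k_0}$ has the Siegel–Walfisz property by the single-factor case. I would then invoke the standard fact that if $\beta$ has Siegel–Walfisz at scale $N_{k_0}$ and $\gamma = \bigstar_{k \in S \setminus \{k_0\}} \alpha_k$ is a coefficient sequence (by (i)), then $\gamma \star \beta$ has Siegel–Walfisz, proved by writing
\[
\Delta(\gamma\star\beta\,\onef_{(\cdot,r)=1}; a\ (q))
= \sum_{(m,qr)=1} \gamma(m)\, \Delta(\beta\,\onef_{(\cdot,mr)=1}; a\overline{m}\ (q))
\]
and applying the hypothesis to each inner discrepancy, absorbing the loss $\sum_m |\gamma(m)|\tau(qmr)^{O(1)}$ into $\tau(qr)^{O(1)} \prod_{k \neq k_0} N_k \cdot \log^{O(1)} x$ by the divisor bounds of Lemma~\ref{divisor-crude}. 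The one technical point to be careful about is that this manipulation assumes $(a,q)=1$ (which is given, since $a\ (q)$ is primitive in~\eqref{sig}), so that the congruence $mn \equiv a\ (q)$ forces $(m,q)=1$ and one may replace $n$ by $a\overline{m}$. This is the only step that is not purely formal, and it is where I expect the bulk of the (very short) proof to lie.
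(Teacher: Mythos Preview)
Your proposal is essentially correct and follows the paper's strategy closely for (i), (ii), and (iv). For (iii) there is one slip in your displayed identity: the inner discrepancy should be $\Delta(\beta\,\onef_{(\cdot,r)=1}; a\overline{m}\ (q))$, not $\Delta(\beta\,\onef_{(\cdot,mr)=1}; a\overline{m}\ (q))$. The Dirichlet convolution $\gamma\star\beta$ imposes no coprimality between the two factors, so no $(n',m)=1$ condition appears when you open it up; with $mr$ in place of $r$ the identity is false. Once corrected, the identity is exact and your bound follows immediately from the Siegel--Walfisz hypothesis on $\beta$ together with $\sum_m |\gamma(m)| \ll (\prod_{k\neq k_0} N_k)\log^{O(1)} x$.

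It is worth noting that your decomposition (summing over the individual values of $m$) differs slightly from the paper's, which instead groups $m$ by residue class modulo $q$ and then bounds $\sum_{m\equiv b\,(q)}|\gamma(m)|$ via Lemma~\ref{divisor-crude}\eqref{talc-bound}. The paper's route produces a secondary term of size $\phi(q)N^{o(1)}M\log^{-B}x$, which forces a preliminary reduction to the case $q = O(\log^{A+O(1)} x)$ using the trivial bound~\eqref{eq-delta-trivial}. Your direct sum over $m$ sidesteps this reduction entirely, so in that respect your version is marginally cleaner.
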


\begin{proof} 
  The first part of (i) is clear from construction.  For the second
  part of (i), we use the easily verified fact that if $\alpha,\beta$
  are coefficient sequences located at scales $N,M$ respectively, then
  $\alpha\star \beta$ is a coefficient sequence located at scale $NM$.

  For (ii), we observe that since $2\sigma >K^{-1}$, the condition
  $N_k\gg x^{2\sigma}$ can only occur for $k > j$ in view
  of~(\ref{naj}), so that $\alpha_k$ takes the form $\psi_{N_k}$ or
  $\psi'_{N_k}$, and the smoothness then follows directly from the
  definitions.

  For (iii), the Siegel-Walfisz property for $\alpha_k$ when $k \leq j$
  follows from the Siegel-Walfisz theorem for the M\"obius function
  and for Dirichlet characters (see e.g. \cite[Satz 4]{siebert}
  or~\cite[Th. 5.29]{ik}), using summation by parts to handle the
  smooth cutoff, and we omit the details.  For $k > j$, $\alpha_k$ is
  smooth, and the Siegel-Walfisz property for $\alpha_k$ follows from
  the Poisson summation formula (and the rapid decay of the Fourier
  transform of smooth, compactly supported functions; compare with the
  arguments at the end of this section for the Type 0 case).

  To handle the general case, it therefore suffices to check that if
  $\alpha,\beta$ are coefficient sequences located at scales $N,M$
  respectively with $x^\eps \ll M \ll x^C$ for some fixed $\eps,C>0$,
  and $\beta$ satisfies the Siegel-Walfisz property, then so does
  $\alpha\star \beta$. This is again relatively standard, but we give
  the proof for completeness.

  By Definition \ref{Coef}, our task is to show that
$$
| \Delta((\alpha \star \beta) \onef_{(\cdot,q)=1}; a\ (r)) | \ll
\tau(qr)^{O(1)} N \log^{-A} x
$$
for any $q,r \geq 1$, any fixed $A$, and any primitive residue class
$a\ (r)$.  We replace $\alpha,\beta$ by their restriction to integers
coprime to $qr$ (without indicating this in the notation), which allows
us to remove the constraint $\onef_{(n,q)=1}$. We may also assume that
$r = O( \log^{A+O(1)} x)$, since the desired estimate follows from the
trivial estimate~(\ref{eq-delta-trivial}) for the discrepancy
otherwise.

For any integer $n$, we have
$$ 
\sum_{n = a\ (r)} (\alpha \star \beta)(n) = \sum_{b\in
  (\Z/r\Z)^\times} \Bigl(\sum_{d = b\ (r)} \alpha(d)\Bigr)
\Bigl(\sum_{m = \bar{b}a\ (r)} \beta(m)\Bigr)
$$
and
\begin{align*}
  \sum_{n} (\alpha \star \beta)(n) &= \left(\sum_d \alpha(d)\right) \left(\sum_m \beta(m)\right)\\
  &= \sum_{b\in (\Z/r\Z)^\times} \Bigl(\sum_{d = b\ (r)}
  \alpha(d)\Bigr) \Bigl(\sum_m \beta(m)\Bigr)
\end{align*}
so that
$$
|\Delta(\alpha \star \beta, a\ (r))|\leq \sum_{b\in (\Z/r\Z)^\times}
\Bigl|\sum_{d = b\ (r)} \alpha(d)\Bigr| \ |\Delta(\beta; \bar{b}a\
(r))|.
$$ 
From \eqref{talc} (and Definition~(\ref{Coef})), we have
$$ 
\sum_{d = b\ (r)} \alpha(d) \ll \frac{N}{r} \tau(r)^{O(1)} \log^{O(1)}
x + N^{o(1)}
$$
for any $b\ (r)$, and since $\beta$ has the Siegel-Walfisz property, we
have
$$ 
| \Delta(\beta; \bar{b}a\ (r))| \ll \tau(r)^{O(1)} M \log^{-B} x
$$
for any $b\ (r)$ and any fixed $B>0$. Thus
\begin{align*}
  |\Delta(\alpha \star \beta, a\ (r))| &\ll \tau(r)^{O(1)} \varphi(r)
  \left( \frac{N}{r} + N^{o(1)} \right) M \log^{-B + O(1)} x
  \\
  &\ll \tau(r)^{O(1)} MN \log^{-B + O(1)} x
\end{align*}
by the assumption concerning the size of $r$.

Finally, the claim (iv) follows from \eqref{nudge}.  
\end{proof}

We can now conclude this section by showing how the assumptions
$\TypeI^{(i)}[\varpi,\delta,\sigma]$, $\TypeII^{(i)}[\varpi,\delta]$
and $\TypeIII^{(i)}[\varpi,\delta,\sigma]$ of Lemma~\ref{comlem} imply
the estimates \eqref{tap}.

Let therefore $(\alpha_1,\ldots, \alpha_{2j})$ be given with the
condition of~(\ref{tap}). By Lemma \ref{facts}(iv), we can write
$N_k \sim x^{t_k}$ for $k=1,\ldots,2j$, where the $t_k$ are
non-negative reals (not necessarily fixed) that sum to $1$.  By Lemma
\ref{sum}, the $t_i$ satisfy one of the three conclusions (Type 0),
(Type I/II), (Type III) of that lemma. We deal with each in turn.  The
first case can be dealt with directly, while the others require one of
the assumptions of Lemma~\ref{comlem}, and we begin with these.

Suppose that we are in the Type I/II case, with the partition
$\{1,\ldots, 2j\}=S\cup T$ given by the combinatorial lemma. We have
$$
\alpha_1\star\cdots\star \alpha_{2j}=\alpha_S\star \alpha_T.
$$
By Lemma \ref{facts}, $\alpha_S, \alpha_T$ are coefficient
sequences located at scales $N_S, N_T$ respectively, where
$$ 
N_S N_T \sim x,
$$ 
and (by (iii)) $\alpha_S$ and $\alpha_T$ satisfy the Siegel-Walfisz
property.  By Lemma~\ref{sum}, we also have
$$ 
x^{1/2-\sigma} \ll N_S \ll N_T \ll x^{1/2+\sigma}.
$$
Thus, directly from Definition \ref{type-def} and~(\ref{eq-Q}),
the required estimate \eqref{tap} follows either from the hypothesis
$\TypeI^{(i)}[\varpi,\delta,\sigma]$ (if one has $N_S \leq
x^{1/2-2\varpi-c}$ for some sufficiently small fixed $c>0$) or from
$\TypeII^{(i)}[\varpi,\delta]$ (if $N_S > x^{1/2-2\varpi-c}$, for the
same value of $c$).

Similarly, in the Type III case, comparing Lemmas \ref{facts}
and~\ref{sum} with Definition~\ref{type-def} and~(\ref{eq-Q}) shows
that \eqref{sss} is a direct translation of
$\TypeIII^{(i)}[\varpi,\delta,\sigma]$.

It remains to prove~(\ref{sss}) in the Type 0 case, and we can do this
directly.  In this case, there exists some $k \in \{ 1, \ldots, 2j \}$, such
that $t_k\geq 1/2+\sigma>2\sigma$. Intuitively, this means that
$\alpha_k$ is smooth (by Lemma~\ref{facts} (ii)) and has a long
support, so that it is very well-distributed in arithmetic
progressions to relatively large moduli, and we can just treat the
remaining $\alpha_j$ trivially.

Precisely, we write
$$
\alpha_1 \star \ldots \star \alpha_{2j}=\alpha_k \star
\alpha_S
$$
where $S=\{1,\ldots,2j\}\backslash \{k\}$. By Lemma~\ref{facts},
$\alpha_k$ is a coefficient sequence which is smooth at a scale $N_k
\gg x^{1/2+\sigma}$, and $\alpha_S$ is a coefficient sequence which is
located at a scale $N_S$ with $N_k N_S \sim x$.  We argue as in Lemma
\ref{facts}(iii):
we have
$$ 
\Delta(\alpha_k \star \alpha_S; a\ (q)) = \sum_{m\in
  (\Z/q\Z)^{\times}} \sum_{\ell = m\ (q)} \alpha_S(\ell) \Delta(\alpha_k; \bar{m} a\ (q))
$$
and since
$$ \sum_m |\alpha_S(m)| \lessapprox  N_S,$$
(by \eqref{taud} and Definition \ref{Coef}), we get
\begin{equation}\label{bosh}
  \sum_{q\in\mathcal{Q}}|\Delta(\alpha_1\star\cdots\star\alpha_{2j};a\ (q))|
  \lessapprox N_S \sum_{q \leq Q} 
  \sup_{b \in (\Z/q\Z)^\times} |\Delta(\alpha_k;b\ (q))|.
 \end{equation}
Since $\alpha_k$ is smooth at scale $N_k$, we can write
$$ \alpha_k(n) = \psi( n / N_k ) $$
for some smooth function $\psi \colon \R \to \R$ supported on an
interval of size $\ll 1$, which satisfies the estimates
$$
|\psi^{(j)}(t)| \lessapprox 1
$$
for all $t$ and all fixed $j \geq 0$.  By the Poisson summation
formula, we have
$$
\sum_{n = b\ (q)} \alpha_k(n) = \frac{N_k}{q} \sum_{m \in \Z} e_q(mb)
\hat \psi\Bigl( \frac{m N_k}{q}\Bigr)= \frac{N_k}{q}\hat{\psi}(0)+
\frac{N_k}{q} \sum_{m \neq 0}e_q(mb) \hat \psi\Bigl( \frac{m
  N_k}{q}\Bigr),
$$ 
for $q\geq 1$ and $b\ (q)$, where
$$
\hat \psi(s) := \int_\R \psi(t) e(-ts)\ dt
$$ 
is the Fourier transform of $\psi$.  From the smoothness and support
of $\psi$, we get the bound
$$
\Bigl|\hat\psi\Bigl(\frac{m N_k}{q}\Bigr)\Bigr| \lessapprox
\Bigl(\frac{mN_k}{q}\Bigr)^{-2}
$$ 
for $m \neq 0$ and $q\leq Q$, and thus we derive
$$ 
\sum_{n = b\ (q)} \alpha_k(n)= \frac{N_k}{q}\hat{\psi}(0)
+O\Bigl(\frac{N_k}{q} (N_k/q)^{-2}\Bigr).
$$ 
Since by definition
$$
\Delta(\alpha_k;b\ (q))|=
\sum_{n = b\ (q)} \alpha_k(n)
-\frac{1}{\varphi(q)}\sum_{c\in(\Z/q\Z)^{\times}} \sum_{n=c\
  (q)}\alpha_k(n),
$$
we get
$$ 
\Delta(\alpha_k;b\ (q))| \lessapprox \frac{N_k}{q} (N_k/q)^{-2}.
$$
Therefore, from~(\ref{bosh}), we have
\begin{align*}
  \sum_{q\in\mathcal{Q}} |\Delta(\alpha_1\star\cdots\star\alpha_{2j};a\ (q))| &
  \lessapprox N_SN_k\Bigl(\frac{Q}{N_k}\Bigr)^2\ll x^{1-2\sigma+4\varpi},
\end{align*}
and since $\sigma>2\varpi$ (by assumption in Lemma~\ref{comlem}), this
implies~(\ref{tap}), which concludes the proof  of Lemma~\ref{comlem}.


\begin{remark}\label{vaughan-rem} In the case $\sigma > 1/6$, one can
  replace the Heath-Brown identity of Lemma \ref{hbi} with other
  decompositions of the von Mangoldt function $\Lambda$, and in
  particular with the well-known \emph{Vaughan identity}
$$ \Lambda_\geq = \mu_< \star  L - \mu_< \star  \Lambda_< \star  1 + \mu_\geq \star  \Lambda_\geq \star  1$$
from \cite{vaughan}, where
\begin{gather}
\Lambda_\geq(n) := \Lambda(n) \onef_{n \geq V},\quad\quad 
\Lambda_<(n) := \Lambda(n) \onef_{n < V}\\
\mu_\geq(n) := \mu(n) \onef_{n \geq U},\quad\quad \mu_<(n) := \mu(n)
\onef_{n < U},
\end{gather}
where $U,V > 1$ are arbitrary parameters.  Setting $U=V=x^{1/3}$, we
then see that to show \eqref{show}, it suffices to establish the
bounds
\begin{align}
\sum_{q \in {\mathcal Q}} |\Delta((\mu_< \star  L) \onef_{[x,2x]}; a\ (q))| &\ll x \log^{-A_0/2+O(1)} x \label{v-1} \\
\sum_{q \in {\mathcal Q}} |\Delta((\mu_< \star  \Lambda_< \star  1) \onef_{[x,2x]}; a\ (q))| &\ll x \log^{-A_0/2+O(1)} x \label{v-2} \\
\sum_{q \in {\mathcal Q}} |\Delta((\mu_\geq \star  \Lambda_\geq \star  1) \onef_{[x,2x]}; a\ (q))| &\ll x \log^{-A_0/2+O(1)} x \label{v-3}.
\end{align}
To prove \eqref{v-1}, we may perform dyadic decomposition on $\mu_<$
and $L$, much as in the previous arguments.  The components of $L$
which give a non-trivial contribution to \eqref{v-1} will be located
at scales $\gg x^{2/3}$. One can then use the results of the Type
0 analysis above.  In order to prove \eqref{v-3}, we
similarly decompose the $\mu_\geq, \Lambda_\geq$, and $1$ factors and
observe that the resulting components of $\mu_\geq$ and $\Lambda_\geq
\star 1$ that give a non-trivial contribution to \eqref{v-3} will be
located at scales $M,N$ with $x^{1/3} \ll M,N \ll x^{2/3}$ and $MN
\sim x$, and one can then argue using Type I and Type II estimates as
before since $\sigma > 1/6$.  Finally, for \eqref{v-2}, we decompose
$\mu_< \star \Lambda_<$, and $1$ into components at scales $M,N$
respectively with $M \ll x^{2/3}$ and $MN \sim x$, so $N \gg x^{1/3}$.
If $N \gg x^{2/3}$, then the Type 0 analysis applies again, and
otherwise we may use the Type I and Type II estimates with $\sigma >
1/6$.
\end{remark}

\begin{remark} An inspection of the arguments shows that the interval
  $[x,2x]$ used in Lemma~\ref{comlem} may be replaced by a more general
  interval $[x_1,x_2]$ for any $x \leq x_1 \leq x_2 \leq 2x$, leading
  to a slight generalization of the conclusion
  $\MPZ^{(i)}[\varpi,\delta]$.  By telescoping series, one may then
  generalize the intervals $[x_1,x_2]$ further, to the range $1 \leq
  x_1 \leq x_2 \leq 2x$.
\end{remark}

In the next sections, we will turn our attention to the task of
proving distribution estimates of Type I, II and III.  All three turn
out to be intimately related to estimates for exponential sums over
$\Z/q\Z$, either ``complete'' sums over all of $\Z/q\Z$ or
``incomplete'' sums over suitable subsets, such as reductions modulo
$q$ of intervals or arithmetic progressions (this link goes back to
the earliest works in proving distribution estimates beyond the range
of the large sieve). In the next section, we consider the basic theory
of the simplest of those sums, where the essential results go back to
Weil's theory of exponential sums in one variable over finite
fields. These are enough to handle basic Type I and II estimates,
which we consider next. On the other hand, for Type III estimates and
the most refined Type I estimates, we require the much deeper results
and insights of Deligne's second proof of the Riemann Hypothesis for
algebraic varieties over finite fields.


\section{One-dimensional exponential sums}\label{exp-sec}

The results of this section are very general and are applicable to
many problems in analytic number theory. Since the account we provide
might well be useful as a general reference beyond the applications to
the main results of this paper, we will not use the asymptotic
convention of Definition~\ref{asym}, but provide explicit estimates
that can easily be quoted in other contexts. (In particular, we will
sometimes introduce variables named $x$ in our notation.)

\subsection{Preliminaries}\label{ssec-exp-prelim}

We begin by setting up some notation and conventions. We recall from
Section \ref{notation-sec} that we defined $e_q(a)=e^{2i\pi a/q}$ for
$a\in\Z$ and $q\geq 1$. This is a group homomorphism $\Z\rightarrow
\C^{\times}$, and since $q\Z\subset \ker e_q$, it induces naturally a
homomorphism, which we also denote $e_q$, from $\Z/q\Z$ to
$\C^{\times}$. In fact, for any multiple $qr$ of $q$, we can also view
$e_q$ as a homomorphism $\Z/qr\Z\ra \C^{\times}$.
\par
It is convenient for us (and compatible with the more algebraic theory
for multi-variable exponential sums discussed in
Section~\ref{deligne-sec}) to extend further $e_q$ to the projective
line $\P^1(\Z/q\Z)$ by extending it by zero to the point(s) at
infinity. Precisely, recall that $\P^1(\Z/q\Z)$ is the quotient of
$$
X_q=\{(a,b)\in (\Z/q\Z)^{2}\colon a\text{ and } b\text{ have no common
  factor}\},
$$
(where a common factor of $a$ and $b$ is a prime $p\mid q$ such that
$a$ and $b$ are zero modulo $p$) by the equivalence relation
$$
(a,b)=(ax,bx)
$$
for all $x\in(\Z/q\Z)^{\times}$. We identify $\Z/q\Z$ with a subset
of $\P^1(\Z/q\Z)$ by sending $x$ to the class of $(x,1)$. We note that
$$
|\P^1(\Z/q\Z)|=q\prod_{p\mid q}{\Bigl(1+\frac{1}{p}\Bigr)},
$$
and that a point $(a,b)\in \P^1(\Z/q\Z)$ belongs to $\Z/q\Z$ if and
only if $b\in (\Z/q\Z)^{\times}$, in which case $(a,b)=(ab^{-1},1)$.
\par
Thus, we can extend $e_q$ to $\P^1(\Z/q\Z)$ by defining
$$
e_q((a,b))=e_q(ab^{-1})
$$
if $b\in (\Z/q\Z)^{\times}$, and $e_q((a,b))=0$ otherwise. 
\par
We have well-defined reduction maps $\P^1(\Z/qr\Z)\ra \P^1(\Z/q\Z)$
for all integers $r\geq 1$, as well as $\P^1(\Q)\ra \P^1(\Z/q\Z)$, and
we can therefore also naturally define $e_q(x)$ for $x\in
\P^1(\Z/qr\Z)$ or for $x\in \P^1(\Q)$ (for the map $\P^1(\Q)\ra
\P^1(\Z/q\Z)$, we use the fact that any $x\in \P^1(\Q)$ is the class
of $(a,b)$ where $a$ and $b$ are coprime integers, so that $(a\ (q),b\
(q))\in X_q$).
\par
We will use these extensions especially in the following context: let
$P$, $Q\in\Z[X]$ be polynomials, with $Q$ non-zero, and consider the
rational function $f=\frac{P}{Q}\in \Q(X)$. This defines a map $\P^1(\Q)\ra
\P^1(\Q)$, and then by reduction modulo $q$, a map
$$
f\ (q)\colon \P^1(\Z/q\Z)\ra \P^1(\Z/q\Z).
$$
We can therefore consider the function $x\mapsto e_q(f(x))$ for
$x\in\Z/q\Z$. If $x\in \Z$ is such that $Q(x)$ is coprime to $q$, then
this is just $e_q(P(x)\overline{Q(x)})$. If $Q(x)$ is not coprime to
$q$, on the other hand, one must be a bit careful. If $q$ is prime,
then one should write $f\ (q)=P_1/Q_1$ with $P_1$, $Q_1\in(\Z/q\Z)[X]$
coprime, and then $e_q(f(x))=e_q(P_1(x)\overline{Q_1(x)})$ if
$Q_1(x)\neq 0$, while $e_q(f(x))=0$ otherwise. If $q$ is squarefree,
one combines the prime components according to the Chinese Remainder
Theorem, as we will recall later.

\begin{example}
Let $P=X$, $Q=X+3$ and $q=3$, and set $f := \frac{P}{Q}$. Then, although $P\ (q)$ and $Q\ (q)$
both take the value $0$ at $x=0\in\Z/q\Z$, we have $e_q(f(0))=1$.
\end{example}

In rare cases (in particular the proof of Proposition~\ref{lode} in
Section~\ref{sec-final-exp}) we will use one more convention:
quantities 
$$
e_p\Bigl(\frac{a}{b}\Bigr)
$$
may arise where $a$ and $b$ are integers that depend on other
parameters, and with $b$ allowed to be divisible by $p$.
However, this will only happen when the formula is to be interpreted as
$$
e_p\Bigl(\frac{a}{b}\Bigr) =\psi\Bigl(\frac{1}{b}\Bigr)=\psi(\infty)
$$
where $\psi(x)=e_p(ax)$ defines an additive character of $\Fp$. Thus
we use the convention
$$
e_p\Bigl(\frac{a}{b}\Bigr)=\begin{cases}
0&\text{ if } a\neq 0\ (p), b = 0\ (p)\\
1&\text{ if } a=0\ (p), b = 0\ (p),
\end{cases}
$$
since in the second case, we are evaluating the trivial character at
$\infty$. 
\par

\subsection{Complete exponential sums over a finite field}

As is well known since early works of Davenport and Hasse in
particular, the Riemann Hypothesis for curves over finite fields
(proven by Weil~\cite{weilrh}) implies bounds with ``square root
cancellation'' for one-dimensional exponential sums over finite
fields.  A special case is the following general bound:

\begin{lemma}[One-variable exponential sums with additive
  characters]\label{prime-exp} Let $P, Q \in
  \Z[X]$ be polynomials over $\Z$ in one indeterminate $X$. Let $p$ be
  a prime number such that $Q\ (p) \in \F_p[X]$ is non-zero, such that
  there is no identity of the form
\begin{equation}\label{Degenerate}
  \frac{P}{Q}\ (p) = g^p - g + c
\end{equation}
in $\F_p(X)$ for some rational function $g = g(X)\in\Fp(X)$ and some
$c \in \F_p$.  Then we have
\begin{equation}\label{pqp}
  \left|\sum_{x \in \F_p} e_p\left(\frac{P(x)}{Q(x)}\right)\right| \ll \sqrt{p}
\end{equation}
where the implicit constant depends only on $\max(\deg P, \deg Q)$,
and this dependency is linear.
\end{lemma}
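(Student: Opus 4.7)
The proof will be a standard application of Weil's bounds, interpreted through the formalism of $\ell$-adic sheaves on $\mathbb{P}^1/\F_p$ (though the Riemann Hypothesis for curves, as opposed to Deligne's general results, is all that is needed). The plan is to realize the sum as the trace of Frobenius on the cohomology of an Artin--Schreier sheaf, show that the hypothesis rules out triviality of this sheaf, and then bound the relevant cohomology.

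First, I introduce the Artin--Schreier sheaf $\mathcal{L}_{\psi(P/Q)}$ on the open set $U \subset \mathbb{A}^1_{\F_p}$ obtained by removing the zeros of $Q$, where $\psi(x) = e_p(x)$ is the standard additive character of $\F_p$. This is a rank-one lisse sheaf on $U$ whose trace of Frobenius at a point $x \in U(\F_p)$ is $e_p(P(x)/Q(x))$. The Grothendieck--Lefschetz trace formula gives
\[
\sum_{x \in U(\F_p)} e_p\!\left(\frac{P(x)}{Q(x)}\right)
= -\mathrm{Tr}\bigl(\mathrm{Frob}\mid H^1_c(U_{\ov\F_p},\mathcal{L}_{\psi(P/Q)})\bigr)
+ \mathrm{Tr}\bigl(\mathrm{Frob}\mid H^2_c(U_{\ov\F_p},\mathcal{L}_{\psi(P/Q)})\bigr),
\]
since $H^0_c$ vanishes on a non-proper curve. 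The difference between this sum and the sum over $\F_p$ (with our convention that $e_p(f(x))=0$ at poles of $f$) is at most $\deg Q$ in absolute value, which is absorbed into the stated bound.

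Next, I use the hypothesis \eqref{Degenerate}. The sheaf $\mathcal{L}_{\psi(P/Q)}$ is geometrically trivial precisely when the equation $t^p - t = P/Q - c$ has a solution $t = g \in \ov\F_p(X)$ for some $c \in \F_p$; since both $P/Q$ and $g^p - g$ are defined over $\F_p$, such a $g$ can in fact be taken in $\F_p(X)$ (comparing Galois conjugates and normalizing). The failure of \eqref{Degenerate} therefore guarantees that $\mathcal{L}_{\psi(P/Q)}$ is a non-trivial geometrically irreducible rank-one sheaf, which forces $H^2_c(U_{\ov\F_p},\mathcal{L}_{\psi(P/Q)}) = 0$ (there are no non-trivial geometric Frobenius invariants in the stalk, hence none in $H^2_c$ by Poincaré duality / coinvariants).

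It remains to bound $H^1_c$. By Deligne's form of Weil (for curves, this is Weil's original theorem), each Frobenius eigenvalue on $H^1_c(U_{\ov\F_p},\mathcal{L}_{\psi(P/Q)})$ has absolute value at most $\sqrt{p}$, so the main task is to bound $\dim H^1_c$ linearly in $d := \max(\deg P, \deg Q)$. This follows from the Euler--Poincaré formula on $\mathbb{P}^1$: since the Euler characteristic of $U$ contributes $O(d)$ ramified points and the Swan conductor of an Artin--Schreier sheaf at each pole is bounded by the order of the pole of $P/Q$ (hence by $d$), one has $\dim H^1_c = -\chi_c(U_{\ov\F_p},\mathcal{L}_{\psi(P/Q)}) \ll d$. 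Combining these ingredients yields the claim \eqref{pqp} with the asserted linear dependence on $d$. The main technical point, and the only step where one has to be careful, is the verification that \eqref{Degenerate} is indeed the precise obstruction to geometric non-triviality of the sheaf; everything else is a mechanical application of the standard $\ell$-adic package.
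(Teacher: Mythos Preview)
Your proof is correct and amounts to the same mathematical content as the paper's: both invoke the Riemann Hypothesis for the Artin--Schreier covering $y^p-y=P/Q$, with the non-degeneracy hypothesis \eqref{Degenerate} ensuring that the relevant $H^2_c$ (equivalently, the geometric triviality obstruction) vanishes. The paper simply cites Perel'muter, Cochrane--Pinner, and Bombieri for this, whereas you have written out the $\ell$-adic trace-formula argument explicitly (in the spirit of the later Section~\ref{deligne-sec}); there is no substantive difference in strategy.
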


Note that, by our definitions, we have
$$
\sum_{x \in \F_p} e_p\Bigl(\frac{P(x)}{Q(x)}\Bigr) = \sum_{\substack{x \in
    \F_p\\Q_1(x) \neq 0}} e_p(P_1(x) \overline{Q_1(x)}),
$$ 
where $P/Q\ (p)=P_1/Q_1$ with $P_1$, $Q_1\in \F_p[X]$ coprime
polynomials. 
\par
As key examples of Lemma \ref{prime-exp}, we record Weil's bound for
Kloosterman sums, namely
\begin{equation}\label{kloost}
\left|\sum_{x \in \F_p} e_p\left( ax + \frac{b}{x} \right)\right| \ll \sqrt{p}
\end{equation}
when $a,b \in \F_p$ are not both zero, as well as the variant
\begin{equation}\label{kloost-2}
  \left|\sum_{x \in \F_p} e_p\left( ax + \frac{b}{x} + 
      \frac{c}{x+l} +\frac{d}{x+m}+\frac{e}{x+l+m}\right)\right| \ll \sqrt{p}
\end{equation}
for $a,b,c,d,e,l,m \in \F_p$ with $b,c,d,e,l,m,l+m$ non-zero.  In
fact, these two estimates are almost the only two cases of Lemma
\ref{prime-exp} that are needed in our arguments. In both cases, one
can determine a suitable implied constant, e.g., the Kloosterman sum
in~(\ref{kloost}) has modulus at most $2\sqrt{p}$.

We note also that the case \eqref{Degenerate} must be excluded, since
$g^p(x)-g(x)+c=c$ for all $x\in\F_p$, and therefore the corresponding
character sum has size equal to $p$.

\begin{proof} 
  This estimate follows from the Riemann Hypothesis for the algebraic
  curve $C$ over $\F_p$ defined by the Artin-Schreier equation
$$
y^p-y = P(x)/Q(x).
$$
This was first explicitly stated by Perelmuter in \cite{perelmuter},
although this was undoubtedly known to Weil; an elementary proof based
on Stepanov's method may also be found in \cite{cochrane}. A full
proof for all curves using a minimal amount of theory of algebraic
curves is found in~\cite{bombieri-stepanov}.
\end{proof}

\begin{remark} For our purpose of establishing some non-trivial Type I
  and Type II estimates for a given choice of $\sigma$ (and in
  particular for $\sigma$ slightly above $1/6$) and for sufficiently small $\varpi,\delta$, it is not necessary
  to have the full square root cancellation in \eqref{pqp}, and any
  power savings of the form $p^{1-c}$ for some fixed $c>0$ would
  suffice (with the same dependency on $P$ and $Q$); indeed, one obtains a non-trivial estimate for a given value of $\sigma$ once one invokes the $q$-van der Corput method a sufficient number of times, and once the gains from the cancellation are greater than the losses coming from completion of sums, with the latter becoming negligible in the limit $\varpi,\delta \to 0$.  Such a power
  saving (with $c=1/4$) was obtained for the Kloosterman sum
  \eqref{kloost} by Kloosterman \cite{Kloosterman} using an elementary
  dilation argument (see also \cite{mordell} for a generalization),
  but this argument does not appear to be available for estimates such
  as \eqref{kloost-2}.
\end{remark}

In order to prove parts \eqref{typeI1}, \eqref{typeI2} and
\eqref{typeII1} of Theorem \ref{newtype}, we need to extend the bounds
of Lemma~\ref{prime-exp} in two ways: to sums over $\Z/q\Z$ for $q$
squarefree instead of prime, and to incomplete sums over suitable
subsets of $\Z/q\Z$ (the other two parts of the theorem also require
exponential sum estimates, but these require the much deeper work of
Deligne \cite{WeilII}, and will be considered in
Section~\ref{deligne-sec}).

\subsection{Complete exponential sums to squarefree moduli}

To extend Lemma \ref{prime-exp} to squarefree moduli, we first need
some preliminaries. We begin with a version of the Chinese Remainder
Theorem.

\begin{lemma}[Chinese Remainder Theorem]\label{crt}  If $q_1,q_2$ are
  coprime natural numbers, then for any integer $a$, or indeed for any
  $a\in \P^1(\Q)$, we have
\begin{equation}\label{eq-crt-easy}
e_{q_1 q_2}(a) = e_{q_1}\left(\frac{a}{q_2}\right) e_{q_2}\left(
  \frac{a}{q_1} \right).
\end{equation}
More generally, if $q_1,\ldots,q_k$ are pairwise coprime natural
numbers, then for any integer $a$ or any $a\in\P^1(\Q)$, we have
$$ 
e_{q_1 \cdots q_k}(a) = \prod_{i=1}^k e_{q_i}\left(\frac{a}{\prod_{j
      \neq i} q_j}\right).
$$
\end{lemma}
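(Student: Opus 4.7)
The plan is to prove the two-modulus case \eqref{eq-crt-easy} first and then deduce the general assertion by a straightforward induction on $k$. The key input is Bezout's identity: since $(q_1,q_2)=1$, we may fix integers $u,v$ with $uq_2+vq_1=1$, so that $u\equiv \overline{q_2}\pmod{q_1}$ and $v\equiv \overline{q_1}\pmod{q_2}$. Multiplying through by $a$ and dividing by $q_1q_2$ gives the identity
\begin{equation*}
\frac{a}{q_1q_2}=\frac{au}{q_1}+\frac{av}{q_2}
\end{equation*}
in $\Q$, hence modulo $1$. For integer $a$, applying $e(\cdot)$ and using the conventions of Section~\ref{ssec-exp-prelim} to rewrite $e_{q_1}(au)=e_{q_1}(a\overline{q_2})=e_{q_1}(a/q_2)$ (and similarly on the other factor) yields \eqref{eq-crt-easy}.

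Next I would extend from $\Z$ to $\P^1(\Q)$. Write $a=[\alpha:\beta]$ with $\gcd(\alpha,\beta)=1$. If $\beta$ is coprime to $q_1q_2$, then $\beta$ is invertible in both $\Z/q_1\Z$, $\Z/q_2\Z$, and $\Z/q_1q_2\Z$, and one reduces directly to the integer case by writing $a\equiv \alpha\overline{\beta}$ in all three rings and invoking the previous step. The remaining case is when $\gcd(\beta,q_1q_2)>1$; after relabelling (and using $(q_1,q_2)=1$) I may assume $\gcd(\beta,q_1)>1$. By the convention extending $e_{q_1q_2}$ to $\P^1(\Z/q_1q_2\Z)$, the left-hand side $e_{q_1q_2}(a)$ vanishes. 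On the right-hand side, I need to verify that $e_{q_1}(a/q_2)$ also vanishes: the rational function value $a/q_2$ corresponds to the projective point $[\alpha:\beta q_2]$, and since $\gcd(q_1,q_2)=1$ one has $\gcd(\beta q_2,q_1)=\gcd(\beta,q_1)>1$, while $\gcd(\alpha,\beta q_2,q_1)=1$ (because any prime dividing $\alpha$ and $q_1$ cannot divide $\beta$ by coprimality of $\alpha,\beta$, nor $q_2$ since $(q_1,q_2)=1$). Hence $[\alpha:\beta q_2]$ is a legitimate point of $\P^1(\Z/q_1\Z)$ at which the second coordinate is a non-unit, so $e_{q_1}(a/q_2)=0$ by the convention. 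Both sides thus vanish, completing the identity.

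Finally, for the $k$-fold version I would induct on $k$, applying \eqref{eq-crt-easy} with $q_1$ replaced by $q_i$ and $q_2$ replaced by $\prod_{j\neq i}q_j$ (which is coprime to $q_i$), then invoking the induction hypothesis on the product factor. A small bookkeeping point is to check that the rational arguments $a/\prod_{j\neq i}q_j$ that arise still lie in $\P^1(\Q)$ and are handled correctly by the conventions, which follows because the same coprimality considerations as in the two-modulus case apply at every step.

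The only genuinely delicate part of the argument is the verification in the $\P^1(\Q)$ case that the conventions make both sides vanish simultaneously; the rest is essentially a one-line Bezout manipulation and an induction. I expect this bookkeeping with the point at infinity, and the checking that reductions modulo the individual $q_i$ remain well-defined projective points, to be where most of the care is needed.
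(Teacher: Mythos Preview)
Your proof is correct and follows essentially the same approach as the paper: both hinge on the Bezout identity $uq_2+vq_1=1$ (written in the paper as $q_1\overline{q_1}+q_2\overline{q_2}\equiv 1\pmod{q_1q_2}$), treat the point-at-infinity case separately by checking that both sides vanish, and reduce the $k$-fold version to the two-modulus case by induction. The only cosmetic difference is that you phrase the core identity in $\Q$ (as $\tfrac{a}{q_1q_2}=\tfrac{au}{q_1}+\tfrac{av}{q_2}$) while the paper works modulo $q_1q_2$, and you give a somewhat more detailed coordinate verification in the $\P^1(\Q)$ case than the paper's one-line remark.
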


\begin{proof} It suffices to prove the former claim for
  $a\in\P^1(\Q)$, as the latter then follows by induction. 

If $a$ maps to a point at infinity in $\P^1(\Z/q_1q_2\Z)$, then it
must map to a point at infinity in $\P^1(\Z/q_1\Z)$ or
$\P^1(\Z/q_2\Z)$, so that both sides of~(\ref{eq-crt-easy}) are zero.

So we can assume that $a\in \Z/q_1q_2\Z$. Let $\overline{q_1},
\overline{q_2}$ be integers such that $q_1 \overline{q_1} = 1\ (q_2)$
and $q_2 \overline{q_2} = 1\ (q_1)$, respectively. Then we have $q_1
\overline{q_1} + q_2 \overline{q_2} = 1\ (q_1 q_2)$, and hence
$$ 
e_{q_1 q_2}(a) = e_{q_1q_2}(a(q_1\overline{q_1}+q_2\overline{q_2}))=
e_{q_1 q_2}( q_1 \overline{q_1} a ) e_{q_1 q_2}( q_2 \overline{q_2} a
).
$$ 
Since $e_{q_1 q_2}( q_1 \overline{q_1} a ) = e_{q_2}(\frac{a}{q_1})$
and $e_{q_1 q_2}( q_2 \overline{q_2} a ) = e_{q_1}(\frac{a}{q_2})$,
the claim follows.
\end{proof}



If $q\in\Z$ is an integer, we say that $q$ divides $f$, and write
$q|f$, if $q$ divides $f$ in $\Z[X]$.  We denote by $(q,f)$ the
largest factor of $q$ that divides $f$ (i.e., the positive generator
of the ideal of $\Z$ consisting of integers dividing $f$).  Thus for instance $(q,0)=q$. We also write $f\ (q) \in (\Z/q\Z)[X]$ for the reduction of $f$ modulo $q$.

We need the following algebraic lemma, which can be viewed as a
version of (a special case of) the fundamental theorem of calculus:

\begin{lemma}\label{fund} Let $f =
  \frac{P}{Q}\in \Q(X)$ with $P,Q \in \Z[X]$ coprime, and
  let $q$ be a natural number such that $Q\ (p)$ is a non-zero
  polynomial for all primes $p\mid q$ (automatic if $Q$ is monic).
\par
\begin{enumerate}[(i)]
\item  If $q|f'$ and all prime factors of $q$ are sufficiently
  large depending on the degrees of $P$ and $Q$, then there exists $c
  \in \Z/q\Z$ such that $q|f-c$. 
\item  If $q$ is squarefree, if $Q\ (p)$ has degree $\deg(Q)$ for
all $p\mid q$ and\footnote{We adopt the convention $\deg(0)=-\infty$.} $\deg(P)<\deg(Q)$, and if all prime factors of $q$
are sufficiently large depending on the degrees of $P$ and $Q$, then
$(q,f')$ divides $(q,f)$. In particular, if $(q,f)=1$ then $(q,f')=1$.
\end{enumerate}
\end{lemma}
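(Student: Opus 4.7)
The plan is to reduce both parts to a statement modulo a single prime $p$, and then exploit the characteristic-$p$ fact that a rational function in $\F_p(X)$ with vanishing derivative must be a $p$-th power. Indeed, if $P_1, Q_1 \in \F_p[X]$ are coprime and $(P_1/Q_1)' = 0$, then $P_1'Q_1 = P_1 Q_1'$, and the coprimality forces $Q_1 \mid Q_1'$ and $P_1 \mid P_1'$, so both derivatives vanish and $P_1 = A^p$, $Q_1 = B^p$ for some coprime $A,B \in \F_p[X]$.

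For part (i), I would first use the Chinese Remainder Theorem to reduce to the prime power case $q = p^k$ with $p > \max(\deg P, \deg Q)$, and then induct on $k$. In the base case $k=1$, the reduction $\tilde f \in \F_p(X)$ satisfies $\tilde f' = 0$, so by the above $\tilde f = h^p$ with $h = A/B \in \F_p(X)$; writing $\tilde f = P_0/Q_0$ in lowest form in $\F_p[X]$, one has $P_0 = c A^p$ and $Q_0 = c B^p$ for some $c \in \F_p^\times$. Since reduction modulo $p$ cannot increase degrees and $p$ strictly exceeds $\max(\deg P, \deg Q)$, both $A$ and $B$ must be constants, so $\tilde f$ is a constant $\tilde c_1 \in \F_p$ which lifts to $c_1 \in \Z$ with $p \mid f - c_1$. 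For the inductive step $k \to k+1$, set $g := (f - c_k)/p^k$, a rational function of the form $P_1/Q$ whose numerator has degree bounded by $\max(\deg P, \deg Q)$; since $p^{k+1} \mid f'$ gives $p \mid g'$, the base case applied to $g$ produces $c'_1 \in \F_p$ with $p \mid g - c'_1$, and $c_{k+1} := c_k + p^k c'_1$ then satisfies $p^{k+1} \mid f - c_{k+1}$.

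For part (ii), since $q$ is squarefree, it suffices to show that for each prime $p \mid q$, $p \mid f'$ implies $p \mid f$. Under the hypotheses $\deg P < \deg Q = \deg(Q \bmod p)$, the lowest-form reduction $\tilde f = P_0/Q_0 \in \F_p(X)$ still satisfies $\deg P_0 < \deg Q_0$, because any common factor of $P \bmod p$ and $Q \bmod p$ is cancelled equally from numerator and denominator. The Frobenius argument gives $\tilde f = (A/B)^p$ with $A, B$ coprime in $\F_p[X]$; the inequality $p \deg A < p \deg B$ together with $p \deg B = \deg Q_0 \leq \deg Q < p$ forces $\deg B = 0$ and hence $A = 0$, so $\tilde f = 0$, i.e., $p \mid f$. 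The final sentence of (ii) is immediate.

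The principal technical obstacle is the bookkeeping around the passage from ``coprime over $\Z$'' to ``lowest form over $\F_p$'': the reductions of $P$ and $Q$ may acquire a common factor modulo $p$, so one must confirm that the relevant degree inequalities survive this potential cancellation (in (i) one uses that the reduced degrees do not exceed the original degrees; in (ii) one uses that the strict inequality $\deg P < \deg Q$ is preserved after equal cancellation). Once this is handled, everything else reduces to the Frobenius fact in $\F_p(X)$ plus a routine $p$-adic lifting.
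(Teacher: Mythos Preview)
Your proof is correct. Both parts work as you describe, and the ``bookkeeping'' concern you flag (that passing from coprimality over $\Z$ to lowest terms over $\F_p$ preserves the needed degree inequalities) is handled exactly as you indicate.

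The paper's argument is organized differently, and the comparison is instructive. For part~(i), the paper also reduces to $q=p^j$ but then argues in one step rather than by induction on~$j$: from $q\mid f'$ it concludes that the reduction of $f'$ vanishes identically in $(\Z/q\Z)[X]$, and then that $f$ is congruent to a constant. Your inductive $p$-adic lifting is more explicit about the prime-power case and arguably cleaner there; the paper's phrasing ``if $q$ is large enough'' sits slightly awkwardly against the hypothesis (only the prime factors need to be large), though the argument can be made to work since a polynomial of degree~$<p$ vanishing at all of $\Z/p^j\Z$ must vanish identically.

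For part~(ii), the paper takes a shorter route than yours: it simply invokes part~(i) with $q=p$ to obtain $p\mid f-c$ for some $c\in\F_p$, and then observes that if $c\neq 0$ then $P\equiv cQ\pmod p$ would force $\deg(P\bmod p)\geq \deg(Q\bmod p)=\deg Q$, contradicting $\deg P<\deg Q$. So the degree hypothesis is used only in this final one-line check, rather than being threaded through the Frobenius argument as you do. Your version is self-contained but duplicates work already done in~(i); the paper's version shows that~(ii) is really a corollary of~(i) plus a degree comparison.
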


\begin{proof}  
  We first prove (i). By the Chinese Remainder Theorem, we may assume that $q=p^j$ is
  the power of a prime.  Write $f'=P_1/Q_1$ where $P_1$ and
  $Q_1\in\Z[X]$ are coprime. By definition, the condition $q\mid f'$
  implies that $P_1(x)=0\ (q)$ for all $x\in \Z/q\Z$. On the other
  hand, since $Q_1\ (p)$ is non-zero in $\Z/p\Z[X]$, the rational
  function $f'\ (q)$ is well-defined at all $x\in \Z/q\Z$ except at
  most $\deg(Q)$ zeros of $Q_1$, and takes the value $0$ at all these
  $\geq q-\deg(Q)$ values. If $q$ is large enough in terms of
  $\deg(P)$ and $\deg(Q)$, this implies that $f'\ (q)=0\in \Z/q\Z[X]$,
  and therefore that $f\ (q)=c$ for some $c\in\Z/q\Z$, i.e., that
  $q\mid f-c$.
\par
Now we prove (ii). If a prime $p$ divides $(q,f')$, then by (i), there exists $c\in
\Z/p\Z$ such that $p\mid f-c$. If $p\nmid (q,f)$, we must have
$c \neq 0$. But then $p\mid P-cQ$, where $P-cQ\ (p)\in\Z/p\Z[X]$ is (by
assumption) a polynomial of degree $\deg(Q)\geq 1$. For $p>\deg(Q)$,
this is a contradiction, so that $p\mid (q,f)$.
\end{proof}

We use this to give an estimate for complete exponential sums, which
combines the bounds for  Ramanujan sums with those arising from the
Riemann Hypothesis for curves.

\begin{proposition}[Ramanujan-Weil bounds]\label{weil} Let $q$ be a
  squarefree natural number, and let $f = \frac{P}{Q}\in \Q(X)$, where
  $P,Q \in \Z[X]$ are coprime polynomials with $Q$
  non-zero modulo $p$ for every $p\mid q$, for instance $Q$
  monic. Then we have
$$ 
\Bigl|\sum_{n \in \Z/q\Z} e_q( f(n) )\Bigr| \leq C^{\Omega(q)}q^{1/2}
\frac{(f',q)}{(f'',q)^{1/2}}
$$
for some constant $C\geq 1$ depending only on $\deg(P)$ and $\deg(Q)$.
\end{proposition}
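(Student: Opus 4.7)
The plan is first to split the sum into a product over primes using the Chinese Remainder Theorem, and then to treat each prime factor by case analysis on the vanishing of $f'$ and $f''$ modulo $p$. Applying Lemma~\ref{crt} iteratively to $q=\prod_{p\mid q} p$, together with the ring isomorphism $\Z/q\Z\cong\prod_{p\mid q}\Z/p\Z$, the exponential sum factors as
$$
\sum_{n\in\Z/q\Z}e_q(f(n))=\prod_{p\mid q}S_p,\qquad
S_p:=\sum_{n\in\Z/p\Z}e_p(c_p f(n)),
$$
where $c_p\in(\Z/p\Z)^{\times}$ is the inverse of $q/p$ modulo $p$. Since multiplication of $f$ by a unit $c_p$ does not alter $(f',p)$ or $(f'',p)$, it suffices to prove the local bound
$$
|S_p|\leq C_0\,p^{1/2}\,\frac{(f',p)}{(f'',p)^{1/2}}
$$
for every prime $p\mid q$ (with $C_0$ depending only on $\deg P,\deg Q$), as then the product yields the claim with $C^{\Omega(q)}=C_0^{\Omega(q)}$ and the multiplicativity $(f',q)=\prod_p(f',p)$, $(f'',q)=\prod_p(f'',p)$ for squarefree $q$.

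Fix a prime $p$. Each of $(f',p),(f'',p)$ lies in $\{1,p\}$, and since $f'\equiv 0$ in $\F_p(X)$ forces $f''=(f')'\equiv 0$, the combination $(f',p)=p$, $(f'',p)=1$ cannot arise, so only three cases occur. In the generic case $(f',p)=(f'',p)=1$, I will invoke Lemma~\ref{prime-exp}, after checking the Artin--Schreier non-degeneracy $f\not\equiv g^p-g+c$ in $\F_p(X)$. Writing $g=A/B$ in lowest terms in $\F_p[X]$ with $\gcd(A,B)=1$ and computing $g^p-g=A(A^{p-1}-B^{p-1})/B^p$, one checks that this expression is in lowest terms; so if $\deg B\geq 1$, then $f$ would need $\deg Q\geq p\deg B\geq p$, while if $B$ is constant and $\deg A\geq 1$, $g^p-g$ is a polynomial of degree $p\deg A$, forcing $\deg P\geq p$. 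Thus for $p>\max(\deg P,\deg Q)$ degeneracy is impossible and Lemma~\ref{prime-exp} gives $|S_p|\leq C_0 p^{1/2}$ as required. In the opposite case $(f',p)=(f'',p)=p$, Lemma~\ref{fund}(i) gives $f\equiv c\pmod p$ for some constant $c$, so $|S_p|$ is trivially at most $p$, matching the target $C_0 p^{1/2}\cdot p/p^{1/2}=C_0 p$.

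The remaining case $(f',p)=1$, $(f'',p)=p$ is the main obstacle I anticipate, since the target bound collapses to a mere constant $C_0$. Here $f''\equiv 0$ in $\F_p(X)$ forces $f'\in\F_p(X^p)=\F_p(X)^p$, so $f'=f_0^p$ for some $f_0\in\F_p(X)^{\times}$. Writing $f_0=A_0/B_0$ in lowest terms gives $f'=A_0^p/B_0^p$, which is still in lowest terms, so $\deg f'=p\max(\deg A_0,\deg B_0)$; since $\deg f'\leq \deg P+2\deg Q$, the condition $p>\deg P+2\deg Q$ forces $f_0\in\F_p^{\times}$, hence $f'\equiv a\pmod p$ for some $a\in\F_p^{\times}$. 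Applying the same degree argument to $f-aX$, whose derivative is zero in $\F_p(X)$, produces a constant $b\in\F_p$ with $f\equiv aX+b$ as rational functions mod $p$. Then, for each $n\in\Z/p\Z$ with $Q(n)\not\equiv 0\pmod p$, $f(n)=an+b$, and completing to a full sum gives
$$
S_p=\sum_{\substack{n\in\Z/p\Z\\Q(n)\not\equiv 0}}e_p(an+b)=-\sum_{\substack{n\in\Z/p\Z\\Q(n)\equiv 0}}e_p(an+b)
$$
since $\sum_{n\in\Z/p\Z}e_p(an+b)=0$ when $a\neq 0$, so $|S_p|\leq \deg Q$, which is absorbed into $C_0$. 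The finitely many primes $p\leq \deg P+2\deg Q$ failing the threshold of any of the above degree arguments can be handled by the trivial bound $|S_p|\leq p$ and absorbed into $C$ after taking the product, completing the proof.
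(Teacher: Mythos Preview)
Your proof is correct and follows essentially the same strategy as the paper: CRT factorization into local sums $S_p$, followed by a three-case analysis according to whether $p\mid f'$ and $p\mid f''$, with Lemma~\ref{prime-exp} handling the generic case and degree arguments handling the degenerate ones. The only cosmetic differences are that in the case $p\mid f''$, $p\nmid f'$ the paper cites Lemma~\ref{fund}(i) twice rather than invoking the characteristic-$p$ fact $g'=0\Rightarrow g\in\F_p(X)^p$ directly, and that under the paper's lowest-terms convention for $e_p(f(n))$ one in fact obtains $S_p=0$ there (your weaker bound $|S_p|\le\deg Q$ is of course still sufficient).
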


\begin{example}
  (1) Let $f(X):=b/X$ for some integer $b$. We get, after changing the
  summation variable, a slightly weaker version of the familiar
  Ramanujan sum bound
\begin{equation}\label{ram1}
  \Bigl|\sum_{n \in \Z/q\Z} e( b n) \onef_{(n,q)=1}\Bigr| \leq (b,q)
\end{equation}
since $(q,f')=(b,q)$ and $(q,f'') = c (b,q)$ in this case for some $c=1,2$.
\par
(2) More generally, let $f:=a/X+bX$ for some integers $a,b$. We get a
weaker form of Weil's bound for Kloosterman sums
$$ 
\Bigl|\sum_{n \in \Z/q\Z} e_q( a \overline{n} + bn )
\onef_{(n,q)=1}\Bigr| \leq 2^{\Omega(q)} q^{1/2}
\frac{(a,b,q)}{(a,q)^{1/2}},
$$ 
which generalizes \eqref{kloost}.
\end{example}

\begin{proof} By Lemma \ref{crt}, we can factor the sum as a product
  of exponential sums over the prime divisors of $q$:
$$ 
\sum_{n \in \Z/q\Z} e_q( f(n) ) = \prod_{p|q} \sum_{n \in \Z/p\Z}
e_p\left( \frac{f(n)}{(q/p)} \right).
$$
Since, for each $p\mid q$, the constant $q/p$ is an invertible element
in $\Z/p\Z$, we see that it suffices to prove the estimates
\begin{align}
\sum_{n \in \Z/p\Z} e_p(f(n))&\ll p,\quad\text{when  $p|f'$ (which implies $p|f''$)}\\
\label{ram}
\sum_{n \in \Z/p\Z} e_p(f(n))& \ll 1,\quad\text{when $p|f''$ but $p \notdivides f'$}\\
\label{epf}
\sum_{n \in \Z/p\Z} e_p(f(n)) &\ll \sqrt{p},\quad\text{otherwise,}
\end{align}
where the implied constants, in all three cases, depend only on
$\deg(P)$ and $\deg(Q)$. Thus we may always assume that $p\mid q$ is
large enough in terms of $\deg(P)$ and $\deg(Q)$, since otherwise the
result is trivial.

The first bound is clear, with implied constant equal to $1$.
For~(\ref{ram}), since $p|f''$, we conclude from Lemma \ref{fund}
(since $p$ is large enough) that there exists $c \in \Z/p\Z$ such that
$p|f'-c$. Since $p \notdivides f'$, we see that $c$ must be non-zero.
Then, since $f'-c = (f-ct)'$, another application of Lemma \ref{fund}
shows that there exists $d \in \Z/p\Z$ such that $p|f-ct-d$.  This
implies that $f(n) = cn+d\ (p)$ whenever $n$ is not a pole of $f\
(p)$. The denominator $Q$ of $f$ (which is non-zero modulo $p$ by
assumption) has at most $\deg(Q)$ zeroes, and therefore we see that
$e_p(f(n)) = e_p(cn+d)$ for all but $\leq \deg(Q)$ values of $n \in
\Z/p\Z$.  Thus (by orthogonality of characters) we get
$$ 
\Bigl|\sum_{n \in \Z/p\Z} e_p(f(n))\Bigr|=\Bigl|\sum_{n \in \Z/p\Z}
e_p(f(n))-\sum_{n \in\Z/p\Z} e_p( cn+d)\Bigr|\leq \deg(Q).
$$

Now we prove \eqref{epf}. This estimate follows immediately from Lemma
\ref{prime-exp}, except if the reduction $\tilde{f}\in\F_p(X)$ of $f$
modulo $p$ satisfies an identity
\begin{equation}\label{tptq}
  \tilde{f} = g^p - g + c
\end{equation}
for some $g \in \F_p(X)$ and $c \in \F_p$. We claim that, if $p$ is
large enough, this can only happen if $p\mid f'$, which contradicts
the assumption of~(\ref{epf}) and therefore concludes the proof.

To prove the claim, we just observe that if~(\ref{tptq}) holds, then
any pole of $g$ would be a pole of $\tilde{f}$ of order $p$, and thus
$g$ must be a polynomial if $p$ is large enough. But then~(\ref{tptq})
implies that $\tilde{f}-c$ either vanishes or has degree at least $p$.  If $p$ is large enough, the latter conclusion is not possible, and thus $p\mid f'$.
\end{proof}

We also need a variant of the above proposition, which is a slight
refinement of an estimate appearing in the proof of \cite[Proposition
11]{zhang}:

\begin{lemma}\label{dork} Let $d_1,d_2$ be squarefree
  integers, so that $[d_1,d_2]$ is squarefree, and let
  $c_1,c_2,l_1,l_2$ be integers.  Then there exists $C\geq 1$ such
  that
$$ 
\Bigl|\sum_{n \in \Z/[d_1,d_2]\Z} e_{d_1}\left( \frac{c_1}{n+l_1}
\right) e_{d_2}\left( \frac{c_2}{n+l_2} \right) \Bigr|\leq
C^{\Omega([d_1,d_2])} (c_1,\delta_1) (c_2,\delta_2) (d_1,d_2)
$$ 
where $\delta_i := d_i / (d_1,d_2)$ for $i=1,2$.
\end{lemma}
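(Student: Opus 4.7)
The plan is to work prime-by-prime using the Chinese Remainder Theorem and then multiply the local bounds together. Since $q := [d_1,d_2]$ is squarefree, every prime $p\mid q$ falls into exactly one of three classes: (A) $p\mid \delta_1$ and $p\nmid d_2$; (B) $p\mid \delta_2$ and $p\nmid d_1$; (C) $p\mid (d_1,d_2)$. Applying Lemma~\ref{crt} to each of the two factors $e_{d_1}$ and $e_{d_2}$ separately, and then regrouping by prime, I will obtain the factorisation
\[
\sum_{n\in\Z/q\Z} e_{d_1}\Bigl(\frac{c_1}{n+l_1}\Bigr)e_{d_2}\Bigl(\frac{c_2}{n+l_2}\Bigr) \;=\;\prod_{p\mid q} S_p,
\]
where the local sum $S_p$ depends only on the $p$-parts of the data. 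Concretely, writing $\alpha_i(p) := c_i\,\overline{d_i/p}\pmod p$ whenever $p\mid d_i$, one has: in case (A), $S_p=\sum_{n\in\Z/p\Z}e_p(\alpha_1(p)/(n+l_1))$; in case (B), the symmetric sum with $\alpha_2(p)$ and $l_2$; in case (C), the two-term sum $S_p=\sum_{n\in\Z/p\Z}e_p\bigl(\alpha_1(p)/(n+l_1)+\alpha_2(p)/(n+l_2)\bigr)$.

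For cases (A) and (B) I will evaluate $S_p$ as a genuine Ramanujan sum: substituting $m=n+l_i$ and using our convention that $e_p(\cdot/0)=0$ collapses $S_p$ to $\sum_{k\in(\Z/p\Z)^{\times}} e_p(\alpha_i(p)\,k)$, which equals $p-1$ if $p\mid c_i$ and $-1$ otherwise. In either subcase we obtain $|S_p|\leq (c_i,p)$, and since $(d_i/p)$ is invertible mod $p$ this matches the prime-local version of the $(c_1,\delta_1)$ and $(c_2,\delta_2)$ factors in the claimed bound. For case (C) I will simply use the trivial estimate $|S_p|\le p$, which already matches the prime-local contribution $p$ to the factor $(d_1,d_2)$; no finer cancellation is needed for this lemma. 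Multiplying the three families of local bounds and absorbing universal constants into $C^{\Omega(q)}$ gives exactly the bound stated.

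The only mildly delicate point is the bookkeeping of the Chinese Remainder Theorem applied to a rational function whose denominator $n+l_i$ may vanish modulo some prime divisor of the modulus; here the conventions fixed in Section~\ref{ssec-exp-prelim} (values of $e_q$ on $\P^1(\Z/q\Z)$ with the point at infinity sent to $0$) ensure that both sides of the CRT factorisation agree term-by-term in $n\in\Z/q\Z$, and therefore that the factorisation $\prod_p S_p$ is unconditional. Once that is in place, the argument above is routine, and no appeal to the Weil bound (Lemma~\ref{prime-exp} or Proposition~\ref{weil}) is necessary because the claimed estimate is of Ramanujan strength, not of square-root strength.
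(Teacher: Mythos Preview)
Your proof is correct and follows essentially the same approach as the paper's: reduce to primes via the Chinese Remainder Theorem, use the trivial bound $|S_p|\le p$ at primes dividing $(d_1,d_2)$, and recognise a Ramanujan sum at the remaining primes. (One small bookkeeping point: by the paper's conventions $e_p(0/0)=1$ rather than $0$, so in your cases (A)/(B) with $p\mid c_i$ one actually has $S_p=p$ rather than $p-1$; this does not affect the bound $|S_p|\le (c_i,p)$.)
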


\begin{proof} As in the proof of Proposition \ref{weil},
  we may apply Lemma \ref{crt} to reduce to the case where
  $[d_1,d_2]=p$ is a prime number.  The bound is then trivial if
  $(c_1,\delta_1)$, $(c_2, \delta_2)$, or $(d_1,d_2)$ is equal to $p$, so we may
  assume without loss of generality that $d_1 = p$, $d_2 = 1$, and
  that $c_1$ is coprime to $p$.  We then need to prove that
$$ 
\sum_{n \in \Z/p\Z} e_p\left( \frac{c_1}{n+l} \right)
 \ll 1,
$$
but this is clear since, by the change of variable $m = c_1 / (n+l)$,
this sum is just a Ramanujan sum.
\end{proof}

\subsection{Incomplete exponential sums}\label{incomplete-exp-sec}

The bounds in the previous section control ``complete'' additive
exponential sums in one variable in $\Z/q\Z$, by which we mean sums
where the variable $n$ ranges over all of $\Z/q\Z$.  For our
applications, as well as for many others, one needs also to have good
estimates for ``incomplete'' versions of the sums, in which the
variable $n$ ranges over an interval, or more generally over the
integers weighted by a coefficient sequence which is (shifted) smooth
at some scale $N$.

The most basic technique to obtain such estimates is the method of
completion of sums, also called the P\'olya-Vinogradov method. In
essence, this is an elementary application of discrete Fourier
analysis, but the importance of the results cannot be overestimated.
\par
We begin with some facts about the discrete Fourier transform. Given a
function 
$$
f\colon\Z/q\Z\ra\C
$$ 
we define its \emph{normalized Fourier transform} $\FT_q (f)$ to be
the function on $\Z/q\Z$ given by
\begin{equation}\label{ftq-def}
\FT_q (f)(h):=\frac{1}{q^{1/2}}\sum_{x\in\Zz/q\Zz}f(x)e_q(hx).
\end{equation}
The normalization factor ${1}/{q^{1/2}}$ is convenient
because the resulting Fourier transform operator is then unitary with
respect to the inner product 
$$
\langle f,g\rangle:=\sum_{x\in \Z/q\Z}f(x)\overline{g(x)}
$$
on the space of functions $\Z/q\Z\ra \C$. In other words, the
Plancherel formula
$$
\sum_{x\in\Zz/q\Zz}f(x)\ov{g(x)}=\sum_{h\in\Zz/q\Zz}\FT_q(f)(h)\ov{\FT_q(g)(h)}
$$
holds for any functions $f$, $g\colon \Z/q\Z\ra \C$. Furthermore, by
the orthogonality of additive characters, we have the discrete Fourier
inversion formula
$$
\FT_q(\FT_q (f))(x)=f(-x)
$$
for all $x\in\Z/q\Z$.

\begin{lemma}[Completion of sums]\label{com} 
  Let $M\geq 1$ be a real number and let $\psi_M$ be a function on
  $\R$ defined by
$$
\psi_M(x) = \psi\left(\frac{x-x_0}{M}\right)
$$
where $x_0\in \R$ and $\psi$ is a smooth function supported on $[c,C]$
satisfying 
$$
|\psi^{(j)}(x)| \ll \log^{O(1)} M
$$
for all fixed $j \geq 0$, where the implied constant may depend on
$j$.  Let $q\geq 1$ be an integer, and let
$$
M' := \sum_{m\geq 1}\psi_M(m)\ll M(\log M)^{O(1)}.
$$
We have:
\begin{enumerate}[(i)]
\item If $f\colon \Z/q\Z \to \C$ is a function, then
\begin{equation}\label{complete-1}
  \Bigl|\sum_m \psi_M(m) f(m) - \frac{M'}{q} 
  \sum_{m \in \Z/q\Z} f(m)\Bigr| 
  \ll  q^{1/2}(\log M)^{O(1)}\sup_{h \in \Z/q\Z \backslash \{0\}}|\FT_q(f)(h)|.
\end{equation}
In particular, if $M \ll q(\log M)^{O(1)}$, then
\begin{equation}\label{complete-2}
  \Bigl|\sum_m \psi_M(m) f(m)\Bigr| \ll q^{1/2}(\log M)^{O(1)} \|\FT_q(f)\|_{\ell^\infty(\Z/q\Z)}.
\end{equation}
We also have the variant
\begin{multline}\label{complete-1b}
  \Bigl|\sum_m \psi_M(m) f(m) - \frac{M'}{q} \sum_{m \in \Z/q\Z}
  f(m)\Bigr| \ll (\log M)^{O(1)} \frac{M}{q^{1/2}} \sum_{0 < |h|\leq
    qM^{-1+\eps}} |\FT_q(f)(h)|
  \\
  + M^{-A} \sum_{m \in \Z/q\Z} |f(m)|
\end{multline}
for any fixed $A>0$ and $\eps>0$, where the implied constant depends
on $\eps$ and $A$.
\item If $I$ is a finite index set, and for each $i \in I$, $c_i$ is a
  complex number and $a_i\ (q)$ is a residue class, then for each
  fixed $A>0$ and $\eps>0$, one has
\begin{multline}\label{complete-3}
  \Bigl|\sum_{i \in I} c_i \sum_m \psi_M(m) \onef_{m=a_i\ (q)} -
  \frac{M'}{q} \sum_{i \in I} c_i\Bigr| \ll (\log M)^{O(1)}\frac{M}{q}
  \sum_{0 < |h|\leq  qM^{-1+\eps}} \Bigl|\sum_{i \in I} c_i e_q(a_i h)\Bigr| \\
  + M^{-A} \sum_{i \in I} |c_i|,
\end{multline}
where the implied constant depends on $\eps$ and $A$.
\end{enumerate}
\end{lemma}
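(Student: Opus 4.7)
\medskip

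\noindent\textbf{Proof plan.} The proof is a standard Fourier-analytic ``completion of sums'' argument, building on the unitarity of $\FT_q$ and on Poisson summation (or equivalently direct estimates using the smoothness of $\psi$). I would handle (i) first and then deduce (ii) by a duality/dualization step.

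For part (i), the starting point is the discrete Fourier inversion formula recalled just before the statement, which gives
\[
f(m) \;=\; \frac{1}{q^{1/2}}\sum_{h\in\Z/q\Z}\FT_q(f)(h)\,e_q(-hm),
\]
so that
\[
\sum_m \psi_M(m)\,f(m) \;=\; \frac{1}{q^{1/2}}\sum_{h\in\Z/q\Z}\FT_q(f)(h)\,S(h),
\qquad S(h):=\sum_{m\in\Z}\psi_M(m)\,e_q(-hm).
\]
The contribution of $h=0$ is exactly $\frac{M'}{q}\sum_{x\in\Z/q\Z}f(x)$, which is the main term in \eqref{complete-1} and \eqref{complete-1b}. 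Thus all the work is to estimate $S(h)$ for $h\neq 0$.

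To estimate $S(h)$, I would apply Poisson summation to the Schwartz function $x\mapsto \psi_M(x)e_q(-hx)$. Writing $h$ as an integer in $(-q/2,q/2]$ and using $\psi_M(x)=\psi((x-x_0)/M)$, one obtains
\[
S(h) \;=\; M\sum_{k\in\Z} e\!\left(-x_0\bigl(k+\tfrac{h}{q}\bigr)\right)\,
\widehat{\psi}\!\left(M\bigl(k+\tfrac{h}{q}\bigr)\right),
\]
and the derivative bounds on $\psi$ translate, by integration by parts, into the rapid-decay estimate $|\widehat\psi(s)|\ll_{A}(\log M)^{O(1)}(1+|s|)^{-A}$ for any fixed $A\geq 0$. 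For $0<|h|\leq q/2$, only the term $k=0$ is relevant (the others contribute an acceptable $O(M^{-A})$ tail), yielding
\[
|S(h)|\;\ll\;M(\log M)^{O(1)}\min\!\left(1,\;\bigl(M|h|/q\bigr)^{-A}\right).
\]
Summing this bound over $h\neq 0$ splits naturally at $|h|\approx qM^{-1+\eps}$: the ``small $h$'' range is $\ll q/M$ in number and contributes the explicit sum $\sum_{0<|h|\le qM^{-1+\eps}}|\FT_q(f)(h)|$ weighted by $M/q^{1/2}$, which is the main error term in \eqref{complete-1b}. The ``large $h$'' tail uses the rapid decay and the crude bound $|\FT_q(f)(h)|\le q^{-1/2}\sum_m|f(m)|$ to yield the $M^{-A}\sum_m|f(m)|$ term. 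Bounding every $|\FT_q(f)(h)|$ uniformly by $\|\FT_q(f)\|_{\ell^\infty}$ gives~\eqref{complete-1}, and absorbing the main term in the case $M\ll q(\log M)^{O(1)}$ (using $|\FT_q(f)(0)|\le\|\FT_q(f)\|_{\ell^\infty}$) yields~\eqref{complete-2}.

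For part (ii), the idea is to package the data into a single function on $\Z/q\Z$: set $F(m):=\sum_{i\in I\colon a_i\equiv m\,(q)}c_i$, so that $\sum_i c_i\sum_m\psi_M(m)\onef_{m=a_i\ (q)}=\sum_m\psi_M(m)F(m)$ and $\sum_m F(m)=\sum_i c_i$. A direct computation from the definition \eqref{ftq-def} gives
\[
\FT_q(F)(h)\;=\;\frac{1}{q^{1/2}}\sum_{i\in I}c_i\,e_q(ha_i),
\]
and $\sum_m|F(m)|\le\sum_i|c_i|$. Substituting these identities into~\eqref{complete-1b} and tidying up the constants yields \eqref{complete-3} immediately. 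The only place requiring genuine care is the control on $S(h)$ from Poisson summation, which is where one must track the derivative bounds on $\psi$ with the $(\log M)^{O(1)}$ loss, and the separation of ranges at $|h|\approx qM^{-1+\eps}$; the rest is routine bookkeeping.
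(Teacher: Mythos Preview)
Your proposal is correct and follows essentially the same approach as the paper: both reduce to the expansion $\sum_m \psi_M(m)f(m) = q^{-1/2}\sum_{h}\FT_q(f)(h)\,S(h)$ with $S(h)=\sum_m\psi_M(m)e_q(-hm)$, isolate the $h=0$ term as the main term, bound $S(h)$ for $h\neq 0$ via Poisson summation and the derivative bounds on $\psi$, and then derive (ii) by taking $f(m)=\sum_{i:a_i\equiv m\,(q)}c_i$. The only cosmetic difference is that the paper reaches the same expansion by first periodizing $\psi_M$ modulo $q$ and invoking Plancherel, whereas you plug in Fourier inversion for $f$ directly; these are equivalent reformulations of the same identity.
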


\begin{remark} One could relax the derivative bounds on $\psi$ to $|\psi^{(j)}(x)| \ll M^{\eps_j}$ for various small fixed $\eps_j>0$, at the cost of similarly worsening the various powers of $\log M$ in the conclusion of the lemma to small powers of $M$, and assuming the $\eps_j$ small enough depending on $\eps$ and $A$; however this variant of the lemma is a little tricky to state, and we will not have use of it here.
\end{remark}

\begin{proof}  Define the function
$$
\psi_{M,q}(x)=\sum_{n\in\Zz}\psi_M(x+qn).
$$
This is a smooth $q$-periodic function on $\R$. By periodization and
by the Plancherel formula, we have
\begin{equation}\label{plancherel-result}
  \sum_m \psi_M(m) f(m)=\sum_{x\in\Zz/q\Zz}f(x)\psi_{M,q}(x)=\sum_{h\in\Zz/q\Zz}\FT_q(f)(h)\FT_q(\psi_{M,q})(-h).
\end{equation}
The contribution of the frequency $h=0$ is given by
$$
\FT_q(f)(0)\FT_q(\psi_{M,q})(0)=\frac{1}{q} \sum_{m \in \Z/q\Z}f(m)
\sum_{m\in\Z/q\Z} \psi_{M,q}(m)=\frac{M'}{q} \sum_{m \in \Z/q\Z}f(m).
$$
We now consider the contribution of the non-zero frequencies. For
$h\in \Z/q\Z$, the definition of $\psi_{M,q}$ leads to
$$
q^{1/2}\FT_q(\psi_{M,q})(-h)=\Psi\Bigl(\frac{h}{q}\Bigr),
$$
where the function $\Psi$ is defined on $\R/\Z$ by
$$
  \Psi(y):=\sum_m \psi_M(m) e(-my).
$$
This is a smooth function $\Psi\colon \Rr/\Zz\ra \C$. We then have
$$
\Bigl|
\sum_{h\in\Zz/q\Zz\backslash\{0\}}\FT_q(f)(h)\FT_q(\psi_{M,q})(-h)
\Bigr|\leq \sup_{h\in \Z/q\Z\backslash\{0\}}
|\FT_q(f)(h)|q^{-1/2}\sum_{\substack{-q/2<h\leq q/2\\h\neq 0}}
\Bigl|\Psi\Bigl(\frac{h}{q}\Bigr)\Bigr|.
$$
Applying the Poisson summation formula and the definition
$\psi_M(x)=\psi((x-x_0)/M)$, we have
$$ 
\Psi(y) = M \sum_{n\in \Z} \hat \psi(M(n+y)) e(-(n+y)x_0)
$$
where
$$ 
\hat \psi(s) =\int_\R \psi(t) e(-st)\ dt.
$$
By repeated integrations by parts, the assumption on the size of the
derivatives of $\psi$ gives the bounds
$$
 |\hat \psi(s)| \ll (\log M)^{O(1)} (1+|s|)^{-A}
$$
for any fixed $A \geq 0$, and therefore
\begin{equation}\label{eq-decay-psi}
|\Psi(y)|\ll M (\log M)^{O(1)} (1 + |y|M)^{-A}
\end{equation}
for any fixed $A \geq 0$ and any $-1/2<y\leq 1/2$. Taking, e.g.,
$A=2$, we get
$$
\sum_{\substack{-q/2<h\leq q/2\\h\neq 0}}
\Bigl|\Psi\Bigl(\frac{h}{q}\Bigr)\Bigr| \ll (\log M)^{O(1)}\sum_{1\leq
  h\leq q/2}\frac{M}{(1+|h|M/q)^2} \ll q(\log M)^{O(1)},
$$
and therefore we obtain \eqref{complete-1}. From this,
\eqref{complete-2} follows immediately.

We now turn to \eqref{complete-1b}. Fix $A>0$ and $\eps>0$. Arguing as
above, we have
\begin{multline*}
  \left|\sum_m \psi_M(m) f(m) - \frac{M'}{q} \sum_{m \in \Z/q\Z}
    f(m)\right|\leq \frac{1}{q^{1/2}}\sum_\stacksum{-q/2<h\leq
    q/2}{h\neq 0}\Bigl|\Psi\Bigl(\frac{h}{q}\Bigr)\Bigr|
  |\FT_q(f)(h)| \\
  \ll (\log M)^{O(1)}\frac{M}{q^{1/2}} \sum_{0 < |h| \leq
    qM^{-1+\eps}}
  |\FT_q(f)(h)| \\
  + (\log M)^{O(1)}\sum_{n \in\Z/q\Z} |f(n)| \sum_{|h| > q M^{-1+\eps}}
  \frac{M}{q(1+|h|M/q)^{A}}
\end{multline*}
Changing $A$ to a large value, we conclude that
\begin{multline*}
  \left|\sum_m \psi_M(m) f(m) - \frac{M'}{q} \sum_{m \in \Z/q\Z}
    f(m)\right| \ll Mq^{-1/2}(\log M)^{O(1)} \sum_{0 < |h| \leq
    qM^{-1+\eps}} |\FT_q(f)(h)| \\ + M^{-A}\sum_{n \in\Z/q\Z}
  |f(n)|,
\end{multline*}
as claimed.

Finally, the claim (ii) follows immediately from \eqref{complete-1b}
by setting
$$
f(m) := \sum_{\substack{i \in I\\ a_i = m\ (q)}} c_i,\quad\text{ so that }\quad
\FT_q(f)(h)=\frac{1}{\sqrt{q}}\sum_{i\in I}c_ie_q(a_ih).
$$
\end{proof}

\begin{remark}
  In Section~\ref{typeiii-sec}, we will use a slightly refined version
  where the coefficients $\Psi(h/q)$ above are not estimated
  trivially.
\end{remark}

By combining this lemma with Proposition \ref{weil}, we can obtain
non-trivial bounds for incomplete exponential sums of the form
$$ 
\sum_n \psi_N(n) e_q(f(n))
$$
for various moduli $q$, which are roughly of the shape
$$ 
\sum_n \psi_N(n) e_q(f(n)) \ll q^{1/2+\eps}
$$
when $N \ll q$. A number of bounds of this type were used by Zhang~\cite{zhang} 
to obtain his Type I and Type II estimates.  However, it
turns out that we can improve this bound for certain regimes of $q,N$
when the modulus $q$ is smooth, or at least densely divisible, by
using the ``$q$-van der Corput $A$-process'' of Heath-Brown
\cite{heath-hybrid} and Graham-Ringrose \cite{graham}. This method was
introduced 
to handle incomplete \emph{multiplicative} character sums, but it is
also applicable to incomplete additive character sums.  It turns out
that these improved estimates lead to significant improvements in the
Type I and Type II numerology over that obtained in \cite{zhang}.

Here is the basic estimate on incomplete one-dimensional exponential
sums that we will need for the Type I and Type II estimates.  Essentially the same bounds were obtained in \cite[Theorem 2]{hb-large}.

\begin{proposition}[Incomplete additive character sums]\label{inc} Let
  $q$ be a squarefree integer, and let $f = \frac{P}{Q}\in \Q(X)$ with
  $P$, $Q\in\Z[X]$, such that the degree of $Q\
  (p)$ is equal to $\deg(Q)$ for all $p\mid q$. Assume that
  $\deg(P)<\deg(Q)$.  Set $q_1 := q/(f,q)$.  Let further $N \geq 1$ be
  given with $N\ll q^{O(1)}$ and let $\psi_N$ be a function on $\R$
  defined by
$$
\psi_N(x) = \psi\left(\frac{x-x_0}{N}\right)
$$
where $x_0\in \R$ and $\psi$ is a smooth function with compact support
satisfying
$$
|\psi^{(j)}(x)| \ll \log^{O(1)} N
$$
for all fixed $j \geq 0$, where the implied constant may depend on
$j$. 
\begin{enumerate}[(i)]
\item (Poly\'a-Vinogradov + Ramanujan-Weil) We have the bound
\begin{equation}\label{vdc-0}
  \sum_n \psi_N(n) e_q(f(n))\ll q^{\eps}\Bigl(q_1^{1/2} +
  \frac{N}{q_1}\onef_{N \geq q_1} 
  \Bigl|\sum_{n \in \Z/q_1\Z} e_{q_1}(f(n) / (f,q) )\Bigr|\Bigr)
\end{equation}
for any $\eps>0$. In particular, lifting the $\Z/q_1\Z$ sum to a $\Z/q\Z$ sum, we have
\begin{equation}\label{vdc-0-alt}
  \sum_n \psi_N(n) e_q(f(n))\ll q^{\eps}\Bigl(q^{1/2} +
  \frac{N}{q} 
  \Bigl|\sum_{n \in \Z/q\Z} e_{q}(f(n) )\Bigr|\Bigr)
\end{equation}
\item (one van der Corput + Ramanujan-Weil) If $q = rs$, then we have the
  additional bound
\begin{equation}\label{vdc-1}
  \sum_n \psi_N(n) e_q(f(n)) \ll q^{\eps}\left( \Bigl(N^{1/2}r_1^{1/2} 
  + N^{1/2} s_1^{1/4}\Bigr) +
  \frac{N}{q_1}\onef_{N \geq q_1} 
  \Bigl|\sum_{n \in \Z/q_1\Z} e_{q_1}(f(n) / (f,q) )\Bigr|\right)
\end{equation}
for any $\eps>0$, where $r_1 := (r,q_1)$ and $s_1:=(s,q_1)$.  In particular, we have
\begin{equation}\label{vdc-1-alt}
  \sum_n \psi_N(n) e_q(f(n)) \ll q^{\eps}\left( \Bigl(N^{1/2}r^{1/2} 
  + N^{1/2} s^{1/4}\Bigr) +
  \frac{N}{q} 
  \Bigl|\sum_{n \in \Z/q\Z} e_{q}(f(n) )\Bigr|\right).
\end{equation}
\end{enumerate}
In all cases the implied constants depend on $\eps$, $\deg(P)$,
$\deg(Q)$ and the implied constants in the estimates for the
derivatives of $\psi$.
\end{proposition}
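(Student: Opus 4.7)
My plan is to establish (i) by a direct application of completion of sums (Lemma \ref{com}) combined with the Ramanujan-Weil bound (Proposition \ref{weil}), and then to derive (ii) from (i) via the Heath-Brown/Graham-Ringrose $q$-van der Corput $A$-process, which reduces the sum to shorter exponential sums of the same shape, but with modulus $s$ in place of $q$.

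For (i), the key observation is that $e_q(f(n)) = e_{q_1}(g(n))$ where $g := f/(f,q)$ and $q_1 = q/(f,q)$; this function is $q_1$-periodic, and by construction $(g, q_1) = 1$ in the sense of Lemma \ref{fund} (after absorbing the finitely many small prime exceptions into the implied constant). I would apply \eqref{complete-1b} of Lemma \ref{com} with modulus $q_1$ and smooth weight $\psi_N$. The $h=0$ term yields exactly the ``main term'' $(N'/q_1)\sum_{m\in\Z/q_1\Z}e_{q_1}(g(m))$ in \eqref{vdc-0}; when $N < q_1$ one instead uses \eqref{complete-2}, which folds this contribution into the sup and produces the indicator $\onef_{N\geq q_1}$. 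Each nonzero Fourier coefficient is then controlled by Proposition \ref{weil} applied to $g(X) + hX$:
\begin{equation*}
|\FT_{q_1}(e_{q_1}(g))(h)| \ll q_1^{\eps}\,\frac{(g'+h, q_1)}{(g'', q_1)^{1/2}}.
\end{equation*}
Since $(g, q_1)=1$, Lemma \ref{fund}(ii) gives $(g', q_1) = 1$, and Lemma \ref{ram-avg} controls $\sum_{0 < |h| \leq q_1 N^{-1+\eps}}(g'+h, q_1)$. Combined with the prefactor $N/q_1^{1/2}$ from \eqref{complete-1b}, this yields the claimed $q_1^{1/2+\eps}$ completion bound.

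For (ii), since $q=rs$ is squarefree, $(r,s)=1$, and CRT factors $e_q(\alpha) = e_r(\overline{s}\alpha)\,e_s(\overline{r}\alpha)$. Introduce a parameter $K \in [1, N/r]$ and perform the $q$-van der Corput shift $n \mapsto n + rk$ for $k = 0,\ldots, K-1$. Since $n + rk \equiv n \pmod r$, the $e_r$ factor is independent of $k$, giving
\begin{equation*}
K\sum_n \psi_N(n) e_q(f(n)) = \sum_n e_r(\overline{s}f(n))\sum_{k=0}^{K-1}\psi_N(n+rk)\,e_s(\overline{r}f(n+rk)).
\end{equation*}
Applying Cauchy-Schwarz in $n$ (over a support of length $\ll N$) and expanding the square reduces the problem to bounding off-diagonal sums
\begin{equation*}
S_h = \sum_n \psi_N(n)\,\overline{\psi_N(n+rh)}\,e_s(g_h(n)),\qquad g_h(X) := \overline{r}\bigl(f(X) - f(X+rh)\bigr),
\end{equation*}
for $0 < |h| < K$, together with a diagonal contribution of size $KN$. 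Applying (i) to each $S_h$ with modulus $s$, then summing over $h$ and optimizing $K$ (balancing $KN$ against the sum of $|S_h|$), yields the two completion terms $N^{1/2} r_1^{1/2}$ (from the diagonal, where $r_1$ accounts for the $r$-part of the effective modulus $q_1$) and $N^{1/2}s_1^{1/4}$ (from the averaged Ramanujan-Weil bound on the off-diagonal). The surviving ``main term'' of \eqref{vdc-1} is inherited directly from (i) applied to the original sum, used whenever it dominates the vdC-based bound.

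The main obstacle lies in the arithmetic analysis of $g_h \pmod s$ in (ii): one must check that $(g_h, s)$ is small for a density-one set of $h$ so that Proposition \ref{weil}, applied via (i), yields a nontrivial bound on the inner complete sum, and that the $h$-average of these bounds actually produces the clean exponent $s_1^{1/4}$. This is done using Lemma \ref{fund} together with the Taylor-type identity $g_h(X) = -h f'(X) + \cdots$, which relates $(g_h, s)$ to $(h, s_1)$ and divisor data of $f'$, after which Lemma \ref{ram-avg} handles the average over $|h| < K$ and the final optimization of $K$ in terms of $r_1$ and $s_1$ delivers the precise bound \eqref{vdc-1}.
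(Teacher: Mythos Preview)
Your proposal is correct and follows essentially the same approach as the paper: reduction to $(f,q_1)=1$, completion of sums plus Proposition~\ref{weil} for (i), and the $q$-van der Corput shift by multiples of $r$ with Cauchy--Schwarz and a second appeal to (i) for (ii). The only point worth flagging is the key divisibility $(s,g_h)\mid(s,h)$: the paper proves this not via a Taylor expansion but by observing that $p\mid g_h$ with $p\nmid h$ forces $f(X)\equiv f(X+rh)\pmod p$, hence (iterating, since $rh$ generates $\Z/p\Z$) $f$ is invariant under all additive shifts mod $p$ and therefore constant, contradicting $(f,q)=1$ via the degree hypothesis.
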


\begin{remark} The estimates obtained by completion of sums are
  usually inefficient in the regime $M = o(q)$, and they become
  trivial for $M\ll q^{1/2}$. For instance, when $f$ is
  bounded in magnitude by $1$, the trivial bound for the right-hand
  side of \eqref{complete-2} is $q$, whereas the trivial bound for the
  left-hand side is of size about $M$, which means that one needs a
  cancellation at least by a factor $q/M$ in the right-hand side to
  even recover the trivial bound. This becomes a prohibitive
  restriction if this factor is larger than $\sqrt{M}$.  In this
  paper, this inefficiency is a major source of loss in our final
  exponents (the other main source being our frequent reliance on the
  Cauchy-Schwarz inequality, as each invocation of this inequality
  tends to halve all gains in exponents arising from application of
  the Riemann Hypothesis over finite fields). It would thus be of
  considerable interest to find stronger estimates for incomplete
  exponential sums. But the only different (general) method we are
  aware of is the recent ``sliding sum method'' of Fouvry, Kowalski
  and Michel~\cite{sliding}, which however only improves on the
  completion technique when $M$ is very close to $q^{1/2}$, and does
  not give stronger bounds than Lemma~\ref{com} and Proposition~\ref{inc} in most
  ranges of interest. (Note however that uniformity of estimates is
  often even more crucial to obtaining good results, and for this
  purpose, the completion techniques are indeed quite efficient.)
\end{remark}

\begin{proof} 
We begin with some technical reductions.  First of all, we may assume
that $q$ has no prime factor smaller than any fixed $B$ depending on
$\deg(P)$ and $\deg(Q)$, as the general case then follows by factoring out a bounded factor from $q$ and splitting the summation over $n$ into a bounded number of pieces.

Second, we also observe that, in all cases, we may replace $f$ by
$f/(f,q)$ and $q$ by $q_1$ and (in the case when $q=rs$) $r$ by $r_1$
and $s$ by $s_1$, since if we write $q=q_1q_2$ we have
$$
e_q(f(n))=e_{q_1}\Bigl(\frac{P(n)}{q_2Q(n)}\Bigr).
$$
Thus we can reduce to a situation where $(f,q)=1$, so $q=q_1$, $r=r_1$
and $s=s_1$. In this case, the condition $\deg(P)<\deg(Q)$ implies
also $(f',q)=(f'',q)=1$ by Lemma \ref{fund}(ii), provided $q$ has no
prime factor less than some constant depending on $\deg(P)$ and
$\deg(Q)$, which we may assume to be the case, as we have seen.

We now establish \eqref{vdc-0}.  We apply~(\ref{complete-1b}), and put
the ``main term'' with $h=0$ in the right-hand side, to get
$$
\sum_n \psi_N(n) e_q(f(n)) \ll \frac{N^{1+\eps}}{q} \sum_{|h|\leq
  qN^{-1+\eps}}\Bigl|\sum_{n \in \Z/q\Z} e_q(f(n) + hn)\Bigr| + 1
$$
for $\eps>0$ arbitrarily small (by selecting $A$ large enough
in~(\ref{complete-1b}) using the assumption $N\ll q^{O(1)}$).

If $N<q$, Proposition~\ref{weil} applied for all $h$ gives
$$
\sum_n \psi_N(n) e_q(f(n)) \ll \frac{N^{1+\eps}}{q^{1/2}}
\sum_{0\leq |h|\leq qN^{-1+\eps}}(f'+h,q).
$$
Since $(f'',q)=1$, we also have $(f'+h,q)=1$, and therefore
$$
\sum_n \psi_N(n) e_q(f(n)) \ll q^{1/2}N^{2\eps}
$$
which implies~(\ref{vdc-0}).  If $N\geq q$, on the other hand, we only
apply~(\ref{weil}) for $h \neq 0$, and we get in the same way
$$
\sum_n \psi_N(n) e_q(f(n)) \ll \frac{N^{1+\eps}}{q}
\Bigl|\sum_{n\in\Z/q\Z}e_q(f(n))\Bigr|+ q^{1/2}N^{2\eps},
$$
which is again~(\ref{vdc-0}).



Consider now \eqref{vdc-1}.  We may assume that $N \leq s$, since
otherwise the claim follows simply from \eqref{vdc-0}, and we may
similarly assume that $ r\leq N$, since otherwise we can use
the trivial bound
$$
\sum_n \psi_N(n) e_q(f(n)) \ll N(\log N)^{O(1)}\ll r^{1/2}N^{1/2}(\log
N)^{O(1)}.
$$

Let $K := \lfloor N/r\rfloor$.  Using translation invariance, we can
write
$$
\sum_n \psi_N(n)e_q(f(n))=\frac{1}{K} \sum_n \sum_{k=1}^K
\psi_N(n+kr) e_q(f(n+kr)).
$$
Since $q=rs$, we have 
$$
e_q(f(n+kr)) = e_{r}(\overline{s} f(n))e_{s}( \overline{r} f(n+kr) )
$$
by Lemma \ref{crt} (and periodicity), and hence we obtain
\begin{align*}
  \Bigl|\sum_n \psi_N(n) e_q(f(n))\Bigr| &\leq
  \frac{1}{K} \sum_n \left|\sum_{k=1}^K \psi_N(n+kr) e_{s}( \overline{r} f(n+kr) )\right| \\
  & \ll \frac{N^{1/2}}{K} \left(\sum_n \left|\sum_{k=1}^K \psi_N(n+kr)
      e_{s}( \overline{r} f(n+kr) )\right|^2\right)^{1/2},
\end{align*}
where the factor $N^{1/2}$ arises because the summand is (as a
function of $n$) supported on an interval of length $O(N)$.  Expanding
the square, we obtain
\begin{equation}\label{eq-square-sum}
\Bigl|\sum_n \psi_N(n) e_q(f(n))\Bigr|^2\ll  \frac{N}{K^2}
\sum_{1\leq k,l\leq K}A(k,l),
\end{equation}
where
$$
A(k,l)=\sum_n \psi_N(n+kr) \overline{\psi_N(n+lr)}\
e_{s}\left(\overline{r} (f(n+kr) - f(n+lr) ) \right).
$$
We have
$$
A(k,k)=\sum_n |\psi_N(n+kr)|^2\ll N(\log N)^{O(1)}.
$$
and therefore 
\begin{equation}\label{eq-diag}
\sum_{1\leq k\leq K} |A(k,k)|\ll KN(\log N)^{O(1)}.
\end{equation}

There remains to handle the off-diagonal terms. For each $k\neq l$, we
have
$$
\frac{f(n+kr) - f(n+lr)}{r}=g(n)
$$
where $g=P_1/Q_1\in \Q(X)$ with integral polynomials
\begin{gather*}
P_1(X)=P(X+kr)Q(X+lr)-Q(X+kr)P(X+lr),\\
Q_1(X)=rQ(X+kr)Q(X+lr).
\end{gather*}
Note that $P_1$ and $Q_1$ satisfy the assumptions of~(\ref{vdc-0})
with respect to the modulus $s$ (although they might not be coprime).

We now claim that (provided all prime factors of $q$ are large enough)
we have 
$$
(s,g')\mid (s,k-l)\quad\text{ and }\quad (s,g)\mid (s,k-l).
$$
Indeed, since $\deg(P)<\deg(Q)$ and the degree of the reduction of $Q$
modulo primes dividing $q$ is constant, it is enough to show that
$(s,g)\mid (s,k-l)$ by Lemma~\ref{fund}(ii). So suppose that a prime
$p$ divides $(s,g)$. Then, by change of variable we have
$$
p\mid (s, f(X+(k-l)r)-f(X)).
$$
By induction, we thus have
$$ p\mid (s, f(X+i(k-l)r)-f(X))$$
for any integer $i$.  If $p\nmid k-l$, then $(k-l)r$ generates $\Z/p\Z$ as an additive group, and we conclude that $p\mid (s,f(X+a)-f(X))$ for all $a\in
\Z/p\Z$. This implies that $f\ (p)$ is constant
where it is defined. But since $\deg(P)<\deg(Q)$ holds modulo $p$, for
$p$ large enough in terms of $\deg(Q)$, this would imply that $p\mid
f$ (as in Lemma~\ref{fund}(ii)), contradicting the assumption
$(s,f)=1$. Thus we have $p\mid k-l$, and we conclude $(s,g)\mid
(s,k-l)$, and then $(s,g')\mid (s,k-l)$, as claimed.

By~(\ref{vdc-0}) and Proposition~\ref{weil}, we have
\begin{align*}
  A(k,l)&\ll q^{\eps}\Bigl(s^{1/2}+\frac{N}{s}\onef_{N\geq s/(s,k-l)}
  \Bigl|\sum_{n\in\Z/s\Z} e_s(g(n))\Bigr|\Bigr)\\
  &\ll q^{\eps}\Bigl(s^{1/2}+\frac{N}{s^{1/2}}
  (s,k-l)^{1/2}\onef_{N\geq s/(s,k-l)}\Bigr).
\end{align*}
Summing over $k$ and $l$, we have
\begin{equation}\label{eq-non-diag}
\sumsum_{1\leq k\neq l\leq K} |A(k,l)| \ll
q^{\eps}K^2s^{1/2}+q^{\eps}Ns^{-1/2} \sum_{1\leq k\neq l\leq K}
(s,k-l)^{1/2}\onef_{N\geq s/(s,k-l)}.
\end{equation}
We use the simple bound
$$
\onef_{N\geq s/(s,k-l)} \leq \sqrt{(s,k-l)} \sqrt{\frac{N}{s}}
$$
to estimate the last sum as follows:
\begin{align*}
  Ns^{-1/2}\sum_{1\leq k\neq l\leq K} (s,k-l)^{1/2}\onef_{N\geq
    s/(s,k-l)}&\leq \frac{N^{3/2}}{s}\sum_{1\leq k\neq l\leq K}
  (s,k-l)\\
&\ll N^{3/2}s^{-1}\times K^2q^{\eps}\ll K^2s^{1/2}q^{\eps}
\end{align*}
using Lemma \ref{ram-avg} and the bound $N<s$. We combine this
with~(\ref{eq-diag}) and~(\ref{eq-non-diag}) in the
bound~(\ref{eq-square-sum}) to obtain
$$
\Bigl|\sum_n \psi_N(n) e_q(f(n))\Bigr|^2\ll  q^{\eps}\frac{N}{K^2}
\Bigl(KN(\log N)^{O(1)}+K^2s^{1/2}\Bigr)\ll
q^{\eps}(Nr+Ns^{1/2}),
$$
from which~(\ref{vdc-1}) follows.
\end{proof}

\begin{remark} 
(1) Assuming that $(f,q)=1$, the first bound \eqref{vdc-0}
  is non-trivial (i.e., better than $O(N)$) as long as $N$ is a bit
  larger than $q^{1/2}$. As for \eqref{vdc-1}, we see that in the
  regime where the factorization $q=rs$ satisfies $r\approx
  q^{1/3}\approx s^{1/2}$, the bound is non-trivial in the
  significantly wider range where $N$ is a bit larger than $q^{1/3}$.
\par
(2) The procedure can also be generalized with similar results to more
general $q$-periodic functions than $n\mapsto e_q(f(n))$, and this
will be important for the most advanced Type I estimates (see
Section~\ref{ssec-vdc}). 
\end{remark}

\begin{remark}\label{vdCiterate} One can iterate the above argument
  and show that
\begin{multline*}
  \Bigl|\sum_n \psi_N(n) e_q(f(n)) \Bigr| \ll q^{\eps}\Bigl(
  \sum_{i=1}^{l-1} N^{1-1/2^i} \tilde{r}_i^{1/2^i} +
  N^{1-1/2^{l-1}} \tilde{r}_l^{1/2^l} \\
  + \frac{N}{q_1} \onef_{N \geq q_1} \Bigl|\sum_{n \in \Z/q_1\Z}
    e_{q_1}(f(n) / (f,q) )\Bigr|\Bigr)
\end{multline*}
for any fixed $l \geq 1$ and any factorization $q = r_1 \ldots r_l$,
with $\tilde{r}_i = (r_i,q_1)$; see \cite{graham}, \cite{hb-large}.  However,
we have found in practice that taking $l$ to be $3$ or higher
(corresponding to two or more applications of the $q$-van der Corput
$A$-process) ends up being counterproductive, mainly because the power
of $q$ that one can save over the trivial bound decays exponentially
in $l$.  However, it is possible that some other variation of the
arguments (for instance, taking advantage of the Parseval identity,
which would be a $q$-analogue of the van der Corput $B$-process) may
give further improvements.
\end{remark}

In our particular application, we only need a special case of the
above proposition. This is a strengthening of \cite[Lemma 11]{zhang},
and it shows how an assumption of dense divisibility of a modulus may
be exploited in estimates for exponential sums.

\begin{corollary}\label{dons} 
  Let $N\geq 1$ and let $\psi_N$ be a function on $\R$ defined by
$$
\psi_N(x) = \psi\left(\frac{x-x_0}{N}\right)
$$
where $x_0\in \R$ and $\psi$ is a smooth function with compact support
satisfying
$$
|\psi^{(j)}(x)| \ll \log^{O(1)} N
$$
for all fixed $j \geq 0$, where the implied constant may depend on
$j$.
\par
Let $d_1,d_2$ be squarefree integers, not necessarily coprime. Let
$c_1,c_2,l_1,l_2$ be integers. Let $y\geq 1$ be a real number, and
suppose that $[d_1,d_2]$ is $y$-densely divisible.  Let $d$ be a divisor of $[d_1,d_2]$ and let $a\ (d)$ be any residue
class.
\par
If $N\leq [d_1,d_2]^{O(1)}$, then we have
\begin{multline*}
  \Bigl|\sum_{n = a\ (d)} \psi_N(n) e_{d_1}\left( \frac{c_1}{n+l_1}
  \right) e_{d_2}\left( \frac{c_2}{n+l_2}
  \right)\Bigr| \\
  \ll [d_1,d_2]^{\eps}\Bigl(d^{-1/2} N^{1/2} [d_1,d_2]^{1/6} y^{1/6} +
  d^{-1} \frac{(c_1,\delta'_1)}{\delta'_1} \frac{(c_2,\delta'_2)}{\delta'_2} N\Bigr),
\end{multline*}
for any $\eps>0$, where $\delta_i := d_i/(d_1,d_2)$ and $\delta'_i := \delta_i / (d,\delta_i)$ for $i=1,2$.  We also
have the variant bound
\begin{multline*}
  \Bigl|\sum_{n = a\ (d)} \psi_N(n) e_{d_1}\left( \frac{c_1}{n + l_1}
  \right) e_{d_2}\left( \frac{c_2}{n+l_2} \right)\Bigr| \ll\\
  [d_1,d_2]^{\eps}\Bigl(d^{-1/2} [d_1,d_2]^{1/2} + d^{-1}
  \frac{(c_1,\delta'_1)}{\delta'_1} \frac{(c_2,\delta'_2)}{\delta'_2} N\Bigr).
\end{multline*}
In both cases the implied constant depends on $\eps$.
\end{corollary}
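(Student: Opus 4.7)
The plan is, after substituting $n = a + dm$ to remove the congruence constraint, to combine the two exponentials via the Chinese Remainder Theorem into the form $e_{q'}(F(m))$ required by Proposition~\ref{inc}, then apply that proposition at scale $N' := N/d$ and modulus $q' := [d_1,d_2]/d$. The two bounds of the corollary will then follow from parts (i) and (ii) of Proposition~\ref{inc}, respectively, with the factorization used in part (ii) furnished by the dense divisibility hypothesis.

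We may assume $N \geq d$, since otherwise the sum has $O(1)$ summands and the trivial estimate $\log^{O(1)} N$ is easily absorbed into the claimed bounds. The substitution replaces $\psi_N(n)$ by a smooth function of $m$ at scale $N/d$ satisfying derivative bounds of the same type. Write $D := (d_1,d_2)$ and $\delta_i := d_i/D$, so that $[d_1,d_2] = D\delta_1\delta_2$ with $D,\delta_1,\delta_2$ pairwise coprime and squarefree; since $d \mid [d_1,d_2]$, we have a unique factorization $d = d_D d_{\delta_1} d_{\delta_2}$ with $d_D \mid D$ and $d_{\delta_i} \mid \delta_i$. At each prime $p$ dividing both $d_i$ and $d$, the denominator $a + l_i + dm$ is constant modulo $p$ (we may assume $p \nmid a + l_i$, else the exponential vanishes identically and the sum is $0$), contributing only an $m$-independent phase; at each prime $p \mid d_i$ with $p \nmid d$, the factor $d$ is invertible mod $p$ and the change of variable $m \mapsto dm$ is absorbed. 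Consequently the product of the two exponentials equals a constant (in $m$) times
$$e_{d_1''}\Bigl(\frac{c_1'}{m+l_1''}\Bigr)\, e_{d_2''}\Bigl(\frac{c_2'}{m+l_2''}\Bigr),$$
where $d_i'' := d_i/(d_i,d) = (D/d_D)(\delta_i/d_{\delta_i})$ and $c_i' \equiv c_i\overline{d} \pmod{d_i''}$. A direct calculation yields the key identifications $[d_1'', d_2''] = q'$, $(d_1'', d_2'') = D/d_D$, and $d_i''/(d_1'', d_2'') = \delta_i'$, so the quantities entering Lemma~\ref{dork} agree with the $\delta_i'$ of the corollary.

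Combining via Lemma~\ref{crt}, the exponential product becomes $e_{q'}(F(m))$ for a suitable rational $F$, and Lemma~\ref{dork} gives the complete-sum estimate
$$\Bigl|\sum_{m \in \Z/q'\Z} e_{q'}(F(m))\Bigr| \ll [d_1,d_2]^{\eps}\,(c_1,\delta_1')(c_2,\delta_2')(D/d_D).$$
The elementary identity $(D/d_D)/q = 1/(d\,\delta_1'\,\delta_2')$, immediate from the decomposition $q = D\delta_1\delta_2$ and $d = d_D d_{\delta_1} d_{\delta_2}$, shows that applying the P\'olya--Vinogradov bound~\eqref{vdc-0-alt} at scale $N'$ and modulus $q'$ reproduces the variant bound of the corollary, with the diagonal contribution $(q')^{1/2} = d^{-1/2}[d_1,d_2]^{1/2}$.

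For the first (main) bound we use~\eqref{vdc-1-alt} in place of~\eqref{vdc-0-alt}, which requires a factorization $q' = rs$. Since $q' \mid [d_1,d_2]$ and $[d_1,d_2]$ is $y$-densely divisible, Lemma~\ref{fq}(i) makes $q'$ into a $(yd)$-densely divisible integer. Choosing the parameter $R := ([d_1,d_2]\,y)^{1/3}$, which lies in $[1, y\,[d_1,d_2]]$, dense divisibility produces a factorization $q' = rs$ with $r \leq R$. A brief case split --- separating whether $R/(yd) \geq 1$, in which case $s \leq q'(yd)/R \leq ([d_1,d_2]y)^{2/3}$, or $R/(yd) < 1$, which forces $q \leq y^2 d^3$ and hence $s^{1/4} \leq (q')^{1/4} \leq ([d_1,d_2]y)^{1/6}$ directly --- confirms that both $r^{1/2}$ and $s^{1/4}$ are bounded by $([d_1,d_2]\,y)^{1/6}$ in every case. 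Inserted into~\eqref{vdc-1-alt}, this produces the dominant term $(N/d)^{1/2}([d_1,d_2]\,y)^{1/6} = d^{-1/2} N^{1/2}[d_1,d_2]^{1/6} y^{1/6}$, while the complete-sum contribution is handled exactly as in the variant case. The principal technical subtlety is the bookkeeping around the $\delta_i'$ and $(c_i,\delta_i')$ factors during the substitution and CRT step, and the choice of parameter $R$ above so that the dense divisibility loss $yd$ is distributed symmetrically across the $r^{1/2}$ and $s^{1/4}$ terms --- both are routine once the decomposition of $d$ is fixed.
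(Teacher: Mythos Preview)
Your proof is correct and follows essentially the same strategy as the paper: substitute to remove the congruence, rewrite the product of exponentials via CRT as $e_{q'}(F(m))$, apply Lemma~\ref{dork} for the complete sum, and invoke the two parts of Proposition~\ref{inc} for the two bounds. The paper first treats $d=1$ and then reduces the general case by the substitution $n=n'd+a$, whereas you perform the substitution at the outset and work directly with $q'=[d_1,d_2]/d$; your explicit use of Lemma~\ref{fq}(i) to obtain $(yd)$-dense divisibility of $q'$, together with the case split on $R/(yd)$, makes precise what the paper's phrase ``applying the previous argument'' leaves implicit. Two trivial slips: the word ``respectively'' in your opening sentence reverses which part of Proposition~\ref{inc} gives which bound (though your actual argument has it right), and the claim that the exponential ``vanishes identically'' when $p\mid a+l_i$ requires $p\nmid c_i$ --- but in either case the $p$-factor is an $m$-independent constant of modulus $\leq 1$, so the argument is unaffected.
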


\begin{proof} Denote $q=[d_1,d_2]$. We first consider the case $d=1$,
  so that the congruence condition $n = a\ (d)$ is vacuous.  Since
  $R=y^{1/3}q^{1/3}\leq yq$, the dense divisibility hypothesis implies
  that there exists a factorization $q = rs$ for some integers $r$,
  $s$ such that
$$ 
y^{-2/3}q^{1/3} \leq r \leq y^{1/3} q^{1/3}
$$
and
$$ 
y^{-1/3}q^{2/3} \leq s \leq y^{2/3} q^{2/3}.
$$

Note now that, by the Chinese Remainder Theorem (as in
Lemma~\ref{crt}), we can write
$$
e_{d_1}\left( \frac{c_1}{n+l_1} \right) e_{d_2}\left(
  \frac{c_2}{n+l_2} \right)=
e_q(f(n))
$$
for a rational function $f=P/Q\in \Q(X)$ satisfying the assumptions of
Proposition~\ref{inc} (in particular $\deg(P)<\deg(Q)$).  The first bound follows
immediately from Proposition \ref{inc}(ii), combined with the complete
sum estimate
$$ 
\left|\sum_{n \in \Z/[d_1,d_2]\Z} e_{d_1}\left( \frac{c_1}{n+l_1}
  \right) e_{d_2}\left( \frac{c_2}{n+l_2} \right)\right| \ll
q^{\eps}(c_1,\delta_1) (c_2,\delta_2) (d_1,d_2)
$$
of Lemma \ref{dork}.  The second bound similarly follows from
Proposition \ref{inc}(i).

Now we consider the case when $d> 1$.  Making the substitution $n = n' d + a$ and applying the previous argument (with $N$ replaced by $N/d$, and with suitable modifications to $x_0$ and $f$), we reduce to showing that
$$ 
\left|\sum_{n \in \Z/[d_1,d_2]\Z: n = a\ (d)} e_{d_1}\left( \frac{c_1}{n+l_1}
  \right) e_{d_2}\left( \frac{c_2}{n+l_2} \right)\right| \ll
q^{\eps}(c_1,\delta'_1) (c_2,\delta'_2) (d'_1,d'_2)
$$
where $d'_i := d_i / (d,d_i)$ for $i=1,2$ (note that $\frac{d (d'_1,d'_2)}{[d_1,d_2]} = \frac{1}{\delta'_1 \delta'_2}$).  However, this again follows from Lemma \ref{dork} after making the change of variables $n = n'd+a$.
\end{proof}


\section{Type I and Type II estimates}\label{typei-ii-sec}

Using the estimates of the previous section, we can now prove the Type
I and Type II results of Theorem \ref{newtype}, with the exception of
part \eqref{typeI4} of that theorem in which we only make a
preliminary reduction for now. The rest of the proof of that part,
which depends on the concepts and results of
Section~\ref{deligne-sec}, will be found in
Section~\ref{typei-advanced-sec}.

We recall the statements (see Definition \ref{type-def}).

\begin{theorem}[New Type I and Type II estimates]\label{newtype-i-ii}
  Let $\varpi,\delta,\sigma > 0$ be fixed quantities, let $I$ be a
  bounded subset of $\R$, let $i\geq 1$ be fixed, let $a\ (P_I)$ be a
  primitive congruence class, and let $M,N \gg 1$ be quantities with
\begin{equation}\label{lin-2}
MN \sim x
\end{equation}
and
\begin{equation}\label{slop}
 x^{1/2-\sigma} \lessapprox N \lessapprox x^{1/2}.
\end{equation}
Let $\alpha,\beta$ be coefficient sequences located at scales $M,N$
respectively, with $\beta$ satisfying the Siegel-Walfisz property.
Then we have the estimate
\begin{equation}\label{alphabet-a}
  \sum_{\substack{d \in \DI{I}{i}{x^\delta}\\ d \lessapprox x^{1/2+2\varpi}}} |\Delta(\alpha \star \beta; a\ (d))|
  \ll x \log^{-A} x
\end{equation}
for any fixed $A>0$, provided that one of the following
hypotheses holds:
\begin{enumerate}[(i)]
\item\label{typeI1-again} $i=1$, $54\varpi + 15\delta + 5 \sigma < 1$, and $N \lessapprox x^{1/2-2\varpi-c}$ for some fixed $c>0$.
\item\label{typeI2-again} $i=2$, $56\varpi + 16\delta + 4 \sigma < 1$, and $N \lessapprox x^{1/2-2\varpi-c}$ for some fixed $c>0$.
\item\label{typeI4-again} $i=4$, $\frac{160}{3} \varpi + 16\delta + \frac{34}{9} \sigma < 1$, $64\varpi + 18 \delta + 2\sigma < 1$, and $N \lessapprox x^{1/2-2\varpi-c}$ for some fixed $c>0$.
\item\label{typeII1-again} $i=1$, $68\varpi + 14 \delta < 1$, and $N \gtrapprox x^{1/2-2\varpi-c}$ for some sufficiently small fixed $c>0$.
\end{enumerate}
The proof of the case \eqref{typeI4-again} uses the general form
of the Riemann Hypothesis over finite fields~\cite{WeilII}, but the
proofs of \eqref{typeI1-again}, \eqref{typeI2-again},
\eqref{typeII1-again} only need the Riemann Hypothesis for curves over
finite fields.
\end{theorem}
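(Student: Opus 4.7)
The plan is to follow Linnik's dispersion method in the refined form used by Zhang, with the key new ingredient being the $q$-van der Corput $A$-process (Corollary~\ref{dons}) made possible by the dense divisibility of the moduli. After a dyadic decomposition and insertion of sign coefficients $c_d$ with $|c_d|\le 1$ to remove the absolute values in \eqref{alphabet-a}, the task is to bound
$$ \Sigma := \sum_{d} c_d\, \Delta(\alpha\star\beta; a\ (d)) = \sum_m \alpha(m) \sum_n \beta(n) \sum_d c_d \bigl[ \onef_{mn=a(d)} - \tfrac{1}{\phi(d)}\onef_{(mn,d)=1} \bigr]. $$
Applying Cauchy--Schwarz to the $m$-variable (extracting $\|\alpha\|_2^2 \lessapprox M$) and expanding the square produces a quadruple sum over $n_1,n_2,d_1,d_2$ whose inner $m$-sum is over an arithmetic progression modulo $[d_1,d_2]$, with a main term that will be cancelled by the Siegel--Walfisz property of $\beta$ applied to the small moduli $(d_1,d_2)/(d_1,d_2,n_1-n_2)$.

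Next I would complete the inner $m$-sum by Poisson summation (Lemma~\ref{com}), reducing matters to controlling, for each dual frequency $h \in \Z/[d_1,d_2]\Z$, the two-variable exponential sum in $n_1,n_2$ of shape
$$ \sum_{n_1,n_2} \beta(n_1)\bar\beta(n_2)\, e_{d_1}\!\Bigl(\tfrac{c_1(h,a)}{n_1+\ast}\Bigr) e_{d_2}\!\Bigl(\tfrac{c_2(h,a)}{n_2+\ast}\Bigr), $$
after one works out the precise inverses modulo $d_1$ and $d_2$ arising from the Chinese Remainder Theorem. The contribution with $h=0$ matches the dispersion main term; the diagonal $n_1=n_2$ is handled trivially using $N\lessapprox x^{1/2}$; and the off-diagonal contributions are to be controlled by the incomplete exponential sum bounds of Section~\ref{exp-sec}. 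In Zhang's argument, one applies the P\'olya--Vinogradov bound \eqref{vdc-0} coupled with Weil's bound for Kloosterman-type sums. Our improvement is to apply Corollary~\ref{dons} instead: since $[d_1,d_2]$ is $i$-tuply $x^\delta$-densely divisible, we may factor $[d_1,d_2]=rs$ with $r\approx [d_1,d_2]^{1/3}$, $s\approx [d_1,d_2]^{2/3}$ (for $i=1$), yielding the stronger bound involving $N^{1/2}[d_1,d_2]^{1/6}x^{\delta/6}$ that underlies the $q$-van der Corput $A$-process of Heath-Brown and Graham--Ringrose. For $i=2$, the hereditary nature of dense divisibility (Lemma~\ref{fq}(i)) permits a further factorization of one of the subfactors, yielding the improved numerology in \eqref{typeI2-again}. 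The Type II case \eqref{typeII1-again} is treated analogously, but because $M\sim N\sim x^{1/2}$ the exponential sum after Poisson involves $n_1-n_2$ in a different way, and Weil's bound \eqref{kloost-2} for the resulting Kloosterman-type phase is the relevant input; one still uses $q$-van der Corput via dense divisibility to save beyond the P\'olya--Vinogradov range.

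For case \eqref{typeI4-again}, I would only carry the reduction up to a specific multidimensional exponential sum estimate, whose proof requires the deeper Deligne machinery of Section~\ref{deligne-sec} (extra averaging over the modulus is exploited, so that after Cauchy--Schwarz one needs correlation bounds for families of hyper-Kloosterman-type sums parametrized by the modulus). This further reduction and the resulting bounds will be completed in Section~\ref{typei-advanced-sec}.

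The main obstacle is the bookkeeping in the final optimization. Each of the constraints $54\varpi+15\delta+5\sigma<1$, $56\varpi+16\delta+4\sigma<1$ and $68\varpi+14\delta<1$ comes from balancing four distinct sources of loss: (a) the length $\sim [d_1,d_2]/M$ of the Poisson dual sum in $h$; (b) the factorization exponent in Corollary~\ref{dons}, which depends on how many times dense divisibility is invoked; (c) the trivial off-diagonal contribution from $(n_1,n_2)$ pairs where the exponential-sum cancellation degenerates (controlled by the $(c_i,\delta_i')$ factors in Corollary~\ref{dons} and Lemma~\ref{ram-avg}); and (d) the cost $M^{1/2}$ from Cauchy--Schwarz. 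Each invocation of Cauchy--Schwarz halves the gains from square-root cancellation, so one must be careful not to apply it more times than necessary; this is precisely what determines the gain from $i=1$ to $i=2$ in the Type I numerology.
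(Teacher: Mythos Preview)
Your proposal misses the central structural step that makes the argument work: \emph{before} the first Cauchy--Schwarz in $m$, one must use the dense divisibility of $d$ to factor $d=qr$ with $r$ chosen just below $N$ (specifically $x^{-3\eps-\delta}N\lessapprox r\lessapprox x^{-3\eps}N$), keep $r$ \emph{outside} the Cauchy--Schwarz, and only duplicate the $q$-variable. In your scheme, Cauchy--Schwarz produces pairs $(d_1,d_2)$ with $[d_1,d_2]$ generically of size $D^2\approx x^{1+4\varpi}$, so completing the $m$-sum (length $M\lessapprox x^{1/2+\sigma}$) to this modulus loses a factor $[d_1,d_2]/M\approx x^{1/2}$, which is fatal. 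In the paper's setup the modulus after Cauchy--Schwarz is $[q_1,q_2]r\approx Q^2R\approx D^2/R\approx Mx^{4\varpi}$, so completion is nearly lossless and the dual $h$-sum has length $H\lessapprox x^{4\varpi+\delta}$, genuinely small. The $r$-factoring also forces $n_1\equiv n_2\pmod r$, so one writes $n_2=n_1+\ell r$ with $|\ell|\ll N/R$; a further device (stripping from $q$ all prime factors $\le D_0=\exp(\log^{1/3}x)$) ensures $(q_1,q_2)=1$ off a negligible set. The main-term cancellation is not handled by Siegel--Walfisz on $(d_1,d_2)$ as you suggest, but by first splitting $\Delta=\Delta_1+\Delta_2$, disposing of $\Delta_2$ via Bombieri--Vinogradov on the $r$-modulus, and then rewriting $\Delta_1$ as a symmetric difference $\Delta_0$ of two residue classes $b_1,b_2\pmod q$ so that the main term $X$ in the dispersion is visibly independent of $b_1,b_2$.

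A second gap is that after completion you still carry the rough weights $\beta(n_1)\overline{\beta(n_2)}$; Corollary~\ref{dons} applies only to sums against smooth cutoffs. The paper applies a \emph{second} Cauchy--Schwarz, now in $n$ (and, in the Type~I cases, also in $q_1$ or a factor of $q_1$), which eliminates $\beta(n)\overline{\beta(n+\ell r)}$ and produces the correlation sum $S_{\ell,r}$ (or $T_{\ell,r}$) with phase $e_{d_1}(c_1/n)e_{d_2}(c_2/(n+\ell r))$ to which Corollary~\ref{dons} is applied. The three numerologies in (i), (ii), (iv) come from different choices of which variables sit outside this second Cauchy--Schwarz: for Type~II one keeps only $n$ outside and uses the \emph{second} (non--van der Corput) bound in Corollary~\ref{dons}; for Type~I(i) one keeps $n,q_1$ outside and uses the van der Corput bound; for Type~I(ii) one spends the extra level of dense divisibility ($i=2$) to split $q_1=u_1v_1$ and keep $n,u_1,q_2$ outside, balancing the Cauchy--Schwarz more finely. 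Your claim that Type~II ``still uses $q$-van der Corput'' and that the $i=2$ gain comes from ``further factorization of one of the subfactors of $[d_1,d_2]$'' both mislocate where the dense divisibility is actually spent.
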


Before we begin the rigorous proof of Theorem \ref{newtype-i-ii}, we
give an informal sketch of our strategy of proof for these estimates,
which is closely modeled on the arguments of Zhang \cite{zhang}.  The
basic idea is to reduce the estimate \eqref{alphabet-a} to a certain
exponential sum estimate, of the type found in Corollary \ref{dons} (and,
for the estimate \eqref{typeI4-again}, in Corollary \ref{inctrace-q} of
the next section).  The main tools for these reductions are completion
of sums (Lemma \ref{com}), the triangle inequality, and many
techniques related to the Cauchy-Schwarz inequality (viewed in a broad
sense), for instance Vinogradov's bilinear form method, the $q$-van
der Corput $A$-process, the method of Weyl differencing, or the
dispersion method of Linnik.

\subsection{Bilinear form estimates}

We begin with a short discussion of typical instances of applications
of the Cauchy-Schwarz inequality (some examples already appeared in
previous sections).  We want to estimate a sum
$$
\sum_{s  \in S} c_s
$$
of (typically) complex numbers $c_s$ indexed by some finite set $S$ of
large size. Suppose we can parameterize $S$ (possibly with repetition)
by a \emph{non-trivial} product set $A \times B$, i.e., by a product
where neither factor is too small, or otherwise prove an inequality
$$ 
\Bigl|\sum_{s \in S} c_s\Bigr| \leq \Bigl|\sum_{a \in A} \sum_{b \in
  B} \alpha_a \beta_b k_{a,b}\Bigr|
$$ 
for certain coefficients $\alpha_a$, $\beta_b$ and $k_{a,b}$.  The
crucial insight is that one can often derive non-trivial estimates for
an expression of this type with little knowledge of the coefficients
$\alpha_a$, $\beta_b$, by exploiting the bilinear structure and
studying the coefficients $k_{a,b}$. 
\par
Precisely, one can apply the Cauchy-Schwarz inequality to bound the right-hand side by
$$ 
\left(\sum_{a \in A} |\alpha_a|^2\right)^{1/2} \left(\sum_{a \in A}
  \left|\sum_{b\in B} \beta_bk_{a,b}\right|^2\right)^{1/2}.
$$ 
The first factor in the above expression is usually easy to
estimate, and the second factor can be expanded as
$$
\Bigl|\sum_{b,b'\in B}\beta_b\overline{\beta_{b'}}
C(b,b')\Bigr|^{1/2}, \quad\quad C(b,b')=\sum_{a\in
  A}k_{a,b}\overline{k_{a,b'}}.
$$ 
One can then distinguish between the \emph{diagonal contribution}
defined by $b=b'$ and the \emph{off-diagonal contribution} where $b
\neq b'$.  The contribution of the former is
$$
\sum_{b\in B}\sum_{a\in A}|\beta_b|^2|k_{a,b}|^2
$$
which is (usually) not small, since there cannot be cancellation
between these non-negative terms. It may however be estimated
satisfactorily, provided $B$ is large enough for the diagonal $\{
(b,b): b\in B \}$ to be a ``small'' subset of the square $B \times
B$. (In practice, there might be a larger subset of $B\times B$ than
the diagonal where the coefficient $C(b,b')$ is not small, and that is
then incorporated in the diagonal; in this paper, where $b$ and $b'$
are integers, it is the size of a gcd $(b-b',q)$ that will dictate
which terms can be considered diagonal).
\par
On the other hand, the individual off-diagonal terms $C(b,b')$ can be
expected to exhibit cancellation that makes them individually small. In
order for the sum over $b\neq b'$ to remain of manageable size, one
needs $B$ to remain not too large. In order to balance the two
contributions, it turns out to be extremely useful to have a flexible
\emph{family} of parameterizations $(a,b) \mapsto s$ of $S$ by product
sets $A \times B$, so that one can find a parameterization for which
the set $B$ is close to the optimum size arising from various
estimates of the diagonal and non-diagonal parts. This idea of
flexibility is a key idea at least since Iwaniec's
discovery~\cite{iwaniec} of the bilinear form of the error term in
the linear sieve.
\par
One of the key ideas in Zhang's paper \cite{zhang} is that if one is
summing over smooth moduli, then such a flexible range of
factorizations exists; to put it another way, the restriction to
smooth moduli is essentially a ``well-factorable'' weight in the sense
of Iwaniec.  In this paper, we isolated the key property of smooth
moduli needed for such arguments, namely the property of \emph{dense
  divisibility}. 
The general strategy is thus to keep exploiting
the smoothness or dense divisibility of the moduli to split the sums
over such moduli into a ``well-factorable'' form to which the
Cauchy-Schwarz inequality may be profitably applied. (Such a strategy
was already used to optimize the use of the $q$-van der Corput
$A$-process in Corollary \ref{dons}.)

\subsection{Sketch of proofs}\label{typei-ii-sketch}

We now give a more detailed, but still very informal, sketch of the
proof of Theorem~\ref{newtype-i-ii}, omitting some steps and some
terms for sake of exposition (e.g., smooth cutoffs are not
mentioned). For simplicity we will pretend that the quantities
$\varpi,\delta$ are negligible, although the quantity $\sigma$ will
still be of a significant size (note from Lemma \ref{comlem} that we
will eventually need to take $\sigma$ to be at least $1/10$).  The
first step is to exploit the dense divisibility of the modulus $d$ to
factor it as $d=qr$, with $q,r$ located at certain scales $Q, R$ which
we will specify later; with $\varpi$ negligible, we expect $QR$ to be
approximately equal to $x^{1/2}$ but a bit larger.  Our task is then
to obtain a non-trivial bound on the quantity
$$ 
\sum_{q\sim Q} \sum_{r \sim R} |\Delta(\alpha \star \beta; a\ (qr))|,
$$
or equivalently to obtain a non-trivial bound on
$$ 
\sum_{q\sim Q} \sum_{r \sim R} c_{q,r} \Delta(\alpha \star \beta; a\
(qr))
$$
for an arbitrary bounded sequence $c_{q,r}$. We suppress here, and
later, some additional information on the moduli $q,r$, e.g. that they
are squarefree and coprime, to simplify this informal exposition. 
For similar reasons we are being vague on what a ``non-trivial bound''
means, but roughly speaking, it should improve upon the ``trivial
bound'' by a factor of $\log^{-A}x$ where $A$ is very large (or
arbitrarily large).
\par
If we insert the definition \eqref{disc-def}, and denote generically
by $\emt$ the contribution of the second term in that definition
(which is the ``expected main term''), we see that we need a
non-trivial bound on the quantity
$$
\sum_{q\sim Q} \sum_{r \sim R} c_{q,r} \sum_{n= a\ (qr)} \alpha \star
\beta(n)-\emt.
$$ 
For simplicity, we will handle the $r$ averaging trivially, and thus seek to control
the sum
$$
 \sum_{q\sim Q} c_{q,r} \sum_{n= a\ (qr)} \alpha \star \beta(n)-\emt
$$
for a single $r\sim R$.  We rearrange this as
$$
\sum_{m \sim M} \alpha(m) \sum_{q\sim Q} c_{q,r} \sum_{\substack{n
    \sim N\\ nm = a\ (qr)}} \beta(n)-\emt.
$$
Note that for fixed $m$ coprime with $q$, the number of pairs $(q,n)$ with $q \sim Q$,
$n \sim N$, and $nm = a\ (qr)$ is expected to be about $\frac{QN}{QR}
= \frac{N}{R}$.  Thus, if we choose $R$ to be a little bit less than
$N$, e.g. $R = x^{-\eps} N$, then the number of pairs $(q,n)$
associated to a given value of $m$ is expected to be non-trivial. This
opens up the possibility of using the dispersion method of Linnik
\cite{linnik}, as the diagonal contribution in that method is expected
to be negligible.  Accordingly, we apply Cauchy-Schwarz in the 
variable $m$, eliminating the rough coefficient sequence $\alpha$, and end
up with the task of controlling an expression of the shape
$$
\sum_{m \sim M} \Bigl|\sum_{q\sim Q} c_{q,r} \sum_{\substack{n \sim
    N\\ nm = a\ (qr)}} \beta(n)-\emt\Bigr|^2.
$$ 
Opening the square as sketched above, this is equal to
$$
\sum_{q_1,q_2 \sim Q} c_{q_1,r} \overline{c_{q_2,r}}
\sumsum_{n_1,n_2\sim N} \beta(n_1) \overline{\beta(n_2)} \Bigl(
\sum_{\substack{m \sim M\\ n_1m =a\ (q_1r)\\ n_2m= a\ (q_2 r)}}
1-\emt\Bigr).
$$ 
Note that, since $a\ (qr)$ is a primitive residue class, the
constraints $n_1 m = a\ (q_1r), n_2 m = a\ (q_2 r)$ imply $n_1 = n_2\
(r)$. Thus we can write $n_2 = n_1 + \ell r$ for some $\ell = O(N/R)$, which
will be rather small (compare with the method of Weyl differencing).
\par
For simplicity, we consider only\footnote{Actually, for technical reasons, in the rigorous argument we will dispose of the $\ell=0$ contribution by a different method, so the discussion here should be viewed as an oversimplification.} the case $\ell=0$ here.  We are thus led
to the task of controlling sums such as
\begin{equation}\label{cqq}
  \sum_{q_1,q_2 \sim Q} c_{q_1,r} \overline{c_{q_2,r}} \sum_{n \sim N} \beta(n) \overline{\beta(n)} \Bigl(\sum_{\substack{m \sim M\\ nm =a\ (q_1r)\\ nm= a\ (q_2 r)}} 1-\emt\Bigr).
 \end{equation}
 It turns out (using a technical trick of Zhang which we will describe
 below) that we can ensure that the moduli $q_1,q_2$ appearing here
 are usually coprime, in the sense that the contribution of the
 non-coprime pairs $q_1,q_2$ are negligible.  Assuming this, we can
 use the Chinese Remainder Theorem to combine the two constraints $nm
 = a\ (q_1r)$, $nm = a\ (q_2r)$ into a single constraint $nm =a\ (q_1
 q_2 r)$ on $m$.  Now, we note that if $R$ is slightly less than $N$, then
 (since $MN$ is close to $x$, and $QR$ is close to $x^{1/2}$) the
 modulus $q_1 q_2 r$ is comparable to $M$.  This means that the inner
 sum 
$$
\sum_{\substack{m \sim M\\ nm =a\ (q_1q_2r)}} 1-\emt
$$ 
is essentially a complete sum, and can therefore be very efficiently
handled by Lemma~\ref{com}. This transforms \eqref{cqq} into
expressions such as
$$ 
\sum_{0 < |h| \leq H} c_h \sum_{q_1,q_2 \sim Q} c_{q_1,r}
\overline{c_{q_2,r}} \sum_{n \sim N} \beta(n) \overline{\beta(n)}
e_{q_1 q_2 r}\left(\frac{ah}{n}\right),
$$ 
where $H \approx \frac{Q^2 R}{M}$ is a fairly small quantity, and the
coefficients $c_h$ are bounded. At this point, the contribution of the
zero frequency $h=0$ has cancelled out with the expected main term
$\emt$ (up to negligible error).

This expression involves the essentially unknown (but bounded)
coefficients $c_{q_1,r}$, $c_{q_2,r}$, $\beta(n)$, and as before, we
can not do much more than eliminate them using the Cauchy-Schwarz
inequality.  This can be done in several ways here, depending on which
variables are taken ``outside'' of the Cauchy-Schwarz inequality. For
instance, if we take $n$ to eliminate the $\beta(n)
\overline{\beta(n)}$ term, one is led, after expanding the square and
exchanging the sum in the second factor of the Cauchy-Schwarz
inequality, to expressions such as
$$ 
\sum_{0 < |h_1|, |h_2| \leq H} \sumsum_{q_1,q_2,s_1,s_2\sim Q} \left|
  \sum_{n \sim N} e_{q_1q_2 r}\left(\frac{ah_1}{n}\right) e_{s_1s_2r}
  \left(-\frac{ah_2}{n}\right)\right|.
$$ 
The sum over $n$ has length $N$ close to the modulus
$[q_1q_2r,s_1s_2r]\approx Q^4R$, and therefore can be estimated
non-trivially using Corollary \ref{dons}. As we will see, this
arrangement of the Cauchy-Schwarz inequality is sufficient to
establish the Type II estimate \eqref{typeII1-again}.
\par
The Type I estimates are obtained by a slightly different application
of Cauchy-Schwarz. Indeed, note for instance that as the parameter
$\sigma$ (which occurs in the Type I condition, but not in Type II)
gets larger, the length $N$ in the sum may become smaller in
comparison to the modulus $q_1 q_2s_1s_2r$ in the
exponential sum
$$
\sum_{n \sim N} e_{q_1q_2 r}\left(\frac{ah_1}{n}\right) e_{s_1 s_2
  r}\left(-\frac{a h_2}{n}\right),
$$ 
and this necessitates more advanced exponential sum estimates to
recover non-trivial cancellation. Here, the $q$-van der Corput
$A$-method enlarges the range of parameters for which we can prove
that such a cancellation occurs. This is one of the main reasons why
our Type I estimates improve on those in \cite{zhang}.  (The other
main reason is that we will adjust the Cauchy-Schwarz inequality to
lower the modulus in the exponential sum to be significantly smaller
than $q_1 q_2s_1s_2 r \sim Q^4 R$, while still keeping both the
diagonal and off-diagonal components of the Cauchy-Schwarz estimate
under control.)

\subsection{Reduction to exponential sums}\label{prelim-red}

We now turn to the details of the above strategy.  We begin with the
preliminary manipulations (mostly following \cite{zhang}) to reduce
the estimate \eqref{alphabet-a} to a certain exponential sum
estimate. This reduction can be done simultaneously in the four cases
\eqref{typeI1-again}, \eqref{typeI2-again}, \eqref{typeI4-again},
\eqref{typeII1-again}, but the verification of the exponential sum
estimate requires a different argument in each of the four cases.

In the remainder of this section
$\varpi,\delta,\sigma,I,i,a,M,N,\alpha,\beta$ are as in Theorem
\ref{newtype-i-ii}.  First of all, since $\beta$ satisfies the
Siegel-Walfisz property, the Bombieri-Vinogradov Theorem \ref{bvt}
implies
\begin{equation}\label{load}
  \sum_{d \leq x^{1/2} \log^{-B} x} |\Delta(\alpha \star \beta; a\ (d))| \ll x \log^{-A} x
\end{equation}
for any fixed $A >0$ and some $B$ depending on $A$.  From this and
dyadic decomposition, we conclude that to prove \eqref{alphabet-a}, it
suffices to establish the estimate
$$
 \sum_{d \in \DI{I}{i}{x^\delta} \cap [D,2D]} |\Delta(\alpha \star \beta; a\ (d))| \ll x \log^{-A} x
$$
for any fixed $A>0$, and for all $D$ such that
\begin{equation}\label{xpd}
  x^{1/2} \lessapprox D \lessapprox x^{1/2+2\varpi}
\end{equation}
(recall that this means $x^{1/2}\ll x^{o(1)}D$ and $D\ll
x^{1/2+2\varpi+o(1)}$ for any $\eps>0$).

We now fix one such $D$.  In the spirit of \cite{zhang}, we first restrict $d$ to moduli which do not have too many small prime
factors.  Precisely, let 
\begin{equation}\label{eq-d0}
D_0:=\exp(\log^{1/3} x),
\end{equation}
and let $\mathcal{E}(D)$ be the set of $d\in [D,2D]$ such that
\begin{equation}\label{eq-many-small}
\prod_{\substack{p|d\\ p\leq D_0}} p > \exp(\log^{2/3} x).
\end{equation}
\par
We have (cf. \cite[lemme 4]{ft}):

\begin{lemma}\label{roar} For any fixed $A>0$, and $D$ obeying \eqref{xpd}, we have
$$
|\mathcal{E}(D)|\ll D\log^{-A} x.
$$
\end{lemma}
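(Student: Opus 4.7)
\textbf{Proof plan for Lemma \ref{roar}.} The bound asserts that very few integers $d \in [D,2D]$ can have an unusually large ``small-prime part''. The natural tool is Rankin's trick: convert the tail condition $\prod_{p|d,p\leq D_0}p > \exp(\log^{2/3} x)$ into a moment estimate by raising to a suitable small power. Concretely, set
$$
s(d) := \prod_{\substack{p|d\\ p\leq D_0}} p, \qquad \alpha := \frac{1}{\log D_0} = (\log x)^{-1/3},
$$
so that $d \in \mathcal{E}(D)$ forces $s(d)^\alpha > \exp(\alpha\log^{2/3} x) = \exp(\log^{1/3} x)$. Markov's inequality then gives
$$
|\mathcal{E}(D)| \leq \exp(-\log^{1/3} x)\sum_{d\leq 2D} s(d)^\alpha.
$$

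The key calibration is the choice $\alpha = 1/\log D_0$: this ensures $p^\alpha \leq e$ for every prime $p \leq D_0$. Since $s(d)^\alpha$ is multiplicative and equals $p^\alpha$ (not $p^{k\alpha}$) at each prime power $p^k$ dividing $d$ with $p\leq D_0$, we obtain the pointwise bound $s(d)^\alpha \leq e^{\omega(d)}$, where $\omega(d)$ counts distinct prime factors. Using $2^{\omega(d)}\leq \tau(d)$, we have $e^{\omega(d)}\leq \tau(d)^{1/\log 2}$, and Lemma~\ref{divisor-crude}(ii) (applied with $C=1/\log 2$) yields
$$
\sum_{d\leq 2D} e^{\omega(d)} \ll D \log^{O(1)} x.
$$

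Combining these two estimates gives
$$
|\mathcal{E}(D)| \ll D\, \exp(-\log^{1/3} x)\,\log^{O(1)} x \ll D\,\log^{-A} x
$$
for any fixed $A$, since $\exp(-\log^{1/3} x)$ decays faster than any fixed negative power of $\log x$. There is no serious obstacle here: the argument is a textbook Rankin upper-bound sieve, and the only delicate point is the choice of $\alpha$, which must be large enough that $\alpha\log^{2/3} x$ tends to infinity but small enough that the Euler product $\prod_{p\leq D_0}(1+(p^\alpha-1)/p)$ remains under control; the threshold $\alpha = 1/\log D_0$ balances both constraints and causes the estimate to be completely insensitive to the upper bound on $D$ from \eqref{xpd}.
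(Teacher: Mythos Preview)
Your proof is correct, and at bottom it uses the same mechanism as the paper: the condition $\prod_{p\mid d,\,p\le D_0}p>\exp(\log^{2/3}x)$ forces $d$ to have many small prime factors, hence large $\tau(d)$, and one concludes by a moment bound on $\tau$. The paper, however, dispenses with Rankin's trick entirely. It simply observes that since each prime in the product is at most $D_0=\exp(\log^{1/3}x)$ while the product exceeds $D_0^{\log^{1/3}x}$, there must be at least $\log^{1/3}x$ distinct prime factors, so $\tau(d)\ge 2^{\log^{1/3}x}$; Markov with the \emph{first} moment $\sum_{d\le 2D}\tau(d)\ll D\log x$ then finishes the job. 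Your Rankin argument with $\alpha=1/\log D_0$ effectively rediscovers this prime-counting step in disguise and then routes through the higher moment $\sum\tau(d)^{1/\log 2}$, which is valid but more than is needed. The calibration of $\alpha$ you highlight as ``the only delicate point'' is, in the paper's formulation, just the trivial inequality $(\text{number of primes})\ge(\log\text{product})/(\log\text{largest prime})$.
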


\begin{proof}
  If $d\geq 1$ satisfies \eqref{eq-many-small}, then
$$
\prod_{\substack{p|d\\ p \leq D_0}} p > \exp( \log^{2/3} x ) =
D_0^{\log^{1/3} x}.
$$
In particular, $d$ has at least $\log^{1/3} x$ prime factors, and
therefore
$$ 
\tau(d) \geq 2^{\log^{1/3} x}.
$$
On the other hand, we have
$$
\sum_{\substack{D\leq d\leq 2D\\\tau(d)\geq \kappa}}1
\leq \frac{1}{\kappa}\sum_{D\leq d\leq 2D}{\tau(d)}
\ll \frac{D}{\kappa}\log x
$$
for any $\kappa>0$ by the standard bound
$$ 
\sum_{D \leq d \leq 2D} \tau(d) \ll D \log x
$$
(see \eqref{taud}), and the result follows.
\end{proof}

This allows us to dispose of these exceptional moduli:

\begin{corollary} We have
$$
\sum_{\substack{d \in \DI{I}{i}{x^\delta} \\ d\in \mathcal{E}(D)}}
|\Delta(\alpha \star \beta; a\ (d))| \ll x \log^{-A} x
$$
for any fixed $A>0$.
\end{corollary}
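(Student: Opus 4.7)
The plan is to derive this as a direct consequence of Lemma \ref{roar}, combined with the trivial pointwise bound \eqref{eq-delta-trivial} on the discrepancy; no cancellation is needed, since $\mathcal{E}(D)$ has been shown to be smaller than any fixed power of $\log x$, and the worst-case bound on each $|\Delta(\alpha\star\beta; a\ (d))|$ is only polylogarithmically larger than the trivial average.

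More precisely, I would first note that $\alpha\star\beta$ is a coefficient sequence located at scale $MN \sim x$ (this is the standard fact, used implicitly in Lemma \ref{facts}(i), that the Dirichlet convolution of coefficient sequences at scales $M$ and $N$ is a coefficient sequence at scale $MN$; it is a routine consequence of the divisor bound \eqref{divisor-bound} and the definition of coefficient sequences). Next, since $d\in\DI{I}{i}{x^\delta}\subset\Scal_I$ divides $P_I$ and $a$ is coprime to $P_I$, the residue class $a\ (d)$ is primitive, so we may apply the trivial discrepancy estimate \eqref{eq-delta-trivial} to obtain
$$
|\Delta(\alpha\star\beta; a\ (d))| \ll \frac{x}{\varphi(d)}(\log x)^{O(1)}
$$
for every $d\in\mathcal{E}(D)$. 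Using the crude lower bound $\varphi(d) \gg d/\log\log x \gg D/\log\log x$ valid for $d\sim D$ of polynomial size, this becomes
$$
|\Delta(\alpha\star\beta; a\ (d))| \ll \frac{x}{D}(\log x)^{O(1)}.
$$

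Finally, I would sum over $d\in\mathcal{E}(D)$ and invoke Lemma \ref{roar} with a sufficiently large parameter $A'>0$ (to be chosen) in place of $A$: this yields
$$
\sum_{d\in\mathcal{E}(D)}|\Delta(\alpha\star\beta; a\ (d))| \ll \frac{x}{D}(\log x)^{O(1)}\cdot|\mathcal{E}(D)| \ll x(\log x)^{O(1)-A'}.
$$
Choosing $A'=A+C$ with $C$ larger than the $O(1)$ exponent completes the proof. There is no real obstacle: the point is simply that Lemma \ref{roar} already gave a saving of \emph{arbitrary} polylogarithmic strength, which more than compensates for the polylogarithmic loss inherent in the trivial discrepancy bound.
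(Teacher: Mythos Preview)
Your proof is correct and follows essentially the same strategy as the paper: both arguments combine a trivial pointwise bound on the discrepancy with the arbitrary logarithmic saving from Lemma~\ref{roar}. The only minor difference is that the paper invokes the bound from \eqref{talc}, which carries an extra $\tau(d)^{O(1)}$ factor, and then applies Cauchy--Schwarz together with \eqref{taud} to handle that factor; by citing the cleaner pointwise estimate \eqref{eq-delta-trivial} instead, you avoid the Cauchy--Schwarz step entirely, which is a small simplification.
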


\begin{proof} 
  From \eqref{talc} we derive the trivial bound
$$ 
|\Delta(\alpha \star \beta; a\ (d))| \ll x D^{-1} \tau(d)^{O(1)}
\log^{O(1)} x,
$$ 
for every $d \sim D$, and hence the Cauchy-Schwarz inequality gives
$$
\sum_{\substack{d \in \DI{I}{i}{x^\delta}\\ d\in \mathcal{E}(D)}}
|\Delta(\alpha \star \beta; a\ (d))| \ll |\mathcal{E}(D)|^{1/2} xD^{-1}\log^{O(1)}x\Bigl(
 \sum_{d\in \mathcal{E}(D)} \tau(d)^{O(1)}
\Bigr)^{1/2} \ll x\log^{-A} x
$$
by Lemma~\ref{roar} and \eqref{taud}.
\end{proof}

It therefore suffices to show that
\begin{equation}\label{summa}
  \sum_{\substack{d \in \DI{I}{i}{x^\delta}\\d \in [D,2D] \backslash \mathcal{E}(D)}} 
  |\Delta(\alpha \star \beta; a\ (d))| \ll x \log^{-A} x
\end{equation}
for any fixed $A>0$.

Let $\eps>0$ be a small fixed quantity to be chosen later.  From
\eqref{slop} and \eqref{xpd} we have
$$
1\leq x^{-3\eps}N\leq D
$$
for $x$ large enough. Let $j\geq 0$ and $k\geq 0$ be fixed integers so that
\begin{equation}\label{waffle}
  i-1=j+k
\end{equation}
Then any integer $d\in \DI{I}{i}{x^{\delta}}$ can by definition (see
Definition \ref{mdd-def}) be factored as $d = qr$, where $q \in
\DI{I}{j}{x^\delta}$, $r \in \DI{I}{k}{x^\delta}$, and
$$
x^{-3\eps-\delta} N \leq r \leq x^{-3\eps} N.
$$

\begin{remark}
  The reason that $r$ is taken to be slightly less than $N$ is to
  ensure that a diagonal term is manageable when the time comes to
  apply the Cauchy-Schwarz inequality. The factor of $3$ in the
  exponent is merely technical, and should be ignored on a first
  reading ($\eps$ will eventually be set to be very small, so the
  constants in front of $\eps$ will ultimately be irrelevant).
\end{remark}

Let $d\ \in [D,2D] \backslash \mathcal{E}(D)$, so that
$$
s=\prod_{\substack{p|d\\ p\leq D_0}} p \lessapprox 1.
$$
Then replacing $q$ by $q/(q,s)$ and $r$ by $r(q,s)$, we obtain a
factorization $d=qr$ where $q$ has no prime factor $\leq D_0$ and
\begin{equation}\label{los}
  x^{-3\eps-\delta} N \lessapprox r \lessapprox x^{-3\eps} N.
\end{equation}
By Lemma \ref{fq}(0), (i), we have
$$
q\in \Dcal{j}{sx^{\delta}}=\Dcal{j}{x^{\delta+o(1)}},\quad\quad
r\in \Dcal{k}{sx^{\delta}}=\Dcal{k}{x^{\delta+o(1)}}.
$$
In particular, $q \in \DI{J}{j}{x^{\delta+o(1)}}$ where $J := I \cap
(D_0,+\infty)$.  As $i \geq 1$, we also have $qr = d \in
\DIone{I}{x^\delta}=\DI{I}{1}{x^{\delta}}$.

\begin{remark}
  The reason for removing all the small prime factors from $q$ will
  become clearer later, when the Cauchy-Schwarz inequality is invoked
  to replace the single parameter $q$ with two parameters $q_1,q_2$ in
  the same range. By excluding the small primes from $q_1,q_2$, this
  will ensure that $q_1$ and $q_2$ will almost always be coprime,
  which will make things much simpler.
\end{remark}

The next step is to perform dyadic decompositions of the range of the
$q$ and $r$ variables, which (in view of \eqref{lin-2}) reduces the
proof of \eqref{summa} to the proof of the estimates
$$
\sumsum_{\substack{q \in \DI{J}{j}{x^{\delta+o(1)}}\cap
    [Q,2Q]\\
    r \in \DI{I}{k}{x^{\delta+o(1)}} \cap [R,2R]\\
    qr \in \DIone{I}{x^\delta}} } |\Delta(\alpha \star \beta; a\
(qr))| \ll MN \log^{-A} x
$$
for any fixed $A>0$ and any $Q,R$ obeying the conditions
\begin{gather}
  x^{-3\eps-\delta} N \lessapprox R \lessapprox x^{-3\eps} N,
\label{crop}
\\
x^{1/2} \lessapprox QR \lessapprox x^{1/2+2\varpi}.  \label{crop2}
\end{gather}
We note that these inequalities also imply that
\begin{equation}\label{eq-nr}
  NQ\lessapprox x^{1/2+2\varpi+\delta+3\eps}.
\end{equation}
For future reference we also claim the bound
\begin{equation}\label{rq}
RQ^2 \ll x.
\end{equation}
In the cases (i)-(iii) of Theorem \ref{newtype-i-ii}, we have $\sigma+4\varpi+\delta < \frac{1}{2}$ (with plenty of room to spare), and \eqref{rq} then easily follows from \eqref{crop}, \eqref{crop2}, \eqref{slop}.  For case (i), we have $6\varpi+\delta < \frac{1}{2}$, and we may argue as before, but with \eqref{slop} replaced by the bound $N \gg  x^{1/2-2\varpi-c}$.

Let $Q,R$ be as above.  We will abbreviate
\begin{equation}\label{q-sum}
  \sum_q A_q= \sum_{q \in \DI{J}{j}{x^{\delta+o(1)}} \cap [Q,2Q]} A_q
\end{equation}
and
\begin{equation}\label{r-sum}
  \sum_r A_r= \sum_{r \in \DI{I}{k}{x^{\delta+o(1)}} \cap [R,2R]} A_r
\end{equation}
for any summands $A_q,A_r$.

We now split the discrepancy by writing
$$ 
\Delta(\alpha \star \beta; a\ (qr)) = 
\Delta_1(\alpha \star \beta; a\ (qr))+
\Delta_2(\alpha \star \beta; a\ (qr))
$$
where
\begin{align*}
  \Delta_1(\alpha \star \beta; a\ (qr))&:= \sum_{n = a\ (qr)} (\alpha
  \star \beta)(n) - \frac{1}{\phi(q)}
  \sum_{\substack{(n,q)=1\\ n = a\ (r)}} (\alpha \star \beta)(n),\\
  \Delta_2(\alpha \star \beta; a\ (qr))&:= \frac{1}{\phi(q)}
  \sum_{\substack{(n,q)=1\\n = a\ (r)}} (\alpha \star \beta)(n) -
  \frac{1}{\phi(qr)} \sum_{(n,qr)=1} (\alpha \star \beta)(n).
\end{align*}

The second term can be dealt with immediately: 

\begin{lemma} We have
$$
\sumsum_{q,r:qr \in \DIone{I}{x^\delta}}| \Delta_2(\alpha \star \beta;
a\ (qr))| \ll NM \log^{-A} x
$$
for any fixed $A>0$.
\end{lemma}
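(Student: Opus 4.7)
The statement is essentially a repackaging of the Bombieri--Vinogradov theorem (Theorem \ref{bvt}), so the plan is to reduce $\Delta_2$ to a discrepancy for a single modulus $r$, and apply Theorem \ref{bvt} uniformly in $q$. Since $qr \in \DIone{I}{x^\delta} \subset \Scal_I$ is squarefree, one has $(q,r)=1$ and $\phi(qr)=\phi(q)\phi(r)$; a direct computation from the definition of $\Delta_2$ yields
$$
\Delta_2(\alpha\star\beta;\,a\ (qr)) \;=\; \frac{1}{\phi(q)}\,\Delta\bigl((\alpha\star\beta)\,\onef_{(\cdot,q)=1};\,a\ (r)\bigr).
$$
Writing $\alpha_q(n):=\alpha(n)\onef_{(n,q)=1}$ and $\beta_q(n):=\beta(n)\onef_{(n,q)=1}$, one has the factorization $(\alpha\star\beta)\onef_{(\cdot,q)=1} = \alpha_q \star \beta_q$, since $(ab,q)=1$ iff $(a,q)=(b,q)=1$.

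Next I would verify that $\alpha_q,\beta_q$ remain coefficient sequences located at scales $M,N$ (which is immediate, as the bounds defining a coefficient sequence are preserved under multiplication by $\onef_{(\cdot,q)=1}$), and that $\beta_q$ still obeys the Siegel--Walfisz property, with constants degraded by at most a factor of $\tau(q)^{O(1)}$: this follows from the identity $\beta_q\onef_{(\cdot,r')=1}=\beta\onef_{(\cdot,[q,r'])=1}$ combined with $\tau(q'[q,r'])\leq\tau(q)\tau(q'r')$. Since \eqref{los} gives $R\lessapprox x^{-3\eps}N\lessapprox x^{1/2-3\eps}$, which lies well below the Bombieri--Vinogradov threshold $x^{1/2}/\log^B x$ for any fixed $B$, Theorem \ref{bvt} applied to $(\alpha_q,\beta_q)$ yields, for each fixed $A'>0$,
$$
\sum_r |\Delta(\alpha_q\star\beta_q;\,a\ (r))| \;\leq\; \sum_{r\leq x^{1/2}/\log^B x} \sup_{b\in(\Z/r\Z)^\times} |\Delta(\alpha_q\star\beta_q;\,b\ (r))| \;\ll\; \tau(q)^{O(1)}\,x\log^{-A'}x
$$
uniformly in $q$. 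Averaging against the weight $1/\phi(q)$ and using the standard bound $\sum_{q\leq 2Q} \tau(q)^{O(1)}/\phi(q) \ll \log^{O(1)} x$ (which follows from \eqref{taud} by partial summation) yields the claimed estimate $\ll x\log^{-A}x \sim MN\log^{-A}x$ upon choosing $A'$ large enough.

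There is no substantive obstacle here: the argument is essentially an unwrapping of definitions followed by a black-box application of the Bombieri--Vinogradov theorem. The only point requiring care is the uniformity of Theorem \ref{bvt} in $q$, which comes down to tracking the polynomial-in-$\tau(q)$ dependence of the Siegel--Walfisz constant for $\beta_q$; this dependence is absorbed harmlessly in the averaging against $1/\phi(q)$ (alternatively, by the divisor bound \eqref{divisor-bound} one even has $\tau(q)^{O(1)}\lessapprox 1$ uniformly in $q\leq 2Q$, which gives the same conclusion with room to spare).
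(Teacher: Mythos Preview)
Your proposal is correct and follows essentially the same route as the paper: rewrite $\Delta_2$ as $\phi(q)^{-1}\Delta(\alpha_q\star\beta_q;a\ (r))$, invoke Theorem~\ref{bvt} for each $q$ (using that $R\lessapprox x^{1/2-3\eps}$), and sum against $1/\phi(q)$. You are in fact slightly more careful than the paper, which simply asserts that $\beta_q$ inherits the Siegel--Walfisz property and uses $\sum_q 1/\phi(q)\ll\log x$ without tracking the $\tau(q)^{O(1)}$ loss.

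One small caveat: your parenthetical alternative, absorbing $\tau(q)^{O(1)}$ via the divisor bound $\tau(q)^{O(1)}\lessapprox 1$, does not work as stated. The relation $\lessapprox 1$ only means $\leq x^{o(1)}$, and the divisor bound gives at best $\tau(q)\leq\exp(O(\log x/\log\log x))$, which dominates any fixed power of $\log x$ and so destroys the saving $\log^{-A'}x$. Your primary argument via $\sum_{q\leq 2Q}\tau(q)^{O(1)}/\phi(q)\ll\log^{O(1)}x$ is the correct way to handle the uniformity in $q$.
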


\begin{proof}
  Since $r\leq 2R\ll x^{1/2+o(1)-3\eps}$, the Bombieri-Vinogradov
  Theorem~\ref{bvt}, applied for each $q$ to $\alpha_q\star \beta_q$,
  where $\alpha_q=\alpha\onef_{(n,q)=1}$, $\beta_q=\onef_{(n,q)=1}$,
  gives
$$
\sum_{\substack{R\leq r\leq 2R\\qr \in \DIone{I}{x^\delta}}}
\Bigl|\sum_{\substack{(n,q)=1\\ n = a\ (r)}} (\alpha \star \beta)(n) -
\frac{1}{\phi(r)} \sum_{(n,qr)=1} (\alpha \star \beta)(n)\Bigr| \ll NM
\log^{-A} x,
$$
since $\beta_q$ inherits the Siegel-Walfisz property from
$\beta$. Dividing by $\phi(q)$ and summing over $q\leq 2Q$, we get the
result using the standard estimate 
$$
\sum_q \frac{1}{\phi(q)} \ll \log x.
$$
\end{proof}

To deal with $\Delta_1$, it is convenient to define
$$
\Delta_0(\alpha\star\beta;a,b_1,b_2)= \sum_\stacksum{n = a\ (r)}{ n=
  b_1\ (q)} (\alpha \star \beta)(n) - \sum_\stacksum{n = a\ (r)}{n =
  b_2\ (q)} (\alpha \star \beta)(n)
$$
for all integers $a$, $b_1$, $b_2$ coprime to $P_I$. Indeed, we have
$$
\sumsum_{\substack{q,r\\qr \in \DIone{I}{x^\delta}}} |\Delta_1(\alpha
\star \beta; a\ (qr))| \leq \frac{1}{\phi(P_I)} \sum_{\substack{b\
    (P_I)\\(b,P_I)=1}} \sumsum_{\substack{q,r\\qr \in
    \DIone{I}{x^\delta}}} |\Delta_0(\alpha\star\beta;a,a,b)|
$$
by the triangle inequality and the Chinese Remainder Theorem. Hence it
is enough to prove that
\begin{equation}\label{straw-2}
  \sumsum_{\substack{q,r\\qr \in \DIone{I}{x^\delta}}}|\Delta_0(\alpha\star\beta;a,b_1,b_2)|
  \ll  NM \log^{-A} x
\end{equation}
for all $a,b_1,b_2$ coprime to $P_I$, and this will be our goal.  The
advantage of this step is that the two terms in $\Delta_0$ behave
symmetrically, in contrast to those in $\Delta_1$ (or $\Delta$), and
this will simplify the presentation of the dispersion method: in the
notation of \cite{bfi,linnik,zhang}, one only
needs to control ${\mathcal S}_1$, and one avoids dealing explicitly
with ${\mathcal S}_2$ or ${\mathcal S}_3$.  This is mostly an
expository simplification, however, since the estimation of ${\mathcal
  S}_1$ is always the most difficult part in applications of the
dispersion method.

The fact that $r\leq R$ is slightly less than $N$ ensures that the
constraint $n = a\ (r)$ leaves room for non-trivial averaging of the
variable $n$, and allows us to profitably use the dispersion method of
Linnik.  We begin by writing
$$
\sumsum_{\substack{q,r\\qr \in
    \DIone{I}{x^\delta}}}|\Delta_0(\alpha\star\beta;a,b_1,b_2)| =
\sumsum_{\substack{q,r\\qr \in \DIone{I}{x^\delta}}} c_{q,r}
\Bigl(\sum_{\substack{n = a\ (r)\\ n= b_1\ (q)}} (\alpha \star
\beta)(n) - \sum_{\substack{n = a\ (r)\\ n = b_2\ (q)}} (\alpha
\star \beta)(n)\Bigr)
$$
where $c_{q,r}$ are complex numbers of modulus $1$.  Expanding the
Dirichlet convolution and exchanging the sums, we obtain
$$
\sumsum_{\substack{q,r\\qr \in
    \DIone{I}{x^\delta}}}|\Delta_0(\alpha\star\beta;a,b_1,b_2)| =
\sum_r \sum_m \alpha(m) \Bigl(\sumsum_{\substack{mn = a\ (r)\\qr \in
    \DIone{I}{x^\delta}}} c_{q,r} \beta(n) (\onef_{mn = b_1\ (q)} -
\onef_{mn = b_2\ (q)})\Bigr).
$$ 
By the Cauchy-Schwarz inequality applied to the $r$ and $m$
sums,~(\ref{alpha-bound}),~(\ref{soso}) and Lemma \ref{divisor-crude}
we have
\begin{multline*}
  \sumsum_{\substack{q,r\\qr \in
      \DIone{I}{x^\delta}}}|\Delta_0(\alpha\star\beta;a,b_1,b_2)| \leq
  R^{1/2}M^{1/2}(\log x)^{O(1)} \Bigl(\sum_{r}\sum_{m} \psi_M(m)\\
  \times \Bigl| \sumsum_{\substack{mn = a\ (r)\\qr \in
      \DIone{I}{x^\delta}}} c_{q,r} \beta(n) (\onef_{mn = b_1\ (q)} -
  \onef_{mn = b_2\ (q)}) \Bigr|^2\Bigr)^{1/2}
\end{multline*}
for any smooth coefficient sequence $\psi_M$ at scale $M$ such that
$\psi_M(m)\geq 1$ for $m$ in the support of $\beta$. This means in
particular that it is enough to prove the estimate
\begin{equation}\label{sq}
  \sum_r \sum_{m} \psi_M(m) \Bigl|\sumsum_{\substack{ mn = a\ (r)\\qr
      \in \DIone{I}{x^\delta}}} c_{q,r} \beta(n) (\onef_{mn = b_1\
    (q)} - \onef_{mn = b_2\ (q)})\Bigr|^2
  \ll N^2 M R^{-1} \log^{-A} x
\end{equation}
for any fixed $A>0$, where $\psi_M$ is a smooth coefficient sequence
at scale $M$.  
\par
Let $\Sigma$ denote the left-hand side of~(\ref{sq}). Expanding the
square, we find
\begin{equation}\label{eq-sigma1}
\Sigma=\Sigma(b_1,b_1)-\Sigma(b_1,b_2)-\Sigma(b_2,b_1)+\Sigma(b_2,b_2),
\end{equation}
where
$$
\Sigma(b_1,b_2):=\sum_r \sum_{m} \psi_M(m)
\multsum_{\substack{q_1,q_2,n_1,n_2\\ mn_1=mn_2 = a\ (r)\\ q_1 r,q_2 r
    \in \DIone{I}{x^\delta}}} c_{q_1,r} \overline{c_{q_2,r}}
\beta(n_1) \overline{\beta(n_2)} \onef_{mn_1 = b_1\ (q_1)} \onef_{mn_2
  = b_2\ (q_2)}
$$
for any integers $b_1$ and $b_2$ coprime to $P_I$ (where the variables
$q_1$ and $q_2$ are subject to the constraint~(\ref{q-sum})). We will
prove that
\begin{equation}\label{sq2}
  \Sigma(b_1,b_2)= X + O( N^2 M R^{-1} \log^{-A} x )
\end{equation}
for all $b_1$ and $b_2$, where the main term $X$ is independent of
$b_1$ and $b_2$. From~(\ref{eq-sigma1}), the desired
conclusion~(\ref{sq}) then follows.

Since $a$ is coprime to $qr$, so are the variables $n_1$ and $n_2$ in
the sum. In particular, they satisfy the congruence $n_1=n_2\ (r)$. We
write $n_2=n_1+\ell r$ in the sum, rename $n_1$ as $n$, and therefore
obtain
\begin{multline*}
  \Sigma(b_1,b_2)=\sum_r\sum_{\ell}
  \sumsum_{\substack{q_1,q_2\\q_1r,q_2r\in \DIone{I}{x^{\delta}}}}
  c_{q_1,r}\overline{c_{q_2,r}} \sum_n\beta(n)\overline{\beta(n+\ell
    r)} \\\sum_{m} \psi_M(m)\onef_{mn = b_1\ (q_1)} \onef_{m(n+\ell r)
    = b_2\ (q_2)}\onef_{mn=a\ (r)}
\end{multline*}
after some rearranging (remembering that $(n,q_1r)=(n+\ell r,q_2 r)=1$).
Note that the sum over $\ell$ is restricted to a range $0\leq
|\ell|\ll L:=NR^{-1}$.  
\par
We will now complete the sum in $m$ (which is long since $M$ is just a
bit smaller than the modulus $[q_1,q_2]r\leq Q^2R$) using
Lemma~\ref{com} (ii), but first we handle separately the diagonal case
$n_1=n_2$, i.e., $\ell=0$. This contribution, say $T(b_1,b_2)$, satisfies
\begin{align*}
  |T(b_1,b_2)|&\leq \sum_r \sumsum_{\substack{q_1,q_2\\q_1r,q_2r\in
      \DIone{I}{x^{\delta}}}} \sum_n|\beta(n)|^2 \sum_{m}
  \psi_M(m)\onef_{mn = b_1\ (q_1)}
  \onef_{mn = b_2\ (q_2)}\onef_{mn=a\ (r)}\\
  &\lessapprox \sum_{r\sim R}\sumsum_{q_1,q_2\sim Q} \sum_{s\sim x}
  \tau(s)\onef_{s = b_1\ (q_1)} \onef_{s = b_2\ (q_2)}\onef_{s=a\
    (r)}\\
  &\lessapprox \sum_{r\sim R}\sumsum_{q_1,q_2\sim Q} \frac{x}{r[q_1,q_2]}\lessapprox x\ll N^2 M R^{-1} \log^{-A} x
\end{align*}
(since $RQ^2\ll x$ (from \eqref{rq}) and $R\lessapprox x^{-3\epsilon }N$).
\par
Now we consider the contributions where $\ell \neq 0$. First, since $n$
and $n+\ell r$ are coprime to $q_1r$ and $q_2r$ respectively, we have
\begin{equation}\label{eq-gamma}
\onef_{mn = b_1\ (q_1)} \onef_{m(n+\ell r) = b_2\ (q_2)}
\onef_{mn=a\ (r)}=\onef_{m=\gamma\ ([q_1,q_2]r)}
\end{equation}
for some residue class $\gamma\ ([q_1,q_2]r)$ (which depends on $b_1$,
$b_2$, $\ell$, $n$ and $a$). We will denote $q_0=(q_1,q_2)$, and observe
that since $q_1$, $q_2$ have no prime factor less than $D_0$, we have
either $q_0=1$ or $q_0\geq D_0$. (The first case gives the
principal contribution, and the reader may wish to assume that $q_0=1$
in a first reading.) The sum over $n$ is further restricted by the
congruence
\begin{equation}\label{eq-n-congruence}
\frac{b_1}{n}=\frac{b_2}{n+\ell r}\ (q_0),
\end{equation}
and we will use
\begin{equation}\label{cn-def}
C(n):=\onef_{\frac{b_1}{n}=\frac{b_2}{n+\ell r}\ (q_0)}
\end{equation}
to denote
the characteristic function of this condition (taking care of the fact that it
depends on other parameters). Observe that, since $q_0$ is coprime to
$rb_1$, this is the characteristic function of a union of at most
$(b_1-b_2,q_0,\ell rb_1)\leq (q_0,\ell)$ congruence classes modulo
$q_0$.
\par 
By Lemma~\ref{com} (ii) applied to each choice of $q_1,q_2,r,\ell$ (where $I$ is the range of the remaining parameter $n$) and summing, we derive
$$
\Sigma(b_1,b_2)=\Sigma_0(b_1,b_2)+\Sigma_1(b_1,b_2)+
O(MN^2R^{-1}\log^{-A}x),
$$
where
$$
  \Sigma_0(b_1,b_2):= \Bigl(\sum_m\psi_M(m)\Bigr)
  \sum_rr^{-1}\sum_{\ell\neq 0}
  \sumsum_{\substack{q_1,q_2\\q_1r,q_2r\in
      \DIone{I}{x^{\delta}}}} \frac{c_{q_1,r}\overline{c_{q_2,r}}}{[q_1,q_2]}\\
  \sum_n\beta(n)\overline{\beta(n+\ell r)}C(n)
$$
and
$$
\Sigma_1(b_1,b_2)\ll 1+ x^{\eps} \hat{\Sigma}_1(b_1,b_2) 
$$
with
$$
\hat{\Sigma}_1(b_1,b_2):= \sum_r\sum_{\ell\neq 0}
\sumsum_{\substack{q_1,q_2\\q_1r,q_2r\in \DIone{I}{x^{\delta}}}}
c_{q_1,r}\overline{c_{q_2,r}}\ \frac{1}{H} \sum_{1\leq |h|\leq
  H}\left|\sum_n\beta(n)\overline{\beta(n+\ell r)}C(n)e_{[q_1,q_2]r}(\gamma
h)\right|,
$$
where $H:=x^{\eps}[q_1,q_2]rM^{-1}\ll x^{\eps}Q^2RM^{-1}$.  We caution that $H$ depends on $q_1$ and $q_2$, so one has to take some care if one is to interchange the $h$ and $q_1,q_2$ summations.

\begin{remark} 
  Before going further, note that $H$ is rather small since $M$ and
  $R$ are close to $x^{1/2}$ and $\eps>0$ will be very small:
  precisely, we have
$$ 
H \ll H_0 :=x^\eps \times (QR)^2 \times \frac{N}{R} \times \frac{1}{NM}
$$
and using \eqref{crop}, \eqref{crop2}, \eqref{lin-2}, we see that
\begin{equation}\label{ao}
  x^{4\eps}\lessapprox 
  H_0 \lessapprox x^{4\varpi+\eps} (N/R) \lessapprox x^{4\varpi + \delta + 4\eps}.
\end{equation}
As we will be using small values of $\varpi,\delta,\eps$, one should
thus think of $H$ as being quite small compared to $x$.
\end{remark}

We can deal immediately with $\Sigma_0(b_1,b_2)$. We distinguish
between the contributions of $q_1$ and $q_2$ which are coprime, and
the remainder. The first is independent of $b_1$ and $b_2$ (since
these parameters are only involved in the factor
$C(n)=\onef_{b_1/n=b_2/(n+\ell r)\ (q_0)}$, which is then always $1$)
and it will be the main term $X$, thus
$$
  X := \Bigl(\sum_m\psi_M(m)\Bigr)
  \sum_rr^{-1}\sum_{\ell\neq 0}
  \sumsum_{\substack{q_1,q_2\\q_1r,q_2r\in
      \DIone{I}{x^{\delta}} \\ (q_1,q_2)=1}} \frac{c_{q_1,r}\overline{c_{q_2,r}}}{[q_1,q_2]}\\
  \sum_n\beta(n)\overline{\beta(n+\ell r)}.
$$
\par
The remaining contribution to $\Sigma_0(b_1,b_2)$, say $\Sigma'_0(b_1,b_2)$, satisfies
$$
  \Sigma'_0(b_1,b_2)\ll \frac{M(\log x)^{O(1)}}{R} \sum_{r\sim R}
  \sum_{|\ell|\ll L} \sum_{\substack{1\neq q_0\ll Q\\ q_0 \in
      {\mathcal S}_{J}}}\frac{1}{q_0}\sumsum_{q_1,q_2 \sim Q/q_0}
  \frac{1}{q_1q_2}
  \sum_{n}(\tau(n)\tau(n+\ell r))^{O(1)}C(n).
$$
We rearrange to sum over $\ell$ first (remember that $C(n)$ depends on
$\ell$ also). Since $rb_1$ is coprime with $q_0$, the condition
$b_1/n=b_2/(n+\ell r)\ (q_0)$ is a congruence condition modulo $q_0$
for $\ell$, and therefore
$$
\sum_{|\ell|\ll L} \tau(n+\ell r)^{O(1)}\onef_{b_1/n=b_2/(n+\ell r)\
  (q_0)}\ll \Bigl(1+\frac{L}{q_0}\Bigr)\log^{O(1)} x
=\Bigl(1+\frac{N}{q_0R}\Bigr)\log^{O(1)} x
$$
by Lemma~\ref{divisor-crude}. Since all $q_0\neq 1$ in the sum satisfy
$D_0\leq q_0\ll Q$, we get
\begin{align*}
  \Sigma'_0(b_1,b_2)&\ll \frac{MN(\log x)^{O(1)}}{R} \sum_{r\sim R}
  \sum_{D_0\leq q_0\ll Q}\frac{1}{q_0}\Bigl(1+\frac{N}{q_0R}\Bigr)
  \sumsum_{q_1,q_2 \sim Q/q_0}  \frac{1}{q_1q_2}\\
  &\ll MN\log^{O(1)} x\sum_{D_0\leq q_0\ll
    Q}\frac{1}{q_0}\Bigl(1+\frac{N}{q_0R}\Bigr)\\
    &\ll MN\log^{O(1)}x +\frac{1}{D_0}\frac{MN^2}{R}\log^{O(1)} x\\
    &\ll MN^2R^{-1}\log^{-A} x,
\end{align*}
since $R\ll x^{-3\eps}N$ and $D_0\gg \log^A x$ for all $A>0$.

Hence we have shown that
\begin{equation}\label{eq-last-1}
\Sigma(b_1,b_2)=X+
O(x^{\eps}|\hat{\Sigma}_1(b_1,b_2)|)+O(MN^2R^{-1}\log^{-A} x).
\end{equation}
From the definition, and in particular the localization of $r$ and the
value of $H$, we have
\begin{align}
  |\hat{\Sigma}_1(b_1,b_2)|&\leq\sum_r\sum_{\ell\neq 0}
  \sumsum_{\substack{q_1,q_2\\q_1r,q_2r\in \DIone{I}{x^{\delta}}}}
  \frac{1}{H}\sum_{0<|h|\leq H}
  \Bigl|\sum_nC(n)\beta(n)\overline{\beta(n+\ell
    r)}e_{[q_1,q_2]r}(\gamma
  h)\Bigr|\nonumber\\
  &\ll x^{-\eps}\frac{M}{RQ^2}\sum_{1\leq |\ell|\ll
    L}\sum_{q_0\ll Q}q_0\sum_r\Upsilon_{\ell,r}(b_1,b_2;q_0)\label{eq-last}
\end{align}
where $q_0$ is again $(q_1,q_2)$ and 
\begin{multline}\label{eq-upsilon}
  \Upsilon_{\ell,r}(b_1,b_2;q_0):= \sumsum_{\substack{q_1,q_2\sim
      Q/q_0\\(q_1,q_2)=1}} \onef_{\substack{q_0q_1,q_0q_2
      \in\DI{I}{j}{x^{\delta+o(1)}} \\ q_0q_1 r,q_0q_2 r \in
      \DIone{I}{x^\delta} }}\\
  \sum_{1\leq |h|\ll \tfrac{x^{\eps}RQ^2}{q_0M}} \Bigl|\sum_{n} C(n)
  \beta(n)\overline{\beta(n+\ell r)}
  \Phi_{\ell}(h,n,r,q_0,q_1,q_2)\Bigr|.
\end{multline}
The latter expression involves the phase function $\Phi_{\ell}$, which we define for parameters
$\uple{p}=(h,n,r,q_0,q_1,q_2)$ by
\begin{equation}\label{eq-phiell}
  \Phi_{\ell}(\uple{p}):= e_r\left( \frac{ah}{nq_0q_1 q_2}
  \right) e_{q_0q_1}\left( \frac{b_1h}{n r q_2} \right) e_{q_2}\left(
    \frac{b_2 h}{(n+\ell r)  rq_0q_1} \right).
\end{equation}
Here we have spelled out and split, using~(\ref{eq-gamma}) and the
Chinese Remainder Theorem, the congruence class of $\gamma$ modulo
$[q_1,q_2]r$, and changed variables so that $q_1$ is $q_0q_1$, $q_2$
is $q_0q_2$ (hence $[q_1,q_2]r$ becomes $q_0q_1q_2r$).  Moreover, the
$r$ summation must be interpreted using \eqref{r-sum}. It will be
important for later purposes to remark that we also have
$$
\hat{\Sigma}_1(b_1,b_2)=0
$$
unless
\begin{equation}\label{eq-h-condition}
\frac{ x^\eps Q^2 R}{q_0M}\gg 1,
\end{equation}
since otherwise the sum over $h$ is empty.
\par

Gathering these estimates, we obtain the following general reduction
statement, where we pick a suitable value of $(j,k)$ in each of the
four cases of Theorem~\ref{newtype-i-ii}: 




\begin{theorem}[Exponential sum estimates]\label{eset}  Let
  $\varpi,\delta,\sigma > 0$ be fixed quantities, let $I$ be a bounded subset
  of $\R$, let $j$, $k\geq 0$ be fixed, let $a\ (P_I)$, $b_1\ (P_I)$,
  $b_2\ (P_I)$ be primitive congruence classes, and let $M,N \gg 1$ be
  quantities satisfying the conditions \eqref{lin-2} and
  \eqref{slop}. Let $\eps>0$ be a sufficiently small fixed quantity,
  and let $Q,R$ be quantities obeying \eqref{crop}, \eqref{crop2}. Let
  $\ell$ be an integer with $1\leq |\ell|\ll N/R$, and let $\beta$ be a
  coefficient sequence located at scale $N$.
\par
Let $\Phi_{\ell}(\uple{p})$ be the phase function defined by
\eqref{eq-phiell} for parameters $\uple{p}=(h,n,r,q_0,q_1,q_2)$, let $C(n)$ be the cutoff \eqref{cn-def} and let $\Upsilon_{\ell,r}(b_1,b_2;q_0)$ be defined in terms of $\beta, \Phi, C$ by \eqref{eq-upsilon}.  Then
we have
\begin{equation}\label{keyo}
  \sum_{r}\Upsilon_{\ell,r}(b_1,b_2;q_0)
  \lessapprox x^{-\eps} Q^2R N  (q_0,\ell)q_0^{-2}
\end{equation}
for all $q_0 \in \Scal_I$, where the sum over $r$ is over
$r\in \DI{I}{k}{x^{\delta+o(1)}} \cap [R,2R]$, provided that one of
the following hypotheses is satisfied:
\begin{enumerate}[(i)]
\item\label{typeI1-yetagain} $(j,k) = (0,0)$, $54\varpi + 15\delta + 5 \sigma < 1$, and $N \lessapprox x^{1/2-2\varpi-c}$ for some fixed $c>0$.
\item\label{typeI2-yetagain} $(j,k) = (1,0)$, $56\varpi + 16\delta + 4 \sigma < 1$, and $N \lessapprox x^{1/2-2\varpi-c}$ for some fixed $c>0$.
\item\label{typeI4-yetagain} $(j,k) = (1,2)$, $\frac{160}{3} \varpi + 16\delta + \frac{34}{9} \sigma < 1$, $64\varpi + 18 \delta + 2\sigma < 1$, and $N \lessapprox x^{1/2-2\varpi-c}$ for some fixed $c>0$.
\item\label{typeII1-yetagain} $(j,k)=(0,0)$, $68\varpi + 14 \delta < 1$, and $N \gtrapprox x^{1/2-2\varpi-c}$ for some sufficiently small fixed $c>0$.
\end{enumerate}
The proof of the estimate \eqref{typeI4-yetagain} requires
Deligne's form of the Riemann Hypothesis for algebraic varieties over
finite fields, but the proofs of \eqref{typeI1-yetagain},
\eqref{typeI2-yetagain}, \eqref{typeII1-yetagain} do not.
\end{theorem}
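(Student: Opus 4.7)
The plan is to apply the Cauchy--Schwarz inequality in the $n$-variable to eliminate the rough coefficient sequence $\beta$ from \eqref{eq-upsilon}, and then to estimate the resulting mean square of the phase sum via the incomplete exponential sum bound of Corollary~\ref{dons} (for cases \eqref{typeI1-yetagain}, \eqref{typeI2-yetagain}, \eqref{typeII1-yetagain}) or its higher-dimensional analogues from Section~\ref{deligne-sec} (for case \eqref{typeI4-yetagain}, whose details are deferred to Section~\ref{typei-advanced-sec}). The bound \eqref{keyo} is then obtained by summing trivially over $r$; the extra average over $q_0$ is harmless thanks to the factor $(q_0,\ell)q_0^{-2}$ on the right.

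To carry this out for a fixed $r$, I would dualize the absolute value in \eqref{eq-upsilon} by inserting unimodular coefficients $c_{q_1,q_2,h}$, exchange the $n$ and $(q_1,q_2,h)$ summations, and apply Cauchy--Schwarz to the $n$-sum. Using $\sum_n |\beta(n)\beta(n+\ell r)|^2\lessapprox N$ from the divisor bounds of Lemma~\ref{divisor-crude}, this gives
$$ \Upsilon_{\ell,r}(b_1,b_2;q_0)^2 \lessapprox N \sum_n C(n)\Bigl|\sum_{q_1,q_2,h}c_{q_1,q_2,h}\Phi_\ell(\uple{p})\Bigr|^2. $$
Expanding the square introduces a parallel sextuple $(s_1,s_2,h')$. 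The diagonal contribution $(q_1,q_2,h)=(s_1,s_2,h')$ is bounded trivially and contributes acceptably to \eqref{keyo} precisely because \eqref{crop} was arranged with $R\lessapprox x^{-3\eps}N$, leaving the $n$-sum just slightly longer than $R$. For the off-diagonal contribution, the Chinese Remainder Theorem collapses $\Phi_\ell(\uple{p})\overline{\Phi_\ell(\uple{p}')}$ into a rational phase of the shape $e_{d_1}(c_1/(n+l_1))\cdot e_{d_2}(c_2/(n+l_2))$ with $d_1\mid q_0q_1q_2 r$ and $d_2\mid q_0s_1s_2 r$. Corollary~\ref{dons} then applies, since $[d_1,d_2]$ inherits dense divisibility from $r$ (and from $q$ in cases \eqref{typeI2-yetagain} and \eqref{typeI4-yetagain}), yielding a ``Weil'' term $N^{1/2}[d_1,d_2]^{1/6}x^{\delta/6}$ together with a ``Ramanujan'' term controlled by $(c_1,\delta_1')(c_2,\delta_2')/(\delta_1'\delta_2')$. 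The latter is summable via Lemma~\ref{ram-avg}, using that $c_1,c_2$ depend on the difference $h-h'$; the former is what drives the numerology.

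The principal obstacle is choosing, in each case, the most profitable factorization of $[d_1,d_2]$ to feed into Corollary~\ref{dons}, so as to balance the Weil bound against the diagonal while respecting the available dense divisibility. In the Type II case \eqref{typeII1-yetagain} the length $N\approx x^{1/2}$ is large enough that a direct application of Cauchy--Schwarz already suffices, producing $68\varpi+14\delta<1$. In the first Type I case \eqref{typeI1-yetagain}, with $j=0$ and shorter $N$, one must exploit the $q$-van der Corput process applied to the densely divisible factor $r$, and optimizing over its factorization gives the constraint $54\varpi+15\delta+5\sigma<1$. In case \eqref{typeI2-yetagain} the extra layer of dense divisibility afforded by $j=1$ allows one to split $q_1=u_1v_1$ at a balanced intermediate scale \emph{before} applying Cauchy--Schwarz, which lowers the effective modulus and relaxes the constraint to $56\varpi+16\delta+4\sigma<1$. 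Finally, in case \eqref{typeI4-yetagain}, the quadruple dense divisibility combined with an additional averaging of the modulus feeds into the higher-dimensional exponential sum estimates of Section~\ref{deligne-sec} (for which Deligne's theorems are required), and the full details are postponed to Section~\ref{typei-advanced-sec}.
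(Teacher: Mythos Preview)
Your overall strategy is right and matches the paper for the Type~II case~\eqref{typeII1-yetagain}. But for the Type~I cases there is a genuine gap: the improved numerology does \emph{not} come from ``optimizing over factorizations'' in the van der Corput step, it comes from \emph{changing which variables sit inside the Cauchy--Schwarz}. With your arrangement (all of $q_1,q_2,h$ inside the square), the off-diagonal exponential sum lives on a modulus $[d_1,d_2]\approx Q^4R$; switching on the first bound of Corollary~\ref{dons} with that arrangement yields only $56\varpi+16\delta+6\sigma<1$, as the remark at the end of Section~\ref{typeii-sec} explicitly records, not the claimed $54\varpi+15\delta+5\sigma<1$. The paper reaches the latter in case~\eqref{typeI1-yetagain} by keeping $q_1$ (as well as $n$) \emph{outside} the square, duplicating only $q_2$ and $h$, so that the modulus drops to $\approx Q^3R$. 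In case~\eqref{typeI2-yetagain}, after factoring $q_1=u_1v_1$ via dense divisibility, the paper keeps $u_1$, $q_2$, $n$ outside and duplicates only $v_1$ and $h$, lowering the modulus further to $\approx x^\delta HQ^2R$; your description does not indicate that $q_2$ must move outside here.

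Two smaller inaccuracies. Your identification of $d_1,d_2$ is wrong: the Chinese Remainder splitting of $\Phi_1\overline{\Phi_2}$ groups factors according to whether the pole of the phase is at $n$ or at $n+\ell r$, giving (in case~\eqref{typeII1-yetagain}) $d_1=rq_0[q_1,s_1]$ and $d_2=[q_2,s_2]$, not $d_1\mid q_0q_1q_2r$, $d_2\mid q_0 s_1s_2 r$. And the ``diagonal'' you single out is not controlled by $R\lessapprox x^{-3\eps}N$; that condition was used earlier, in Section~\ref{prelim-red}, to dispose of the $\ell=0$ term in the dispersion step, which is already excluded by the hypothesis $|\ell|\geq 1$ of the present theorem. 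At this stage the near-diagonal contribution is the Ramanujan-type main term in Corollary~\ref{dons}, governed by gcd's such as $(h_1s_1s_2-h_2q_1q_2,r)$ and summed via Lemma~\ref{ram-avg}.
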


Indeed, inserting this bound in~(\ref{eq-last}) we obtain
$$
x^{\eps}|\hat{\Sigma}(b_1,b_2)|\lessapprox x^{-\eps}MN\sum_{q_0\ll Q}
\frac{1}{q_0} \sum_{1\leq |\ell|\ll NR^{-1}}(q_0,\ell) \lessapprox
x^{-\eps}MN^2R^{-1}
$$
(by Lemma~\ref{ram-avg}, crucially using the fact that we have previously removed the $\ell=0$ contribution), and hence using~(\ref{eq-last-1}), we derive
the goal~(\ref{sq2}).

\begin{remark} As before, one should consider the $q_0=1$ case as the main case, so that the technical factors of $q_0$, $(\ell,q_0)$, and $C(n)$ should be ignored at a first reading; in practice, we will usually (though not always) end up discarding several powers of $q_0$ in the denominator in the final bounds for the $q_0>1$ case.
The trivial bound for $\Upsilon_{\ell,r}(b_1,b_2;q_0)$
  is about $ (Q/q_0)^2 N H$ with $H=x^{\eps}RQ^2M^{-1}q_0^{-1}$.  Thus
  one needs to gain about $H$ over the trivial bound.  As observed
  previously, $H$ is quite small, and even a modestly non-trivial
  exponential sum estimate can suffice for this purpose (after using
  Cauchy-Schwarz to eliminate factors such as $\beta(n)
  \overline{\beta(n+\ell r)}$).
\end{remark}

It remains to establish Theorem \ref{eset} in the four cases
indicated.  We will do this for \eqref{typeI1-yetagain},
\eqref{typeI2-yetagain}, \eqref{typeII1-yetagain} below, and defer the
proof of \eqref{typeI4-yetagain} to Section \ref{typei-advanced-sec}.
In all four cases, one uses the Cauchy-Schwarz inequality to
eliminate non-smooth factors such as $\beta(n)$ and $\beta(n+\ell r)$, and
reduces matters to incomplete exponential sum estimates.  In the cases
\eqref{typeI1-yetagain}, \eqref{typeI2-yetagain}, \eqref{typeII1-yetagain} treated below, the one-dimensional exponential sum estimates from Section
\ref{incomplete-exp-sec} suffice; for the final case \eqref{typeI4-yetagain}, a
multidimensional exponential sum estimate is involved, and we will
prove it using Deligne's formalism of the Riemann Hypothesis over
finite fields, which we survey in Section~\ref{deligne-sec}.

\subsection{Proof of Type II estimate}\label{typeii-sec}

We begin with the proof of Theorem \ref{eset}\eqref{typeII1-yetagain},
which is the simplest of the four estimates to prove. We fix notation
and hypotheses as in this statement. 

To prove \eqref{keyo}, we will not exploit any averaging in the 
variable $r$, and more precisely, we will show that
\begin{equation}\label{eq-iv-target}
\Upsilon_{\ell,r}(b_1,b_2;q_0)
\lessapprox x^{-\eps} Q^2 N (q_0,\ell)q_0^{-2}
\end{equation}
for each $q_0 \geq 1$, $r \sim R$ and $\ell\ll N/R$.  We abbreviate
$\Upsilon=\Upsilon_{\ell,r}(b_1,b_2;q_0)$ in the remainder of this
section and denote
$$
H=x^{\eps}RQ^2M^{-1}q_0^{-1}.
$$
By \eqref{eq-upsilon}, we can then write
\begin{equation}\label{eq-start}
\Upsilon= \sumsum_{\substack{q_1,q_2 \sim Q/q_0\\ (q_1,q_2) = 1}} \sum_{1
  \leq |h| \leq H}c_{h,q_1,q_2} \sum_{n} C(n)\beta(n)
\overline{\beta(n+\ell r)} \Phi_{\ell}(h,n, r, q_0, q_1,q_2)
\end{equation}
for some coefficients $c_{h,q_1,q_2}$ with modulus at most $1$. We then
exchange the order of summation to move the sum over $n$ (and the terms $C(n) \beta(n) \overline{\beta(n+\ell r)}$) outside. Since $C(n)$
is the characteristic function of at most $(q_0,\ell)$ congruence
classes modulo $q_0$ (as observed after~(\ref{eq-n-congruence})), we
have
\begin{equation}\label{baz}
  \sum_n C(n)|\beta(n)|^2 |\beta(n+\ell r)|^2 \lessapprox 
  N\frac{(q_0,\ell)}{q_0}
\end{equation}
by~(\ref{divisor-crude}) (and the Cauchy-Schwarz inequality), using
the fact that $Q\leq N$.
\par
By another application of the Cauchy-Schwarz inequality, and after inserting (by positivity) a suitable coefficient sequence
$\psi_N(n)$, smooth at scale $N$ and $\geq 1$ for $n$ in the support
of $\beta(n)\overline{\beta(n+\ell r)}$, 
we conclude the bound
\begin{align*}
  |\Upsilon|^2&\lessapprox N\frac{(q_0,\ell)}{q_0}
  \sum_{n}\psi_N(n)C(n) \Bigl| \sumsum_{\substack{q_1,q_2 \sim Q/q_0\\
      (q_1,q_2) = 1}} \sum_{1 \leq |h|
    \leq H} c_{h,q_1,q_2} \Phi_{\ell}(h,n, r, q_0, q_1,q_2)\Bigr|^2\\
  &\lessapprox N\frac{(q_0,\ell)}{q_0}
  \multsum_{\substack{q_1,q_2,s_1,s_2\sim Q/q_0\\
      (q_1,q_2)=(s_1,s_2)=1 }} \sumsum_{1\leq h_1,h_2\leq |H|}
  |S_{\ell,r}(h_1,h_2,q_1,q_2,s_1,s_2)|,
\end{align*}
where the exponential sum  $S_{\ell,r}=S_{\ell,r}(h_1,h_2,q_1,q_2,s_1,s_2)$ is given by
\begin{equation}\label{S-def}
  S_{\ell,r}:=  \sum_n C(n)\psi_N(n) \Phi_{\ell}(h_1,n, r, q_0,q_1,q_2)
  \overline{\Phi_{\ell}( h_2,n, r,q_0,s_1,s_2)}.
\end{equation}

We will prove below the following estimate for this exponential sum
(compare with \cite[(12.5)]{zhang}):

\begin{proposition}\label{exse} For any
$$
\uple{p}=(h_1,h_2,q_1,q_2,s_1,s_2)
$$
with $(q_0q_1q_2s_1s_2,r)=1$, any $\ell \neq 0$ and $r$ as above with
$$
q_0q_i,\ q_0s_i\ll Q,\quad\quad r\ll R,
$$
we have
$$ 
|S_{\ell,r}(\uple{p})| \lessapprox (q_0,\ell)\Bigl( q_0^{-2} Q^2
R^{1/2} + \frac{N}{q_0R} (h_1s_1s_2-h_2q_1q_2,r)\Bigr).
$$
\end{proposition}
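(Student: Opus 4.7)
\emph{Overview.} The plan is to reorganize the phase in $S_{\ell,r}$ into the form to which Corollary~\ref{dons} applies, and then let that corollary do the work. Write
$$ \Phi_\ell(h_1,n,r,q_0,q_1,q_2)\,\overline{\Phi_\ell(h_2,n,r,q_0,s_1,s_2)} = e_{d_1}\!\Bigl(\frac{c_1}{n}\Bigr)\,e_{d_2}\!\Bigl(\frac{c_2}{n+\ell r}\Bigr), $$
where I would group the moduli according to where the poles lie. The $e_r$, $e_{q_0 q_1}$ and $e_{q_0 s_1}$ factors (poles at $n=0$) consolidate via CRT into the modulus $d_1 := r\, q_0\, [q_1,s_1]$, while the $e_{q_2}$ and $e_{s_2}$ factors (poles at $n = -\ell r$) consolidate into $d_2 := [q_2,s_2]$. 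Using the coprimalities $(r,q_0q_1q_2s_1s_2)=1$, $(q_1,q_2)=(s_1,s_2)=1$, and $(q_0,q_i s_i)=1$, this CRT combination is well-defined.

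\emph{The key algebraic point} is the $r$-component of $c_1$. Modulo $r$, only the $e_r$ factors survive, and
$$ \frac{h_1}{q_0 q_1 q_2} - \frac{h_2}{q_0 s_1 s_2} \equiv \frac{h_1 s_1 s_2 - h_2 q_1 q_2}{q_0 q_1 q_2 s_1 s_2} \pmod{r}, $$
so that, as an element of $\Z/r\Z$, the coefficient of $1/n$ is a unit times $h_1 s_1 s_2 - h_2 q_1 q_2$. By CRT this gives $(c_1,r)=(h_1 s_1 s_2 - h_2 q_1 q_2, r)$, which is exactly the gcd appearing in the target estimate.

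\emph{Applying Corollary~\ref{dons}.} The cutoff $C(n)$ is the indicator of at most $(q_0,\ell)$ residue classes modulo $q_0$, so I split $S_{\ell,r}$ accordingly and bound each piece with Corollary~\ref{dons} using $d = q_0$, $l_1 = 0$, $l_2 = \ell r$ (second variant, which does not require dense divisibility of $[d_1,d_2]$). The hypotheses $q_0 q_i, q_0 s_i \ll Q$ and $r \ll R$ give $[d_1,d_2] \ll R\,Q^4/q_0^3$, hence $q_0^{-1/2}[d_1,d_2]^{1/2} \lessapprox q_0^{-2} Q^2 R^{1/2}$. For the complete-sum term, the trivial bounds $(c_1,[q_1,s_1])\leq [q_1,s_1]$ and $(c_2,[q_2,s_2])\leq [q_2,s_2]$ collapse all the non-$r$ factors in $(c_1,\delta_1')(c_2,\delta_2')/(\delta_1'\delta_2')$, leaving the genuine gcd $(c_1,r)/r$. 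Multiplying by $N$ yields the second term $N(h_1 s_1 s_2 - h_2 q_1 q_2, r)/(q_0 R)$. Multiplying both terms by the number $(q_0,\ell)$ of residue classes gives \eqref{keyo}.

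\emph{Main obstacle.} The cleanest version of the calculation assumes $(d_1,d_2)=1$, i.e.\ the cross-gcds $(q_1,s_2)$ and $(s_1,q_2)$ are trivial, which is not hypothesised. When they are non-trivial, $[d_1,d_2]$ shrinks (which only helps the first term) but the denominators $\delta_j'$ in the complete-sum piece also shrink. I expect the main care needed is checking, prime-by-prime, that the trivial bound $(c_j,\delta_j')\leq \delta_j'$ at each place outside of $r$ still leads to the desired cancellation and that the coprimality conditions for Corollary~\ref{dons} (e.g.\ non-degeneracy of the combined rational phase modulo primes dividing $[d_1,d_2]$, and constancy of denominator degrees) are satisfied. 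This is the technically delicate step, but no term larger than $(c_1,r)/r$ is ever exposed, so the final bound is unaffected.
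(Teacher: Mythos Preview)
Your proposal is correct and follows essentially the same route as the paper: the same regrouping of the phase into $e_{d_1}(c_1/n)\,e_{d_2}(c_2/(n+\ell r))$ with $d_1=rq_0[q_1,s_1]$, $d_2=[q_2,s_2]$, the same splitting of $C(n)$ into at most $(q_0,\ell)$ residue classes modulo $q_0$, and the same appeal to the second variant of Corollary~\ref{dons} with $d=q_0$. Your ``main obstacle'' is less of an obstacle than you fear: since $(r,q_0q_1q_2s_1s_2)=1$ forces $r\mid \delta_1'$, the inequality $\tfrac{(c_1,\delta_1')}{\delta_1'}\leq \tfrac{(c_1,r)}{r}$ holds regardless of the cross-gcds $(q_1,s_2)$, $(s_1,q_2)$, and $\tfrac{(c_2,\delta_2')}{\delta_2'}\leq 1$ is trivial---so the non-coprime case requires no additional care.
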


Assuming this, we obtain
$$
|\Upsilon|^2\lessapprox N\Bigl(\frac{(q_0,\ell)}{q_0}\Bigr)^2
\multsum_{\substack{q_1,q_2,s_1,s_2\sim Q/q_0\\
    (q_1,q_2)=(s_1,s_2)=1 }} \sumsum_{1\leq h_1,h_2\leq |H|}
\Bigl(\frac{1}{q_0}Q^2 R^{1/2} +
\frac{N}{R}(h_1s_1s_2-h_2q_1q_2,r)\Bigr)
$$
(since $S_{\ell,r}=0$ unless $(q_0q_1q_2s_1s_2,r)=1$, by the
definition~(\ref{eq-phiell}) and the definition of $e_q$ in
Section~\ref{exp-sec}).
\par
Making the change of variables $\Delta = h_1 s_1 s_2 - h_2 q_1 q_2$, and noting that each $\Delta$ has at most $\tau_3(\Delta) = |\{ (a,b,c): abc = \Delta \}|$ representations in terms of $h_2,q_1,q_2$ for each fixed $h_1,s_1,s_2$, we have
\begin{align*}
  \multsum_{\substack{q_1,q_2,s_1,s_2\sim Q/q_0\\
      (q_1,q_2)=(s_1,s_2)=1 }} \sumsum_{1\leq h_1,h_2\leq |H|}
  (h_1s_1s_2-h_2q_1q_2,r) &\leq \sum_{|\Delta|\ll
    H(Q/q_0)^2}(\Delta,r) \multsum_{h_1,s_1,s_2}
  \tau_3(h_1s_1s_2-\Delta)\\
  &\lessapprox H\Bigl(\frac{Q}{q_0}\Bigr)^2\sum_{0\leq |\Delta|\ll
    H(Q/q_0)^2}(\Delta,r)\\
&\lessapprox H\Bigl(\frac{Q}{q_0}\Bigr)^2\Bigl(\frac{HQ^2}{q_0^2}+R\Bigr)
\end{align*}
by Lemma~\ref{divisor-crude} (bounding $\tau_3 \leq \tau^2$) and
Lemma~\ref{ram-avg}. Therefore we obtain
\begin{align}
  |\Upsilon|^2&\lessapprox N\frac{(q_0,\ell)^2}{q_0^2}\Bigl\{
  \frac{H^2Q^2R^{1/2}}{q_0}\Bigl(\frac{Q}{q_0}\Bigr)^4
  +\frac{H^2N}{R}\Bigl(\frac{Q}{q_0}\Bigr)^4+
  NH\Bigl(\frac{Q}{q_0}\Bigr)^2
  \Bigr\}\nonumber\\
  &\lessapprox \frac{N^2Q^4(q_0,\ell)^2}{q_0^4} \Bigl\{
  \frac{H^2Q^2R^{1/2}}{N}+\frac{H^2}{R}+\frac{H}{Q^2} \Bigr\}
  \nonumber\\
  &\lessapprox \frac{N^2Q^4(q_0,\ell)^2}{q_0^4} \Bigl\{
  x^{2\eps}\frac{Q^6R^{5/2}}{M^2N}+x^{2\eps}\frac{RQ^4}{M^2}+\frac{x^{\eps}R}{M}
  \Bigr\}
\label{eq-iv-final}
\end{align}
where we have discarded some powers of $q_0 \geq 1$ in the denominator to reach the second and third lines.
We now observe that
\begin{gather*}
  \frac{Q^6R^{5/2}}{M^2N} \sim \frac{(NQ)(QR)^5}{x^2R^{5/2}}
  \lessapprox \frac{x^{1+12\varpi+\delta+3\eps}}{R^{5/2}} \lessapprox
  \frac{x^{1+12\varpi+7\delta/2+21\eps/2}}{N^{5/2}}
  \\
  \frac{Q^4R}{M^2}\sim \frac{N^2RQ^4}{x^2}=\frac{(QR)(NQ)^3}{x^2N}
  \lessapprox \frac{x^{8\varpi +3\delta+9\eps}}{N}
  \\
  \frac{R}{M}\sim \frac{NR}{x}\lessapprox x^{-1-3\eps}N^2\lessapprox
  x^{-3\eps}
\end{gather*}
by~(\ref{crop2}) and~(\ref{eq-nr}) and $N\lessapprox M$. Under the
Type II assumption that $N\ggcurly x^{1/2-2\varpi-c}$ for a small
enough $c>0$ and that $\eps>0$ is small enough, we see
that~(\ref{eq-iv-final}) implies~(\ref{eq-iv-target}) provided
$\varpi$ and $\delta$ satisfy
$$
\begin{cases}
1+12\varpi+\frac{7\delta}{2}<\frac{5}{2}(\frac{1}{2}-2\varpi)\\
8\varpi+3\delta<\frac{1}{2}-2\varpi
\end{cases}
\Leftrightarrow\quad
\begin{cases}
68\varpi+14\delta<1\\
20\varpi+6\delta<1,
\end{cases}
$$
both of which are, indeed, consequences of the hypotheses of
Theorem~\ref{eset} (iv) (the first implies the second because
$\varpi>0$ so $\delta<1/14$).

To finish this treatment of the Type II sums, it remains to prove the
proposition.

\begin{proof}[Proof of Proposition~\ref{exse}]
 For fixed $(r,\ell,q_0,a, b_1,b_2)$ we can use \eqref{eq-phiell} to express the phase
  $\Phi_{\ell}$ in the form
$$
\Phi_{\ell}(h,n,r,q_0,q_1,q_2)=
\uple{e}^{(1)}_r\Bigl(\frac{h}{q_1q_2n}\Bigr)
\uple{e}^{(2)}_{q_0q_1}\Bigl(\frac{h}{nq_2}\Bigr)
\uple{e}^{(3)}_{q_2}\Bigl(\frac{h}{(n+\tau)q_0q_1}\Bigr)
$$
where $\uple{e}^{(i)}_{d}$ denotes various non-trivial additive characters
modulo $d$ which may depend on $(r,\ell,q_0,a,b_1,b_2)$ and $\tau=\ell
r$. 
\par
We denote $\Phi_1(n)=\Phi_{\ell}(h_1,n,r,q_0,q_1,q_2)$ and
$\Phi_2(n)=\Phi_{\ell}(h_2,n,r,q_0,s_1,s_2)$, and thus we have
\begin{multline}\label{eq-phi1-phi2}
  \Phi_1(n)\overline{\Phi_2(n)}
  =\uple{e}_r^{(1)}\Bigl(\frac{h_1}{q_1q_2n}-\frac{h_2}{s_1s_2n}\Bigr)
  \uple{e}_{q_0q_1}^{(2)}\Bigl(\frac{h_1}{nq_2}\Bigr)
  \uple{e}_{q_0s_1}^{(2)}\Bigl(-\frac{h_2}{ns_2}\Bigr)\\
  \uple{e}_{q_2}^{(3)}\Bigl(\frac{h_1}{(n+\tau)q_0q_1}\Bigr)
  \uple{e}_{s_2}^{(3)}\Bigl(-\frac{h_2}{(n+\tau)q_0s_1}\Bigr)
\end{multline}
and this can be written
$$
\Phi_1(n)\overline{\Phi_2(n)} =\uple{e}^{(4)}_{d_1} \Bigl( \frac{c_1}{n}
\Bigr) \uple{e}_{d_2}^{(5)}\Bigl( \frac{c_2}{n+\tau} \Bigr)
$$
where
\begin{gather*}
  d_1 := r q_0 [q_1,s_1],\quad\quad
  d_2 := [q_2,s_2] 
\end{gather*}
for some $c_1$ and $c_2$. 
\par
Now, since $C(n)$ is the characteristic function
of $\leq (q_0,\ell)$ residue classes modulo $q_0$, we deduce
$$
|S_{\ell,r}|=
\Bigl|\sum_{n}C(n)\psi_N(n)\Phi_1(n)\overline{\Phi_2(n)}\Bigr|
\leq (q_0,\ell)
\max_{t\in \Z/q_0\Z}
\Bigl|\sum_{n=t\ (q_0)} 
\psi_N(n)\Phi_1(n)\overline{\Phi_2(n)}\Bigr|,
$$
and by the second part of Corollary \ref{dons}, we derive
$$
|S_{\ell,r}|\lessapprox (q_0,\ell)
\Bigl(\frac{[d_1,d_2]^{1/2}}{q_0^{1/2}}+
\frac{N}{q_0}\frac{(c_1,\delta'_1)}{\delta'_1}\frac{(c_2,\delta'_2)}{\delta'_2}
\Bigr) \lessapprox (q_0,\ell)
\Bigl(R^{1/2}\Bigl(\frac{Q}{q_0}\Bigr)^2+
\frac{N}{q_0}\frac{(c_1,\delta'_1)}{\delta'_1} \Bigr)
$$
where $\delta_i=d_i/(d_1,d_2)$ and $\delta'_i = \delta_i/(q_0,\delta_i)$, since
$$
[d_1,d_2]\leq rq_0q_1q_2s_1s_2\ll
q_0R\Bigl(\frac{Q}{q_0}\Bigr)^4,\quad\quad
\frac{(c_2,\delta'_2)}{\delta'_2}\leq 1.
$$
\par
Finally, we have
$$ 
\frac{(c_1,\delta'_1)}{\delta'_1}=\prod_{\substack{p\mid
    \delta_1\\p\nmid c_1, q_0}}p \leq \frac{(c_1,r)}{r}
$$
(since $r\mid \delta_1$ and $(r,q_0)=1$). But a prime $p\mid r$ divides $c_1$
precisely when the $r$-component of~(\ref{eq-phi1-phi2}) is constant,
which happens exactly when $p\mid h_1s_1s_2-h_2q_1q_2$, so that
$$
S_{\ell,r}\lessapprox (q_0,\ell)R^{1/2}\Bigl(\frac{Q}{q_0}\Bigr)^2+
\frac{(q_0,\ell)N}{q_0R}(r,h_1s_1s_2-h_2q_1q_2).
$$
\end{proof}


\begin{remark} By replacing the lower bound $N \gtrapprox
  x^{1/2-2\varpi-c}$ with the lower bound $N \gtrapprox
  x^{1/2-\sigma}$, the above argument also yields the estimate
  $\TypeI^{(1)}[\varpi,\delta,\sigma]$ whenever
  $48\varpi+14\delta+10\sigma < 1$.  However, as this constraint does
  not allow $\sigma$ to exceed $1/10$, one cannot use this estimate as
  a substitute for Theorem \ref{newtype}\eqref{typeI2} or Theorem
  \ref{newtype}\eqref{typeI4}.  If one uses the first estimate of
  Corollary \ref{dons} in place of the second, one can instead obtain
  $\TypeI^{(1)}[\varpi,\delta,\sigma]$ for the range $56 \varpi+16\delta+6\sigma< 1$, which
  now does permit $\sigma$ to exceed $1/10$, and thus gives some
  version of Zhang's theorem after combining with a Type III estimate.
  However, $\sigma$ still does not exceed $1/6$, and so one cannot
  dispense with the Type III component of the argument entirely with
  this Type I estimate.  By using a second application of $q$-van der
  Corput, though (i.e. using the $l=3$ case of Proposition \ref{inc}
  rather than the $l=2$ case), it is possible to raise $\sigma$ above
  $1/6$, assuming sufficient amounts of dense divisibility; we leave
  the details to the interested reader.  However, the Cauchy-Schwarz
  arguments used here are not as efficient in the Type I setting as
  the Cauchy-Schwarz arguments in the sections below, and so these
  estimates do not supersede their Type I counterparts.
\end{remark}

\subsection{Proof of first Type I estimate}\label{typei-1-sec}

We will establish Theorem \ref{eset}\eqref{typeI1-yetagain}, which is
the easiest of the Type I estimates to prove. The strategy follows
closely that of the previous section. The changes, roughly speaking,
are that the Cauchy-Schwarz argument is slightly modified (so that
only the $q_2$ variable is duplicated, rather than both $q_1$ and
$q_2$) and that we use an exponential sum estimate based on the first
part of Corollary \ref{dons} instead of the second.

As before, we will establish the bound~(\ref{eq-iv-target}) for each
individual $r$. We abbreviate again
$\Upsilon=\Upsilon_{\ell,r}(b_1,b_2;q_0)$ and denote
$$
H=x^{\eps}RQ^2M^{-1}q_0^{-1}.
$$
We begin with the formula~(\ref{eq-start}) for $\Upsilon$, move the
$q_1$ and $n$ sums outside, apply the Cauchy-Schwarz inequality (and
insert a suitable smooth coefficient sequence $\psi_N(n)$ at scale $N$
to the $n$ sum), so that we get
$$
|\Upsilon|^2\leq \Upsilon_1 \Upsilon_2
$$
with
$$
\Upsilon_1:= \sum_{q_1\sim Q/q_0} \sum_n C(n)|\beta(n)|^2 |\beta(n+\ell
r)|^2 \lessapprox \frac{NQ(q_0,\ell)}{q_0^2},
$$
(as in~(\ref{baz})) and
\begin{align*}
  \Upsilon_2&:= \sum_{n}\psi_N(n)C(n)\sum_{q_1\sim Q/q_0} \Bigl|
  \sum_{\substack{q_2 \sim Q/q_0\\(q_1,q_2) = 1}} \sum_{1 \leq |h|
    \leq H} c_{h,q_1,q_2}
  \Phi_{\ell}(h,n, r, q_0, q_1,q_2)\Bigr|^2\\
  &= \sum_{q_1\sim Q/q_0}\sumsum_{\substack{q_2,s_2\sim Q/q_0\\
      (q_1,q_2)=(q_1,s_2)=1 }} \sumsum_{1\leq h_1,h_2\leq |H|}
  c_{h_1,q_1,q_2}\overline{c_{h_2,q_1,s_2}}S_{\ell,r}(h_1,h_2,q_1,q_2,q_1,s_2),
\end{align*}
where $S_{\ell,r}$ is the same sum~(\ref{S-def}) as before and the
variables $(q_1,q_2,s_2)$ are restricted by the condition $q_0q_1r$,
$q_0q_2r$, $q_0s_2r\in \DIone{I}{x^{\delta}}$ (recall the
definition~(\ref{eq-upsilon})).

We will prove the following bound:



\begin{proposition}\label{exse-2} 
For any
$$
\uple{p}=(h_1,h_2,q_1,q_2,q_1,s_2)
$$
with $(q_0q_1q_2s_2,r)=1$ and for any $\ell \neq 0$ and $r$ as above
with
\begin{gather*}
  q_0q_ir, q_0s_2r\in \DIone{I}{x^{\delta}}\\
  q_0q_i\ll Q,\quad q_0s_2\ll Q,\quad r\ll R,
\end{gather*}
we have
$$ 
|S_{\ell,r}(\uple{p})| \lessapprox q_0^{1/6} N^{1/2} x^{\delta/6} (Q^3
R)^{1/6} + R^{-1} N (h_1 s_2 - h_2q_2, r).
$$
\end{proposition}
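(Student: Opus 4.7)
The plan is to combine the two phase functions $\Phi_1(n)=\Phi_\ell(h_1,n,r,q_0,q_1,q_2)$ and $\overline{\Phi_2(n)}=\overline{\Phi_\ell(h_2,n,r,q_0,q_1,s_2)}$ by modulus using the Chinese Remainder Theorem (Lemma~\ref{crt}) and the explicit formula \eqref{eq-phiell}. Grouping all $1/n$ contributions together and all $1/(n+\ell r)$ contributions together, one obtains a representation
$$
\Phi_1(n)\overline{\Phi_2(n)} = e_{d_1}\!\Bigl(\frac{c_1}{n}\Bigr)\,e_{d_2}\!\Bigl(\frac{c_2}{n+\ell r}\Bigr),
$$
where $d_1 = r\,q_0\,q_1$ (collecting the factors from the first two exponentials in \eqref{eq-phiell}) and $d_2 = [q_2,s_2]$ (collecting the third exponential for both phases). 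Because of the coprimality hypotheses $(q_1,q_2)=(q_1,s_2)=1$, the condition $q_0q_i\in\Scal_I$, and $(q_0q_1q_2s_2,r)=1$, the two moduli $d_1$ and $d_2$ are coprime, so $[d_1,d_2]=d_1d_2=r\,q_0\,q_1\,[q_2,s_2]$, and in particular $[d_1,d_2]\ll RQ^3/q_0^2$.

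The next key step is to verify that $[d_1,d_2]$ is $x^\delta$-densely divisible, so that Corollary~\ref{dons} (in the stronger form exploiting the $q$-van der Corput $A$-process) applies with $y=x^{\delta+o(1)}$. By hypothesis, each of $q_0q_1r$, $q_0q_2r$, $q_0s_2r$ lies in $\DIone{I}{x^\delta}$, and Lemma~\ref{fq}(ii) shows that the least common multiple of $y$-densely divisible integers is again $y$-densely divisible. Applying this twice yields
$$
[q_0q_1r,\,q_0q_2r,\,q_0s_2r] = q_0\,r\,[q_1,q_2,s_2] = q_0\,q_1\,r\,[q_2,s_2] = [d_1,d_2],
$$
using the coprimality of $q_1$ to $q_2$ and $s_2$. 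Thus $[d_1,d_2]$ is $x^{\delta+o(1)}$-densely divisible.

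Now we decompose the cutoff $C(n)$, which (as noted after \eqref{eq-n-congruence}) is the characteristic function of at most $(q_0,\ell)$ residue classes modulo $q_0$. For each such class $t\ (q_0)$ we apply the first bound of Corollary~\ref{dons} with $d=q_0$ (a divisor of $d_1$ and hence of $[d_1,d_2]$). Since $\delta_1'=r q_1$ and $\delta_2'=[q_2,s_2]$ (using $(q_0,d_2)=1$), this gives
$$
\Bigl|\sum_{n=t\,(q_0)}\psi_N(n)\Phi_1(n)\overline{\Phi_2(n)}\Bigr|
\lessapprox q_0^{-1/2}N^{1/2}[d_1,d_2]^{1/6}x^{\delta/6}
+ q_0^{-1}\,\frac{(c_1,rq_1)(c_2,[q_2,s_2])}{rq_1[q_2,s_2]}\,N.
$$
Summing over the at most $(q_0,\ell)\leq q_0$ residue classes and using $[d_1,d_2]^{1/6}\ll(Q^3R)^{1/6}q_0^{-1/3}$ yields the first term $q_0^{1/6}N^{1/2}x^{\delta/6}(Q^3R)^{1/6}$ of the proposition. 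For the second term, one repeats the gcd identification carried out in the proof of Proposition~\ref{exse}: writing $c_1$ via CRT and using $(q_0q_1q_2s_2,r)=1$ together with the coprimality of $a,b_1$ to $r$, one finds $(c_1,r)=(h_1s_2-h_2q_2,r)$; bounding $(c_1,q_1)\leq q_1$ and $(c_2,[q_2,s_2])\leq[q_2,s_2]$ trivially gives a contribution $\ll R^{-1}N(h_1s_2-h_2q_2,r)$ after multiplying by $(q_0,\ell)\leq q_0$.

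The only delicate points, and thus the main obstacle, are the two pieces of bookkeeping: first, checking dense divisibility of $[d_1,d_2]$ with the sharp parameter $x^\delta$ (rather than a worse $x^{\delta}q_1$ from naive monotonicity via Lemma~\ref{fq}(i)), which requires presenting $[d_1,d_2]$ as an iterated l.c.m.\ of the three hypothesised densely divisible moduli so that Lemma~\ref{fq}(ii) applies; and second, tracing the CRT identification carefully enough to equate $(c_1,r)$ with $(h_1s_2-h_2q_2,r)$. Once these are in place, Corollary~\ref{dons} provides the exponential sum estimate directly.
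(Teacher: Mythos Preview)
Your proposal is correct and follows essentially the same route as the paper's proof: write $\Phi_1\overline{\Phi_2}$ as $e_{d_1}(c_1/n)\,e_{d_2}(c_2/(n+\tau))$ with $d_1=rq_0q_1$, $d_2=[q_2,s_2]$, verify via Lemma~\ref{fq}(ii) that $[d_1,d_2]=[q_0q_1r,q_0q_2r,q_0s_2r]$ is $x^{\delta}$-densely divisible, split $C(n)$ into residue classes modulo $q_0$, and apply the first bound of Corollary~\ref{dons}. Your observation that $(d_1,d_2)=1$ and the resulting sharper bound $[d_1,d_2]\ll RQ^3/q_0^2$ (the paper writes $Q^3R/q_0$) is in fact what is needed to recover the stated exponent $q_0^{1/6}$ after bounding $(q_0,\ell)\leq q_0$; otherwise the argument is identical.
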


We first conclude assuming this estimate: arguing as in the previous
section to sum the gcd $(h_1s_2-h_2q_2,r)$, we obtain
$$
\Upsilon_2\lessapprox 
\Bigl(\frac{Q}{q_0}\Bigr)^3H^2
\Bigl\{
q_0^{1/6}N^{1/2}(Q^3R)^{1/6}x^{\delta/6}+\frac{N}{R}
\Bigr\}
+HN\Bigl(\frac{Q}{q_0}\Bigr)^2,
$$
and therefore
\begin{align*}
|\Upsilon|^2&
\lessapprox \frac{NQ(q_0,\ell)}{q_0^2}\Bigl\{
q_0^{1/6}\Bigl(\frac{Q}{q_0}\Bigr)^3H^2N^{1/2}(Q^3R)^{1/6}x^{\delta/6}+
\Bigl(\frac{Q}{q_0}\Bigr)^3\frac{H^2N}{R}+
HN\Bigl(\frac{Q}{q_0}\Bigr)^2
\Bigr\}\\
&\lessapprox \frac{N^2Q^4(q_0,\ell)^2}{q_0^4}
\Bigl\{
\frac{H^2Q^{1/2}R^{1/6}x^{\delta/6}}{N^{1/2}}+
\frac{H^2}{R}+\frac{H}{Q}
\Bigr\}
\end{align*}
where we once again discard some powers of $q_0 \geq 1$ from the denominator.
Using again~(\ref{crop2}) and~(\ref{eq-nr}) and $N\lessapprox M$, we
find that
\begin{gather*}
\frac{H^2Q^{1/2}R^{1/6}x^{\delta/6}}{N^{1/2}}\lessapprox
x^{\delta/6+2\eps}\frac{R^{13/6}Q^{9/2}}{M^2N^{1/2}}\lessapprox
x^{-2+\delta/6+2\eps}\frac{N^{3/2}(QR)^{9/2}}{R^{7/3}}
\lessapprox \frac{x^{1/4+9\varpi+5\delta/2+9\eps}}{N^{5/6}}
\\
\frac{H^2}{R}\lessapprox \frac{x^{8\varpi +3\delta+11\eps}}{N}\\
\frac{H}{Q}\leq x^{\eps}\frac{RQ}{M}\lessapprox
\frac{x^{1/2+2\varpi+\eps}}{M}
\lessapprox x^{-c+\eps},
\end{gather*}
and using the assumption $N\ggcurly x^{1/2-\sigma}$ from \eqref{slop}, we will
derive~(\ref{eq-iv-target}) if $c=3\eps$, $\eps>0$ is small enough,
and
$$
\begin{cases}
\frac{1}{4}+9\varpi+5\frac{\delta}{2}<\frac{5}{6}
\Bigl(\frac{1}{2}-\sigma\Bigr)\\
8\varpi+3\delta<\frac{1}{2}-\sigma
\end{cases}
\quad
\Leftrightarrow\quad\quad
\begin{cases}
54\varpi+15\delta+5\sigma<1\\
16\varpi+6\delta+2\sigma<1.
\end{cases}
$$
For $\varpi$, $\delta$, $\sigma>0$, the first condition implies the
second (as its coefficients are larger). Since the first
condition is the assumption of Theorem
\ref{eset}\eqref{typeI1-yetagain}, we are then done.

We now prove the exponential sum estimate.

\begin{proof}[Proof of Proposition~\ref{exse-2}]
  We denote 
$$
\Phi_1(n)=\Phi_{\ell}(h_1,n,r,q_0,q_1,q_2),\quad\quad
\Phi_2(n)=\Phi_{\ell}(h_2,n,r,q_0,q_1,s_2),
$$
as in the proof of Proposition~\ref{exse}, and we write
$$
\Phi_1(n)\overline{\Phi_2(n)}= \uple{e}^{(4)}_{d_1} \Bigl( \frac{c_1}{n}
\Bigr) \uple{e}^{(5)}_{d_2}\Bigl( \frac{c_2}{n+\tau} \Bigr)
$$
for some $c_1$ and $c_2$, where
\begin{gather*}
  d_1 := r q_0 q_1,\quad\quad d_2 := [q_2,s_2].
\end{gather*}
Since $rq_0q_1$, $rq_0q_2$ and $rq_0s_2$ are $x^{\delta}$-densely
divisible, Lemma~\ref{fq}(ii) implies that the lcm
$[d_1,d_2]=[rq_0q_1,rq_0q_2,rq_0s_2]$ is also $x^{\delta}$-densely
divisible.
\par
Splitting again the factor $C(n)$ into residue classes modulo $q_0$,
and applying the first part of Corollary~\ref{dons} to each residue class,
we obtain
$$
|S_{\ell,r}|\lessapprox (q_0,\ell) \Bigl(
\frac{N^{1/2}}{q_0^{1/2}} [d_1,d_2]^{1/6} x^{\delta/6} +
\frac{N}{q_0}\frac{(c_1,\delta'_1)}{\delta_1}
\frac{(c_2,\delta'_2)}{\delta'_2} \Bigr)
$$
where $\delta_i=d_i/(d_1,d_2)$ and $\delta'_i = \delta_i/(q_0,\delta_i)$. Again, as in the proof of
Proposition~\ref{exse}, we conclude by observing that $[d_1,d_2]\leq
Q^3R/q_0$, $(c_2,\delta'_2)/\delta'_2\leq 1$ while
$$ 
\frac{(c_1,\delta'_1)}{\delta'_1}\leq\frac{(c_1,r)}{r},
$$
and inspection of the $r$-component of $\Phi_1(n)\overline{\Phi_2(n)}$
using~(\ref{eq-phiell}) shows that a prime $p\mid r$ divides $c_1$ if
and only if $p\mid h_1s_2-h_2q_2$.
\end{proof}

\subsection{Proof of second Type I estimate}\label{second-typei}

We finish this section with the proof of Theorem
\ref{eset}\eqref{typeI2-yetagain}.  The idea is very similar to the
previous Type I estimate, the main difference being that since $q_1$
(and $q_2$) is densely divisible in this case, we can split the sum
over $q_1$ to obtain a better balance of the factors in the
Cauchy-Schwarz inequality. 

As before, we will prove the bound~(\ref{eq-iv-target}) for individual
$r$, and we abbreviate $\Upsilon=\Upsilon_{\ell,r}(b_1,b_2;q_0)$ and
denote
$$
H=x^{\eps}RQ^2M^{-1}q_0^{-1}.  
$$
We may assume that $H\geq 1$, since otherwise the bound is trivial.
We note that $q_0q_1$ is, by assumption,
$x^{\delta+o(1)}$-densely divisible, and therefore by
Lemma~\ref{fq}(i), $q_1$ is $y$-densely divisible with
$y=q_0x^{\delta+o(1)}$. Furthermore we have
$$
x^{-2\eps} Q/H\ggcurly x^{c-3\eps}
$$
by~(\ref{crop2}) and $M\ggcurly x^{1/2+2\varpi+c}$, and
$$
x^{-2\eps} Q/H\lessapprox q_1y=q_1q_0x^{\delta+o(1)}
$$
since $q_1q_0\sim Q$ and $H\geq 1$. Thus (assuming $c>3\eps$) we can factor
$$
q_1=u_1v_1
$$
where $u_1,v_1$ are squarefree with
\begin{gather*}
  q_0^{-1} x^{-\delta-2\eps} Q/H \lessapprox u_1 \lessapprox
  x^{-2\eps} Q/H\\
  q_0^{-1} x^{2\eps} H \lessapprox v_1 \lessapprox x^{\delta+2\eps} H
\end{gather*}
(either from dense divisibility if $x^{-2\eps}Q/H\lessapprox q_1$, or
taking $u_1=q_1$, $v_1=1$ otherwise).
\par
Let
$$
\Upsilon_{U,V}:=\sum_{1 \leq |h| \leq H} \sum_{u_1 \sim U} \sum_{v_1
  \sim V} \sum_{\substack{q_2 \sim Q/q_0\\ (u_1v_1,q_0q_2)= 1}}\Bigl|
\sum_n C(n)\beta(n) \overline{\beta(n+\ell r)}
\Phi_{\ell}(h,n,r,q_0,u_1v_1,q_2)\Bigr|,
$$
where $u_1,v_1$ are understood to be squarefree.
\par
By dyadic decomposition of the sum over $q_1=u_1v_1$ in $\Upsilon$, it
is enough to prove that
\begin{equation}\label{eq-upsilon-dyadic}
\Upsilon_{U,V}\lessapprox x^{-\eps}(q_0,\ell)Q^2Nq_0^{-2}
\end{equation}
whenever
\begin{align}
  q_0^{-1} x^{-\delta-2\eps} Q/H \lessapprox U &\lessapprox x^{-2\eps} Q/H \label{s-bound}\\
  q_0^{-1} x^{2\eps} H \lessapprox V &\lessapprox x^{\delta+2\eps} H \label{t-bound}\\
  UV &\sim Q/q_0. \label{st}
\end{align}

We replace the modulus by complex numbers $c_{h,u_1,v_1,q_2}$ of
modulus at most $1$, move the sum over $n$, $u_1$ and $q_2$ outside and apply
the Cauchy-Schwarz inequality as in the previous sections to obtain
$$
|\Upsilon_{U,V}|^2\leq \Upsilon_1\Upsilon_2
$$
with
$$
\Upsilon_1:=\sumsum_{\substack{u_1\sim U\\ q_2\sim Q/q_0}} \sum_n C(n)
|\beta(n)|^2|\beta(n+\ell r)|^2\lessapprox (q_0,\ell)\frac{NQU}{q_0^2}
$$
as in~(\ref{baz}) and
\begin{align*}
  \Upsilon_2&:= \sumsum_{\substack{u_1\sim U\\ q_2\sim Q/q_0}}
  \sum_{n}\psi_N(n)C(n) \Bigl| \sum_{v_1\sim V; (u_1v_1,q_0q_2)= 1}
  \sum_{1 \leq |h| \leq H} c_{h,u_1,v_1,q_2}
  \Phi_{\ell}(h,n, r, q_0, u_1v_1,q_2)\Bigr|^2\\
  &=\sumsum_{\substack{u_1\sim U\\q_2\sim Q/q_0}} 
  \sumsum_{v_1,v_2\sim V; (u_1v_1v_2,q_0q_2)= 1} \sumsum_{1\leq |h_1|,|h_2|\leq H}
  c_{h_1,u_1,v_1,q_2}\overline{c_{h_2,u_1,v_2,q_2}}
  T_{\ell,r}(h_1,h_2,u_1,v_1,v_2,q_2,q_0)
\end{align*}
where the exponential sum $T_{\ell,r}$ is a variant of $S_{\ell,r}$ given by
\begin{equation}\label{eq-tell}
T_{\ell,r} :=\sum_n C(n)\psi_N(n) \Phi_{\ell}(h_1,n, r,q_0,u_1v_1, q_2)
\overline{ \Phi_{\ell}(h_2,n, r,q_0,u_1v_2,q_2)}.
\end{equation}
The analogue of Propositions~\ref{exse} and~\ref{exse-2} is:

\begin{proposition}
\label{exse-3}  
For any
$$
\uple{p}=(h_1,h_2,u_1,v_1,v_2,q_2,q_0)
$$
with $(u_1v_1v_2,q_0q_2)=(q_0,q_2)=1$, any $\ell \neq 0$ and $r$
as above, we have
$$
|T_{\ell,r}( \uple{p})| \lessapprox (q_0,\ell)\Bigl(q_0^{-1/2} N^{1/2}
x^{\delta/3+\eps/3} (R H Q^2)^{1/6} + \frac{N}{q_0R}(h_1 v_2-h_2v_1,
r)\Bigr).
$$
\end{proposition}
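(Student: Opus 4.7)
The structure of the argument is parallel to the proof of Proposition~\ref{exse-2}: rewrite the phase $\Phi_1 \overline{\Phi_2}$ using the Chinese Remainder Theorem as a product of two terms of the form $e_{d_1}(c_1/n)$ and $e_{d_2}(c_2/(n+\ell r))$, verify dense divisibility of $[d_1,d_2]$, and invoke the first part of Corollary~\ref{dons}.

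Concretely, write $\Phi_1(n) = \Phi_\ell(h_1,n,r,q_0,u_1v_1,q_2)$ and $\Phi_2(n) = \Phi_\ell(h_2,n,r,q_0,u_1v_2,q_2)$. By \eqref{eq-phiell} and the coprimality hypotheses $(u_1v_1v_2, q_0q_2) = (q_0,q_2) = 1$ (and that $r$ is coprime to $q_0u_1v_1v_2q_2$, inherited from the prior setup), the moduli $r$, $q_0u_1v_1$, $q_0u_1v_2$, $q_2$ have pairwise manageable overlaps, and the CRT produces an expression
$$\Phi_1(n)\overline{\Phi_2(n)} = e^{(4)}_{d_1}\Bigl(\frac{c_1}{n}\Bigr) e^{(5)}_{d_2}\Bigl(\frac{c_2}{n+\ell r}\Bigr)$$
where $d_1 := r q_0 u_1 [v_1,v_2]$ and $d_2 := q_2$, and where $(c_1,r)$ divides $h_1v_2 - h_2v_1$ by inspection of the $r$-components (the contribution of the $1/n$ piece on modulus $r$ is proportional to $h_1v_2 - h_2v_1$ up to units coprime to $r$), while $(c_2,\delta_2')/\delta_2' \leq 1$ trivially.

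The dense divisibility of $[d_1,d_2] = rq_0u_1[v_1,v_2]q_2$ with parameter $y = q_0 x^{\delta+o(1)}$ follows from Lemma~\ref{fq}(ii): each of $rq_0u_1v_1q_2$, $rq_0u_1v_2q_2$ is a divisor of a product of $x^\delta$-densely divisible numbers arising from the constraints $q_0u_1v_i r, q_0q_2 r \in \DIone{I}{x^\delta}$. Now split the cutoff $C(n)$ into at most $(q_0,\ell)$ residue classes modulo $q_0$ (which divides $d_1$) and apply the first bound of Corollary~\ref{dons} in each such class with $d = q_0$. This produces
$$|T_{\ell,r}| \lessapprox (q_0,\ell)\Bigl(q_0^{-1/2} N^{1/2} [d_1,d_2]^{1/6} y^{1/6} + q_0^{-1} \frac{(c_1,\delta_1')}{\delta_1'} \frac{(c_2,\delta_2')}{\delta_2'} N\Bigr).$$

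To finish, I would estimate $[d_1,d_2]$ using the dyadic size constraints: from $v_1,v_2 \leq 2V \leq x^{\delta+2\eps} H$ and $u_1 v_j \sim q_1 \sim Q/q_0$, one gets $u_1[v_1,v_2] \leq u_1 v_1 v_2 \lessapprox x^{\delta+2\eps} HQ/q_0$, hence
$$[d_1,d_2]^{1/6} y^{1/6} \lessapprox \bigl(R\cdot q_0 \cdot x^{\delta+2\eps} HQ^2/q_0\bigr)^{1/6} (q_0 x^\delta)^{1/6} = x^{\delta/3+\eps/3} (RHQ^2)^{1/6},$$
yielding the first term of the claimed bound. For the second term, as in the proof of Proposition~\ref{exse-2}, dropping the trivially-bounded $u_1[v_1,v_2]$ part of $\delta_1'$ gives $(c_1,\delta_1')/\delta_1' \leq (c_1,r)/r \leq (h_1v_2 - h_2v_1,r)/R$, producing exactly $N(h_1v_2-h_2v_1,r)/(q_0R)$.

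The only delicate step is the book-keeping: one must track that the prime factorizations of $u_1$, $v_1$, $v_2$, $q_0$, $q_2$, and $r$ are sufficiently coprime (inherited from squarefreeness of $q_0u_1v_j$ and $(q_0q_1,r)$ being $x^\delta$-densely divisible as part of $\DIone{I}{x^\delta}$), and that the lcm $[v_1,v_2]$, rather than $v_1v_2$, appears in $d_1$. The $x^{\eps/3}$ loss relative to the naive $x^{\delta/6}$ comes entirely from the upper range $V \lessapprox x^{\delta+2\eps} H$ of the dyadic decomposition \eqref{t-bound}. No further analytic input beyond Corollary~\ref{dons} (hence only Weil's bound for curves) is needed.
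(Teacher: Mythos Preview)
Your approach is the same as the paper's: write $\Phi_1\overline{\Phi_2}$ via the CRT with $d_1 = rq_0u_1[v_1,v_2]$, $d_2 = q_2$, verify dense divisibility of $[d_1,d_2]$, split $C(n)$ into $\leq (q_0,\ell)$ classes modulo $q_0$, and apply the first bound of Corollary~\ref{dons}. The analysis of the second term (the $(c_1,r)/r$ estimate via the $r$-component) is also identical to the paper's.

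There is one point where you make your own life harder. You take the dense-divisibility parameter to be $y = q_0 x^{\delta+o(1)}$, but in fact $[d_1,d_2]$ is already $x^\delta$-densely divisible: it equals $[rq_0u_1v_1,\, rq_0u_1v_2,\, rq_0q_2]$, and each of these three integers lies in $\DIone{I}{x^\delta}$ by the support conditions on the coefficients, so Lemma~\ref{fq}(ii) gives $y = x^\delta$ directly. Your weaker choice of $y$ is still valid (by monotonicity, Lemma~\ref{fq}(0)), but it costs an extra $q_0^{1/6}$ in the main term. Combined with a small bookkeeping slip in your bound for $[d_1,d_2]$ (you have $u_1[v_1,v_2]q_2 \lessapprox x^{\delta+2\eps}HQ^2/q_0$, but $q_2 \sim Q/q_0$ contributes a second $q_0^{-1}$, so the correct bound is $[d_1,d_2]\lessapprox x^{\delta+2\eps}HQ^2R/q_0$), the displayed equality $[d_1,d_2]^{1/6}y^{1/6} = x^{\delta/3+\eps/3}(RHQ^2)^{1/6}$ is off by a factor of $q_0^{1/6}$ as written. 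Using $y=x^\delta$ and the correct $[d_1,d_2]$ bound, the paper actually gets $q_0^{-2/3}$ in the first term, stronger than the stated $q_0^{-1/2}$; either way the Proposition follows once the arithmetic is repaired.
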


Assuming this, we derive as before 
$$
\Upsilon_2\lessapprox  (q_0,\ell) 
H^2UV^2\Bigl(\frac{Q}{q_0}\Bigr)
\Bigl\{
N^{1/2}(RHQ^2)^{1/6}x^{\delta/3+\eps/3}+\frac{N}{R}
\Bigr\}
+HNUV\Bigl(\frac{Q}{q_0^2}\Bigr),
$$
and then
\begin{align*}
  |\Upsilon_{U,V}|^2&\lessapprox (q_0,\ell)^2\frac{NQU}{q_0} \Bigl\{
  \frac{H^2Q^3N^{1/2}(HQ^2R)^{1/6}x^{\delta/3+\eps/3}}{Uq_0^3}
  +\frac{H^2NQ^3}{URq_0^3} +HN\Bigl(\frac{Q^2}{q_0^3}\Bigr)
  \Bigr\}\\
  &\lessapprox (q_0,\ell)^2\frac{N^2Q^4}{q_0^4} \Bigl\{
  \frac{H^{13/6}Q^{1/3}R^{1/6}x^{\delta/3+\eps/3}}{N^{1/2}} +\frac{H^2}{R}+
  \frac{H}{Vq_0} \Bigr\}
\end{align*}
since $UV\sim Q/q_0$, where we have again discarded a factor of $q_0$ in the first line.  Using
again~(\ref{crop2}),~(\ref{eq-nr}),~(\ref{t-bound}), we find that
\begin{align*}
  \frac{H^{13/6}Q^{1/3}R^{1/6}x^{\delta/3+\eps/3}}{N^{1/2}} &\lessapprox
  x^{\delta+5\eps/2}\frac{R^{7/3}Q^{14/3}}{N^{1/2}M^{13/6}}
  \lessapprox
  x^{1/6+28\varpi/3+\delta/3+5\eps/2}\frac{N^{5/3}}{R^{7/3}}\\
  & \lessapprox
  \frac{x^{28\varpi/3+8\delta/3+1/6+19\eps/2}}{N^{2/3}}
  \\
  \frac{H^2}{R}&\lessapprox \frac{x^{8\varpi +3\delta+11\eps}}{N}\\
  \frac{H}{Vq_0}&\lessapprox x^{-2\eps},
\end{align*}
and therefore~(\ref{eq-upsilon-dyadic}) holds for sufficiently small $\eps$ provided
$$
\begin{cases}
  \frac{28\varpi}{3}+\frac{8\delta}{3}+\frac{1}{6}
  <\frac{2}{3}\left(\frac{1}{2}-\sigma\right)
  \\
  8\varpi+3\delta<\frac{1}{2}-\sigma
\end{cases}\quad
\Leftrightarrow
\quad\quad
\begin{cases}
  56\varpi+16\delta+4\sigma <1
  \\
  16\varpi+6\delta+2\sigma<1.
\end{cases}
$$
Again the first condition implies the second, and the proof is
completed.

\begin{proof}[Proof of Proposition~\ref{exse-3}]
We proceed as in the previous cases. Denoting
$$
\Phi_1(n):= \Phi_{\ell}(h_1,n, r,q_0,u_1v_1, q_2),\quad\quad \Phi_2(n):=
\Phi_{\ell}(h_2,n, r,q_0,u_1v_2,q_2)
$$
for brevity, we may write
$$
\Phi_1(n)\overline{\Phi_2(n)}= \uple{e}^{(4)}_{d_1} \Bigl( \frac{c_1}{n}
\Bigr) \uple{e}_{d_2}^{(5)}\Bigl( \frac{c_2}{n+\tau} \Bigr)
$$
by~(\ref{eq-phiell}) for some $c_1$ and $c_2$ and $\tau$, where
\begin{gather*}
  d_1 := r q_0u_1[v_1,v_2],\quad\quad d_2 := q_2.
\end{gather*}
Since $rq_0u_1v_1$, $rq_0u_1v_2$ and $rq_0q_2$ are
$x^{\delta}$-densely divisible, Lemma~\ref{fq}(ii) implies that their
gcd $[d_1,d_2]$ is also $x^{\delta}$-densely divisible.
\par
Splitting again the factor $C(n)$ into residue classes modulo $q_0$,
and applying the first part of Corollary~\ref{dons} to each residue class,
we obtain
$$
|T_{\ell,r}|\lessapprox (q_0,\ell) \Bigl( \frac{N^{1/2}}{q_0^{1/2}}
[d_1,d_2]^{1/6} x^{\delta/6} +
\frac{N}{q_0}\frac{(c_1,\delta'_1)}{\delta'_1}
\frac{(c_2,\delta'_2)}{\delta'_2} \Bigr)
$$
where $\delta_i=d_i/(d_1,d_2)$ and $\delta'_i = \delta_i / (q_0,\delta_i)$. We conclude as before by observing
that 
$$
[d_1,d_2]\ll QRUV^2\lessapprox x^{\delta+2\eps}\frac{HQ^2R}{q_0},
$$
by~(\ref{t-bound}) and~(\ref{st}), that $(c_2,\delta_2)/\delta_2\leq
1$ and that $(c_1,\delta)/\delta_1\leq(c_1,r)/r$, where inspection of
the $r$-component of $\Phi_1(n)\overline{\Phi_2(n)}$
using~(\ref{eq-phiell}) shows that a prime $p\mid r$ divides $c_1$ if
and only if $p\mid h_1v_2-h_2v_1$.
\end{proof}

\section{Trace functions and multidimensional exponential sum
  estimates}\label{deligne-sec}

In this section (as in Section \ref{exp-sec}), we do not use the standard asymptotic convention
(Definition~\ref{asym}), since we discuss general ideas that are of
interest independently of the goal of bounding gaps between primes.
\par
We will discuss some of the machinery and formalism of
\emph{$\ell$-adic sheaves} $\mcF$ on curves\footnote{In our
  applications, the only curves $U$ we deal with are obtained by
  removing a finite number of points from the projective line $\Pl$.}
and their associated \emph{Frobenius trace functions} $t_{\mcF}$. This
will allow us to state and then apply the deep theorems of Deligne's
general form of the Riemann Hypothesis over finite fields for such
sheaves. We will use these theorems to establish certain estimates for
multi-variable exponential sums which go beyond the one-dimensional
estimates obtainable from Lemma \ref{prime-exp} (specifically, the
estimates we need are stated in Corollary~\ref{inctrace-q} and 
Corollary~\ref{corr-2}).  
\par
The point is that these Frobenius trace functions significantly
generalize the rational phase functions $x \mapsto
e_p\left(\frac{P(x)}{Q(x)}\right)$ which appear in
Lemma~\ref{prime-exp}. They include more general functions, such as the
hyper-Kloosterman sums 
$$
x \mapsto \frac{(-1)^{m-1}}{p^{\frac{m-1}{2}}}
\multsum_{\substack{y_1,\ldots,y_m \in \Fp\\ y_1 \ldots y_m = x}} e_p(
y_1 + \cdots + y_m ),
$$
and satisfy a very flexible formalism. In particular, the class of Frobenius 
trace functions is (essentially)
closed under basic operations such as pointwise addition and
multiplication, complex conjugation, change of variable
(pullback),
and the normalized Fourier transform.  Using these closure properties
allows us to build a rich class of useful trace functions from just a
small set of basic trace functions.  In fact, the sheaves we actually
use in this paper are ultimately obtained from only two sheaves: the
Artin-Schreier sheaf and the third hyper-Kloosterman sheaf.\footnote{One can even reduce the number of generating
  sheaves to one, because the sheaf-theoretic Fourier transform,
  combined with pullback via the inversion map $x \mapsto\frac{1}{x}$,
  may be used to iteratively build the hyper-Kloosterman sheaves from
  the Artin-Schreier sheaf.} However, we have chosen to discuss more
general sheaves in this section in order to present the
sheaf-theoretic framework in a more natural fashion.

Because exponential sums depending on a parameter are often themselves
trace functions, one can recast many multidimensional exponential sums
(e.g.
$$
\sum_{x_1,\ldots,x_n \in \Fp} e_p(f(x_1,\ldots,x_n))
$$ 
for some rational function $f\in\Fp(X_1,\ldots,X_n)$) in terms of
one-dimensional sums of Frobenius trace functions. As a very rough
first approximation, Deligne's results~\cite{WeilII} imply that the
square root cancellation exhibited in Lemma~\ref{prime-exp} is also
present for these more general sums of Frobenius trace functions, as
long as certain degenerate cases are avoided.  Therefore, at least
in principle, this implies square root cancellation for many
multidimensional exponential sums.  
\par
In practice, this is often not entirely straightforward, as we will
explain. One particular issue is that the bounds provided by Deligne's
theorems depend on a certain measure of complexity of the $\ell$-adic
sheaf defining the trace function, which is known as the
\emph{conductor} of a sheaf. In estimates for sums of trace functions,
this conductor plays the same role that the degrees of the polynomials
$f,g$ play in Lemma \ref{prime-exp}.  We will therefore have to expend
some effort to control the conductors of various sheaves before we can
extract usable estimates from Deligne's results.

This section is not self-contained, and assumes a certain amount of
prior formal knowledge of the terminology of $\ell$-adic cohomology on
curves. For readers who are not familiar with this material, we would
recommend as references such surveys as \cite[\S 11.11]{ik},
\cite{exp-rism}, \cite{pizza}, and some of the books and papers of
Katz, in particular \cite{katz-sommes, katzWII, GKM}, as well as
Deligne's own account~\cite[Sommes trig.]{deligne41/2}. We would like
to stress that, if the main results of the theory are assumed and the
construction of some main objects (e.g. the Artin-Schreier and
hyper-Kloosterman sheaves) is accepted, working with $\ell$-adic
sheaves essentially amounts to studying
certain 
finite-dimensional continuous representations of the Galois group of the field
$\Fp(X)$ of rational functions over $\Fp$.  
\par
Alternatively, for the purposes of establishing only the bounds on
(incomplete) multi-variable exponential sums used in the proofs of the
main theorems of this paper (namely the bounds in Corollary~\ref{inctrace-q} and 
Corollary~\ref{corr-2}), it is possible to ignore all
references to sheaves, if one accepts the estimates on complete
multi-dimensional exponential sums in Proposition \ref{kls} and
Theorem \ref{kf-bound} as ``black boxes''; the estimates on  incomplete
exponential sums will be deduced from these results via completion of sums and the
$q$-van der Corput $A$-process.

\subsection{$\ell$-adic sheaves on the projective line}
For $p$ a prime, we fix an algebraic closure $\ov\Fp$ of $\Fp$ and
denote by $k\subset \ov\Fp=\ov k$ a finite extension of $\Fp$. Its
cardinality is usually denoted
$|k|=p^{[k:\Fp]}=p^{\operatorname{deg}(k)}=q$.  For us, the Frobenius
element relative to $k$ means systematically the \emph{geometric
  Frobenius} $\Fr_k$, which is the inverse in $\Gal(\ov k/k)$ of the
\emph{arithmetic Frobenius}, $x \mapsto x^q$ on $\ov k$. 

We denote by $K=\Fp(t)$ the function field of the projective line
$\Pl_{\Fp}$ and by $\overline K\supset \overline \Fp$ some
separable closure; let $\ov\eta=\mathrm{Spec}(\overline K)$ be the
corresponding geometric generic point.

We fix another prime $\ell\neq p$, and we denote by $\iota \colon
\ov{\Q_\ell}\hookrightarrow \C$ an algebraic closure of the field
$\Q_\ell$ of $\ell$-adic numbers, together with
an embedding into the complex numbers. By an \emph{$\ell$-adic
  sheaf} $\mcF$ on a noetherian scheme $X$ (in practice, a curve), we
always mean a constructible sheaf of finite-dimensional $\overline{\Q_\ell}$-vector spaces with respect to the \'etale topology on $X$,
and we recall that the category of $\ell$-adic sheaves is abelian.

We will be especially interested in the case $X=\mathbb{P}_k^1$ (the
projective line) and we will use the following notation for the
translation, dilation, and fractional linear maps from $\Pl$
to itself:
\begin{align*}
  [+l] \colon x&\mapsto x+l,\\
  [\times a] \colon x&\mapsto ax,\\
  \gamma \colon x&\mapsto \gamma\cdot x=\frac{ax+b}{cx+d}\text{ for }
  \gamma=\begin{pmatrix}a&b\\c&d
  \end{pmatrix}\in \mathrm{GL}_2(\Fp).
\end{align*}
We will often transform a sheaf $\mcF$ on $\Pl_k$ by applying
pullback by one of the above maps, and we denote these pullback
sheaves by $[+l]^*\mcF$, $[\times a]^*\mcF$ and $\gamma^*\mcF$.

\subsubsection{Galois representations}

The category of $\ell$-adic sheaves on $\mathbb{P}_k^1$ admits a
relatively concrete description in terms of representations of the
Galois group $\Gal(\ov K/k.K)$. We recall some important features of
it here and we refer to \cite[4.4]{katz-sommes} for a complete
presentation.

For $j:U\hookrightarrow \mathbb{P}_k^1$ some non-empty open subset
defined over $k$, we denote by $\pi_1(U)$ (resp. $\pi^g_1(U)$) the
\emph{arithmetic (resp. geometric) fundamental group} of $U$, which may
be defined as the quotient of $\Gal(\overline K/k.K)$ (resp.\ of
$\Gal(\overline K/\ov k.K)$) by the smallest closed normal subgroup
containing all the inertia subgroups above the closed points of
$U$. We have then a commutative diagram of short exact sequences of
groups
\begin{equation}\label{exactseq}
\begin{tikzcd} 1\arrow{r}&  \Gal(\ov K/\ov k.K)\arrow{r}\arrow[two heads]{d}&\Gal(\ov K/k.K)\arrow{r}\arrow[two heads]{d}&\Gal(\ov k/k)\arrow{r}\arrow{d}{=}&1\\1\arrow{r}&\pige(U)\arrow{r}&\piar(U)\arrow{r}&\Gal(\ov k/k)\arrow{r}&1
\end{tikzcd}
\end{equation}

Given an $\ell$-adic sheaf $\mcF$ on $\mathbb{P}_k^1$, there exists
some non-empty (hence dense, in the Zariski topology) open set
$j:U\hookrightarrow \mathbb{P}_k^1$ such that the pullback $j^*\mcF$
(the restriction of $\mcF$ to $U$) is \emph{lisse}, or in other words,
for which $j^*\mcF$ ``is'' a finite-dimensional continuous representation
$\rho_{\mcF}$ of $\Gal(\overline{K}/k.K)$ factoring through $\pi_1(U)$
$$
\rho_\mcF \colon \Gal(\overline K/k.K )\twoheadrightarrow
\pi_1(U)\rightarrow \mathrm{GL}(\mcF_{\ov\eta}),
$$
where the geometric generic stalk $\mcF_{\overline\eta}$ of $\mcF$ is
a finite-dimensional $\overline{\Q_\ell}$-vector space. Its dimension
is the \emph{(generic) rank} of $\mcF$ and is denoted
$\mathrm{rk}(\mcF)$. There is a maximal (with respect to inclusion) open subset on
which $\mcF$ is lisse, which will be denoted $U_\mcF$.

We will freely apply the terminology of representations to $\ell$-adic
sheaves. The properties of $\rho_\mcF$ as a representation of the
arithmetic Galois group $\Gal(\ov K/k.K)$ (or of the arithmetic
fundamental group $\pi_1(U)$) will be qualified as ``arithmetic'',
while the properties of its restriction $\rho_\mcF^g$ to the
geometric Galois group $\Gal(\overline K/\ov k. K )$ (or the geometric
fundamental group $\pi_1^g(U)$) will be qualified as
``geometric''. For instance, we will say that $\mcF$ is {\em
  arithmetically irreducible} (resp.\ \emph{geometrically irreducible})
or \emph{arithmetically isotypic} (resp.\ \emph{geometrically
  isotypic}) if the corresponding arithmetic representation
$\rho_\mcF$ (resp.\ the geometric representation $\rho_\mcF^g$) is. 

We will be mostly interested in the geometric properties of a sheaf,
therefore we will usually omit the adjective ``geometric'' in our
statements, so that ``isotypic'' will mean ``geometrically
isotypic''. We will always spell out explicitly when an arithmetic
property is intended, so that no confusion can arise.

\subsubsection{Middle-extension sheaves} 
An $\ell$-adic sheaf is a called a \emph{middle-extension} sheaf if,
for some (and in fact, for any) non-empty open subset
$j:U\hookrightarrow \mathbb{P}_k^1$ such that $j^*\mcF$ is lisse, we
have an arithmetic isomorphism
$$
\mcF\simeq j_*j^*\mcF,
$$
or equivalently if, for every $\ov x\in\Pl(\ov k)$, the
specialization maps (cf. \cite[4.4]{katz-sommes}) 
$$
s_{\ov x}:\mcF_{\ov x}\rightarrow \mcF_{\ov \eta}^{I_{\ov x}}
$$ 
are isomorphisms, where $I_{\ov{x}}$ is the inertia subgroup at
$\ov{x}$. Given an $\ell$-adic sheaf, its associated middle-extension
is the sheaf
$$
\mcF^{\mathrm{me}}=j_*j^*\mcF
$$
for some non-empty open subset $j:U\hookrightarrow \mathbb{P}_k^1$ on
which $\mcF$ is lisse. This sheaf is a middle-extension sheaf, and is
(up to arithmetic isomorphism) the unique middle-extension sheaf whose
restriction to $U$ is arithmetically isomorphic to that of $\mcF$. In
particular, $\mcF^{\mathrm{me}}$ does not depend on the choice of $U$.

\subsection{The trace function of a sheaf}
Let $\mcF$ be an $\ell$-adic sheaf on the projective line over
$\Fp$. For each finite extension $k/\Fp$, $\mcF$ defines a complex
valued function
$$
x \mapsto t_{\mcF}(x;k)
$$ 
on $k\cup\{\infty\}=\Pl(k)$, which is called the \emph{Frobenius trace function}, or
just \emph{trace function}, associated with $\mcF$ and $k$. It is
defined by
$$
\Pl(k) \ni x\mapsto t_{\mcF}(x;k):=\iota(\Tr(\Fr_{x,k}|\mcF_{\overline
  x})).
$$
Here $\ov x:\mathrm{Spec}(\ov k)\rightarrow \Pl_k$ denotes a geometric
point above $x$, and $\mcF_{\overline x}$ is the stalk of $\mcF$ at
that point, which is a finite-dimensional $\ov{\Q_\ell}$-vector space
on which $\Gal(\ov k/k)$ acts linearly, and $\Fr_{x,k}$ denotes the
geometric Frobenius of that Galois group. The trace of the action of
this operator is independent of the choice of $\ov{x}$.

If $k=\Fp$, which is the case of importance for the applications in
this paper, we will write $t_{\mcF}(x;p)$ or simply $t_{\mcF}(x)$
instead of $t_{\mcF}(x;\Fp)$.

If $x\in U_\mcF(k)$, the quantity $t_\mcF(x;k)$ is simply the trace of
the geometric Frobenius conjugacy class of a place of $\ov K$ above
$x$ acting through the associated representation $\mcF_{\ov\eta}$,
i.e., the value (under $\iota$) of the character of the
representation at this conjugacy class:
$$
t_{\mcF}(x;k)=\iota(\Tr(\Fr_{x,k}|\mcF_{\ov\eta})).
$$
If $\mcF$ is a middle-extension sheaf one has more generally
$$
t_{\mcF}(x;k)=\iota(\Tr(\Fr_{x,k}|\mcF_{\ov\eta}^{I_{\ov x}})).
$$

For any sheaf $\mcF$, the trace function of $\mcF$ restricted to
$U_\mcF(k)$ coincides with the restriction of the trace function of
$\mcF^{\mathrm{me}}$.

\subsubsection{Purity and admissibility}

The following notion was introduced by Deligne~\cite{WeilII}.

\begin{definition}[Purity] For $i\in \Z$, an $\ell$-adic sheaf on
  $\Pl_{\Fp}$ is \emph{generically pure} (or \emph{pure}, for short) of
  weight $i$ if, for any $k/\Fp$ and any $x\in U_\mcF(k)$, the
  eigenvalues of $\Fr_{x,k}$ acting on $\mcF_{\ov\eta}$ are $\Q$-algebraic
  numbers whose Galois conjugates have complex absolute value equal to
  $q^{i/2}=|k|^{i/2}$.
\end{definition}

\begin{remark} Deligne proved (see \cite[(1.8.9)]{WeilII}) that if
  $\mcF$ is generically pure of weight $i$, then for any $k/\Fp$ and
  any $x\in \Pl(k)$, the eigenvalues of $\Fr_{x,k}$ acting on
  $\mcF_{\ov\eta}^{I_{\ov x}}$ are $\Q$-algebraic numbers whose Galois
  conjugates have complex absolute value $\leq q^{i/2}$.

In particular, if $\mcF$ is a middle-extension sheaf which is pointwise pure
of weight $i$, then we get
\begin{equation}\label{traceupperbound}
  |t_\mcF(x;k)|=|\iota(\Tr(\Fr_x|\mcF_{\ov\eta}^{I_{\ov x}}))|\leq \rk(\mcF)q^{i/2}
\end{equation}
for any $x\in\Pl(k)$.
\end{remark}


We can now describe the class of sheaves and trace functions that we
will work with.

\begin{definition}[Admissible sheaves] Let $k$ be a finite extension
  of $\Fp$. An \emph{admissible sheaf} over $k$ is a middle-extension
  sheaf on $\Pl_{k}$ which is pointwise pure of weight $0$.  An
  \emph{admissible trace function} over $k$ is a function
  $k\rightarrow \C$ which is equal to the trace function of some
  admissible sheaf restricted to $k\subset \Pl(k)$.
\end{definition}

\begin{remark} The weight $0$ condition may be viewed as a
  normalization to ensure that admissible trace functions typically
  have magnitude comparable to $1$.  Sheaves which are pure of some
  other weight can be studied by reducing to the $0$ case by the
  simple device of \emph{Tate twists}. However, we will not need to do
  this, as we will be working exclusively with sheaves which are pure
  of weight $0$.
\end{remark}

\subsubsection{Conductor}

Let $\mcF$ be a middle-extension sheaf on $\Pl_{k}$. The
\emph{conductor} of $\mcF$ is defined to be
$$
\cond(\mcF):=\rk(\mcF)+|(\Pl-U_{\mcF})(\ov{k})|+\sum_{x\in (\Pl-
  U_\mcF)(\overline k)}\swan_x(\mcF)
$$
where $\swan_x(\mcF)$ denotes the Swan conductor of the representation
$\rho_\mcF$ at $x$, a non-negative integer measuring the ``wild
ramification'' of $\rho_{\mcF}$ at $x$ (see e.g. \cite[Definition
1.6]{GKM} for the precise definition of the Swan conductor).  If
$\swan_x(\mcF)=0$, one says that $\mcF$ is \emph{tamely ramified} at
$x$, and otherwise that it is \emph{wildly ramified}.
\par
The invariant $\cond(\mcF)$ is a non-negative integer (positive if
$\mcF\neq 0$) and it measures the complexity of the sheaf
$\mcF$ and of its trace function $t_\mcF$.  For instance, if $\mcF$ is
admissible, so that it is also pure of weight $0$, then we deduce from
\eqref{traceupperbound} that
\begin{equation}\label{traceupperbound-2}
|t_\mcF(x;k)| \leq \rk(\mcF) \leq \cond(\mcF)
\end{equation}
for any $x\in k$.

\subsubsection{Dual and Tensor Product} Given admissible sheaves
$\mcF$ and $\mcG$ on $\Pl_k$, their tensor product, denoted
$\mcF\otimes\mcG$, is by definition the middle-extension sheaf
associated to the tensor product representation
$\rho_\mcF\otimes\rho_\mcG$ (computed over the intersection of
$U_{\mcF}$ and $U_{\mcG}$, which is still a dense open set of
$\Pl_k$). Note that this sheaf may be different from the tensor product
of $\mcF$ and $\mcG$ as constructible sheaves (similarly to the fact
that the product of two primitive Dirichlet characters is not
necessarily primitive).

Similarly, the dual of $\mcF$, denoted $\wcheck{\mcF}$, is defined as
the middle extension sheaf associated to the contragredient
representation $\wcheck{\rho_\mcF}$.

We have
$$
U_\mcF\cap U_\mcG\subset U_{\mcF\otimes\mcG},\quad\quad U_{\wcheck
  \mcF}=U_{\mcF}.
$$

It is not obvious,
but true, that the tensor product and the dual of
admissible sheaves are admissible. We then have
\begin{equation}\label{hom}
  t_{\mcF\otimes\mcG}(x;k)=t_{\mcF}(x;k)t_{\mcG}(x;k),\quad\quad
  t_{\wcheck\mcF}(x;k)=\ov{t_{\mcF}(x;k)}
\end{equation}
for $x \in U_\mcF(k) \cap U_\mcG(k)$ and $x \in \Pl(k)$, respectively.
In particular, the product of two admissible trace functions
$t_{\mcF}$ and $t_{\mcG}$ coincides with an admissible trace function
outside a set of at most $\cond(\mcF)+\cond(\mcG)$ elements, and the complex
conjugate of an admissible trace function is again an admissible trace function.

We also have
\begin{equation}\label{cond-1}
\cond(\wcheck \mcF) = \cond(\mcF)
\end{equation}
(which is easy to check from the definition of Swan conductors) and
\begin{equation}\label{cond-2}
  \cond(\mcF\otimes\mcG)\ll \rk(\mcF)\rk(\mcG)\cond(\mcF)\cond(\mcG)
  \leq \cond(\mcF)^2\cond(\mcG)^2
\end{equation}
where the implied constant is absolute (which is also relatively
elementary, see \cite[Prop. 8.2(2)]{FKM1} or~\cite[Lemma
4.8]{transforms}).

\subsection{Irreducible components and isotypic decomposition}

Let $k$ be a finite field, let $\mcF$ be an admissible sheaf over
$\Pl_k$ and consider $U=U_{\mcF}$ and the corresponding open immersion
$j\colon U\hookrightarrow \Pl_{k}$.  A fundamental result of
Deligne~\cite[(3.4.1)]{WeilII} proves that $\rho_{\mcF}$ is then
geometrically semisimple. Thus there exist lisse sheaves $\mcG$ on
$U\times\ov{k}$, irreducible and pairwise non-isomorphic, and integers
$n(\mcG)\geq 1$, such that we have
$$
j^*\mcF\simeq \bigoplus_{\mcG}\mcG^{n(\mcG)}
$$
as an isomorphism of lisse sheaves on $U\times\ov{k}$ (the $\mcG$
might not be defined over $k$). Extending with $j_*$ to $\Pl_{\ov{k}}$
we obtain a decomposition
$$
\mcF\simeq \bigoplus_{\mcG}j_*\mcG^{n(\mcG)}
$$
where each $j_*\mcG$ is a middle-extension sheaf over $\ov{k}$. We
call the sheaves $j_*\mcG$ the \emph{geometrically irreducible
  components} of $\mcF$.
\par
Over the open set $U_{\mcF}$, we can define the arithmetic
semisimplification $\rho_{\mcF}^{\text{ss}}$ as the direct sum of the
Jordan-H\"older arithmetically irreducible components of the
representation $\rho_{\mcF}$. Each arithmetically irreducible
component is either geometrically isotypic or induced from a proper
finite index subgroup of $\pi_1(U_{\mcF})$.  If an arithmetically
irreducible component $\pi$ is induced, it follows that the trace
function of the middle-extension sheaf corresponding to $\pi$ vanishes
identically. Thus, if we denote by $\mathrm{Iso}(\mcF)$ the set of
middle-extensions associated to the geometrically isotypic components
of $\rho_{\mcF}^{\text{ss}}$, we obtain an identity
\begin{equation}\label{eq-isotypic}
 t_\mcF=\sum_{\mcG\in \mathrm{Iso}(\mcF)}t_{\mcG}
\end{equation}
(indeed, these two functions coincide on $U_{\mcF}$ and are both trace
functions of middle-extension sheaves), where each summand is
admissible. For these facts, we refer to~\cite[\S 4.4, \S
4.5]{katz-sommes} and~\cite[Prop. 8.3]{FKM1}.



\subsection{Deligne's main theorem and quasi-orthogonality}

The generalizations of complete exponential sums over finite fields
that we consider are sums
$$
S(\mcF;k)=\sum_{x\in k}t_\mcF(x;k)
$$
for any admissible sheaf $\mcF$ over $\Pl_k$. By
\eqref{traceupperbound-2}, we have the trivial bound
$$
|S(\mcF;k)|\leq \cond(\mcF)|k|=\cond(\mcF)q.
$$
Deligne's main theorem \cite[Thm.~1]{WeilII} provides strong
non-trivial estimates for such sums, at least when $p$ is large
compared to $\cond(\mcF)$.

\begin{theorem}[Sums of trace functions]\label{propGL1}
  Let $\mcF$ be an admissible sheaf on $\Pl_k$ where $|k|=q$ and
  $U=U_{\mcF}$. We have
$$
S(\mcF;k)=q\Tr\left(\Fr_{k}|(\mcF_{\ov\eta})_{\pi_1^g(U)}\right)
+O(\cond(\mcF)^2q^{1/2})
$$ 
where $(\mcF_{\ov\eta})_{\pi_1^g(U)}$ denotes the Tate-twisted
$\pi_1^g(U_{\mcF})$-coinvariant space\footnote{Recall that the
  coinvariant space of a representation of a group $G$ is the largest
  quotient on which the group $G$ acts trivially.} of $\rho_{\mcF}$,
on which $\Gal(\ov k/k)$ acts canonically, and where the implied
constant is effective and absolute.
\end{theorem}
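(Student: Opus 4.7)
The plan is to apply the Grothendieck--Lefschetz trace formula on the open curve $U = U_{\mcF}$, together with Deligne's weight bounds from \cite{WeilII}, to the lisse sheaf $j^*\mcF$ where $j \colon U \hookrightarrow \Pl_k$ is the open immersion. The starting identity is
$$
\sum_{x \in U(k)} t_{\mcF}(x;k) = \sum_{i=0}^{2} (-1)^i \Tr(\Fr_k \mid H^i_c(U_{\ov{k}}, j^*\mcF)).
$$
First I would reduce $S(\mcF;k)$ to the sum on the left by removing the contribution of the points in $(\Pl - U)(k)$. Since $|(\Pl - U)(k)| \leq \cond(\mcF)$ and, by \eqref{traceupperbound-2}, the trace function has modulus at most $\rk(\mcF) \leq \cond(\mcF)$ at each such point, this contributes an admissible error of $O(\cond(\mcF)^2)$.

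Next I would identify the three cohomology groups. Since $U$ is a non-proper smooth curve and $j^*\mcF$ is lisse, one has $H^0_c(U_{\ov k}, j^*\mcF) = 0$. By Poincar\'e duality on $U$, the top group is
$$
H^2_c(U_{\ov k}, j^*\mcF) \cong (\mcF_{\ov\eta})_{\pi_1^g(U)}(-1),
$$
i.e.\ the Tate-twisted coinvariants, whose Frobenius trace is exactly $q \Tr(\Fr_k \mid (\mcF_{\ov\eta})_{\pi_1^g(U)})$; this is precisely the claimed main term. So the whole task reduces to estimating the $H^1_c$ contribution.

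For $H^1_c$ I would invoke the central input, Deligne's main theorem in \cite{WeilII}: because $j^*\mcF$ is pure of weight $0$, the group $H^1_c(U_{\ov k}, j^*\mcF)$ is mixed of weights $\leq 1$, so every eigenvalue of $\Fr_k$ on it has complex absolute value at most $q^{1/2}$. This gives
$$
\bigl|\Tr(\Fr_k \mid H^1_c(U_{\ov k}, j^*\mcF))\bigr| \leq \dim H^1_c(U_{\ov k}, j^*\mcF) \cdot q^{1/2}.
$$
The remaining step, and the only genuinely quantitative one, is to bound this dimension by $O(\cond(\mcF)^2)$. This will be the main technical obstacle, but it is standard: by the Euler--Poincar\'e formula (Grothendieck--Ogg--Shafarevich) on the curve $U \subset \Pl$,
$$
\chi_c(U, j^*\mcF) = \rk(\mcF)\,\chi_c(U) - \sum_{x \in \Pl - U} \bigl(\swan_x(\mcF) + (\rk(\mcF) - \dim \mcF_{\ov\eta}^{I_{\ov x}})\bigr),
$$
and $\chi_c(U) = 2 - |(\Pl - U)(\ov k)|$. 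Combining this with the already computed dimensions $\dim H^0_c = 0$ and $\dim H^2_c \leq \rk(\mcF)$, one gets $\dim H^1_c$ bounded in terms of $\rk(\mcF)$, $|\Pl - U|$, and the Swan conductors, and a direct inspection shows
$$
\dim H^1_c(U_{\ov k}, j^*\mcF) \ll \rk(\mcF)\bigl(|\Pl - U(\ov k)| + 1\bigr) + \sum_x \swan_x(\mcF) \ll \cond(\mcF)^2,
$$
with an absolute implied constant. Assembling the three cohomological contributions with this dimension bound yields the claimed estimate, with an effective absolute implied constant.
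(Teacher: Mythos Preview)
Your proof is correct and follows essentially the same approach as the paper's: reduce to $U(k)$ with error $O(\cond(\mcF)^2)$, apply the Grothendieck--Lefschetz trace formula, use $H^0_c=0$, identify $H^2_c$ with the Tate-twisted coinvariants via Poincar\'e duality, and control $H^1_c$ by combining Deligne's weight bound with an Euler--Poincar\'e estimate giving $\dim H^1_c \ll \cond(\mcF)^2$. One small remark: in the Grothendieck--Ogg--Shafarevich formula for $\chi_c$ of a \emph{lisse} sheaf on $U$, the ``drop'' terms $\rk(\mcF) - \dim \mcF_{\ov\eta}^{I_{\ov x}}$ do not appear (only the Swan conductors do), but since these extra nonnegative terms only inflate your upper bound for $\dim H^1_c$, the conclusion is unaffected.
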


\proof Using \eqref{traceupperbound-2}, we have
$$
S(\mcF;k)=\sum_{x\in U(k)}t_\mcF(x;k)+O(\cond(\mcF)^2)
$$
where the implied constant is at most $1$. The Grothendieck-Lefshetz
trace formula (see, e.g., \cite[Chap.\ 3]{GKM}) gives
$$
S_\mcF(U,k)=\sum_{i=0}^2(-1)^i\Tr\left(\Fr_{k}|H_c^i(U\otimes_k\ov
  k,\mcF)\right)
$$
where $H^i_c(U\otimes_k\ov{k},\mcF)$ is the $i^{\operatorname{th}}$ compactly supported
\'etale cohomology group of the base change of $U$ to $\ov{k}$ with
coefficients in $\mcF$, on which the global Frobenius automorphism
$\Fr_k$ acts.
\par
Since $U$ is affine and $\mcF$ is lisse on $U$, it is known that
$H_c^0(U\otimes_k\ov k,\mcF)=0$. For $i=1$, Deligne's main theorem
shows that, because $\mcF$ is of weight $0$, all eigenvalues of
$\Fr_k$ acting on $H^1_c(U\times_k\ov{k},\mcF)$ are algebraic numbers
with complex absolute value $\leq |k|^{1/2}$, so that
$$
\left|\Tr\left(\Fr_{k}|H_c^1(U\otimes_k\ov k,\mcF)\right)\right| \leq
\dim\left(H_c^1(U\otimes_k\ov k,\mcF)\right)q^{1/2}.
$$
Using the Euler-Poincar\'e formula and the definition of the
conductor, one easily obtains
$$
\dim\left(H_c^1(U\otimes_k\ov k,\mcF)\right)\ll \cond(\mcF)^2
$$
with an absolute implied constant (see, e.g.,~\cite[Chap.\ 2]{GKM}
or~\cite[Th. 2.4]{FKMGowersnorms}).
\par
Finally for $i=2$, it follows from Poincar\'e duality that
$H_c^2(U\otimes_k\ov k,\mcF)$ is isomorphic to the Tate-twisted space of
$\pi_1^g(U)$-coinvariants of $\mcF_{\ov \eta}$ (see,
e.g.,~\cite[Chap. 2.]{GKM}), and hence the contribution of
this term is the main term in the formula.
\qed

\subsubsection{Correlation and quasi-orthogonality of trace functions}
An important application of the above formula arises when estimating
the \emph{correlation} between the trace functions $t_\mcF$ and
$t_\mcG$ associated to two admissible sheaves $\mcF, \mcG$, i.e., when
computing the sum associated to the tensor product sheaf
$\mcF\otimes\wcheck\mcG$.  
We define the correlation sum
$$
C(\mcF,\mcG;k):=\sum_{x\in k}t_\mcF(x;k)\ov{t_\mcG(x;k)}.
$$
From \eqref{traceupperbound-2} we have the trivial bound
$$
|C_{\mcF,\mcG}(k)|\leq \cond(\mcF)\cond(\mcG)q.
$$
The Riemann Hypothesis allows us improve upon this bound when $\mcF$,
$\mcG$ are ``disjoint'':

\begin{corollary}[Square root cancellation]\label{correlationcor} Let
  $\mcF,\mcG$ be two admissible sheaves on $\Pl_k$ for a finite field
  $k$. If $\mcF$ and $\mcG$ have no irreducible constituent in common,
  then we have
$$
|C(\mcF,\mcG;k)| \ll (\cond(\mcF)\cond(\mcG))^4q^{1/2}
$$
where the implied constant is absolute.  In particular, if in addition
$\cond(\mcF)$ and $\cond(\mcG)$ are bounded by a fixed constant, then
$$ 
|C(\mcF, \mcG;k)| \ll q^{1/2}.
$$
\end{corollary}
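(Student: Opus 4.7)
The plan is to obtain the estimate by applying Theorem~\ref{propGL1} to the admissible sheaf $\mcH := \mcF \otimes \wcheck{\mcG}$, whose trace function coincides with $t_{\mcF}(x;k)\,\overline{t_{\mcG}(x;k)}$ at all but finitely many points of $k$. First I would use the formulas \eqref{hom} to write
$$
C(\mcF,\mcG;k) = \sum_{x\in k} t_{\mcH}(x;k) + E,
$$
where $E$ accounts for the at most $\cond(\mcF)+\cond(\mcG)$ exceptional points at which $\mcH$ and $\mcF\otimes\wcheck{\mcG}$ (as products of trace functions) may disagree. By the pointwise bound \eqref{traceupperbound-2} applied to each of $\mcF$ and $\mcG$, the error $E$ is $O(\cond(\mcF)^2\cond(\mcG)^2)$, which is absorbed into the target bound.

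Next I would invoke Theorem~\ref{propGL1} for $\mcH$, producing
$$
\sum_{x\in k} t_{\mcH}(x;k) = q\,\Tr\bigl(\Fr_k\mid (\mcH_{\ov\eta})_{\pi_1^g(U_{\mcH})}\bigr)+O\bigl(\cond(\mcH)^2 q^{1/2}\bigr).
$$
The bounds \eqref{cond-1} and \eqref{cond-2} give
$$
\cond(\mcH) \ll \cond(\mcF)^2\cond(\wcheck{\mcG})^2 = \cond(\mcF)^2\cond(\mcG)^2,
$$
so the error contribution is $O\bigl((\cond(\mcF)\cond(\mcG))^4 q^{1/2}\bigr)$, as required.

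The crux of the argument—the step where the disjointness hypothesis enters—is showing that the main term vanishes, i.e.\ that the coinvariant space $(\mcH_{\ov\eta})_{\pi_1^g(U_{\mcH})}$ is zero. By duality this is equivalent to the vanishing of the invariant space
$$
\bigl(\wcheck{\mcF}_{\ov\eta}\otimes \mcG_{\ov\eta}\bigr)^{\pi_1^g(U_{\mcH})} \;\simeq\; \Hom_{\pi_1^g(U_{\mcH})}(\mcF_{\ov\eta},\mcG_{\ov\eta}).
$$
Now Deligne's geometric semisimplicity theorem (\cite[(3.4.1)]{WeilII}, already quoted in the text above) tells us that the restrictions of $\mcF$ and $\mcG$ to $U_{\mcH}$ decompose as direct sums of irreducible lisse sheaves. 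By Schur's lemma, any $\pi_1^g(U_{\mcH})$-equivariant homomorphism between two such semisimple representations is determined by its components between isomorphic geometrically irreducible summands, and these components vanish by hypothesis since $\mcF$ and $\mcG$ share no geometrically irreducible constituent. Hence the Hom-space is $0$, the main term disappears, and combining with the error estimate yields
$$
|C(\mcF,\mcG;k)| \ll (\cond(\mcF)\cond(\mcG))^4 q^{1/2}.
$$
The main technical point to be careful about is that ``irreducible constituent'' must be read in the geometric sense (which matches the paper's convention and is what makes Schur applicable over $\pi_1^g$); provided this is granted, the rest is bookkeeping of conductors.
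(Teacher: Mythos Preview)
Your proposal is correct and follows essentially the same route as the paper's proof: apply Theorem~\ref{propGL1} to $\mcF\otimes\wcheck{\mcG}$, control the conductor via \eqref{cond-1}--\eqref{cond-2}, and kill the main term using Deligne's geometric semisimplicity together with Schur's lemma. The paper states the vanishing of the coinvariants directly rather than passing through the dual Hom-space, but this is the same argument.
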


\proof We have
$$
t_{\mcF\otimes\wcheck\mcG}(x;k)=t_{\mcF}(x;k)\ov{t_{\mcG}(x;k)}
$$
for $x\in U_\mcF(k)\cap U_\mcG(k)$ and
$$
|t_{\mcF\otimes\wcheck\mcG}(x;k)|,\quad
|t_{\mcF}(x;k)\ov{t_{\mcG}(x;k)}|\leq \cond(\mcF)\cond(\mcG).
$$
Thus the previous proposition applied to the sheaf
$\mcF\otimes\widecheck\mcG$ gives
\begin{align*}
  C(\mcF,\mcG;k)&=S(\mcF\otimes\widecheck\mcG;k)+
  O((\cond(\mcF)+\cond(\mcG))\cond(\mcF)\cond(\mcG))\\
  &=q\Tr\left(\Fr_{k}|(\mcF\otimes\wcheck{\mcG})_{\ov\eta})_{\pi_1^g(U)}\right)
  +O((\cond(\mcF)\cond(\mcG))^4q^{1/2})
\end{align*}
using \eqref{cond-1} and \eqref{cond-2}.  We conclude by observing
that, by Schur's Lemma and the geometric semisimplicity of admissible
sheaves (proved by Deligne~\cite[(3.4.1)]{WeilII}), our disjointness
assumption on $\mcF$ and $\mcG$ implies that the coinvariant space
vanishes. \qed

\subsection{The Artin-Schreier sheaf}

We will now start discussing specific important admissible sheaves.
Let $p$ be a prime and let $\psi\colon (\Fp,+)\rightarrow \C^\times$
be a non-trivial additive character.  For any finite extension $k$ of
$\Fp$, we then have an additive character
$$
\psi_k \colon \begin{cases}
  k  \to \C^\times\\
  x\mapsto \psi(\Tr_{k/\Fp}(x)),
\end{cases}
$$
where $\Tr_{k/\Fp}$ is the trace map from $k$ to $\Fp$.
\par
One shows (see \cite[Chap.\ 4]{GKM}, \cite[\S 1.4]{deligne41/2},
\cite[p.\ 302--303]{ik}) that there exists an admissible sheaf
$\mcL_{\psi}$, called the \emph{Artin-Schreier sheaf} associated to
$\psi$, with the following properties:
\begin{itemize}
\item the sheaf $\mcL_{\psi}$ has rank $1$, hence is automatically
  geometrically irreducible, and it is geometrically non-trivial;
\item the sheaf $\mcL_{\psi}$ is lisse on $\mathbb{A}^1_{\Fp}$, and
  wildly ramified at $\infty$ with $\swan_\infty(\mcL_\psi)=1$, so
  that in particular $\cond(\mcL_{\psi})=3$, independently of $p$ and
  of the non-trivial additive character $\psi$;
\item the trace function is given by the formula
$$
t_{\mcL_\psi}(x;k)=\psi_{k}(x)
$$
for every finite extension $k/\Fp$ and every $x\in \mathbb{A}^1(k)=k$,
and
$$
t_{\mcL_{\psi}}(\infty;k)=0.
$$
\end{itemize}

Let $f\in \Fp(X)$ be a rational function not of the shape $g^p-g+c$
for $g\in\Fp(X),\ c\in\Fp$ (for instance whose zeros or poles have
order prime to $p$). Then $f$ defines a morphism $f\colon \Pl_{\F_p}
\rightarrow \Pl_{\F_p}$, and we denote by $\mcL_{\psi(f)}$ the
pull-back sheaf $f^*\mcL_{\psi}$, which we call the
\emph{Artin-Schreier sheaf associated to $f$ and $\psi$}. Then
$\mcL_{\psi(f)}$ has the following properties:
\begin{itemize}
\item it has rank $1$, hence is geometrically irreducible, and it is
  geometrically non-trivial (because $f$ is not of the form $g^p-g+c$
  for some other function $g$, by assumption);
\item it is lisse outside the poles of $f$, and wildly ramified at
  each pole with Swan conductor equal to the order of the pole, so
  that if the denominator of $f$ has degree $d$ (coprime to $p$) we
  have $\cond(\mcL_{\psi(f)})=1+e+d$, where $e$ is the number of
  distinct poles of $f$;
\item it has trace function given by the formula
$$
t_{\mcL_{\psi(f)}}(x;k)=\psi(\tr_{k/\Fp}(f(x)))
$$
for any finite extension $k/\Fp$ and any $x\in\Pl(k)$ which is not a
pole of $f$, and $t_{\mcL_{\psi(f)}}(x;k)=0$ if $x$ is a pole of $f$.
\end{itemize}

In particular, from Theorem \ref{propGL1}, we thus obtain the
estimate
$$ 
\Bigl|\sum_{x\in \Fp}\psi(f(x))\Bigr| \ll \deg(f)^2p^{1/2}
$$
for such $f$, which is a slightly weaker form of the Weil bound from
Lemma~\ref{prime-exp}. Note that this weakening, which is immaterial
in our applications, is only due to the general formulation of
Theorem \ref{propGL1} which did not attempt to obtain the best
possible estimate for specific situations.

%
 
\subsection{The $\ell$-adic Fourier transform}

Let $p$ be a prime, $k/\Fp$ a finite extension and $\psi$ a
non-trivial additive character of $k$. For a finite extension $k/\Fp$
and a function $x\mapsto t(x)$ defined on $k$, we define the
\emph{normalized Fourier transform} $\FT_\psi t(x)$ by the formula
$$
\FT_\psi t(x):=-\frac{1}{q^{1/2}}\sum_{y\in k}t(y)\psi(xy)
$$
(which is similar to \eqref{ftq-def} except for the sign).  It is a
very deep fact that, when applied to trace functions, this
construction has a sheaf-theoretic incarnation. This was defined by
Deligne and studied extensively by Laumon~\cite{laumon} and
Katz~\cite{GKM}.  However, a restriction on the admissible sheaves is
necessary, in view of the following obstruction: if $t(x)=\psi(bx)$
for some $b\in k$, then its Fourier transform is a Dirac-type function
$$
\FT_\psi(t)(x)=-q^{1/2}\delta_{-b}(x)=
\begin{cases}-q^{1/2}&\text{ if $x=-b$}\\
  0&\text{ otherwise.}
\end{cases}
$$
But this cannot in general be an admissible trace function with
bounded conductor as this would violate \eqref{traceupperbound} at
$x=-b$ if $q$ is large enough.  We make the following definition, as
in~\cite{GKM}:

\begin{definition}[Admissible Fourier sheaves] An admissible sheaf
  over $\Pl_k$ is a \emph{Fourier sheaf} if its geometrically
  irreducible components are neither trivial nor Artin-Schreier
  sheaves $\mcL_{\psi}$ for some non-trivial additive character
  $\psi$.
\end{definition}

\begin{theorem}[Sheaf-theoretic Fourier
  transform]\label{fourier-sheaf}
  Let $p$ be a prime and $k/\Fp$ a finite extension, and let
  $\psi$ be a non-trivial additive character of $k$.  Let $\mcF$ be an
  admissible $\ell$-adic Fou\-rier sheaf on $\Pl_k$. There exists an
  $\ell$-adic sheaf
$$
\mcG=\FT_\psi(\mcF),
$$
called the \emph{Fourier transform of $\mcF$}, which is also an
admissible $\ell$-adic Fourier sheaf, with the property that for any
finite extension $k'/k$, we have
$$
t_\mcG(\cdot;k')=\FT_{\psi_{k'}} t_\mcF(\cdot;k),
$$
in particular
$$
t_{\mcG}(x;k)=-\frac{1}{\sqrt{|k|}}
\sum_{y\in k}t_{\mcF}(y;k)\psi(xy).
$$
\par
Moreover, the following additional assertions hold:
\begin{itemize}
\item The sheaf $\mcG$ is geometrically irreducible, or geometrically
  isotypic, if and only if $\mcF$ is;
\item The Fourier transform is (almost) involutive, in the sense that
  we have a canonical arithmetic isomorphism
\begin{equation}\label{eq-fourier-inv}
\FT_\psi\mcG\simeq [\times (-1)]^*\mcF
\end{equation}
where $[\times (-1)]^*$ denotes the pull-back by the map $x\mapsto
-x$;
\item We have
\begin{equation}
 \cond(\mcG)\leq 10\cond(\mcF)^2.\label{fourierconductorbound}
\end{equation}
\end{itemize}
\end{theorem}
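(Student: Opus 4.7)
The plan is to construct $\FT_\psi(\mcF)$ via the standard Deligne--Laumon recipe: consider the two projections $\pi_1,\pi_2\colon \Af\times\Af\to\Af$ and the multiplication map $m\colon\Af\times\Af\to\Af$, $m(x,y)=xy$, and set
$$
\FT_\psi(\mcF):=j_{*}\bigl(R^1(\pi_2)_{!}(\pi_1^{*}\mcF\otimes m^{*}\mcL_\psi)\bigr)(1/2)
$$
on a suitable dense open $j\colon V\hookrightarrow\Pl$, with the Tate twist inserted to preserve weight $0$. The Fourier sheaf hypothesis (no geometric constituent is trivial or isomorphic to an Artin--Schreier sheaf $\mcL_{\psi'}$) is exactly the condition under which the stalks $H^0_c$ and $H^2_c$ of the relevant fibres vanish, so that the complex $R(\pi_2)_!(\pi_1^{*}\mcF\otimes m^{*}\mcL_\psi)$ is concentrated in degree one and its cohomology sheaf is a genuine middle extension. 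Admissibility (middle-extension + pure of weight $0$) then follows from Deligne's main theorem in \cite{WeilII} applied fibrewise, combined with the fact that $j_{*}$ of a pure lisse sheaf on $V$ is pure of the same weight.

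The trace-function identity is immediate from proper base change and the Grothendieck--Lefschetz formula: for $x\in k'$, the stalk of $\FT_\psi(\mcF)$ at $x$ carries a Frobenius whose trace equals
$$
-\frac{1}{|k'|^{1/2}}\sum_{i}(-1)^{i}\Tr\bigl(\Fr_{k'}\mid H^{i}_{c}(\Af_{\overline{k'}},\mcF\otimes[\times x]^{*}\mcL_\psi)\bigr)
=-\frac{1}{|k'|^{1/2}}\sum_{y\in k'}t_\mcF(y;k')\psi_{k'}(xy),
$$
using the vanishing of $H^0_c$ and $H^2_c$ from the Fourier-sheaf hypothesis. Preservation of geometric irreducibility and the inversion formula \eqref{eq-fourier-inv} are then Laumon's theorems from \cite{laumon}: $\FT_\psi$ is an equivalence on the category of Fourier perverse sheaves, and satisfies $\FT_\psi\circ\FT_{\overline\psi}\cong[\times(-1)]^{*}$; the middle-extension normalisation intertwines this with our sheaf-level definition.

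The genuinely hard part is the conductor estimate \eqref{fourierconductorbound}. Here I would invoke Laumon's theory of local Fourier transforms $\mathrm{FT}^{(0,\infty)}$, $\mathrm{FT}^{(\infty,0)}$ and $\mathrm{FT}^{(\infty,\infty)}$ from \cite{laumon}, which express the local representations of $\FT_\psi(\mcF)$ at each singular point in terms of those of $\mcF$, together with the principle of stationary phase. From these, one extracts:
\begin{itemize}
\item a formula for $\rk(\FT_\psi\mcF)$ in terms of $\rk(\mcF)$, $\swan_\infty(\mcF)$, and the drops of $\mcF$ at finite singularities;
\item a bound on the number of singularities of $\FT_\psi\mcF$ by the number of slopes $\neq 1$ appearing in the local representations of $\mcF$ at $\infty$ plus the number of finite singularities of $\mcF$;
\item formulas for $\swan$ of $\FT_\psi\mcF$ at each singular point in terms of local slopes and Swan conductors of $\mcF$.
\end{itemize}
Summing these three contributions and applying $ab+a+b\leq(a+b)^{2}$ type inequalities yields a bound of the form $\cond(\FT_\psi\mcF)\leq C\cond(\mcF)^{2}$; a careful bookkeeping (as carried out, for a slightly different constant, in \cite[Prop.~8.2]{FKM1} and \cite[Lemma~7.3.9]{katzWII}) shows that the constant $C=10$ suffices. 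The main obstacle is keeping this combinatorial accounting tight enough to get the explicit constant $10$ rather than an unspecified absolute constant, but since the applications only need \emph{some} polynomial dependence on $\cond(\mcF)$, one could also content oneself with the weaker bound $\cond(\FT_\psi\mcF)\ll\cond(\mcF)^{2}$ without loss.
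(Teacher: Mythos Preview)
The paper's own proof of this theorem is a two-line citation: ``These claims are established for instance in \cite[Chap.~8]{GKM}, with the exception of \eqref{fourierconductorbound} which is proved in \cite[Prop.~8.2(1)]{FKM1}.'' Your sketch is therefore strictly more informative than the paper itself, and it follows exactly the standard Deligne--Laumon--Katz line that those references contain: construct $\FT_\psi\mcF$ as the middle extension of $R^1(\pi_2)_!(\pi_1^*\mcF\otimes m^*\mcL_\psi)(1/2)$, use the Fourier-sheaf hypothesis to kill $H^0_c$ and $H^2_c$ so the Lefschetz formula gives the trace identity, invoke Laumon's equivalence for irreducibility and inversion, and bound the conductor via the local Fourier transforms and stationary phase (which is precisely the content of \cite[Prop.~8.2(1)]{FKM1}). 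So your approach is correct and is the same as the one the paper points to.

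One small slip: you write the inversion as $\FT_\psi\circ\FT_{\overline\psi}\cong[\times(-1)]^{*}$, but $\FT_\psi\circ\FT_{\overline\psi}$ is the identity; the formula \eqref{eq-fourier-inv} in the statement is $\FT_\psi\circ\FT_\psi\cong[\times(-1)]^{*}$, which is what drops out of the trace-level computation you yourself wrote down.
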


\begin{proof}  
  These claims are established for instance in \cite[Chap. 8]{GKM},
  with the exception of \eqref{fourierconductorbound} which is proved
  in \cite[Prop. 8.2(1)]{FKM1}.
\end{proof}

\subsection{Kloosterman sheaves}

Given a prime $p\geq 3$, a non-trivial additive character $\psi$ of $\Fp$ and
an integer $m\geq 1$, the $m^{\operatorname{th}}$ hyper-Kloosterman sums are defined by
the formula
\begin{equation}\label{klm-def}
  \hypk_m(x;k):=\frac{1}{q^{\frac{m-1}2}}\sum_\stacksum{y_1,\cdots,y_m\in
    k}{y_1\cdots y_m=x}\psi_{k}(y_1+\cdots+y_m)
\end{equation}
for any finite extension $k/\Fp$ and any $x \in k$. Thus, we have for
instance $\hypk_1(x;k) = \psi_k(x)$, while $\hypk_2$ is essentially a classical Kloosterman sum.
\par
The following deep result shows that, as functions of $x$, these sums
are trace functions of admissible sheaves.

\begin{proposition}[Deligne; Katz]\label{kl-sheaf} There exists an
  admissible Fourier sheaf $\Kl_m$ such that, for any $k/\Fp$ and any
  $x\in k^\times$, we have
$$
t_{\Kl_m}(x;k)=(-1)^{m-1}\hypk_m(x;k).
$$
Furthermore:
\begin{itemize}
\item $\Kl_m$ is lisse on $\Gm=\Pl-\{0,\infty\}$; if $m\geq 2$, it is
  tamely ramified at $0$, and for $m=1$ it is lisse at $0$; for all
  $m\geq 1$, it is wildly ramified at $\infty$ with Swan conductor
  $1$; 
\item $\Kl_m$ is of rank $m$, and is geometrically irreducible;
\item if $p$ is odd, then the Zariski closure of the image
  $\rho_{\Kl_m}(\pi^g_1(\Gm))$, which is called the \emph{geometric
    monodromy group} of $\Kl_m$, is isomorphic to $\mathrm{SL}_m$ if
  $m$ is odd, and to $\mathrm{Sp}_m$ if $m$ is even.
\end{itemize}
It follows that $\cond(\Kl_m)=m+3$ for all $m\geq 2$ and all $p$, and
that $\cond(\Kl_1)=3$.
\end{proposition}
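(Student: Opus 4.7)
\smallskip

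The plan is to construct $\Kl_m$ inductively, starting from the Artin-Schreier sheaf $\Kl_1 := \mcL_\psi$ (or rather its restriction to $\Gm$, which has the trace function $x \mapsto \psi_k(x)$ already identified with $\hypk_1$), and then producing $\Kl_m$ from $\Kl_{m-1}$ by a multiplicative convolution with $\mcL_\psi$. Concretely, let $\mu\colon \Gm\times\Gm \to \Gm$ denote the multiplication map $(y,z)\mapsto yz$, and define
$$
\Kl_m := R^1\mu_!\bigl(\Kl_{m-1}\boxtimes \mcL_\psi\bigr),
$$
then replace this by its middle extension $j_{!*}$ along the inclusion $j\colon \Gm\hookrightarrow \Pl$ (alternatively, one can build $\Kl_m$ using a chain of $\ell$-adic Fourier transforms alternating with pullback by inversion, as in \cite[Chap.\ 4]{GKM}). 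The first task is then to check that $R^i\mu_!(\Kl_{m-1}\boxtimes\mcL_\psi)=0$ for $i\neq 1$ over $\Gm$, so that $\Kl_m$ is indeed an honest sheaf lisse on $\Gm$; this follows from a fiberwise computation, since the fiber at $x\in\Gm$ parameterizes the affine curve $\{yz=x\}\simeq \Gm$ on which the integrand is the tensor of a lisse sheaf with a wildly ramified Artin-Schreier sheaf, forcing $H^0_c = H^2_c = 0$.

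Once the construction is in place, the trace function identity $t_{\Kl_m}(x;k) = (-1)^{m-1}\hypk_m(x;k)$ follows by induction on $m$: applying the Grothendieck-Lefschetz trace formula to the stalk of $R^1\mu_!$ at a point $x\in k^\times$, together with the vanishing of the other cohomology groups and the inductive formula
$$
\hypk_m(x;k) = \frac{1}{q^{1/2}}\sum_{yz=x}\hypk_{m-1}(y;k)\psi_k(z),
$$
immediately yields the claim, with the sign $(-1)^{m-1}$ coming from the degree shift $[1]$ needed to make the construction perverse. The rank statement $\rk(\Kl_m) = m$ is then obtained by a direct Euler characteristic computation, or again inductively from $\rk(\Kl_m) = \rk(\Kl_{m-1})\cdot\rk(\mcL_\psi) + (\text{boundary terms that vanish}) = m-1+1$.

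Next I would carry out the ramification analysis. Lisseness on $\Gm$ is automatic from the construction. At $\infty$, one uses the stationary phase principle of Laumon~\cite{laumon} for the Fourier transform interpretation, or an explicit analysis of the vanishing cycles for the convolution, to show that $\Kl_m$ is totally wild at $\infty$ with all breaks equal to $1/m$ and hence $\swan_\infty(\Kl_m) = m\cdot(1/m) = 1$. At $0$, induction on $m$ combined with the fact that the multiplicative convolution of a tame sheaf with another tame sheaf remains tame (and $\mcL_\psi$ is tame at $0$ and $\infty$ of $\Gm$ viewed as the $z$-factor) shows that $\Kl_m$ is tame at $0$ for $m\geq 2$, with in fact local monodromy unipotent with a single Jordan block of size $m$. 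Combining these bounds gives $\cond(\Kl_m) = m + 2 + 1 = m+3$ for $m\geq 2$.

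The main obstacle, and by far the deepest part, is the determination of the geometric monodromy group $G_{\mathrm{geom}}\subset\mathrm{GL}_m$. Geometric irreducibility follows from the fact that the local monodromy at $0$ is a single Jordan block (so the representation cannot split off a subrepresentation). To identify $G_{\mathrm{geom}}$ with $\mathrm{SL}_m$ (resp.\ $\mathrm{Sp}_m$ for $m$ even), I would follow the strategy of Katz~\cite[Chap.\ 11]{GKM}: first show that $G_{\mathrm{geom}}$ is connected and semisimple (connectedness from the unipotent pseudoreflection at $0$, semisimplicity from Deligne's theorem on weights and the triviality of the determinant up to a Tate twist), then use the wild ramification at $\infty$ to rule out small classical groups and exceptional possibilities, and finally appeal to the classification of irreducible subgroups of $\mathrm{GL}_m$ containing a unipotent pseudoreflection. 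For even $m$, an autoduality computation (constructing an $\mathrm{Sp}_m$-invariant pairing via the involution $x\mapsto (-1)^m/x$ on $\Gm$) forces the group inside $\mathrm{Sp}_m$, while for odd $m$ no such form exists and one lands in $\mathrm{SL}_m$. This classification-based step is what makes the proof genuinely deep and is where one cannot avoid invoking substantial machinery beyond the Riemann Hypothesis itself.
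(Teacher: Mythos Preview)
Your proposal is not so much ``different'' from the paper's proof as it is an actual sketch of a proof where the paper gives none: the paper simply cites Katz's book \cite{GKM}, pointing to Theorem~4.1.1 for the construction, trace function, rank, and ramification, and to Theorem~11.1 for the monodromy group. What you have written is a fair outline of how Katz actually proceeds in that reference (the inductive multiplicative-convolution construction, the fiberwise Grothendieck--Lefschetz argument for the trace identity, the break analysis at $\infty$, the single unipotent Jordan block at $0$, and the classification argument for $G_{\mathrm{geom}}$).

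Two small corrections to your sketch. First, your rank computation ``$\rk(\Kl_{m-1})\cdot\rk(\mcL_\psi)+\text{(boundary)}=m-1+1$'' is not the right heuristic; the rank of a multiplicative convolution is not a product of ranks. The honest computation is via the Euler--Poincar\'e formula on the fiber $\Gm$, using that $\Kl_{m-1}$ has Swan conductor $1$ at $\infty$ and is tame at $0$. Second, the local monodromy at $0$ is a single unipotent Jordan block of size $m$, which for $m\geq 3$ is a \emph{regular unipotent} element, not a pseudoreflection (a pseudoreflection differs from the identity by a rank-one operator). Katz's classification argument in Chapter~11 uses the presence of a regular unipotent element together with the irreducibility and the break structure at $\infty$, not a pseudoreflection criterion. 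With those adjustments your outline is sound.
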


\begin{proof}
  All these results can be found in the book of Katz~\cite{GKM}; more
  precisely, the first two points are part of Theorem 4.1.1
  in~\cite{GKM} and the last is part of Theorem 11.1 in the same
  reference. 
\end{proof}

\begin{remark}\label{romeo} In particular, for $x\neq 0$, we get the
  estimate
$$
|\hypk_m(x;k)|\leq m,
$$
first proved by Deligne. Note that this exhibits square-root
cancellation in the $(m-1)$-variable character sum defining
$\hypk(x;k)$. For $x=0$, it is elementary that
$$
\hypk_m(0;k)=(-1)^{m-1}q^{-(m-1)/2}.
$$
\end{remark}

We have the following bounds for hyper-Kloosterman sums, where the
case $m=3$ is the important one for this paper:

\begin{proposition}[Estimates for hyper-Kloosterman sums]\label{kls}
  Let $m\geq 2$ be an integer, and $\psi'$ an additive character of
  $\Fp$, which may be trivial. We have
\begin{equation}\label{klsumsfirstmoment}
\Bigl|\sum_{x\in\Fpt}\hypk_m(x;p)\psi'(x)\Bigr| \ll p^{1/2}.
\end{equation}
Further, let $a\in\Fpt$. If either $a\neq 1$ or $\psi'$ is
non-trivial, we have
\begin{equation}\label{klsumscorrelation}
  \Bigl|\sum_{x\in\Fpt}\hypk_m(x;p)\ov{\hypk_m(ax;p)}\psi'(x)\Bigr| \ll p^{1/2}
\end{equation}
In these bounds, the implied constants depend only, and at most
polynomially, on $m$.
\end{proposition}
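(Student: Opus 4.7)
The strategy is to rewrite both sums as (correlation) sums of trace functions of admissible $\ell$-adic sheaves and then apply Deligne's Riemann Hypothesis in the form of Theorem~\ref{propGL1} and Corollary~\ref{correlationcor}. By Proposition~\ref{kl-sheaf}, we have $\hypk_m(x;p)=(-1)^{m-1}t_{\Kl_m}(x;p)$ for $x\in\Fpt$, while $\psi'(x)=t_{\mcL_{\psi'}}(x;p)$ on all of $\Fp$ (with $\mcL_{\psi'}$ replaced by the trivial sheaf when $\psi'$ is trivial). The contributions of $x=0$ and of the point at infinity contribute $O(1)$ by \eqref{traceupperbound-2} and the formula for $\hypk_m(0)$ in Remark~\ref{romeo}, so they may be absorbed into the error term.

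For the first sum, set $\mcF := \Kl_m\otimes\mcL_{\psi'}$, so that the sum equals $(-1)^{m-1} S(\mcF;p)+O(m)$. By Theorem~\ref{propGL1}, the main term is $p$ times the trace of Frobenius on the geometric coinvariants, and the error is $O(\cond(\mcF)^2 p^{1/2})$; by \eqref{cond-2} and $\cond(\Kl_m)=m+3$, $\cond(\mcL_{\psi'})\leq 3$, this error is polynomial in $m$. Since $\Kl_m$ is geometrically irreducible of rank $m\geq 2$ (Proposition~\ref{kl-sheaf}), $\mcF$ is geometrically irreducible of rank $m\geq 2$; in particular $\mcF$ is not geometrically trivial (it has rank $\geq 2$, while the trivial sheaf has rank $1$), so its $\pi_1^g$-coinvariants vanish and the main term drops out.

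For the second sum, put $\mcF:=\Kl_m\otimes\mcL_{\psi'}$ and $\mcG:=[\times a]^*\Kl_m$; using \eqref{hom} and $t_{[\times a]^*\Kl_m}(x;p)=t_{\Kl_m}(ax;p)$, the sum equals $C(\mcF,\mcG;p)+O(m)$. Both $\mcF$ and $\mcG$ are geometrically irreducible of rank $m$ (tensoring or pulling back by a linear automorphism preserves geometric irreducibility of $\Kl_m$), and their conductors are polynomial in $m$ by \eqref{cond-2} together with the fact that pullback by a dilation does not increase the conductor. By Corollary~\ref{correlationcor}, it therefore suffices to verify that $\mcF$ and $\mcG$ are \emph{not} geometrically isomorphic under the hypothesis ``$a\neq 1$ or $\psi'$ nontrivial''.

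\textbf{Main obstacle.} The crux is this last geometric non-isomorphism statement; the rest of the argument is essentially bookkeeping with conductors. We establish it by inspecting the local monodromy at $\infty$, where the Kloosterman sheaf is pinned down very rigidly (see Katz~\cite{GKM}). Concretely, $\Kl_m$ is tame at $0$ and has Swan conductor $1$ at $\infty$ with a single break equal to $1/m$; after pullback to the $m$-fold cover $y\mapsto y^m=x$, its $I_\infty$-representation becomes the direct sum of the rank-one sheaves $\mcL_{\psi(m\zeta y)}$ as $\zeta$ ranges over the $m$-th roots of unity. Pulling back by $[\times a]$ replaces these characters by $\psi(m\alpha\zeta y)$ where $\alpha^m=a$, whereas tensoring with $\mcL_{\psi'}$, $\psi'(x)=\psi(bx)$, replaces them by $\psi(m\zeta y+by^m)$. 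If $b\neq 0$, the latter family has slope $1$ at $\infty$ while the former has slope $1/m<1$, so the two $I_\infty$-representations cannot be isomorphic. If $b=0$ and $a\neq 1$, the two unordered multisets of characters $\{\psi(m\alpha\zeta y)\}_\zeta$ and $\{\psi(m\zeta y)\}_\zeta$ differ (their ratios are not all trivial), so again the representations are non-isomorphic. In either case $\mcF\not\simeq\mcG$ geometrically, hence they share no irreducible constituent, and Corollary~\ref{correlationcor} yields \eqref{klsumscorrelation} with an implied constant polynomial in $m$ through \eqref{cond-2} and \eqref{fourierconductorbound}.
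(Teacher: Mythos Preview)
Your proof of \eqref{klsumsfirstmoment} is essentially the paper's argument (the paper phrases it via Corollary~\ref{correlationcor} with $\mcF=\Kl_m$ and $\mcG=\widecheck{\mcL_{\psi'}}$, which is the same thing).

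For \eqref{klsumscorrelation} your argument is correct but the paper takes a genuinely different route. The paper uses the recursion $\hypk_m=-\FT_\psi\bigl([y^{-1}]^*\hypk_{m-1}\bigr)$ together with Plancherel to transform the correlation into
\[
\sum_{y}\hypk_{m-1}(y;p)\,\overline{\hypk_{m-1}(\gamma\cdot y;p)},\qquad
\gamma=\begin{pmatrix}a&0\\ b&1\end{pmatrix},
\]
and then applies Corollary~\ref{correlationcor} to $\Kl_{m-1}$ and $\gamma^*\Kl_{m-1}$. For $m\ge 3$ the non-isomorphism is checked by comparing only the \emph{locations} of the tame and wild points ($\gamma$ must fix $0$ and $\infty$, forcing $b=0$), after which Katz's result \cite[Prop.~4.1.5]{GKM} gives $a=1$; for $m=2$ the reduced sum is a one-variable rational-phase sum handled by Lemma~\ref{prime-exp}. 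Your approach instead stays with $\Kl_m$ and distinguishes $\Kl_m\otimes\mcL_{\psi'}$ from $[\times a]^*\Kl_m$ via the fine $I_\infty$-structure (break $1$ versus $1/m$ when $b\ne 0$; the explicit character multiset after the Kummer pullback $y\mapsto y^m$ when $b=0$). This is sound, but it leans on the detailed $I_\infty$ description of $\Kl_m$ from Katz's book, which goes beyond what is recorded in Proposition~\ref{kl-sheaf}; the paper's reduction buys a proof that needs only the ramification locus plus one black-box input from Katz, and an entirely elementary endgame when $m=2$. Two small cosmetic points: in your $b\ne 0$ case the slopes you quote ($1$ vs.\ $1/m$) are in the $x$-variable, whereas after your Kummer pullback in $y$ they become $m$ vs.\ $1$; and the final appeal to \eqref{fourierconductorbound} is not needed here.
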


\proof The first bound~\eqref{klsumsfirstmoment} follows directly from
Corollary~\ref{correlationcor} and~\eqref{cond-2} because $\Kl_m$ is,
for $m\geq 2$, geometrically irreducible of rank $>1$, and therefore
not geometrically isomorphic to the rank $1$ Artin-Schreier sheaf
$\mcL_{\psi'}$.

For the proof of \eqref{klsumscorrelation}, we use the identity\footnote{One could use this identity to recursively build the hyper-Kloosterman sheaf from the Artin-Schreier sheaf, Theorem \ref{fourier-sheaf}, and pullback via the map $x \mapsto \frac{1}{x}$, if desired.} 
$$
\hypk_m(x)=\frac{1}{p^{1/2}}\sum_{y\in\Fpt}\hypk_{m-1}(y^{-1})\psi(xy)=
-\FT_\psi([y^{-1}]^*\hypk_{m-1})(x)
$$
which is valid for all $x\in\Fp$ (including $x=0$). If we let
$b\in\Fp$ be such that $\psi'(x)=\psi(bx)$ for all $x$, then by the
Plancherel formula, we deduce
\begin{align*}
  \sum_{x\in\Fp}\hypk_m(x;p)\ov{\hypk_m(ax;p)}\psi'(x)&=
  \sum_{y\in\Fp \backslash \{0,-b\}}\hypk_{m-1}(y^{-1})\ov{\hypk_{m-1}(a(y+b)^{-1})}\\
  &=\sum_\stacksum{y\in\Fp,}{ y\neq
    0,-1/b}\hypk_{m-1}(y;p)\ov{\hypk_{m-1}(\gamma\cdot y;p)}
\end{align*}
where
$$
\gamma:=\begin{pmatrix}
a&0\\b&1
\end{pmatrix}.
$$  
We are in the situation of Corollary \ref{correlationcor}, with both
sheaves $\Kl_{m-1}$ and $\gamma^*\Kl_{m-1}$ admissible and
geometrically irreducible.  If $m\geq 3$, $\Kl_{m-1}$ is tamely
ramified at $0$ and wildly ramified at $\infty$, and
$\gamma^*\Kl_{m-1}$ is therefore tame at $\gamma^{-1}(0)$ and wild at
$\gamma^{-1}(\infty)$, so that a geometric isomorphism
$\Kl_{m-1}\simeq \gamma^*\Kl_{m-1}$ can only occur if $\gamma(0)=0$
and $\gamma(\infty)=\infty$, or in other words if $b=0$. 
If $b=0$, we have $\gamma^*\Kl_{m-1}=[\times a]^*\Kl_{m-1}$ which is
known to be geometrically isomorphic to $\Kl_{m-1}$ if and only if
$a=1$, by \cite[Prop. 4.1.5]{GKM}. Thus \eqref{klsumscorrelation}
follows from Corollary \ref{correlationcor} for $m\geq 3$,
using~(\ref{cond-2}) and the formulas
$\cond(\Kl_{m-1})=\cond(\gamma^*\Kl_{m-1})=m+3$. 
\par
The case $m=2$ is easy since the sum above is then simply
$$
\sum_\stacksum{y\in\Fp,}{ y\neq 0,-1/b}\psi(y-ay/(by+1))
$$
where the rational function $f(y)=y-ay/(by+1)$ is constant if and only
if $a=1,b=0$, so that we can use Lemma \ref{prime-exp} in this case.
\qed

\begin{remark} 
  A similar result was proved by Michel using a different method
  in \cite[Cor.\ 2.9]{DMJPhM}. That method requires more information
  (the knowledge of the geometric monodromy group of $\Kl_m$) but
  gives more general estimates.  The case $m=3$ is (somewhat
  implicitly) the result used by Friedlander and Iwaniec
  in~\cite{fi-3}, which is proved by Birch and Bombieri in the
  Appendix to \cite{fi-3} (with in fact two proofs, which are rather
  different and somewhat more ad-hoc than the argument presented
  here). This same estimate is used by Zhang \cite{zhang} to control
  Type III sums.
\end{remark}

\subsection{The van der Corput method for trace functions}

Let $t=t_\mcF$ be the trace function associated to an admissible sheaf
$\mcF$.  In the spirit of Proposition \ref{inc}, the $q$-van der
Corput method, when applied to incomplete sums of $t$, followed by
completion of sums, produces expressions of the form
$$
\sum_{x\in\Fp}t(x)\ov{t(x+l)}\psi(hx)
$$
for $(h,l) \in\Fp\times\Fpt$, and for some additive character $\psi$. We
seek sufficient conditions that ensure square-root cancellation in the
above sum, for any $l\neq 0$ and any $h$.

Observe that if 
$$
t(x)=\psi(ax^2+bx),
$$
then the sum is sometimes of size $p$. Precisely, this happens if and
only if $h=2al$. As we shall see, this phenomenon is essentially the
only obstruction to square-root cancellation.

\begin{definition}[No polynomial phase]
  For a finite field $k$ and $d\geq 0$, we say that an admissible
  sheaf $\mcF$ over $\Pl_k$ has \emph{no polynomial phase} of degree
  $\leq d$ if no geometrically irreducible component of $\mcF$ is
  geometrically isomorphic to a sheaf of the form $\mcL_{\psi(P(x))}$
  where $P(X)\in\Fp[X]$ is a polynomial of degree $\leq d$.
\end{definition}

Thus, for instance, an admissible sheaf is Fourier if and only if it
has no polynomial phase of degree $\leq 1$.

\begin{remark}\label{irrednopolynomialphase} 
  An obvious sufficient condition for $\mcF$ not to contain any
  polynomial phase (of any degree) is that each geometrically
  irreducible component of $\mcF$ be irreducible of rank $\geq 2$, for
  instance that $\mcF$ itself be geometrically irreducible of rank
  $\geq 2$.
\end{remark}

The following inverse theorem is a variant of an argument of Fouvry,
Kowalski and Michel \cite[Lemma 5.4]{FKMGowersnorms}.

\begin{theorem}\label{th-pol-phase}
  Let $d\geq 1$ be an integer, and let $p$ be a prime such that
  $p>d$. Let $\mcF$ be an isotypic admissible sheaf over $\Pl_{\Fp}$
  with no polynomial phase of degree $\leq d$.
  Then either $\cond(\mcF)\geq p+1$, or for any $l\in\Fp^\times$ the
  sheaf $\mcF\otimes[+l]^*\wcheck\mcF$ contains no polynomial phase of
  degree $\leq d-1$. 
\par
In all cases, for any $l\in\Fpt$ and any $P(X)\in\Fp[X]$ of degree
$d-1$, we have
\begin{equation}\label{squarerootforWeylshifts}
  \Bigl|\sum_{x\in\Fp}t_\mcF(x+l)\ov{t_\mcF(x)}\psi(P(x))\Bigr|\ll p^{1/2}
\end{equation}
where the implied constant depends, at most polynomially, on
$\cond(\mcF)$ and on $d$.  Furthermore, this estimate holds also if
$l=0$ and $P(x)=hx$ with $h\neq 0$.
\end{theorem}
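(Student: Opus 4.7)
\emph{Plan.} My plan is to first prove the structural dichotomy (either $\cond(\mcF)\geq p+1$ or $\mcF\otimes[+l]^*\wcheck\mcF$ has no polynomial phase of degree $\leq d-1$), and then derive \eqref{squarerootforWeylshifts} as a direct application of Corollary~\ref{correlationcor}. For the dichotomy, since $\mcF$ is isotypic, let $\mcG$ denote its geometrically irreducible component. Assuming $\mcF\otimes[+l]^*\wcheck\mcF$ contains $\mcL_{\psi(Q)}$ with $\deg Q\leq d-1$ as a geometric component, tensor-hom adjunction and the isotypic hypothesis yield a geometric isomorphism
$$\mcG \cong [+l]^*\mcG\otimes \mcL_{\psi(Q)}.$$
Since $\deg Q < p-1$, the finite-difference operator $S\mapsto S(\cdot+l)-S(\cdot)$ is surjective from polynomials of degree $\leq d$ onto polynomials of degree $\leq d-1$, so I can choose $S\in\ov\Fp[X]$ of degree $\leq d$ with $S(X+l)-S(X)=Q(X)$. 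Setting $\mathcal{H}:=\mcG\otimes\mcL_{\psi(S)}$, the isomorphism simplifies to the translation-invariance $\mathcal{H}\cong [+l]^*\mathcal{H}$.

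The core of the argument is then the analysis of such translation-invariant sheaves. The singular locus of $\mathcal{H}$ in $\Af(\ov\Fp)$ is invariant under translation by $l$, and every non-trivial orbit of $\langle +l\rangle\cong\Fp$ has size $p$, so this set is either empty or of cardinality $\geq p$. The latter case forces $\cond(\mathcal{H})\geq p+1$, and hence $\cond(\mcF)\geq p+1$ after absorbing the $O(d)$ correction from the $\mcL_{\psi(S)}$-twist at $\infty$ (which is harmless since we may assume $p$ is large relative to $d$---otherwise the theorem follows trivially from $\bigl|\sum\bigr|\leq p\cdot\cond(\mcF)^2$). Otherwise $\mathcal{H}$ is lisse on all of $\Af$, and the translation-invariance forces $\mathcal{H}$ to descend through the Artin--Schreier cover $\pi\colon\Af\to\Af$, $y=x^p-x$ (which is $\Fp$-Galois via translation). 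Since $\pi$ is totally wildly ramified at $\infty$, the pullback formula for Swan conductors forces $\swan_\infty(\mathcal{H})$ to grow linearly in $p\cdot\rk(\mathcal{H})$ unless the descended sheaf on the target is trivial, again yielding $\cond(\mcF)\geq p+1$. In the only remaining case, $\mathcal{H}$ is itself geometrically trivial, so $\mcG\cong\mcL_{\psi(-S)}$ is a polynomial phase of degree $\leq d$ in $\mcF$, contradicting the hypothesis and completing the dichotomy.

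To deduce \eqref{squarerootforWeylshifts} for $l\neq 0$, I recognize the sum as the correlation $C(\mcF,\,[+l]^*\mcF\otimes \mcL_{\psi(-P)})$. If $\cond(\mcF)\geq p+1$, the trivial bound $p\cdot\cond(\mcF)^2\leq\cond(\mcF)^3 p^{1/2}$ already suffices; otherwise the dichotomy, applied with $Q=-P$, guarantees that $\mcF\otimes[+l]^*\wcheck\mcF$ contains no copy of $\mcL_{\psi(-P)}$, so by adjunction $\mcF$ and $[+l]^*\mcF\otimes\mcL_{\psi(-P)}$ share no geometric irreducible constituent, and Corollary~\ref{correlationcor} yields the desired bound with implied constant polynomial in $\cond(\mcF)$ and $d$. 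For the $l=0$ addendum with $P(x)=hx$, $h\neq 0$, it suffices to verify $\mcG\not\cong\mcG\otimes\mcL_{\psi(-hx)}$; taking determinants reduces this to asking whether $\mcL_{\psi(-rhx)}$, where $r=\rk\mcG$, is geometrically trivial. Since $r\leq\cond(\mcF)<p+1$ (or we invoke the trivial bound as before), $r<p$ forces $rh\not\equiv 0\pmod p$, so disjointness holds and Corollary~\ref{correlationcor} applies.

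The hardest step will be the ramification analysis inside the dichotomy, specifically establishing the lower bound on $\swan_\infty(\mathcal{H})$ for a non-trivial translation-invariant irreducible lisse sheaf on $\Af$ arising as a pullback along the wild Artin--Schreier cover. This rests on Hasse--Arf together with the pullback formula for Swan conductors through a totally wildly ramified degree-$p$ cover, and is where the conductor hypothesis $\cond(\mcF)<p+1$ does its real work: it prevents $\mathcal{H}$ from acquiring the large wild ramification at $\infty$ that would otherwise be compatible with translation invariance.
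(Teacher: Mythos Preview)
Your proof is correct and follows the same overall strategy as the paper: establish the dichotomy, then apply Corollary~\ref{correlationcor}. The singular-locus argument in the ramified case is identical. For the lisse-on-$\Af$ case, the paper simply cites \cite[Lemma 5.4(2)]{FKMGowersnorms} as a black box, whereas you sketch its proof via the twist to $\mathcal H=\mcG\otimes\mcL_{\psi(S)}$, Artin--Schreier descent through $x\mapsto x^p-x$, and Swan-conductor growth; your sketch is correct in outline (the key point being that a lisse irreducible sheaf on $\Af$ that is tame at $\infty$ is geometrically trivial, since $\pi_1^{g,\mathrm{tame}}(\Af)=1$, while a wild descended sheaf forces $\swan_\infty(\mathcal H)\geq p$), though you assert rather than carry out the Swan computation. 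The one genuinely different ingredient is your treatment of the $l=0$ case: the paper converts $\mcG\cong\mcG\otimes\mcL_{\psi(hx)}$ via Fourier transform into $[+h]^*\FT_\psi\mcG\cong\FT_\psi\mcG$ and invokes the same FKM lemma again to get $\cond(\mcF)\gg p^{1/2}$; your determinant argument ($\det$ of both sides forces $\mcL_{\psi(rhx)}$ trivial, hence $p\mid rh$) is more elementary and avoids both the Fourier machinery and the external citation. One minor sloppiness: from $\cond(\mcF)<p+1$ you only get $r\leq p$, not $r<p$; the edge case $r=p$ still gives $\cond(\mcF)\geq p$, so the trivial bound handles it.
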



\proof First suppose that $l \neq 0$.  Observe that if
$\cond(\mcF)\geq p+1$, the bound \eqref{squarerootforWeylshifts}
follows from the trivial bound 
$$
|t_\mcF(x+l)\ov{t_\mcF(x)}\psi(P(x))|\leq \rk(\mcF)^2\leq \cond(\mcF)^2,
$$
and that if the sheaf $[+l]^*\mcF\otimes\wcheck\mcF$ contains no
polynomial phase of degree $\leq d-1$, then the bound is a consequence
of Corollary \ref{correlationcor}.

We now prove that one of these two properties holds. We assume that
$[+l]^*\mcF\otimes\wcheck\mcF$ contains a polynomial phase of degree
$\leq d-1$, and will deduce that $\cond(\mcF)\geq p+1$. 
\par
Since $\mcF$ is isotypic, the assumption implies that there is a
geometric isomorphism
$$
[+l]^*\mcF\simeq \mcF\otimes\mcL_{\psi(P(x))}
$$
for some polynomial $P(X)\in\Fp[X]$ of degree $\leq d-1$. Then,
considering the geometric irreducible component $\mcG$ of $\mcF$
(which is a sheaf on $\Pl_{\ov{\Fp}}$) we also have
\begin{equation}\label{plus-l}
[+l]^*\mcG\simeq \mcG\otimes\mcL_{\psi(P(x))}.
\end{equation}
If $\mcG$ is ramified at some point $x\in \Aa^1(\ov k)$, then since
$\mcL_{\psi(P(x))}$ is lisse on $\Aa^1(\ov k)$, we conclude by
iterating \eqref{plus-l} that $\mcG$ is ramified at $x,x+l,\
x+2l,\cdots, x+(p-1)l$, which implies that $\cond(\mcF)\geq
\cond(\mcG)\geq p+\rk(\mcG)$. Thus there remains to handle the case
when $\mcG$ is lisse outside $\infty$. It then follows from
\cite[Lemma 5.4 (2)]{FKMGowersnorms} that either $\cond(\mcG)\geq
\rk(\mcG)+p$, in which case $\cond(\mcF)\geq p+1$ again, or that
$\mcG$ is isomorphic (over $\ov{\Fp}$) to a sheaf of the form
$\mcL_{\psi(Q(x))}$ for some polynomial of degree $\leq d$. Since
$\mcG$ is a geometrically irreducible component of $\mcF$, this
contradicts the assumption on $\mcF$.
\par
Finally, consider the case where $l=0$ and $P(x)=hx$ with $h\neq
0$. Using Corollary~\ref{correlationcor} and~(\ref{cond-2}), the
result holds for a given $h\in\Fpt$ unless the geometrically
irreducible component $\mcG$ of $\mcF$ satisfies
$$
\mcG\simeq \mcG\otimes\mcL_{\psi(hx)}.
$$
Since $d\geq 1$, $\mcF$ is a Fourier sheaf, and hence so are $\mcG$
and $\mcG\otimes\mcL_{\psi(hx)}$. Taking the Fourier transform of both
sides of this isomorphism, we obtain easily
$$
[+h]^*\FT_\psi\mcG\simeq \FT_\psi\mcG
$$
and it follows from~\cite[Lemma 5.4 (2)]{FKMGowersnorms} again that
$\cond(\FT_\psi\mcG)\geq p+1$. Using the Fourier inversion
formula~(\ref{eq-fourier-inv}) and~(\ref{fourierconductorbound}), we
derive
$$
\cond(\mcF)\geq \cond(\mcG)\gg p^{1/2},
$$   
so that the bound \eqref{squarerootforWeylshifts} also holds trivially
in this case.
\qed

\begin{remark}\label{invarianceremark}
  For later use, we observe that the property of having \emph{no
    polynomial phase of degree $\leq 2$} of an admissible sheaf $\mcF$
  is invariant under the following transformations:
\begin{itemize}
\item Twists by an Artin-Schreier sheaf associated to a polynomial
  phase of degree $\leq 2$, i.e., $\mcF\mapsto
  \mcF\otimes\mcL_{\psi(ax^2+bx)}$;
\item Dilations and translations: $\mcF\mapsto [\times a]^*\mcF$ and $\mcF \mapsto [+b]^* \mcF$ where
  $a \in \Fp^\times$ and $b \in \Fp$;
\item Fourier transforms, if $\mcF$ is Fourier: $\mcF\mapsto
  \FT_\psi\mcF$. Indeed, the Fourier transform of a sheaf
  $\mcL_{\psi(P(x))}$ with $\deg(P)=2$ is geometrically isomorphic to
  $\mcL_{\psi(Q(x))}$ for some polynomial $Q$ of degree $2$.
\end{itemize}
\end{remark}

\subsection{Study of some specific exponential sums}\label{study-sec}

We now apply the theory above to some specific multi-dimensional
exponential sums which appear in the refined treatment of the Type I
sums in Section \ref{typei-advanced-sec}. For parameters
$(a,b,c,d,e)\in \Fp$, with $a\neq c$, we consider the rational
function
$$
f(X,Y):=\frac{1}{(Y+aX+b)(Y+cX+d)}+eY\in \Fp(X,Y).
$$
For a fixed non-trivial additive character $\psi$ of $\Fp$ and for any
$x\in \Fp$, we define the character sum
\begin{equation}\label{Kfdef}
  K_f(x;p):=-\frac{1}{p^{1/2}}
  \sum_{\substack{y\in \Fp\\(y+ax+b)(y+cx+d)\neq 0}}\psi(f(x,y)).
\end{equation}
For any $x\in \Fp$, the specialized rational function
$f(x,Y)\in\Fp(Y)$ is non-constant (it has poles in $\Af_{\F_p}$), and
therefore by Lemma~\ref{prime-exp} (or Theorem~\ref{propGL1}) we have
\begin{equation}\label{klo}
|K_f(x;p)|\leq 4
\end{equation}
We will prove  the following additional properties of the sums
$K_f(x;p)$:

\begin{theorem}\label{kf-bound} For a prime $p$ and parameters
  $(a,b,c,d,e)\in\Fp^5$ with $a\neq c$, the function $x\mapsto
  K_f(x;p)$ on $\Fp$ is the trace function of an admissible
  geometrically irreducible sheaf $\mcF$ whose conductor is bounded by a
  constant independent of $p$. Furthermore, $\mcF$ contains no
  polynomial phase of degree $\leq 2$.
\par
In particular, we have
\begin{equation}\label{squareroot}
  \Bigl|\sum_{x\in\Fp}K_f(x;p)\psi(hx)\Bigr|\ll p^{1/2}
\end{equation}
for all $h \in \Fp$ and
\begin{equation}\label{squarerootforWeylshift}
  \Bigl|\sum_{x\in\Fp}K_f(x;p)\ov{K_f(x+l;p)}\psi(hx)\Bigr|\ll p^{1/2}
\end{equation}
for any $(h,l)\in\Fp^2-\{(0,0)\}$, where the implied constants are
absolute.
\end{theorem}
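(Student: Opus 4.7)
The plan is to realize $K_f(x;p)$ as the trace function of a natural $\ell$-adic sheaf $\mcF$ obtained by a one-step cohomological pushforward, and then to read off the desired properties from the geometry of this construction. I would let $\pi\colon \Aa^2 \to \Af$ denote the projection $(x,y)\mapsto x$, let $U\subset\Aa^2$ be the complement of the divisor $(y+ax+b)(y+cx+d)=0$ with open immersion $j\colon U\hookrightarrow\Aa^2$, and consider the Artin--Schreier sheaf $\mcL_{\psi(f)}$ on $U$. I would then define $\mcF$ to be the middle extension to $\Pl_{\Fp}$ of $R^1\pi_!(j_!\mcL_{\psi(f)})(\tfrac{1}{2})$. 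By the Grothendieck--Lefschetz trace formula on the generic fibre, together with the vanishing of $H^0_c$ (non-triviality of $\mcL_{\psi(f(x,\cdot))}$ on a non-empty open) and of $H^2_c$ (geometric non-triviality of the same sheaf for all but finitely many $x$), one obtains $t_{\mcF}(x;p)=K_f(x;p)$ on a dense open subset of $\Af(\Fp)$, and purity of weight $0$ follows from Deligne's main theorem applied to $R^1\pi_!$, exactly as in the proof of Theorem~\ref{propGL1}.

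Next, I would compute the generic rank and bound the conductor by Euler--Poincar\'e on the fibres. For $x$ in the open set where $\Delta(x) := (c-a)x + (d-b)$ is non-zero, the function $y \mapsto f(x,y)$ has two simple poles in $\Af$ with residues $\pm 1/\Delta(x)$ (each contributing $\swan=1$) and behaves at $\infty$ like $ey$ (so $\swan_\infty = \onef_{e\neq 0}$). The Grothendieck--Ogg--Shafarevich formula gives $\rk(\mcF) = 3 + \onef_{e\neq 0}\geq 3$. The sheaf $\mcF$ is lisse outside a set consisting of at most the points where the two poles of $y\mapsto f(x,y)$ coalesce (namely $x=(b-d)/(c-a)$) and $x=\infty$, and Laumon-type conductor bounds for $R^1\pi_!$ of rank-one sheaves on $\Aa^2$ (see~\cite{laumon}, \cite[Chap.~7]{GKM}) then bound the total Swan conductor of $\mcF$ at these points by an absolute constant. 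Consequently $\cond(\mcF)$ is bounded independently of $p$ and of $(a,b,c,d,e)$.

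The main obstacle is proving that $\mcF$ is geometrically irreducible. My approach here would be to analyse the local monodromy of $\mcF$ at $x=\infty$ via Laumon's stationary-phase formalism: one expects the wild inertia at $\infty$ to act with a single break whose slope and multiplicity are incompatible with any non-trivial direct-sum decomposition, forcing geometric irreducibility. As an alternative route that I would pursue in parallel, the change of variables $y\mapsto y-ax-b$ followed by $y\mapsto\Delta(x)y$ rewrites $K_f(x;p)$ as $\psi(-e(ax+b))$ times the pullback under $x\mapsto\Delta(x)$ of a trace function of the shape
\[
L(t) = -p^{-1/2}\sum_{y\neq 0,-t}\psi\Bigl(\frac{1}{y}-\frac{1}{y+t}+\frac{ey}{t}\Bigr),
\]
which is closely related to a convolution of two Kloosterman sheaves with an Artin--Schreier twist; irreducibility of the corresponding sheaf on the $t$-line can then be extracted from the theory of Kloosterman sheaves in~\cite[Chap.~8]{GKM} by the standard analysis of the multiplicative convolution of $\Kl_1$-type sheaves.

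Once irreducibility is in hand, the absence of a polynomial phase of degree $\leq 2$ is automatic from Remark~\ref{irrednopolynomialphase}, since $\rk(\mcF)\geq 3\geq 2$. The bound~\eqref{squareroot} then follows from Corollary~\ref{correlationcor} applied to $\mcF$ and $\mcL_{\psi(-hx)}$: these have no common geometrically irreducible constituent (being irreducible of distinct ranks), and $\cond(\mcF\otimes\mcL_{\psi(hx)})$ is absolutely bounded by~\eqref{cond-2}. Finally, for~\eqref{squarerootforWeylshift} I would apply Theorem~\ref{th-pol-phase} directly: $\mcF$ is isotypic, has no polynomial phase of degree $\leq 2$, and for all $p$ larger than an absolute constant one has $\cond(\mcF)<p+1$, so the estimate holds with an implied constant depending only on the (bounded) conductor of $\mcF$; the case $l=0$, $h\neq 0$ is included in that same theorem.
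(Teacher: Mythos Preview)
Your approach via $R^1\pi_!$ is a natural one, and indeed the paper mentions it explicitly in the remark immediately following its own proof. Your rank computation and conductor bound are fine, and your derivation of \eqref{squareroot} and \eqref{squarerootforWeylshift} from irreducibility plus rank $\geq 2$ is correct. But the paper takes a genuinely different route that sidesteps the hard irreducibility question you face.

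The paper first normalizes the parameters to $(a,b,c,d,e)=(1,0,0,0,e)$ using the invariance of the ``no polynomial phase of degree $\leq 2$'' property under twists and affine pullbacks (Remark~\ref{invarianceremark}). It then computes the Fourier transform $\FT_\psi(K_f)$ explicitly: the change of variables $y_1 = 1/((y+x)y)$, $y_2 = z(y+x)$ yields
\[
\FT_\psi(K_f)(z) = \hypk_3\bigl(z(e-z);p\bigr),
\]
so $\FT_\psi(K_f)$ is the trace function of $\mcG_f := \phi^*\Kl_3$ with $\phi(z)=z(e-z)$. Irreducibility of $\mcG_f$ is then immediate: $\pi_1^g$ of the $z$-line minus $\{0,e,\infty\}$ acts through a finite-index subgroup of the monodromy of $\Kl_3$, and since the latter has Zariski-dense image in the connected group $\mathrm{SL}_3$ (Proposition~\ref{kl-sheaf}), so does the former. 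Because $\mcG_f$ is irreducible of rank $3>1$, it has no polynomial phase; by Fourier inversion and Remark~\ref{invarianceremark} again, the same holds for the sheaf with trace $K_f$.

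In your proposal, irreducibility is the genuine gap. The stationary-phase argument you sketch (``one expects the wild inertia at $\infty$ to act with a single break\ldots'') is a hope, not a proof: carrying it out would require computing the break decomposition at $\infty$ precisely and showing it is indecomposable as a representation of the wild inertia, which is not routine here. Your alternative via $L(t)$ is also incomplete: the formula you wrote does not quite match what the change of variables actually produces, and the claim that irreducibility follows from ``standard analysis of the multiplicative convolution of $\Kl_1$-type sheaves'' in \cite[Chap.~8]{GKM} is not something one can read off directly, because the additional Artin--Schreier factor in the $y$-variable prevents the sum from being a clean multiplicative convolution. The paper's explicit Fourier computation is exactly the device that turns this opaque two-variable sum into a transparent pullback of $\Kl_3$, where the monodromy group is already known.
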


\proof Note that the estimates~(\ref{squareroot})
and~(\ref{squarerootforWeylshift}) follow from the first assertion
(see Theorem~\ref{th-pol-phase}).
\par
We first normalize most of the parameters: we have
$$
K_f(x;p)=-\frac{\psi(-eax-eb)}{p^{1/2}}\sum_{z\in\Fp}
\psi\Bigl(ez+\frac{1}{z(z+(c-a)x+d-b)}\Bigr),
$$
and by Remark~\ref{invarianceremark}, this means that we may assume
that $c=d=0$, $a\neq 0$. Furthermore, we have then
$$
K_f(x;p)=K_{\tilde{f}}(ax+b;p)
$$
where $\tilde{f}$ is the rational function $f$ with parameters
$(1,0,0,0,e)$. Again by Remark~\ref{invarianceremark}, we are reduced
to the special case $f=\tilde f$, i.e., to the sum
$$
K_f(x;p) =-\frac{1}{p^{1/2}}\sum_{\substack{y\in \Fp\\(y+x)y\neq 0}}
\psi\left(\frac{1}{(y+x)y} + ey\right).
$$

We will prove that the Fourier transform of $K_f$ is the trace
function of a geometrically irreducible Fourier sheaf with bounded
conductor and no polynomial phase of degree $\leq 2$. By the Fourier
inversion formula~(\ref{eq-fourier-inv})
and~(\ref{fourierconductorbound}), and the invariance of the property
of not containing a polynomial phase of degree $\leq 2$ under Fourier
transform (Remark~\ref{invarianceremark} again), this will imply the
result for $K_f$.

For $z\in \Fp$, we have
$$ 
\FT_\psi(K_f)(z) =\frac{1}{p} \sumsum_{y+x,y\neq 0}
\psi\left(\frac{1}{(y+x)y}+ ey + zx\right)
$$
If $z\neq 0$, the change of variables
$$
y_1:=\frac{1}{(y+x)y},\quad\quad y_2 := z(y+x)
$$ 
is a bijection
$$
\{(x,y)\in\Fp\times\Fp\colon y(x+y)\neq 0\}\to \{(y_1,y_2)\in
\Fpt\times\Fpt\}
$$
(with inverse $y=z/(y_1y_2)$ and $x=y_2/z-z/(y_1y_2)$), which
satisfies
$$
\frac{1}{(y+x)y}+ ey +
zx=y_1+\frac{ez}{y_1y_2}+y_2-\frac{z^2}{y_1y_2}=
y_1+y_2+\frac{z(e-z)}{y_1y_2}
$$
for $y(x+y)\neq 0$. Thus
$$
\FT_\psi(K_f)(z) =
\frac{1}{p}\sumsum_{y_1,y_2\in\Fpt}\psi\Bigl(
y_1+y_2+\frac{z(e-z)}{y_1y_2} \Bigr)=\hypk_3(z(e-z);p)
$$
for $z(e-z)\neq 0$.
\par
Similar calculations reveal that this identity also holds when $z=0$
and $z=e$ (treating the doubly degenerate case $z=e=0$ separately),
i.e., both sides are equal to $\frac{1}{p}$ in these cases. This means
that $\FT_\psi(K_f)$ is the trace function of the pullback sheaf
$$
\mcG_f := \phi^* \Kl_3,
$$
where $\phi$ is the quadratic map $\phi: z\mapsto z(e-z)$.

The sheaf $\mcG_f$ has bounded conductor (it has rank $3$ and is lisse on
$U=\Pl_{\Fp}-\{0,e,\infty\}$, with wild ramification at $\infty$ only,
where the Swan conductor can be estimated using~\cite[1.13.1]{GKM},
for $p\geq 3$).  We also claim that $\mcG_f$ is geometrically irreducible. Indeed, it suffices
to check that $\pi_1^g(U)$ acts irreducibly on the underlying vector
space of $\rho_{\Kl_3}$. But since $z\mapsto z(e-z)$ is a non-constant
morphism $\Pl_{\F_p} \to \Pl_{\F_p}$, $\pi_1^g(U)$ acts by a finite-index subgroup of
the action of $\pi_1^g(\Gm)$ on $\Kl_3$. Since the image of
$\pi_1^g(\Gm)$ is Zariski-dense in $\mathrm{SL}_3$ (as recalled in
Proposition~\ref{kl-sheaf}), which is a connected algebraic group, it
follows that the image of $\pi_1^g(U)$ is also Zariski-dense in
$\mathrm{SL}_3$, proving the irreducibility.
\par
Since $\mcG_f$ is geometrically irreducible of rank $3>1$, it does not
contain any polynomial phase (see Remark
\ref{irrednopolynomialphase}), concluding the proof.
\qed

\begin{remark} Another natural strategy for proving this theorem would
  be to start with the observation that the function $x\mapsto
  K_f(x;k)$ is the trace function of the constructible $\ell$-adic
  sheaf
$$
\mcK_f=R^1\pi_{1,!}\mcL_{\psi(f)}(1/2),\quad\quad
\mcL_{\psi(f)}=f^*\mcL_\psi
$$
where $\pi_1:\Aa_{\Fp}^2\to \Af_{\Fp}$ is the projection on the
first coordinate and $R^1\pi_{1,!}$ denotes the operation of
higher-direct image with compact support associated to that map (and
$(1/2)$ is a Tate twist). This is known to be mixed of weights $\leq
0$ by Deligne's work~\cite{WeilII}, and it follows from the general
results\footnote{Which were partly motivated by the current paper.} of
Fouvry, Kowalski and Michel in \cite{transforms} that the conductor of
this sheaf is absolutely bounded as $p$ varies. To fully implement
this approach, it would still remain to prove that the weight $0$ part
of $\mcK_f$ is geometrically irreducible with no polynomial phase of
degree $\leq 2$.  Although such arguments might be necessary in more
advanced cases, the direct approach we have taken is simpler here.
\end{remark}


\begin{remark} In the remainder of this paper, we will only use the
  bounds \eqref{squareroot} and \eqref{squarerootforWeylshift} from
  Theorem~\ref{kf-bound}.  These bounds can also be expressed in terms
  of the Fourier transform $\FT_\psi(K_f)$ of $K_f$, since they are
  equivalent to
$$
|\FT_\psi( K_f )(h)| \ll p^{1/2}
$$
and
$$ 
\Bigl|\sum_{x \in \Fp}\FT_\psi(K_f)(x+h)\overline{\FT_\psi(K_f)(x)}
  \psi( -lx )\Bigr| \ll p^{1/2},
$$ 
respectively.  As such, we see that it is in fact enough to show that
$\FT_\psi(K_f)$, rather than $K_f$, is the trace function of a
geometrically irreducible admissible sheaf with bounded conductor and
no quadratic phase component.  Thus, in principle, we could avoid any
use of Theorem \ref{fourier-sheaf} in our arguments (provided that we took the existence of the Kloosterman sheaves for granted).  However, from a
conceptual point of view, the fact that $K_f$ has a good trace
function interpretation is more important than the corresponding fact
for $\FT_\psi$ (for instance, the iterated van der Corput bounds in
Remark \ref{vdCiterate-trace} rely on the former fact rather than the
latter).
\end{remark}

\subsection{Incomplete sums of trace
  functions}\label{compositetracefunctions}

In this section, we extend the discussion of Section \ref{exp-sec} to
general admissible trace functions. More precisely, given a
squarefree integer $q$, we say that a $q$-periodic arithmetic function 
$$
t\colon \Zz\rightarrow \Zz/q\Zz\rightarrow \Cc
$$ 
is an \emph{admissible trace function} if we have
\begin{equation}\label{tfactor}
t(x)=\prod_{p|q}t(x;p)
\end{equation}
for all $x$ where, for each prime $p\mid q$, $x\mapsto t(x;p)$ is the
composition of reduction modulo $p$ and the trace function
associated to an admissible sheaf $\mcF_{p}$ on $\Pl_{\Fp}$.

An example is the case discussed in Section \ref{exp-sec}: for a
rational function $f(X)=P(X)/Q(X)\in\Q(X)$ with $P,Q\in\Zz[X]$ and a
squarefree integer $q$ such that $Q\ (q)\neq 0$, we can write 
$$
e_q(f(x))=e_q\left(\frac{P(x)}{Q(x)}\right)=
\prod_{p|q}e_p\left(\ov{q_p}f(x)\right),\quad\text{ where } q_p=q/p.
$$ 
(by Lemma~\ref{crt}). In that case, we take
$$
\mcF_p=\mcL_{\psi(f)},\quad\text{ where}\quad \psi(x)=e_p(\ov{q_p}x).
$$
Another example is given by the Kloosterman sums defined for $q$
squarefree and $x\in\Z$ by
\begin{equation}\label{kl-def}
  \hypk_m(x;q)=\frac{1}{q^{{m-1}/2}}
  \sum_{\substack{x_1,\ldots,x_m\in \Z/q\Z\\x_1\cdots x_m=x}}e_q(x_1+\cdots+x_m),
\end{equation}
for which we have
$$
\hypk_m(x;q)=\prod_{p|q}\hypk_m(\ov{q_p}^mx;p) =\prod_{p|q}([\times
\ov{q_p}^m]^*\hypk_m(\cdot;p))(x).
$$ 
and hence
$$
\hypk_m(x;q)=(-1)^{(m-1)\Omega(q)}t(x)
$$
where
$$
t(x)=\prod_{p|q}(-1)^{m-1}t_{\mcF_p}(x;p)\text{ with }\mcF_p=[\times
\ov{q_p}^m]^*\Kl_m
$$
is an admissible trace function modulo $q$.

Given a tuple of admissible sheaves $\uple{\mcF}=(\mcF_p)_{p\mid q}$,
we define the conductor $\cond(\uple{\mcF})$ as
$$
\cond(\uple{\mcF})=\prod_{p\mid q}\cond(\mcF_p).
$$

Thus, for the examples above, the conductor is bounded by
$C^{\Omega(q)}$ for some constant $C$ depending only on $f$ (resp. on
$m$). This will be a general feature in applications.


\subsubsection{A generalization of Proposition \ref{inc}}\label{ssec-vdc}

Thanks to the square root cancellation for complete sums of trace
functions provided by Corollary \ref{correlationcor}, we may extend
Proposition \ref{inc} to general admissible trace functions to
squarefree moduli.

\begin{proposition}[Incomplete sum of trace function]\label{inctrace}
  Let $q$ be a squarefree natural number of polynomial size and let
  $t(\cdot;q) \colon \Z\to \C$ be an admissible trace function modulo
  $q$ associated to admissible sheaves $\uple{\mcF}=(\mcF_p)_{p\mid
    q}$.
\par
Let further $N \geq 1$ be given with $N\ll q^{O(1)}$ and let $\psi_N$
be a function on $\R$ defined by
$$
\psi_N(x) = \psi\left(\frac{x-x_0}{N}\right)
$$
where $x_0\in \R$ and $\psi$ is a smooth function with compact support
satisfying
$$
|\psi^{(j)}(x)| \ll \log^{O(1)} N
$$
for all fixed $j \geq 0$, where the implied constant may depend on
$j$.
\begin{enumerate}[(i)]
\item (P\'olya-Vinogradov + Deligne) Assume that, for every $p|q$, the sheaf
  $\mcF_p$ has no polynomial phase of degree $\leq 1$. Then we have
\begin{equation}\label{vdctrace-0}
  \Bigl|\sum_n \psi_N(n) t(n;q)\Bigr| 
  \ll q^{1/2+\eps} \Bigl(1 + \frac{N}{q}\Bigr).
\end{equation}
for any $\eps>0$.
\item (one van der Corput + Deligne) Assume that, for every $p|q$, the sheaf
  $\mcF_p$ has no polynomial phase of degree $\leq 2$. Then, for any
  factorization $q = rs$ and $N \leq q$, we have
\begin{equation}\label{vdctrace-1}
  \Bigl|\sum_n \psi_N(n) t(n;q)\Bigr| \ll q^{\eps}\Bigl(
  N^{1/2}r^{1/2} + N^{1/2} s^{1/4}\Bigr).
\end{equation}
\end{enumerate}
In all cases the implied constants depend on $\eps$, $\cond(\uple{\mcF})$
and the implied constants in the estimates for the derivatives of
$\psi$.
\end{proposition}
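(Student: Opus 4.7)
The plan is to follow the same two-step strategy as Proposition~\ref{inc}, replacing the one-variable Weil bounds (Proposition~\ref{weil}) with the sheaf-theoretic Deligne-type estimates of Corollary~\ref{correlationcor} and Theorem~\ref{th-pol-phase}, and using the Chinese Remainder Theorem (Lemma~\ref{crt}) to reduce from squarefree moduli to prime moduli. The multiplicative structure \eqref{tfactor} of $t$, combined with the conductor bounds \eqref{cond-1}--\eqref{cond-2}, ensures that losses incurred at each prime are at most polynomial in $\cond(\mcF_p)$, so that the total damage is bounded by $\cond(\uple{\mcF})^{O(1)} \ll q^{o(1)}$ and can be absorbed into the $q^{\eps}$ factor.

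For part (i), the goal is to establish the Fourier transform bound $|\FT_q(t)(h)| \ll q^{\eps}$ uniformly in $h \in \Z/q\Z$, after which \eqref{vdctrace-0} follows from \eqref{complete-1b} of Lemma~\ref{com} in exactly the same way that \eqref{vdc-0} was deduced in the proof of Proposition~\ref{inc}(i). To prove the Fourier bound, I would use \eqref{tfactor} and Lemma~\ref{crt} to factor $\FT_q(t)(h)$ as a product, over the primes $p \mid q$, of one-variable exponential sums
$$\sum_{x \in \F_p} t(x;p)\, e_p(\ov{q_p}\, h\, x),$$
which is precisely the correlation sum of the admissible sheaf $\mcF_p$ against the Artin--Schreier sheaf $\mcL_{\psi(\ov{q_p} h x)}$ (understood as the trivial sheaf when $p \mid h$). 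The ``no polynomial phase of degree $\leq 1$'' hypothesis says exactly that no geometrically irreducible constituent of $\mcF_p$ is isomorphic to any such sheaf, so Corollary~\ref{correlationcor} (applied to the isotypic decomposition \eqref{eq-isotypic}) delivers the required $O(p^{1/2})$ cancellation uniformly in $h$.

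For part (ii), I would mirror the van der Corput argument of Proposition~\ref{inc}(ii) step by step: with $K := \lfloor N/r \rfloor$, use translation invariance, factor $t(n+kr;q) = t(n;r)\, t(n+kr;s)$ via CRT (using that $(r,s)=1$ because $q$ is squarefree) together with the $r$-periodicity of $t(\cdot;r)$, apply Cauchy--Schwarz in $n$ to absorb the $t(n;r)$ factor, and expand the square. The diagonal $k=l$ contribution is $O(KN\log^{O(1)} N)$ as in \eqref{eq-diag}; for $k\neq l$, completing the sum in $n$ via Lemma~\ref{com} reduces matters to correlation sums of the form
$$\sum_{n \in \Z/s\Z} t(n+m;s)\, \ov{t(n;s)}\, e_s(hn), \qquad m := (k-l)r,$$
with $(m,h) \neq (0,0) \bmod s/(s,k-l)$. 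Factoring by CRT over primes $p \mid s$, decomposing each $\mcF_p$ into isotypic components via \eqref{eq-isotypic}, and then applying Theorem~\ref{th-pol-phase} to each diagonal isotypic pairing and Corollary~\ref{correlationcor} to the off-diagonal pairings, produces an $O((s,k-l)^{1/2} s^{1/2+\eps})$ bound. The remainder of the argument---summing the gcd $(s,k-l)$ against the van der Corput weights via Lemma~\ref{ram-avg}---is formally identical to the corresponding step of Proposition~\ref{inc}(ii) and yields \eqref{vdctrace-1}.

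The main obstacle is not analytic but sheaf-theoretic: the skeleton of completion-of-sums, translation, and Cauchy--Schwarz is literally copied from Proposition~\ref{inc}, and what truly changes is the input complete-sum estimate at each prime. The delicate point in part (ii) is that the hypothesis on $\mcF_p$ must survive shifted tensoring $\mcF_p \otimes [+l]^*\wcheck{\mcF_p}$ while keeping the conductor polynomially controlled and ruling out the polynomial-phase ``Dirac-type'' obstruction; this is exactly the content of Theorem~\ref{th-pol-phase}, and is the reason the hypothesis is strengthened from no-polynomial-phase-of-degree-$\leq 1$ in (i) to no-polynomial-phase-of-degree-$\leq 2$ in (ii). Once Theorem~\ref{th-pol-phase} is in hand, the rest is routine bookkeeping of conductors through \eqref{cond-1}--\eqref{cond-2}.
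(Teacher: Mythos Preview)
Your proposal is correct and follows essentially the same route as the paper's proof: completion of sums via Lemma~\ref{com} for part~(i), the $q$-van der Corput $A$-process for part~(ii), with the Deligne-type input (Corollary~\ref{correlationcor} and Theorem~\ref{th-pol-phase}) replacing the Weil bounds at each prime via CRT. The only organizational difference is that the paper reduces to the isotypic case once at the outset (so that in part~(ii) there is a single component and Theorem~\ref{th-pol-phase} handles everything), whereas you carry the isotypic decomposition through the argument and split into diagonal and off-diagonal isotypic pairings; both are equivalent.
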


\begin{remark} In the context of Proposition \ref{inc}, where
  $t(n;q)=e_q\left(\frac{P(n)}{Q(n)}\right)$, the assumptions $\deg
  P<\deg Q$ and $\deg(Q\ (p))=\deg(Q)$ (for all $p\mid q$), ensure that
  the sheaves $\mcL_{e_p\left(\ov{q_p}\frac{P(x)}{Q(x)}\right)}$ do
  not contain any polynomial phase of any degree.
\end{remark}

\begin{remark}\label{anyfunction} 
  For future reference, we observe that, in the proof of \eqref{vdctrace-1} below, we will not use
  any of the properties of the functions $x\mapsto t(x;p)$ for $p\mid
  r$ for a given factorization $q=rs$, except for their boundedness.
\end{remark}

\proof For each $p \mid q$, the trace function $t_{\mcF_p}$ decomposes
by~(\ref{eq-isotypic}) into a sum of at most $\rk(\mcF_p)\leq
\cond(\mcF_p)\leq \cond(\uple{\mcF})$ trace functions of isotypic admissible
sheaves, and therefore $n\mapsto t(n;q)$ decomposes into a sum of at
most $C^{\omega(q)}$ functions, each of which is an admissible trace
function modulo $q$ associated to isotypic admissible
sheaves. Moreover, if no $\mcF_p$ contains a polynomial phase of
degree $\leq d$, then all isotypic components share this property (in
particular, since $d\geq 1$ for both statements, each component is
also a Fourier sheaf). Thus we may assume without loss of generality
that each $\mcF_p$ is isotypic.

We start with the proof of \eqref{vdctrace-0}. By \eqref{complete-1} we have 
\begin{align*}
  \left|\sum_n \psi_N(n) t(n;q)\right| &\ll
  q^{1/2+\eps}\Bigl(1+\frac{|N'|}q\Bigr)
  \sup_{h\in\Zz/q\Zz}|\FT_q(t(h;q))|\\
  &\ll
  q^{1/2+\eps}\left(1+\frac{N}q\right)\sup_{h\in\Zz/q\Zz}|\FT_q(t(h;q))|
\end{align*}
for any $\eps>0$, where $N'=\sum_n\psi_N(n)$.  By Lemma \ref{crt},
\eqref{tfactor} and the definition of the Fourier transform, we have
$$
\FT_q(t(\cdot;q))(h)=\prod_{p|q}\FT_{p}(t(\cdot;p))(\ov{q_p}h).
$$
Since $t(\cdot;p)=t_{\mcF_p}$ is the trace function of a Fourier
sheaf, we have
$$
|\FT_{p}(t(\cdot;p))(\ov{q_p}h)|\leq 10\cond(\mcF_p)^2\leq
10\cond(\uple{\mcF})^2
$$
for all $h$ by~(\ref{fourierconductorbound}) (or Corollary
\ref{correlationcor} applied to the sheaves $\mcF_p$ and
$\mcL_{e_p(-\ov{q_p}x)}$).  Combining these bounds, we obtain
\eqref{vdctrace-0}.

The proof of \eqref{vdctrace-1} follows closely that of
\eqref{vdc-1}. It is sufficient to prove this bound in the case $r\leq s$. We may also assume that $r\leq N\leq s$, since,
  otherwise, the result follows either from the trivial bound or \eqref{vdctrace-0}. Then, denoting $K := \lfloor N/r\rfloor$,
we write
$$
\sum_n \psi_N(n)t(n;q)=\frac{1}{K} \sum_n \sum_{k=1}^K \psi_N(n+kr)
t(n+kr;q).
$$
Since $q=rs$, we have 
$$
t(n+kr;q) = t(n;r)t(n+kr;s),
$$
where
$$
t(n;r)=\prod_{p|r}t(n;p),\quad\quad t(n;s)=\prod_{p|s}t(n;p)
$$
are admissible trace functions modulo $r$ and $s$, respectively. 
Hence
\begin{align*}
  \Bigl|\sum_n \psi_N(n) t(n;q)\Bigr| &\ll
  \frac{1}{K} \sum_n \Bigl|\sum_{k=1}^K \psi_N(n+kr) t(n+kr;s) \Bigr| \\
  & \ll \frac{N^{1/2}}{K} \Bigl(\sum_n \Bigl|\sum_{k=1}^K \psi_N(n+kr)
  t(n+kr;s)\Bigr|^2\Bigr)^{1/2}\\
  &\ll \frac{N^{1/2}}{K} \Bigl(\sum_{1\leq k,l\leq K}A(k,l)\Bigr)^{1/2},
\end{align*}
where
$$
A(k,l)=\sum_n \psi_N(n+kr)
\overline{\psi_N(n+lr)}t(n+kr;s)\overline{t(n+lr;s)}.
$$
The diagonal contribution satisfies
$$
\sum_{1\leq k\leq K}A(k,k) \ll q^{\eps}KN
$$
for any $\eps>0$, where the implied constant depends on
$\cond(\uple{\mcF})$.


Instead of applying~(\ref{vdctrace-0}) for the off-diagonal terms, it
is slightly easier to just apply~(\ref{complete-1}). For given
$k\neq l$, since $kr$, $lr\ll N$, the sequence $\Psi_N(n)=\psi_N(n+kr)
\overline{\psi_N(n+lr)}$ satisfies the assumptions
of~(\ref{complete-1}). Denoting
$$
w(n;s)=t(n+kr;s)\overline{t(n+lr;s)},
$$
we obtain 
$$
|A(k,l)|=\Bigl|\sum_n \Psi_{N}(n)w(n;s)\Bigr| \ll
q^{\eps}s^{1/2}\sup_{h\in\Z/s\Z}|\FT_{s}(w(\cdot;s))(h)|
$$
by~(\ref{complete-1}) (since $N\leq s$). We have
$$
\FT_s(w(\cdot;s))(h)= \prod_{p\mid s}\FT_p(w(\cdot;p))(\ov{s_p}h)
$$
with $s_p=s/p$. For $p\mid k-l$, we use the trivial bound
$$
|\FT_p(w(\cdot;p))(\ov{s_p}h)|\ll p^{1/2}
$$
and for $p\nmid k-l$, we have
$$
\FT_p(w(\cdot;p))(\ov{s_p}h) =\frac{1}{p^{1/2}}
\sum_{x\in\Fp}t(x+kr;p)\ov{t(x+lr;p)}e_p(\ov{s_p}hx)\ll 1
$$
by the change of variable $x\mapsto x+kq_1$ and
\eqref{squarerootforWeylshifts}, which holds for $\mcF_p$ by our
assumptions. In all cases, the implied constant depends only on
$\cond(\mcF_p)$. Therefore we have
$$
A(k,l)\ll (k-l,s)^{1/2}q^{\eps}s^{1/2},
$$
and summing over $k\neq l$, we derive
\begin{align*}
  \Bigl|\sum_n \psi_N(n) e_q(f(n))\Bigr|&\ll \frac{q^{\eps}N^{1/2}}{K}
  \Bigl(KN+s^{1/2}\sum_{1\leq k\neq l\leq K}(k-l,s)^{1/2}\Bigr)^{1/2}
  \\
  &\ll \frac{q^{\eps}N^{1/2}}{K} (K^{1/2}N^{1/2}+s^{1/4}K)
\end{align*}
which gives the desired conclusion~(\ref{vdctrace-1}).
\qed

\begin{remark}\label{vdCiterate-trace} Similarly to Remark
  \ref{vdCiterate}, one can iterate the above argument and conclude
  that, for any $l \geq 1$, and any factorization $q = q_1 \cdots q_l$
$$
\left| \sum_n \psi_N(n) t(n;q) \right| \ll q^{\eps}\left(
\left(\sum_{i=1}^{l-1} N^{1-1/2^i} q_i^{1/2^i}\right) + N^{1-1/2^{l-1}}
q_l^{1/2^l}\right),
$$ 
assuming that $N<q$ and the $\mcF_p$ do not contain any polynomial
phase of degree $\leq l$.
\end{remark}

Specializing Proposition \ref{inctrace} to the functions in Theorem
\ref{kf-bound}, we conclude:

\begin{corollary}\label{inctrace-q} Let $q\geq 1$ be a squarefree
  integer and let $K(\cdot;q)$ be given by
$$
K(x;q):=\frac{1}{q^{1/2}} \sum_{y\in \Z/q\Z} e_q(f(x,y))
$$
where
$$
f(x,y)=\frac{1}{(y+ax+b)(y+cx+d)}+ey
$$
and $a,b,c,d,e$ are integers with $(a-c,q)=1$. 
Let further $N \geq 1$ be given with $N\ll q^{O(1)}$ and let $\psi_N$
be a function on $\R$ defined by
$$
\psi_N(x) = \psi\left(\frac{x-x_0}{N}\right)
$$
where $x_0\in \R$ and $\psi$ is a smooth function with compact support
satisfying
$$
|\psi^{(j)}(x)| \ll \log^{O(1)} N
$$
for all fixed $j \geq 0$, where the implied constant may depend on
$j$.
\par
Then we have
\begin{equation}\label{vdctrace-0-q}
  \Bigl|\sum_n \psi_N(n) K(n;q)\Bigr| 
  \ll q^{1/2+\eps}\left(1 + \frac{N}{q}\right)
\end{equation}
for any $\eps>0$.
\par
Furthermore, for any factorization $q = rs$ and $N \leq q$, we have the
additional bound
\begin{equation}\label{vdctrace-1-q}
  \Bigl|\sum_n \psi_N(n) K(n;q)\Bigr| \ll q^{\eps}\Bigl( N^{1/2}r^{1/2} +
  N^{1/2} s^{1/4}\Bigr).
\end{equation}
\end{corollary}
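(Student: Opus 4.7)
\medskip

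\noindent\textbf{Proof plan for Corollary \ref{inctrace-q}.}

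The plan is to realize $n \mapsto K(n;q)$ as an admissible trace function modulo the squarefree integer $q$ in the sense of Section~\ref{compositetracefunctions}, and then invoke Proposition~\ref{inctrace} directly. The deep input is Theorem~\ref{kf-bound}, which provides, for each prime $p \mid q$, a geometrically irreducible admissible sheaf $\mcF_p$ on $\Pl_{\Fp}$ of bounded conductor, containing no polynomial phase of degree $\leq 2$, whose trace function is the normalized complete sum~\eqref{Kfdef}.

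First I would verify the multiplicative decomposition. By the Chinese Remainder Theorem (Lemma~\ref{crt}), for each $p \mid q$ one has
$$
e_q(f(x,y)) = \prod_{p \mid q} e_p\bigl(\overline{q_p}\, f(x,y)\bigr), \qquad q_p = q/p,
$$
and the sum over $y \in \Z/q\Z$ factors as a product over $p \mid q$ of sums over $y \in \Fp$. Since $q$ is squarefree, each $\overline{q_p}$ is invertible modulo $p$, so for each prime $p \mid q$ the character $\psi_p(z) := e_p(\overline{q_p} z)$ is a nontrivial additive character of $\Fp$; moreover, the hypothesis $(a-c,q)=1$ guarantees $a \not\equiv c \pmod p$ for every $p \mid q$. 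Thus Theorem~\ref{kf-bound} applies prime by prime (with the character $\psi_p$ in place of $\psi$, and the parameters $(a,b,c,d,e)$ reduced mod $p$) and gives
$$
K(x;q) = (-1)^{\omega(q)} \prod_{p \mid q} t_{\mcF_p}(x;p),
$$
which up to the harmless sign $(-1)^{\omega(q)}$ is exactly an admissible trace function modulo $q$ associated to the tuple $\uple{\mcF} = (\mcF_p)_{p \mid q}$.

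Next I would check that the hypotheses of Proposition~\ref{inctrace} are met. By Theorem~\ref{kf-bound} each $\mcF_p$ is admissible, geometrically irreducible, and has conductor bounded by an absolute constant $C$ independent of $p$; hence $\cond(\uple{\mcF}) \leq C^{\omega(q)}$, and this factor is absorbed into the $q^\eps$ losses since $q$ is of polynomial size (cf.\ Lemma~\ref{divisor-crude}). Theorem~\ref{kf-bound} also guarantees that no $\mcF_p$ contains a polynomial phase of degree $\leq 2$, so the hypotheses of both parts (i) and (ii) of Proposition~\ref{inctrace} hold. Applying part (i) yields \eqref{vdctrace-0-q}, and applying part (ii) to the given factorization $q=rs$ yields \eqref{vdctrace-1-q}.

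There is essentially no obstacle here beyond bookkeeping: the entire analytic content (square-root cancellation in the complete sum, the van der Corput amplification via the shifted correlation sum~\eqref{squarerootforWeylshift}, and the Fourier-transform argument identifying $\FT_\psi(K_f)$ with a pullback of $\Kl_3$) has been packaged into Theorem~\ref{kf-bound} and Proposition~\ref{inctrace}. The one mild subtlety to take care of is the character twist $\psi_p = e_p(\overline{q_p}\,\cdot)$ arising from the CRT split: one should note that Theorem~\ref{kf-bound} was stated for an arbitrary fixed nontrivial $\psi$ (the proof only uses the invariance properties in Remark~\ref{invarianceremark} and the Kloosterman-sheaf input from Proposition~\ref{kl-sheaf}, neither of which depends on a particular choice of $\psi$), so the conclusion transfers verbatim. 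The sign $(-1)^{\omega(q)}$ plays no role in the bounds.
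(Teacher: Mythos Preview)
Your proposal is correct and matches the paper's approach exactly: the paper's proof is a one-line observation that Theorem~\ref{kf-bound} together with the hypothesis $(a-c,q)=1$ shows $K(\cdot;q)$ is an admissible trace function modulo $q$ whose local sheaves contain no polynomial phase of degree $\leq 2$, whence Proposition~\ref{inctrace}(i),(ii) give \eqref{vdctrace-0-q} and \eqref{vdctrace-1-q}. You have simply spelled out the CRT factorization, the character twist $\psi_p = e_p(\overline{q_p}\,\cdot)$, and the sign $(-1)^{\omega(q)}$ that the paper leaves implicit.
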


Indeed, it follows from Theorem~\ref{kf-bound} and the assumption
$(a-c,q)=1$ that $K_f(\cdot;q)$ is an admissible trace function modulo
$q$ associated to sheaves which do not contain any polynomial phase of
degree $\leq 2$.

\subsubsection{Correlations of hyper-Kloosterman sums of composite moduli}

Finally, we extend Proposition \ref{kls} to composite moduli:

\begin{lemma}[Correlation of hyper-Kloosterman sums]\label{lm-c-kl} Let $s,r_1,r_2$
  be squarefree integers with $(s,r_1)=(s,r_2)=1$. Let $a_1 \in
  (\Z/r_1 s)^\times$, $a_2 \in (\Z/r_2s)^\times$, and $n \in
  \Z/([r_1,r_2] s)\Z$.  Then we have
\begin{multline*}
  \sum_{h \in (\Z/s[r_1, r_2] \Z)^\times} \hypk_3(a_1 h; r_1 s)
  \overline{\hypk_3(a_2 h; r_2 s)} e_{[r_1,r_2] s}( nh )\ll
  \\
  (s[r_1,r_2])^{\eps} s^{1/2} [r_1,r_2]^{1/2}
  (a_2-a_1,n,r_1,r_2)^{1/2}(a_2 r_1^3 - a_1 r_2^3, n, s)^{1/2}
\end{multline*}
for any $\eps>0$, where the implied constant depends only on $\eps$.
\end{lemma}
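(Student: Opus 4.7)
The plan is to factor the correlation sum via the Chinese Remainder Theorem into a product of one-variable correlation sums over primes, and then apply Proposition~\ref{kls} to each factor. Decomposing $r_1 = r_0 r_1'$ and $r_2 = r_0 r_2'$ with $r_0 = (r_1, r_2)$, the modulus $s[r_1,r_2]$ equals $s \cdot r_0 \cdot r_1' \cdot r_2'$, a product of four pairwise coprime squarefree integers. Using the CRT factorization
$$
\hypk_3(x; q) = \prod_{p \mid q} \hypk_3(\overline{(q/p)}^3 x; p)
$$
together with Lemma~\ref{crt} applied to $e_{[r_1,r_2]s}(nh)$, and decomposing $h \in (\Z/s[r_1,r_2]\Z)^\times$ via CRT into components $h_p \in \Fp^\times$ for each $p \mid s[r_1,r_2]$, the full sum splits as $S = \prod_p S_p$, where each $S_p$ is a one-variable sum over $\Fp^\times$.

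Each local factor is then estimated using Proposition~\ref{kls}. For $p \mid r_1'$ or $p \mid r_2'$, only one of the hyper-Kloosterman sums involves $p$, so $S_p$ has the shape $\sum_{h_p \in \Fp^\times} \hypk_3(\alpha h_p; p) e_p(\gamma h_p)$ and \eqref{klsumsfirstmoment} yields $|S_p| \ll p^{1/2}$ unconditionally. For $p \mid s$ or $p \mid r_0$, both hyper-Kloosterman factors contribute, and after the change of variable $h_p \mapsto h_p / \alpha_{1,p}$ the local sum takes the form
$$
S_p = \sum_{h_p \in \Fp^\times} \hypk_3(h_p; p) \overline{\hypk_3(\lambda_p h_p; p)} e_p(\mu_p h_p)
$$
with $\lambda_p \in \Fp^\times$ and $\mu_p \in \Fp$ determined by tracking the CRT inversion factors. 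Then \eqref{klsumscorrelation} gives $|S_p| \ll p^{1/2}$ unless $\lambda_p = 1$ and $\mu_p = 0$, in which case the trivial bound $|\hypk_3(\cdot;p)| \leq 3$ yields $|S_p| \ll p$.

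Multiplying the local bounds produces a baseline factor of $\prod_{p \mid s[r_1, r_2]} p^{1/2} = (s[r_1,r_2])^{1/2}$, with each ``bad'' prime (where the non-degeneracy fails) contributing an additional $p^{1/2}$. Direct computation of $\mu_p$ shows that $\mu_p = 0$ iff $p \mid n$ in every case, while $\lambda_p = 1$ translates to $p \mid a_2 r_1^3 - a_1 r_2^3$ for $p \mid s$ and to the analogous condition $p \mid a_2 (r_1/p)^3 - a_1 (r_2/p)^3$ for $p \mid r_0$. Hence the products of bad primes are dominated by $(a_2 r_1^3 - a_1 r_2^3, n, s)$ and by a divisor of $(a_2 - a_1, n, r_1, r_2)$, respectively, yielding the claimed bound once the $O(1)$ implied constants at each prime are absorbed into the $(s[r_1, r_2])^\eps$ factor via the divisor bound. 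The main technical step will be the careful bookkeeping of the scalars $\lambda_p, \mu_p$ through the CRT reduction, particularly for the primes $p \mid r_0$, where both $r_1$ and $r_2$ vanish modulo $p$ and the non-cancellation condition must be rewritten in a form where the stated gcd $(a_2 - a_1, n, r_1, r_2)$ is manifestly an upper bound on the product of bad primes.
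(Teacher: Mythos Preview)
Your proposal is correct and follows essentially the same route as the paper's proof: both reduce by the Chinese Remainder Theorem to a single prime $p$ and then invoke Proposition~\ref{kls}. The paper states the reduction slightly more abstractly, first splitting $S = S_1 S_2$ (modulo $[r_1,r_2]$ and modulo $s$) and then observing that it suffices to establish the single prime estimate
\[
\Bigl|\sum_{h \in (\Z/p\Z)^\times} \hypk_3(b_1 h; d_1)\,\overline{\hypk_3(b_2 h; d_2)}\,e_p(mh)\Bigr| \ll p^{1/2}\,(b_1-b_2,m,d_1,d_2)^{1/2}
\]
for $[d_1,d_2]=p$, with the three cases $d_1=1$, $d_2=1$, $d_1=d_2=p$ corresponding exactly to your cases $p\mid r_2'$, $p\mid r_1'$, $p\mid r_0$ or $p\mid s$. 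Your decomposition $[r_1,r_2]=r_0 r_1' r_2'$ makes the same case split explicit from the outset, and you correctly flag the bookkeeping for the $r_0$-primes (rewriting the degeneracy condition $a_2(r_1/p)^3\equiv a_1(r_2/p)^3\pmod p$ so as to match the stated gcd $(a_2-a_1,n,r_1,r_2)$) as the one point requiring care; the paper's proof is equally terse on this last point.
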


\begin{proof} Let $S$ be the sum to estimate. From Lemma \ref{crt}, we
  get
$$ 
\hypk_3( a_i h; r_i s ) = \hypk_3( a_i \bar{s}^3 h; r_i) \hypk_3( a_i
\overline{r_i}^3 h; s )
$$ 
for $i=1,2$, as well as
$$
 e_{[r_1,r_2]s}(nh) = e_{[r_1,r_2]}(\bar{s}nh)
 e_s(\overline{[r_1,r_2]}nh).
$$
and therefore $S=S_1S_2$ with
\begin{align*}
  S_1&=\sum_{h \in (\Z/[r_1, r_2] \Z)^\times} \hypk_3(a_1 \bar{s}^3 h;
  r_1)
  \overline{\hypk_3(a_2 \bar{s}^3 h; r_2)} e_{[r_1,r_2]}( \bar{s} nh ),\\
  S_2&=\sum_{h \in (\Z/s \Z)^\times} \hypk_3(a_1 \overline{r_1}^3 h;
  s) \overline{\hypk_3(a_2 \overline{r_2}^3 h; s)} e_{s}(
  \overline{[r_1,r_2]} nh ).
\end{align*}
Splitting further the summands as products over the primes dividing
$[r_1,r_2]$ and $s$, respectively, we see that it is enough to prove
the estimate
\begin{equation}\label{eq-ckl-goal}
\Bigl|\sum_{h \in (\Z/p \Z)^\times} \hypk_3(b_1 h; d_1)
\overline{\hypk_3(b_2 h; d_2)} e_{p}( mh )\Bigr| \ll p^{1/2}
(b_1-b_2,m,d_1,d_2)^{1/2}
\end{equation}
for $p$ prime and integers $d_1$, $d_2\geq 1$ such that $[d_1,d_2] =
p$ is prime, and for all $m \in \Z/p\Z$, and $b_1,b_2 \in
(\Z/p\Z)^\times$.

We now split into cases.  First suppose that $d_2=1$, so that $d_1=p$.
Then we have $\hypk_3(b_2 h; d_2) = 1$, and the left-hand side
of~(\ref{eq-ckl-goal}) simplifies to
$$
\sum_{h \in (\Z/p \Z)^\times} \hypk_3(b_1 h; p) e_{p}( mh ) \ll p^{1/2}
$$
by the first part of Proposition~\ref{kls}. Similarly, we
obtain~(\ref{eq-ckl-goal}) if $d_1=1$.

If $d_1=d_2=p$ and $b_1-b_2=m=0\ (p)$, then the claim follows from the
bound $|\hypk_3(h; p)| \ll 1$ (see Remark \ref{romeo}).

Finally, if $d_1=d_2=p$ and $b_1-b_2 \neq 0\ (p)$ or $m \neq 0\ (p)$,
then~(\ref{eq-ckl-goal}) is a consequence of the second part of
Proposition \ref{kls}.
\end{proof}

Finally, from this result, we obtain the following corollary:

\begin{corollary}[Correlation of hyper-Kloosterman sums,
  II]\label{corr-2} Let $s$, $r_1$, $r_2$ be squarefree integers with
  $(s,r_1)=(s,r_2)=1$. Let $a_1 \in (\Z/r_1 s)^\times$, $a_2 \in
  (\Z/r_2s)^\times$.  Let further $H \geq 1$ be given with $H\ll
  (s[r_1,r_2])^{O(1)}$ and let $\psi_H$ be a function on $\R$ defined
  by
$$
\psi_H(x) = \psi\left(\frac{x-x_0}{H}\right)
$$
where $x_0\in \R$ and $\psi$ is a smooth function with compact support
satisfying
$$
|\psi^{(j)}(x)| \ll \log^{O(1)} H
$$
for all fixed $j \geq 0$, where the implied constant may depend on
$j$.  Then we have
\begin{multline*}
  \Bigl|
  \sum_{(h,s[r_1,r_2])=1} \Psi_H(h) \hypk_3(a_1 h; r_1 s) \overline{\hypk_3(a_2 h; r_2 s)}\Bigr| \\
  \ll (s[r_1,r_2])^{\eps}\left(\frac{H}{[r_1,r_2]s} + 1\right) s^{1/2}
  [r_1,r_2]^{1/2} (a_2-a_1,r_1,r_2)^{1/2}(a_2 r_1^3 - a_1 r_2^3,s)^{1/2}
\end{multline*}
for any $\eps>0$ and any integer $n$.
\end{corollary}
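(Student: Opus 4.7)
The plan is to apply completion of sums (Lemma~\ref{com}) to reduce the smooth incomplete sum to a complete sum over $\Z/q\Z$ with $q := [r_1,r_2]s$, and then use Lemma~\ref{lm-c-kl} to estimate both the complete sum (which produces the main term) and the Fourier transforms at non-zero frequencies (which produce the error term). Specifically, define the $q$-periodic function
\[
f(h) := \onef_{(h,q)=1}\,\hypk_3(a_1 h; r_1 s)\,\overline{\hypk_3(a_2 h; r_2 s)},
\]
which satisfies $|f(h)| \ll 1$ thanks to Remark~\ref{romeo}. Then our target sum equals $\sum_h \psi_H(h) f(h)$, and by \eqref{complete-1b} with arbitrary fixed $A$ and sufficiently small $\eps>0$, it equals
\[
\frac{H'}{q}\sum_{h\in\Z/q\Z} f(h) \;+\; O\!\Bigl((\log H)^{O(1)}\tfrac{H}{q^{1/2}}\sum_{0<|n|\le qH^{-1+\eps}}|\FT_q(f)(n)|\Bigr) \;+\; O(H^{-A}q).
\]

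For the main term, Lemma~\ref{lm-c-kl} at $n=0$ (where $(a_2-a_1,0,r_1,r_2)=(a_2-a_1,r_1,r_2)$ and similarly for the second gcd) gives
\[
\Bigl|\sum_{h\in(\Z/q\Z)^\times} f(h)\Bigr| \ll q^{\eps}\,s^{1/2}[r_1,r_2]^{1/2}\,A^{1/2}B^{1/2},
\]
where I abbreviate $A := (a_2-a_1,r_1,r_2)$ and $B := (a_2 r_1^3-a_1 r_2^3, s)$. Since $H' \ll H(\log H)^{O(1)}$, this yields the contribution $\ll q^{\eps}\tfrac{H}{q}\,s^{1/2}[r_1,r_2]^{1/2}A^{1/2}B^{1/2}$, which matches the $\tfrac{H}{[r_1,r_2]s}$ part of the claimed bound. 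For the error term, Lemma~\ref{lm-c-kl} bounds each non-zero Fourier coefficient by
\[
|\FT_q(f)(n)| \ll q^{\eps}\,(A,n)^{1/2}(B,n)^{1/2},
\]
so the error is controlled by $q^{\eps}\tfrac{H}{q^{1/2}}\sum_{0<|n|\le qH^{-1+\eps}}(A,n)^{1/2}(B,n)^{1/2}$.

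The remaining step is to estimate this weighted sum of gcds. By Cauchy--Schwarz and Lemma~\ref{ram-avg},
\[
\sum_{|n|\le N}(A,n)^{1/2}(B,n)^{1/2} \le \Bigl(\sum_{|n|\le N}(A,n)\Bigr)^{1/2}\Bigl(\sum_{|n|\le N}(B,n)\Bigr)^{1/2} \ll q^{\eps}\bigl(N+A\bigr)^{1/2}\bigl(N+B\bigr)^{1/2},
\]
and expanding the product and applying the three bounds $N$, $(NA)^{1/2}+(NB)^{1/2}$, and $(AB)^{1/2}$ separately with $N=qH^{-1+\eps}$, each of the four resulting contributions to the error is bounded (after multiplying by $q^{\eps}H/q^{1/2}$) by at most $q^{1/2+O(\eps)}A^{1/2}B^{1/2} = q^{O(\eps)}\,s^{1/2}[r_1,r_2]^{1/2}A^{1/2}B^{1/2}$, which matches the $+1$ part of the factor $(H/([r_1,r_2]s)+1)$ in the target bound. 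The tail term $H^{-A}q$ is harmless for $A$ chosen sufficiently large relative to the polynomial bound $H\ll q^{O(1)}$.

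The main obstacle, and the conceptual heart of the argument, is really already contained in Lemma~\ref{lm-c-kl}: the fact that the correlation between two hyper-Kloosterman trace functions at moduli $r_1 s$ and $r_2 s$ exhibits square-root cancellation outside of a controlled diagonal locus (measured by the two gcds) requires the Deligne-type results from Proposition~\ref{kls}, together with a careful Chinese Remainder factorization to separate the $s$-part from the $[r_1,r_2]$-part. Once Lemma~\ref{lm-c-kl} is in hand, the present corollary is essentially a mechanical application of the P\'olya--Vinogradov completion technique, with the only mild subtlety being the Cauchy--Schwarz/Ramanujan bookkeeping needed to produce the product $A^{1/2}B^{1/2}$ rather than a single gcd.
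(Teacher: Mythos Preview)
Your proof is correct and takes a slightly different route from the paper. The paper handles the coprimality constraint $(h,s[r_1,r_2])=1$ by M\"obius inversion: writing the sum as $\sum_{\delta\mid s[r_1,r_2]}\mu(\delta)t_1(\delta)S_1(\delta)$ with $|t_1(\delta)|\le\delta^{-2}$ (exploiting that $\hypk_3(0;p)=p^{-1}$), then applying Lemma~\ref{lm-c-kl} and the completion Lemma~\ref{com} to each $S_1(\delta)$ separately before summing over $\delta$. You instead build the coprimality indicator directly into the periodic function $f$, so that its Fourier coefficients are \emph{exactly} the complete sums appearing in Lemma~\ref{lm-c-kl} (which already sums over $(\Z/q\Z)^\times$); this is more direct and avoids the M\"obius bookkeeping entirely. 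Two minor remarks: first, you can skip the Cauchy--Schwarz/gcd-sum step by using \eqref{complete-1} in place of \eqref{complete-1b}, since $(A,n)^{1/2}(B,n)^{1/2}\le A^{1/2}B^{1/2}$ for every $n$ and hence the sup over nonzero frequencies is already $\ll q^\eps A^{1/2}B^{1/2}$; second, your justification for the tail $H^{-A}q$ is slightly misstated (the \emph{upper} bound $H\ll q^{O(1)}$ is not what helps), but the fix is immediate, since for $H\le q^{1/2}$ the trivial bound $\ll H$ already matches the target, while for $H>q^{1/2}$ the tail is $\ll q^{1-A/2}$ and negligible.
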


This exponential sum estimate will be the main estimate used for
controlling Type III sums in Section~\ref{typeiii-sec}.

\begin{proof}
This follows almost directly from Lemma~\ref{lm-c-kl} and the
completion of sums in Lemma~\ref{com}, except that we must incorporate
the restriction $(h,s[r_1,r_2])=1$. We do this using M\"obius
inversion: the sum $S$ to estimate is equal to
$$
\sum_{\delta\mid s[r_1,r_2]}\mu(\delta)t_1(\delta)S_1(\delta)
$$
where $t_1(\delta)$ satisfies $|t_1(\delta)|\leq \delta^{-2}$ because
$\hypk_3(0;p)=p^{-1}$ for any prime $p$, and
\begin{align*}
  S_1(\delta)&=\sum_{\delta\mid h} \Psi_H(h) \hypk_3(\alpha_1 h; r_1
  s/(\delta,r_1s)) \overline{\hypk_3(\alpha_2 h; r_2 s/(\delta,r_2s))}
  \\
  &=\sum_{h} \Psi_{H/\delta}(h) \hypk_3(\delta\alpha_1 h; r_1
  s/(\delta,r_1s)) \overline{\hypk_3(\delta\alpha_2 h; r_2
    s/(\delta,r_2s))}
\end{align*}
for some $\alpha_i\in (\Z/r_is/(\delta,r_is)\Z)^{\times}$. By
Lemma~\ref{lm-c-kl} and Lemma~\ref{com}, we have
$$
S_1(\delta)\ll (s[r_1,r_2])^{\eps}
\Bigl(\frac{H}{\delta s[r_1,r_2]}+1\Bigr)
\Bigl(\frac{s[r_1,r_2]}{\delta}\Bigr)^{1/2}
(a_2-a_1,r_1,r_2)^{1/2}
(a_2r_1^3-a_1r_2^3,s)^{1/2}
$$
(the gcd factors for $S_1(\delta)$ are divisors of those for
$\delta=1$). Summing over $\delta\mid s[r_1,r_2]$ then gives the
result.
\end{proof}

\subsubsection{The Katz Sato-Tate law over short intervals}

In this section, which is independent of the rest of this paper, we
give a sample application of the van der Corput method to Katz's
equidistribution law for the angles of the Kloosterman sums
$\hypk_2(n;q)$.  
\par
Given a squarefree integer $q\geq 1$ with $\omega(q)\geq 1$ prime
factors, we define the \emph{Kloosterman angle}
$\theta(n;q)\in[0,\pi]$ by the formula
$$
2^{\omega(q)}\cos(\theta(n;q))=\hypk_2(n;q).
$$
In \cite{GKM}, as a consequence of the determination of the geometric
monodromy group of the Kloosterman sheaf $\Kl_2$, Katz proved (among
other things) a result which can be phrased as follows:

\begin{theorem}[Katz's Sato-Tate equidistribution law] 
  As $p\ra\infty$, the set of angles
$$
\{\theta(n;p),\ 1\leq n\leq p\}\in [0,\pi]\}
$$ 
becomes equidistributed in $[0,\pi]$ with respect to the Sato-Tate
measure $\mu_{ST}$ with density
$$
\frac{2}{\pi}\sin^2(\theta)d\theta,
$$
i.e., for any continuous function $f\,:\, [0,\pi]\ra \Cc$, we have
$$
\int f(x)d\mu_{ST}(x)=\lim_{p\ra +\infty}
\frac{1}{p-1}\sum_{1\leq n\leq p}f(\theta(n;p)).
$$
\end{theorem}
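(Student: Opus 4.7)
The plan is to apply Weyl's equidistribution criterion. The characters $\chi_k(\theta) := \sin((k+1)\theta)/\sin\theta$ of the irreducible representations $\mathrm{sym}^k$ of $\mathrm{SU}(2)$, for $k \geq 0$, form an orthonormal basis of $L^2([0,\pi], \mu_{ST})$, with $\chi_0 \equiv 1$. Therefore it is enough to prove that, for every fixed integer $k \geq 1$,
$$
\frac{1}{p-1} \sum_{n=1}^{p-1} \chi_k(\theta(n;p)) \longrightarrow 0 \qquad \text{as } p \to \infty.
$$

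First I would identify the left-hand side as the average of a trace function. By definition of $\theta(n;p)$, the two eigenvalues of $\mathrm{Frob}_n$ on the stalk of $\Kl_2$ at $n \in \Fp^\times$ are $e^{\pm i\theta(n;p)}$, so that
$$
\chi_k(\theta(n;p)) = t_{\mathrm{Sym}^k \Kl_2}(n;p),
$$
where $\mathrm{Sym}^k \Kl_2$ is the $k$-th symmetric power of the Kloosterman sheaf, an admissible sheaf on $\Pl_{\Fp}$ of rank $k+1$. The key geometric input is Proposition~\ref{kl-sheaf}: for odd $p$, the geometric monodromy group of $\Kl_2$ is $\mathrm{Sp}_2 = \mathrm{SL}_2$, which acts irreducibly on every symmetric power of its standard $2$-dimensional representation. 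Consequently $\mathrm{Sym}^k \Kl_2$ is geometrically irreducible for all $k \geq 1$, and since its rank $k+1 \geq 2$ it is not geometrically isomorphic to the trivial sheaf. Moreover it is lisse on $\mathbb{G}_m$, tame at $0$, and wildly ramified only at $\infty$ with Swan conductor bounded by $k$, so that $\mathrm{cond}(\mathrm{Sym}^k \Kl_2)$ is bounded solely in terms of $k$.

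With these facts in hand, Theorem~\ref{propGL1} applied to $\mathrm{Sym}^k \Kl_2$ yields
$$
\sum_{n \in \Fp} t_{\mathrm{Sym}^k \Kl_2}(n;p) = p\,\mathrm{Tr}\bigl(\mathrm{Frob}_p \mid ((\mathrm{Sym}^k \Kl_2)_{\overline\eta})_{\pi_1^g}\bigr) + O_k(p^{1/2}),
$$
and the coinvariant space on the right-hand side vanishes, because a non-trivial geometrically irreducible representation has no non-zero coinvariants under the geometric fundamental group. Discarding the at most two points $n = 0, \infty$ from the sum changes it by $O_k(1)$ in view of \eqref{traceupperbound-2}, so after dividing by $p-1$ we obtain
$$
\frac{1}{p-1} \sum_{n=1}^{p-1} \chi_k(\theta(n;p)) \ll_k p^{-1/2} \longrightarrow 0,
$$
which together with Weyl's criterion gives the desired equidistribution.

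The only step of substance is the geometric irreducibility of $\mathrm{Sym}^k \Kl_2$, which is the core ingredient of Katz's theorem; once Proposition~\ref{kl-sheaf} is granted, the rest of the argument is a routine application of Deligne's Riemann Hypothesis as packaged in Theorem~\ref{propGL1}, together with the standard (but unavoidable) bookkeeping needed to verify that the conductor of $\mathrm{Sym}^k \Kl_2$ is bounded purely in terms of $k$ so that the implicit constant in the $O_k$ estimate is independent of $p$.
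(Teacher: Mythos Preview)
The paper does not actually prove this theorem; it merely states it as a known result of Katz, citing \cite{GKM}. Your proof is correct and is precisely the standard argument: Weyl's criterion reduces equidistribution to cancellation in averages of $\chi_k(\theta(n;p))$, these are identified with traces of the geometrically irreducible symmetric power sheaves $\mathrm{Sym}^k\Kl_2$ (irreducibility following from the $\mathrm{SL}_2$ monodromy in Proposition~\ref{kl-sheaf}), and then Theorem~\ref{propGL1} gives square-root cancellation. This is exactly the mechanism the paper invokes when it proves its own refinement (the Sato--Tate law over short intervals for composite moduli), where the same symmetric-power trace-function interpretation and the same irreducibility input from \cite{GKM} appear explicitly; so your argument is fully aligned with the paper's methodology, just applied to the base case that the paper chose to quote rather than reprove.
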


By the P\'olya-Vinogradov method one can reduce the length of the
interval $[1,p]$:
\begin{proposition} 
  For any $\eps>0$, the set of angles
$$
\{\theta(n;p),\ 1\leq n\leq p^{1/2+\eps}\}\in [0,\pi]
$$ 
becomes equidistributed on $[0,\pi]$ with respect to the Sato-Tate
measure $\mu_{ST}$ as $p\ra +\infty$.
\end{proposition}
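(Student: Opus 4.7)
The plan is to verify the Weyl equidistribution criterion for the measure $\mu_{ST}$, using the Chebyshev polynomials of the second kind $U_k(\cos\theta) = \sin((k+1)\theta)/\sin\theta$, which form a complete orthonormal system in $L^2([0,\pi],\mu_{ST})$ and satisfy $\int_0^\pi U_k\, d\mu_{ST} = \delta_{k,0}$. By Weierstrass' theorem, it suffices to show that
$$
\frac{1}{N}\sum_{1\leq n\leq N} U_k(\cos\theta(n;p))\longrightarrow 0\quad\text{as }p\to\infty,
$$
for each fixed $k\geq 1$, where $N=p^{1/2+\eps}$.

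First, I would interpret the summand as a trace function. Writing the Kloosterman angle as $\hypk_2(n;p)=e^{i\theta(n;p)}+e^{-i\theta(n;p)}$, one recognizes $U_k(\cos\theta(n;p))$ as the trace of the geometric Frobenius at $n$ acting on the stalk of the $k$-th symmetric power sheaf $\mathrm{Sym}^k\Kl_2$. This sheaf is lisse on $\mathbb{G}_m$, tamely ramified at $0$, wildly ramified at $\infty$, and by Proposition~\ref{kl-sheaf} the geometric monodromy group of $\Kl_2$ is $\mathrm{SL}_2$ (this is the case $m=2$, as $\mathrm{Sp}_2=\mathrm{SL}_2$). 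Since the symmetric powers of the standard representation of $\mathrm{SL}_2$ are all irreducible, $\mathrm{Sym}^k\Kl_2$ is geometrically irreducible of rank $k+1$, pointwise pure of weight $0$, and its conductor is bounded by a constant $C(k)$ depending only on $k$ (the only singularities are at $0$ and $\infty$, and the Swan conductor at $\infty$ is controlled by functoriality from that of $\Kl_2$). In particular, for $k\geq 1$, $\mathrm{Sym}^k\Kl_2$ has rank at least $2$ and is geometrically irreducible, hence is an admissible Fourier sheaf containing no polynomial phase component (Remark~\ref{irrednopolynomialphase}).

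Next, I would estimate the complete and incomplete sums. For the complete sum, Theorem~\ref{propGL1} applied to $\mathcal{F}=\mathrm{Sym}^k\Kl_2$ gives
$$
\Bigl|\sum_{n\in\F_p} t_{\mathrm{Sym}^k\Kl_2}(n;p)\Bigr|\ll_k p^{1/2},
$$
since the space of geometric coinvariants vanishes (by irreducibility and non-triviality of $\mathrm{Sym}^k\Kl_2$ for $k\geq 1$). Now the incomplete sum $\sum_{1\leq n\leq N} U_k(\cos\theta(n;p))$ can be opened up via the discrete Fourier expansion on $\Z/p\Z$:
$$
\sum_{1\leq n\leq N} t_{\mathrm{Sym}^k\Kl_2}(n;p)=\frac{1}{\sqrt{p}}\sum_{h\in\Z/p\Z}\FT_p(t_{\mathrm{Sym}^k\Kl_2})(-h)\sum_{1\leq n\leq N}e_p(hn).
$$
The $h=0$ term contributes $O_k(N p^{-1/2})$ by the complete sum bound above. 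For $h\neq 0$, the Fourier transform is pointwise bounded ($|\FT_p(t_{\mathrm{Sym}^k\Kl_2})(h)|\ll_k 1$) because $\FT_\psi(\mathrm{Sym}^k\Kl_2)$ is again an admissible sheaf of conductor $O_k(1)$ by Theorem~\ref{fourier-sheaf} and \eqref{fourierconductorbound}; combined with the classical bound $|\sum_{n\leq N}e_p(hn)|\ll \min(N,p/\|h/p\|)$, this yields a total contribution of $O_k(\sqrt{p}\log p)$. (Equivalently, one may invoke Proposition~\ref{inctrace}(i) directly, after approximating the sharp cutoff $\onef_{[1,N]}$ by smooth bump functions.) Summing, we obtain
$$
\Bigl|\sum_{1\leq n\leq N} U_k(\cos\theta(n;p))\Bigr|\ll_k \frac{N}{\sqrt{p}}+\sqrt{p}\log p,
$$
which is $o(N)$ when $N=p^{1/2+\eps}$ with $\eps>0$, completing the proof.

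The main substantive point, and the only one that is not a routine instance of the completion-of-sums machinery of Section~\ref{exp-sec}, is the geometric input: verifying that $\mathrm{Sym}^k\Kl_2$ is geometrically irreducible with conductor bounded independently of $p$. This rests on Katz's determination of the monodromy group $\mathrm{SL}_2$ in Proposition~\ref{kl-sheaf}, which is itself the deepest ingredient used here; everything else amounts to the standard Weyl/Plancherel dualization together with Deligne's bound for the complete sum.
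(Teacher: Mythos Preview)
Your proof is correct and is exactly the argument the paper has in mind: the paper states this proposition without proof, merely attributing it to ``the P\'olya--Vinogradov method,'' and your proposal spells out precisely that method---Weyl criterion via the $\mathrm{sym}_k = U_k$ functions, identification of $U_k(\cos\theta(n;p))$ with the trace function of $\mathrm{Sym}^k\Kl_2$, Katz's determination of the $\mathrm{SL}_2$ monodromy to get geometric irreducibility of rank $>1$, and completion of sums. The same ingredients appear explicitly in the paper's proof of the subsequent theorem on composite moduli (see the $k'=0$ case there, which invokes \eqref{vdctrace-0}), so your writeup is fully consistent with the paper's framework.
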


(In fact, using the ``sliding sum method'' \cite{sliding}, one can
reduce the range to $1\leq n\leq p^{1/2}\Psi(p)$ for any increasing function
$\Psi$ with $\Psi(p)\ra +\infty$).

As we show here, as a very special example of application of the van
der Corput method, we can prove a version of Katz's Sato-Tate law for
Kloosterman sums of composite moduli over shorter ranges:

\begin{theorem} 
  Let $q$ denote integers of the form $q=rs$ where $r$, $s$ are two
  distinct primes satisfying
$$ 
s^{1/2}\leq r\leq 2s^{1/2}.
$$ 
For any $\eps>0$, the set of pairs of angles
$$
\{(\theta(n\ov s^2;r),\theta(n\ov r^2;s)),\ 1\leq n\leq
q^{1/3+\eps}\}\in [0,\pi]^2
$$
becomes equidistributed on $[0,\pi]^2$ with respect to the product
measure $\mu_{ST}\times\mu_{ST}$ as $q\ra +\infty$ among such
integers.  
\par 
Consequently the set
$$
\{\theta(n;q),\ 1\leq n\leq q^{1/3+\eps}\}\in [0,\pi]
$$ 
becomes equidistributed on $[0,\pi]$ with respect to the measure
$\mu_{ST,2}$ obtained as the push forward of the measure
$\mu_{ST}\times\mu_{ST}$ by the map $(\theta,\theta')\mapsto
\mathrm{acos}(\cos\theta\cos\theta')$.
\end{theorem}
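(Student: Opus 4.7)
The approach is to apply the Weyl equidistribution criterion and reduce to bounds on one-dimensional Weyl sums, estimated as incomplete sums of trace functions via Proposition~\ref{inctrace}. Since $\{U_m(\cos\theta)\}_{m\geq 0}$ (Chebyshev polynomials of the second kind) form an orthonormal basis of $L^2([0,\pi],\mu_{ST})$, it suffices to show that for each fixed $(m_1,m_2)\in\N^2\setminus\{(0,0)\}$,
\[
W(N) := \sum_{1\leq n\leq N} U_{m_1}\bigl(\cos\theta(n\bar{s}^2;r)\bigr)\, U_{m_2}\bigl(\cos\theta(n\bar{r}^2;s)\bigr) = o(N)
\]
as $q=rs\to\infty$ through the admissible integers, where $N=\lfloor q^{1/3+\eps}\rfloor$. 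The equidistribution of $\theta(n;q)$ under $\mu_{ST,2}$ then follows by the continuous mapping theorem from the identity $\cos\theta(n;q)=\cos\theta(n\bar{s}^2;r)\cos\theta(n\bar{r}^2;s)$, itself coming from $\hypk_2(n;rs)=\hypk_2(\bar{s}^2 n;r)\hypk_2(\bar{r}^2 n;s)$.

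The sheaf-theoretic input is the identity $U_m(\cos\theta(x;p))=(-1)^m t_{\mathrm{Sym}^m\Kl_2}(x;p)$, which follows from purity of weight $0$ together with the reality of $t_{\Kl_2}=-\hypk_2$ (ensuring that the two Frobenius eigenvalues on $(\Kl_2)_x$ form a complex conjugate pair $e^{\pm i\phi(x)}$ with $\phi(x)=\pi-\theta(x;p)$) and the classical trace formula $\mathrm{tr}\,\mathrm{Sym}^m\mathrm{diag}(e^{i\phi},e^{-i\phi})=U_m(\cos\phi)$. Since the geometric monodromy of $\Kl_2$ is $\mathrm{SL}_2=\mathrm{Sp}_2$ (Proposition~\ref{kl-sheaf}) and the $m$-th symmetric power of the standard $\mathrm{SL}_2$-representation is irreducible of dimension $m+1$, the sheaf $\mathrm{Sym}^m\Kl_2$ is geometrically irreducible of rank $m+1$, with conductor bounded in terms of $m$ alone; for $m\geq 1$ it therefore contains no polynomial phase of any degree, by Remark~\ref{irrednopolynomialphase}. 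In the principal case $m_1,m_2\geq 1$, $W(N)$ equals, up to sign, the incomplete sum $\sum_{n\leq N}t(n;q)$ of the admissible trace function $t(n;q):=t_{\mathrm{Sym}^{m_1}\Kl_2}(\bar{s}^2 n;r)\cdot t_{\mathrm{Sym}^{m_2}\Kl_2}(\bar{r}^2 n;s)$ modulo $q=rs$, whose local sheaves at $r$ and $s$ contain no polynomial phase of degree $\leq 2$. Proposition~\ref{inctrace}(ii) applied with the factorization $q=r\cdot s$ into its two prime factors then yields, for any $\eta>0$,
\[
|W(N)|\ll_{\eta,m_1,m_2} q^{\eta}\bigl(N^{1/2}r^{1/2}+N^{1/2}s^{1/4}\bigr).
\]
The hypothesis $s^{1/2}\leq r\leq 2s^{1/2}$ forces $r\sim q^{1/3}$ and $s\sim q^{2/3}$, so both terms on the right are $\sim q^{1/3+\eps/2+\eta}$, which is $o(N)$ upon choosing $\eta<\eps/2$.

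The two boundary cases, where exactly one of $m_1,m_2$ vanishes, reduce $W(N)$ to an incomplete sum of a single trace function modulo $r$ alone (resp.\ modulo $s$ alone), and the completion bound of Proposition~\ref{inctrace}(i) gives respectively $\ll r^{1/2+\eta}(1+N/r)\sim Nq^{-1/6+\eta/3}$ and $\ll s^{1/2+\eta}(1+N/s)\sim q^{1/3+2\eta/3}$, both $o(N)$ for $\eta>0$ small enough. The main obstacle is the principal case: a plain Polya-Vinogradov bound modulo $q$ via part (i) of Proposition~\ref{inctrace} would require $N\gg q^{1/2}$ to detect any cancellation, well above our target, and it is only the $q$-van der Corput $A$-process built into part (ii) of that proposition, together with the balanced factorization $r\sim q^{1/3}$, $s\sim q^{2/3}$ that equalizes the two summands $N^{1/2}r^{1/2}$ and $N^{1/2}s^{1/4}$ precisely at the threshold $N\sim q^{1/3}$, that rescues the argument. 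The input that makes this van der Corput step applicable is the absence of polynomial phases of degree $\leq 2$ in the local sheaves, which is here automatic from their geometric irreducibility and rank $\geq 2$.
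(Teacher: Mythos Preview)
Your proposal is correct and follows essentially the same route as the paper: Weyl's criterion with the test functions $U_{m_1}(\cos\theta)U_{m_2}(\cos\theta')$ (which are exactly the paper's $\mathrm{sym}_{k,k'}$), identification with trace functions of $\mathrm{Sym}^m\Kl_2$, and Proposition~\ref{inctrace} in its van der Corput form for the main case and its P\'olya--Vinogradov form for the degenerate cases. One small technical point: Proposition~\ref{inctrace} is stated for sums weighted by a smooth cutoff $\psi_N$, not the sharp sum $\sum_{1\le n\le N}$, so a partition-of-unity step (as the paper inserts explicitly) is needed before you can invoke it. Your case split (both $m_i\ge 1$; $m_2=0$; $m_1=0$) differs slightly from the paper's (which handles $k'\ge 1$ uniformly via Remark~\ref{anyfunction}, using only boundedness of the $r$-factor), but both work and yield the same bounds.
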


\proof The continuous
functions 
$$
\mathrm{sym}_{k,k'}(\theta,\theta'):=\mathrm{sym}_k(\theta)
\mathrm{sym}_{k'}(\theta')=\frac{\sin((k+1)\theta)}{\sin\theta}\frac{\sin((k+1)\theta')}{\sin\theta'}
$$
for $(k,k')\in\Nn_{\geq 0}-\{(0,0)\}$ generate a dense subspace of the
space of continuous functions on $[0,\pi]^2$ with mean $0$ with
respect to $\mu_{ST}\times\mu_{ST}$. Thus, by the classical Weyl
criterion, it is enough to prove that
$$
\sum_{1\leq n\leq q^{1/3+\eps}}\mathrm{sym}_k(\theta(\ov
s^2n;r))\mathrm{sym}_{k'}(\theta(\ov r^2n;s))=o(q^{1/3+\eps}).
$$
By partition of unity, it is sufficient to prove that
\begin{equation}\label{STbound}
  \sum_{n}\Psi\Bigl(\frac{n}{N}\Bigr)\mathrm{sym}_k(\theta(\ov s^2n;r))
  \mathrm{sym}_{k'}(\theta(\ov r^2n;s))\ll_{k,k'} q^{1/3+9\eps/10}.
\end{equation}
for any $N\leq q^{1/3+\eps}\log q$ and any smooth function $\Psi$ as
above, where the subscripting in $\ll_{k,k'}$ indicates that the implied constant is allowed to depend on $k,k'$.  For any fixed $(k,k')$, the function
$$
x\mapsto \mathrm{sym}_{k'}(\theta(\ov r^2x;s))
$$
is a trace function modulo $s$, namely the trace function associated
to the lisse sheaf obtained by composing the representation
corresponding to the rank $2$ pullback of the Kloosterman sheaf
$[\times \ov r^2]^*\Kl_2$ with the $k^{\operatorname{th}}$ symmetric power
representation $\mathrm{sym}_{k'}:\GL_2\ra \GL_{k'+1}$. By \cite{GKM},
this sheaf $\mathrm{sym}_{k'}\Kl_2$ is non-trivial if $k'\geq 1$, and
geometrically irreducible of rank $k'+1>1$. Therefore, if $k'\geq 1$,
the van der Corput method \eqref{vdctrace-1} (see also Remark
\ref{anyfunction}) gives
$$
\sum_{n}\Psi_N(n)\mathrm{sym}_k(\theta(\ov
s^2n;r))\mathrm{sym}_{k'}(\theta(\ov r^2n;s))\ll
N^{1/2}q^{1/6}\ll_{k,k'} q^{1/3+9\eps/10}.
$$
Indeed, $\mathrm{sym}_{k'}\Kl_2$, being geometrically irreducible of
rank $>1$, does not contain any quadratic phase. 
\par
If $k'=0$ (so that the function modulo $s$ is the constant function
$1$), then we have $k\geq 1$ and $\mathrm{sym}_{k}\Kl_2$ is
geometrically irreducible of rank $>1$. Therefore it does not contain
any linear phase, and by the P\'olya-Vinogradov method
\eqref{vdctrace-0}, we deduce
$$
\sum_{n}\Psi_N(n)\mathrm{sym}_k(\theta(\ov
s^2n;r))\mathrm{sym}_{k'}(\theta(\ov r^2n;s))\ll
r^{1/2+\eta}(1+N/r)\ll_\eta q^{1/6+\eta+\eps}
$$
for any $\eta>0$.
\qed


\section{The Type III estimate}\label{typeiii-sec}

In this section we establish Theorem \ref{newtype}\eqref{typeIII1}. Let us 
recall the statement:

\begin{theorem}[New Type III estimates]\label{th-newtypeiii}
  Let $\varpi,\delta,\sigma > 0$ be fixed quantities, 
let $I$ be a bounded subset of $\R$, let $i\geq 1$ be fixed, let $a\
(P_I)$ be a primitive congruence class, and let $M$, $N_1$, $N_2$,
$N_3 \gg 1$ be quantities with
\begin{gather}
\label{eq-x}
MN_1N_2N_3 \sim x\\
\label{eq-long}
N_1N_2,\ N_1N_3,\ N_2N_3\ggcurly x^{1/2+\sigma}
\\
\label{eq-short}
x^{2\sigma}\lessapprox N_1,\ N_2,\ N_3\lessapprox x^{1/2-\sigma}.
\end{gather}
Let $\alpha$, $\psi_1$, $\psi_2$, $\psi_3$ be smooth coefficient
sequences located at scales $M$, $N_1$, $N_2$, $N_3$, respectively.
Then we have the estimate
$$
  \sum_{\substack{d \in \DIone{I}{x^\delta}\\ d \lessapprox
      x^{1/2+2\varpi}}} |\Delta(\alpha \star \psi_1\star\psi_2\star \psi_3; a\ (d))|
  \ll x \log^{-A} x
$$
for any fixed $A>0$, provided that
\begin{equation}\label{eq-iii-cond}
  \varpi<\frac{1}{12},\quad\quad \sigma>\frac{1}{18}+\frac{28}{9}
  \varpi + \frac{2}{9}\delta.
\end{equation}
\end{theorem}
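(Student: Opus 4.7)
The plan is to follow the streamlined Friedlander--Iwaniec / Fouvry--Kowalski--Michel approach alluded to in the introduction, in which the Type~III estimate is reduced to a correlation bound on hyper-Kloosterman sums of composite moduli---exactly the content of Corollary~\ref{corr-2}. After the usual preliminary reductions (Bombieri--Vinogradov to restrict to $d \gtrapprox x^{1/2}$, dyadic decomposition to localize $d \sim D$ with $D \lessapprox x^{1/2+2\varpi}$, removal of moduli with many small prime factors as in Lemma~\ref{roar}, and replacement of the absolute value by bounded coefficients $c_d$), the task is to bound
$$
\sum_{d \sim D,\ d\text{ densely divisible}} c_d \sum_m \alpha(m)\Bigl( \sum_{m n_1 n_2 n_3 = a\,(d)} \psi_1(n_1)\psi_2(n_2)\psi_3(n_3) - \emt \Bigr).
$$

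First, for each $m$ coprime with $d$, I would apply Poisson summation in each of the three smooth variables $n_1,n_2,n_3$. The zero-frequency contribution cancels the expected main term, and for non-zero frequencies the resulting complete sum modulo $d$ is, by the Chinese Remainder Theorem, a normalised hyper-Kloosterman sum $\hypk_3(a\bar m h_1 h_2 h_3; d)$ up to elementary factors; the rapid decay of the Fourier transforms of $\psi_i$ truncates $|h_i| \lessapprox H_i := x^\eps D/N_i$. This rewrites the sum, up to lower-order contributions from degenerate frequencies $h_i = 0$ (which involve lower-dimensional Kloosterman sums and are handled separately by the Weil bound), as
$$
\frac{N_1 N_2 N_3}{D}\sum_{d \sim D} c_d \sum_m \alpha(m) \sum_{0 < |h_i| \lessapprox H_i} \Phi(\vec h, d)\, \hypk_3(a \bar m h_1 h_2 h_3; d)
$$
with bounded weights $\Phi$. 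I then apply Cauchy--Schwarz in $m$ to eliminate the unknown sequence $\alpha$, duplicating the modulus into $(d_1,d_2)$ and the frequencies into $(\vec h, \vec h{}')$; completing the inner $m$-sum modulo $[d_1,d_2]$ produces, on the off-diagonal, correlation sums of the form
$$
\sum_{h\,(\bmod [d_1,d_2])} \hypk_3(\alpha_1 h; d_1)\overline{\hypk_3(\alpha_2 h; d_2)}\, e_{[d_1,d_2]}(n h)
$$
for parameters $\alpha_1, \alpha_2, n$ depending on $\vec h, \vec h{}', a$.

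To exploit dense divisibility, before the Cauchy--Schwarz step I would factor each $d = q r$ with $r$ at a scale $R$ to be optimized, fix $r$ and throw only the $q$-variable into the square---the same refinement that strengthened the Type~I estimates in Section~\ref{typei-ii-sec}. With this arrangement, Corollary~\ref{corr-2} applies to the correlation sum above with $s = r$ and $(r_1,r_2) = (q_1,q_2)$, yielding a saving of $\sqrt{[q_1,q_2]\,r}$ with residual gcd factors $(\alpha_2 - \alpha_1, q_1, q_2)^{1/2}$ and $(\alpha_2 q_1^3 - \alpha_1 q_2^3, r)^{1/2}$ that are controlled on average by the divisor-bound / $\tau_k$ arguments already used in Sections~\ref{typeii-sec} and~\ref{second-typei}. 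The diagonal contribution $d_1 = d_2$ is estimated by the trivial bound $|\hypk_3| \leq 3$ together with a count of solutions to the implied congruence.

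The main obstacle, and the source of the precise numerical conditions in \eqref{eq-iii-cond}, is the optimisation of the factorisation scale $R$. The assumption $\varpi < 1/12$ ensures that the frequency boxes $H_i$ are non-empty, so that Poisson completion represents a genuine gain over the trivial bound; and the inequality $\sigma > \frac{1}{18} + \frac{28}{9}\varpi + \frac{2}{9}\delta$ is precisely what is forced by balancing the off-diagonal bound from Corollary~\ref{corr-2} against the target $x^{1-\eps}$, using the relations $MN_1N_2N_3 \sim x$, $N_iN_j \gtrapprox x^{1/2+\sigma}$ and $D \lessapprox x^{1/2+2\varpi}$. The deep input (Deligne's Riemann Hypothesis, via the sheaf formalism of Section~\ref{deligne-sec}) is already packaged in Corollary~\ref{corr-2}; the remaining difficulty is the careful book-keeping of the error terms and gcd losses arising from the degenerate frequency configurations and from the dense-divisibility factorisations, and the verification that, with the optimal choice of $R$, all of them fit within the prescribed bound.
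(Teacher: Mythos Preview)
Your overall architecture (Poisson in $n_1,n_2,n_3$ to reach hyper-Kloosterman sums, then Cauchy--Schwarz, then Corollary~\ref{corr-2}) is the right one, but the Cauchy--Schwarz is applied to the wrong variable, and this is a genuine gap rather than a cosmetic difference.

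The paper does \emph{not} Cauchy--Schwarz in $m$. After completion it collapses the three frequencies into a single variable $\ell = h_1h_2h_3$ carrying a $\tau_3$-weight (length $H = D^3/N$), factors $d = rs$ by dense divisibility, and then applies Cauchy--Schwarz to the sum over $(s,\ell)$, keeping the short variables $r$ and $m$ \emph{inside} the square. Expanding produces pairs $(r_1,m_1),(r_2,m_2)$, and the remaining sum over $\ell$ is precisely an incomplete correlation sum
\[
\sum_{\ell}\psi_{H}(\ell)\,\hypk_3\!\Bigl(\tfrac{a\ell b}{m_1};r_1s\Bigr)\overline{\hypk_3\!\Bigl(\tfrac{a\ell b}{m_2};r_2s\Bigr)},
\]
exactly the shape of Corollary~\ref{corr-2} (linear argument in $\ell$). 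The diagonal here is $m_1r_1^3=m_2r_2^3$, and the short ranges of $r_i,m_i$ keep both diagonal and off-diagonal manageable; the optimization of $S$ then yields the condition $\sigma>\tfrac{1}{18}+\tfrac{28}{9}\varpi+\tfrac{2}{9}\delta$, while $\varpi<\tfrac{1}{12}$ is needed only to ensure the chosen $S$ satisfies $S\geq 1$.

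Your version instead Cauchy--Schwarzes over $m$ (length $M\lessapprox x^{1/4-3\sigma/2}$) and throws the \emph{long} variables $q,\vec h$ inside. Two things go wrong. First, the resulting inner sum is $\sum_m\psi_M(m)\hypk_3(\alpha_1\bar m;q_1r)\overline{\hypk_3(\alpha_2\bar m;q_2r)}$; after completion the nonzero-frequency terms carry a phase $e(nm)$ together with $\bar m$ inside the $\hypk_3$, which is \emph{not} the form of Corollary~\ref{corr-2} (that lemma has a linear argument $h$ in both the $\hypk_3$'s and the additive character). A separate Deligne-type estimate would be required. Second, even granting square-root cancellation, duplicating $(q,\vec h)$ creates $\asymp (D/R)^2H^2$ off-diagonal pairs with modulus $\asymp D^2/R$; a back-of-envelope computation shows the off-diagonal contribution to $|\Sigma|$ is $\asymp \sqrt{M}\,D^{5/2}R^{-1/4}$, which can be made $\ll x^{1-\eps}$ only under a condition of the strength $\sigma>\tfrac13+6\varpi$, far worse than \eqref{eq-iii-cond}. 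The point is that $M$ is too short to profit from Cauchy--Schwarz, whereas $H$ is long; one should smooth away the $\tau_3$-weight on $\ell$, not the weight $\alpha$ on $m$.

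Two smaller corrections: the terms with $h_1h_2h_3=0$ are not ``handled by the Weil bound'' but are shown to be \emph{independent of $a$} and thus absorbed into the main term $X$; and $\varpi<\tfrac{1}{12}$ has nothing to do with the $H_i$ being nonempty (indeed $H_i\gtrapprox x^{\sigma}$ automatically) --- it arises in the admissibility of the splitting parameter $S$.
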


Our proof of this theorem is inspired in part by the recent work of Fouvry, Kowalski
and Michel~\cite{FKM3}, in which the value of the exponent of
distribution of the ternary divisor function $\tau_3(n)$ in arithmetic
progressions to large (prime) moduli is improved upon the earlier
results of Friedlander-Iwaniec~\cite{fik-3} and
Heath-Brown~\cite{hb-d3}. Our presentation is also more streamlined. 
The present argument moreover exploits the existence of an
averaging over divisible moduli to derive further improvements to the
exponent.

\subsection{Sketch of proofs}

Before we give the rigorous argument, let us first sketch the solution
of the model problem (in the spirit of Section \ref{typei-ii-sketch}),
of obtaining a non-trivial estimate for
\begin{equation}\label{model}
 \sum_{q \sim Q} |\Delta( \psi_1 \star \psi_2 \star \psi_3,a\ (q))|
\end{equation}
for $Q$ slightly larger than $x^{1/2}$ in logarithmic scale (i.e. out
of reach of the Bombieri-Vinogradov theorem). Here
$\psi_1,\psi_2,\psi_3$ are smooth coefficient sequences at scales
$N_1$, $N_2$, $N_3$ respectively with $N_1N_2N_3 \sim x$ and
$N_1,N_2,N_3 \lessapprox \sqrt{x}$, and $q$ is implicitly restricted
to suitably smooth or densely divisible moduli (we do not make this
precise to simplify the exposition).  The trivial bound for this sum
is $\ll \log^{O(1)} x$, and we wish to improve it at least by a factor
$\log^{-A} x$ for arbitrary fixed $A>0$.
\par
This problem is equivalent to that of estimating
$$ 
\sum_{q \sim Q} c_q \Delta( \psi_1 \star \psi_2 \star \psi_3,a\ (q))
$$
when $c_q$ is an arbitrary bounded sequence.  As in Section
\ref{typei-ii-sketch}, we write $\emt$ for unspecified main terms, and
we wish to control the expression
$$ 
\sum_{q \sim Q} c_q \sum_{n = a\ (q)} \psi_1 \star \psi_2 \star
\psi_3(n)-\emt
$$
to accuracy better than $x$.  After expanding the convolution and
completing the sums, this 
sum can be transformed to a sum roughly of the form
$$ 
\frac{1}{H} \trpsum_{1\leq |h_i|\ll H_i} \sum_{q \sim Q} c_q
\trpsum_{\substack{n_1,n_2,n_3 \in\Z/q\Z\\ n_1 n_2 n_3 = a\ (q)}} e_q(
h_1 n_1 + h_2 n_2 + h_3 n_3 )
$$
where $H_i:= Q/N_i$ and $H := H_1H_2 H_3 \sim Q^3/x$, the main term
having cancelled out with the zero frequencies. As we are taking $Q$
close to $x^{1/2}$, $H$ is thus close to $x^{1/2}$ as well.  Ignoring the
degenerate cases when $h_1,h_2,h_3$ share a common factor with $q$, we
see from \eqref{kl-def} that
$$ 
\trpsum_{\substack{n_1,n_2,n_3 \in\Z/q\Z\\ n_1 n_2 n_3 = a\ (q)}} e_q(
h_1 n_1 + h_2 n_2 + h_3 n_3 ) = q \hypk_3( a h_1 h_2 h_3; q ),
$$ 
so we are now dealing essentially with the sum of hyper-Kloosterman
sums
$$
\frac{Q}{H} \trpsum_{1\leq |h_i|\ll H_i} \sum_{q \sim Q} c_q \hypk_3(
ah_1h_2h_3; q)= \frac{Q}{H} \sum_{1\leq |h|\ll H} \tilde \tau_3(h)
\sum_{q \sim Q} c_q \hypk_3(ah;q)
$$ 
where 
$$
\tilde \tau_3(h) := \trpsum_{\substack{1\leq |h_i|\ll H_i\\
    h_1h_2h_3=h}}1
$$ 
is a variant of the divisor function $\tau_3$.  

A direct application of the deep Deligne bound
\begin{equation}\label{del}
  |\hypk_3(ah;q)|\lessapprox 1
\end{equation}
for hyper-Kloosterman sums (see Remark \ref{romeo}) gives the trivial
bound $\lessapprox Q^2$, which just fails to give the desired result,
so the issue is to find some extra cancellation in the phases of the
hyper-Kloosterman sums. 
\par
One can apply immediately the Cauchy-Schwarz inequality to eliminate
the weight $\tilde\tau_3(h)$, but it turns out to be more efficient to
first use the assumption that $q$ is restricted to densely divisible
moduli and to factor $q=rs$ where $r\sim R$, $s\sim S$, in which $R$ and
$S$ are well-chosen in order to balance the diagonal and off-diagonal
components resulting from the Cauchy-Schwarz inequality (it turns out
that the optimal choices here will be $R,S \approx x^{1/4}$).  
\par
Applying this factorization, and arguing for each $s$ separately, we
are led to expressions of the form
$$ 
\frac{Q}{H}
\sum_{1\leq |h|\ll H} \tilde \tau_3(h) \sum_{r \sim R} c_{rs}
\hypk_3(ah;rs),
$$
where we must improve on the bound $\lessapprox QR$ coming from \eqref{del}
for any given $s \sim S$.  If we then apply the Cauchy-Schwarz
inequality to the sum over $h$, we get
\begin{align*}
  \frac{Q}{H} \sum_{1\leq |h|\ll H} \tilde \tau_3(h) \sum_{r \sim R}
  c_{rs} \hypk_3(ah;rs) &\lessapprox \frac{Q}{H^{1/2}} \Bigl(\sum_{1\leq |h|\ll
    H} \Bigl|\sum_{r \sim R} c_{rs}
  \hypk_3(ah;rs)\Bigr|^2\Bigr)^{1/2}\\
  &\lessapprox \frac{Q}{H^{1/2}}\Bigl(\sumsum_{r_1,r_2 \sim R} \sum_{1\leq
    |h|\ll H} \hypk_3(ah;r_1s)
  \overline{\hypk_3(ah;r_2s)}\Bigr)^{1/2}.
\end{align*}
The inner sum over $h$ is now essentially of the type considered by
Corollary \ref{corr-2}, and this result gives an adequate bound.
Indeed, the contribution of the diagonal terms $r_1=r_2$ is
$\lessapprox R H$ (using \eqref{del}) and the contribution of each
non-diagonal sum (assuming we are in the model case where $r_1$, $r_2$ are coprime, and the other greatest common divisors appearing in Corollary \ref{corr-2} are negligible) is
$$
\sum_{1\leq |h|\ll H} \hypk_3(ah;r_1s) \overline{\hypk_3(ah;r_1s)}
\lessapprox (r_1r_2s)^{1/2}\lessapprox RS^{1/2}
$$ 
by Corollary \ref{corr-2}, leading to a total estimate of size
$$
\lessapprox \frac{Q}{H^{1/2}}\Bigl( R^{1/2}H^{1/2}+R^{3/2} S^{1/4}\Bigr).
$$
If $R=S\approx x^{1/4}$, this is very comfortably better than what we
want, and this strongly suggests that we can take $Q$ quite a bit
larger than $x^{1/2}$.


\begin{remark}\label{higher-type} It is instructive to run the same
  analysis for the fourth order sum
$$ 
\sum_{q \sim Q} |\Delta( \psi_1 \star \psi_2 \star \psi_3 \star
\psi_4,a\ (q))|
$$ 
where $\psi_1,\psi_2,\psi_3,\psi_4$ are smooth at scales
$N_1,N_2,N_3,N_4$ with $N_1 \ldots N_4 \sim x$ and $N_1$ ,\ldots, $N_4
\lessapprox x^{1/2} \approx Q$. This is a model for the ``Type IV''
sums mentioned in Remark \ref{type-iv}, and is clearly related to the
exponent of distribution for the divisor function $\tau_4$. 
\par
The quantity $H$ is now of the form $H \approx Q^4/x \approx x$, and
one now has to estimate the sum
$$ 
\sum_{1\leq |h|\ll H} \tilde \tau_4(h) \sum_{q \sim Q} c_q
\hypk_4(ah;q)
$$ 
to accuracy better than $H x / Q^{3/2} \approx x^{5/4}$.  If we apply
the Cauchy-Schwarz inequality in the same manner after exploiting a
factorization $q=rs$ with $r\sim R$, $s\sim S$ and $RS \sim Q \approx
x^{1/2}$, we end up having to control
$$ 
\sumsum_{r_1,r_2 \sim R} \left|\sum_{1\leq |h|\ll H} \hypk_4(ah;r_1s)
  \overline{\hypk_4(ah;r_2s)}\right|
$$ 
with accuracy better than $(x^{5/4}/S)^2 / H \approx x^{3/2}/S^2$.
The diagonal contribution $r_1=r_2$ is $\lessapprox RH \approx
x^{3/2}/S$, and the off-diagonal contribution is $\approx R^2
(R^2S)^{1/2} \approx x^{3/2} / S^{5/2}$. However even with the optimal
splitting $S \approx 1$, $R \approx Q$, one cannot make both of these
terms much smaller than the target accuracy of $x^{3/2}/S^2$.  Thus
the above argument does not improve upon the Bombieri-Vinogradov
inequality for Type IV sums.  (It is known, due to Linnik, that the
exponent of distribution for $\tau_4$ is at least $1/2$, in the
stronger sense that the asymptotic formula holds for all moduli $\leq
x^{1/2-\eps}$ for $\eps>0$.)  The situation is even worse, as the
reader will check, for the Type V sums, in that one now cannot even
recover Bombieri-Vinogradov with this method.
\end{remark}

We will give the rigorous proof of Theorem
\ref{newtype}\eqref{typeIII1} in the next two sections, by first
performing the reduction to exponential sums, and then concluding the
proof.

\subsection{Reduction to exponential sums}


By Theorem \ref{bvt} (the general version of the Bombieri-Vinogra\-dov
theorem) we have
$$
\sum_{q\leq x^{1/2}\log^{-B(A)} x}|\Delta(\alpha\star \psi_1\star
\psi_2\star \psi_3)|\ll x\log^{-A} x
$$
for some $B(A)\geq 0$. We may therefore restrict our attention to moduli $q$ in the range
$x^{1/2}/\log^Bx\leq q \lessapprox x^{1/2+2\varpi}$.
\par
We also write $N=N_1N_2N_3$. From~(\ref{eq-long})
and~(\ref{eq-short}), we deduce
\begin{equation}\label{eq-n}
  x^{3/4+3\sigma/2}\lessapprox
  (N_1N_2)^{1/2}(N_1N_3)^{1/2}(N_2N_3)^{1/2}=N\lessapprox x^{3/2-3\sigma}.
\end{equation}

It is convenient to restrict $q$ to a finer-than-dyadic interval $\ftd(Q)$ in order to separate variables later using Taylor
expansions.  More precisely, 
for a small fixed $\eps>0$ and some fixed
$c\geq 1$, we denote by $\ftd=\ftd(Q)$ a finer-than-dyadic
interval of the type
$$
\ftd(Q) := \{ q: Q(1-cx^{-\eps})\leq q\leq Q(1+cx^{- \eps})\},
$$
(assuming, as always, that $x$ is large, so that $c x^{-\eps}$ is less than (say) $1/2$)
and abbreviate
$$ 
\sum_q A_q= \sum_{\substack{q \in \DIone{I}{x^\delta}\\ q
    \in\ftd(Q)}}A_q
$$
for any $A_q$.
\par
Theorem~\ref{th-newtypeiii} will clearly follow if we prove that, for
$\eps>0$ sufficiently small, we have
\begin{equation}\label{eq-target-iii}
  \sum_{q}
  |\Delta(\alpha \star \psi_1 \star \psi_2 \star \psi_3; a\ (q))|
  \lessapprox x^{-2\eps} M N
\end{equation}
for all $Q$ such that
\begin{equation}\label{qp}
 x^{1/2} \lessapprox Q \lessapprox x^{1/2+2\varpi}.
\end{equation}

We fix $Q$ as above and denote by $\Sigma(Q;a)$ the left-hand side
of~(\ref{eq-target-iii}).  We have
$$
\Sigma(Q;a)=\sum_q c_q \Delta(\alpha \star \psi_1 \star \psi_2 \star
\psi_3; a\ (q))
$$ 
for some sequence $c_q$ with $|c_q|=1$. We will prove that, for any
$a\ (q)$, we have
\begin{equation}\label{ton}
  \sum_q c_q \sum_{n = a\ (q)} (\alpha \star \psi_1 \star \psi_2 \star
  \psi_3)(n) 
  = X + O( x^{-2\eps+o(1)} M N  )
\end{equation}
for some $X$ that is independent of $a$ (but that can depend on all other
quantities, such as $c_q$, $\alpha$, or
$\psi_1,\psi_2,\psi_3$). Then~(\ref{eq-target-iii}) follows by
averaging over all $a$ coprime to $P_I$ (as in the reduction
to~(\ref{straw-2}) in Section~\ref{typei-ii-sec}).

The left-hand side of \eqref{ton}, say $\Sigma_1(Q;a)$, is equal to
\begin{equation}\label{tin}
  \Sigma_1(Q;a)=
  \sum_q c_q \sum_{(m,q)=1} \alpha(m) \trpsum_{n_1,n_2,n_3} 
  \psi_1(n_1) \psi_2(n_2) \psi_3(n_3) \onef_{mn_1n_2n_3 = a\ (q)}.
\end{equation}
The next step is a variant of the completion of sums technique from
Lemma \ref{com}.  In that lemma, the Fourier coefficients of the
cutoff functions were estimated individually using the fast decay of
the Fourier transforms. In our current context, we want to keep track
to some extent of their dependency on the variable $q$. Since we have
restricted $q$ to a rather short interval, we can separate the
variables fairly easily using a Taylor expansion.
\par
Note first that for $i=1,2,3$, one has
$$ 
N_i \lessapprox x^{1/2-\sigma} \lessapprox x^{-\sigma} Q,
$$
so in particular $\psi_i$ is supported in $(-q/2,q/2]$ if $x$ is large
enough.  By discrete Fourier inversion, we have
\begin{equation}\label{dfi}
\psi_i(x)=\frac{1}{q}\sum_{-q/2<h\leq
  q/2}\Psi_i\Bigl(\frac{h}{q}\Bigr) e\Big(\frac{hx}{q}\Bigr)
\end{equation}
where
$$
\Psi_i(y)=\sum_n{\psi_i(n)e(-ny)}
$$
is the analogue of the function $\Psi$ in the proof of
Lemma~\ref{com}. As in that lemma, using the smoothness of $\psi_i$,
Poisson summation, and integration by parts, we derive the bound
$$ 
|\Psi_i(y)| \lessapprox N_i (1 + N_i |y|)^{-C}
$$
for any fixed $C \geq 0$ and any $-1/2 \leq y \leq 1/2$
(see~(\ref{eq-decay-psi})). More generally we obtain
$$ 
|\Psi^{(j)}_i(y)| \lessapprox N_i^{1+j} (1 + N_i |y|)^{-C}
$$
for any fixed $C\geq 0$, any $j \geq 0$ and any $-1/2 \leq y \leq
1/2$.  
\par
Denoting $H_i:=Q/N_i\ggcurly x^{\sigma}$, we thus have
$$
\Psi_i^{(j)}\Bigl(\frac{h}{q}\Bigr) \ll x^{-100}
$$
(say) for $x^{\eps/2} H_i < |h| \leq q/2$ and all fixed $j$.  On the other hand, for
$|h| \leq x^{\eps/2} H_i$ and $q\in \ftd$, a Taylor expansion using the definition of $\ftd$ and $H_i$ gives
$$
\frac{1}{q} \Psi_i\Bigl(\frac{h}{q}\Bigr)=
\frac{1}{q}\sum_{j=0}^J\frac{1}{j!}\Psi_i^{(j)}(h/Q)\eta^j
+O(N_i^{2+J}|\eta|^{J+1})
$$
for any fixed $J$ where $\alpha$ is the $q$-dependent quantity
$$
\eta:=\frac{h}{q}-\frac{h}{Q} =\frac{h(Q-q)}{qQ}\ll
x^{-\eps}\frac{h}{Q} \ll x^{-\eps/2}\frac{1}{N_i}.
$$
Thus we obtain
$$
\frac{1}{q} \Psi_i\Bigl(\frac{h}{q}\Bigr)=
\frac{1}{q}\sum_{j=0}^J\frac{1}{j!}\Psi_i^{(j)}\Bigl(\frac{h}{Q}\Bigr)
\Bigl(\frac{h}{Q}\Bigr)^j \Bigl(\frac{q-Q}{q}\Bigr)^j
+O(x^{-(J+1)\eps/2}N_i).
$$
Taking $J$ large enough, depending on $\eps>0$ but still fixed, this gives an
expansion
\begin{equation}\label{expand}
\frac{1}{q} \Psi_i\Bigl(\frac{h}{q}\Bigr) = 1_{|h| < x^{\eps/2} H_i} \frac{1}{H_i} \sum_{j=0}^J
c_i(j,h) \frac{Q}{q}\left(\frac{q-Q}{q}\right)^j + O( x^{-100} )
\end{equation}
with coefficients that satisfy
$$
c_i(j,h)=\frac{1}{j!}\Psi_i^{(j)}\Bigl(\frac{h}{Q}\Bigr)
\Bigl(\frac{h}{Q}\Bigr)^j\frac{H_i}{Q}\ll 1,
$$
as well as
$$
\Bigl(\frac{Q}{q}\Bigr) \Bigl(\frac{q-Q}{q}\Bigr)^j\ll 1.
$$
Let
\begin{equation}\label{eq-def-h}
H:=H_1H_2H_3=Q^3/N.
\end{equation}
Inserting \eqref{expand} for $i=1$, $2$, $3$ into \eqref{dfi} and the definition \eqref{tin} of
$\Sigma_1(Q;a)$, we see that $\Sigma_1(Q;a)$ can be expressed (up to errors of $O(x^{-100})$) as a sum
of a bounded number (depending on $\eps$) of expressions, each of the form
$$
\Sigma_2(Q;a)=\frac{1}{H} \sum_q \eta_q \sum_{(m,q)=1} \alpha(m)
\sum_{\uple{h}} c(\uple{h}) \sum_{\uple{n}\in (\Z/q\Z)^3} e_{q}( h_1
n_1 + h_2 n_2 + h_3 n_3 ) \onef_{mn_1n_2n_3 = a\ (q)},
$$
where $\eta_q$ is a bounded sequence supported on $\ftd\cap
\DIone{I}{x^{\delta}}$, $\uple{h}:=(h_1,h_2,h_3)$ and $c(\uple{h})$ are
bounded coefficients supported on $|h_i|\leq x^{\eps/2}H_i$, and
$\uple{n}$ abbreviates $(n_1,n_2,n_3)$.
 Our task is now to show that
$$
\Sigma_2(Q;a)  = X_2 + O( x^{-2\eps+o(1)} M N  )
$$
for some quantity $X_2$ that can depend on quantities such as $\eta_q$, $\alpha$, $c$, $H$, but which is independent of $a$.
\par
We use $F(\uple{h}, a; q)$ to denote the hyper-Kloosterman type sum
\begin{equation}\label{fuple-def}
F(\uple{h}, a; q) := \frac{1}{q} \sum_{\uple{n}\in
  ((\Z/q\Z)^\times)^3} e_{q}( h_1 n_1 + h_2 n_2 + h_3 n_3 )
\onef_{n_1n_2n_3 = a\ (q)}
\end{equation}
for $\uple{h}=(h_1,h_2,h_3) \in (\Z/q\Z)^3$ and $a \in
(\Z/q\Z)^\times$ (note that the constraint $n_1n_2n_3=a\ (q)$ forces $n_1,n_2,n_3$ coprime to $q$), so that 
$$
\Sigma_2(Q;a)=\frac{Q}{H} \sum_q \eta'_q \sum_{(m,q)=1} \alpha(m)
\sum_{\uple{h}} c(\uple{h}) F(\uple{h},a\overline{m};q)
$$
where $\eta'_q := \frac{q}{Q} \eta_q$ is a slight variant of $\eta_q$.


We next observe that $F(\uple{h}, a \overline{m}; q)$ is independent
of $a$ if $h_1h_2h_3=0$ (as can be seen by a change of variable). Thus
the contribution $X_2$ to the sum from tuples $\uple{h}$ with
$h_1h_2h_3=0$ is independent of $a$. The combination of these terms
$X_2$ in the decomposition of $\Sigma_1(Q;a)$ in terms of instances of
$\Sigma_2(Q;a)$ is the quantity $X$ in~(\ref{ton}). We denote by
$\Sigma'_2(Q;a)$ the remaining contribution.  Our task is now to show that
\begin{equation}\label{sqa}
\Sigma'_2(Q,a) \lessapprox x^{-2\eps} MN.
\end{equation}

We must handle possible common factors of $q$ and $h_1h_2h_3$ for
$h_1h_2h_3\neq 0$ (the reader may skip the necessary technical details
and read on while assuming that $q$ is always coprime to each of the
$h_i$, so that all the $b$-factors appearing below become equal to $1$).  

For $i=1,2,3$, we write
$$
h_i=b_il_i
$$ 
where $(l_i,q)=1$ and $b_i\mid q^{\infty}$ (i.e., $b_i$ is the product
of all the primes in $h_i$, with multiplicity, that also divide
$q$). We also write
\begin{equation}\label{eq-b}
  b :=\prod_{p\mid b_1b_2b_3}p= (h_1h_2h_3,q),
\end{equation}
so that we have a factorization $q=bd$, where $d\in
\DIone{I}{bx^{\delta}}$ by Lemma~\ref{fq}(i), since $q$ is
$x^\delta$-densely divisible. 

By Lemma \ref{crt}, we have 
$$ 
F( \uple{h}, a \overline{m}; q) = F( \bar{d}\uple{h}, a \overline{m};
b) F( \bar{b}\uple{h}, a \overline{m}; d)
$$
where $\bar{b}\uple{h} := (\bar{b}h_1, \bar{b}h_2, \bar{b}h_3)$.
By an easy change of variable, the second factor satisfies
$$
F( \bar{b}\uple{h}, a \overline{m}; d) =\hypk_3 ( a h_1 h_2 h_3
\overline{mb^3}; d )= \hypk_3\Bigl( \frac{ab_1b_2b_3}{b^3}
\frac{l_1l_2l_3}{m};d\Bigr).
$$
We observe that the residue class $ab_1b_2b_3\overline{mb^3}\ (d)$ is
invertible.

Denoting $\uple{b}:=(b_1,b_2,b_3)$, $\uple{l}:=(l_1,l_2,l_3)$, we can
thus write
$$
\Sigma'_2(Q;a)=\frac{Q}{H}\sum_{\uple{b}}\sum_{\uple{l}}
c(\uple{b},\uple{l}) \sum_{\substack{d\in
    \DIone{I}{bx^{\delta}}\\(d,bl_1l_2l_3)=1}}\eta'_{bd}
\sum_{(m,bd)=1} \alpha(m) F(\bar{d}\uple{h},a\overline{m};b)
\hypk_3\Bigl(\frac{ab_1b_2b_3}{b^3} \frac{l_1l_2l_3}{m};d\Bigr)
$$
where $b$ is defined as in~(\ref{eq-b}), 
$c(\uple{b},\uple{l}):=c(b_1l_1,b_2l_2,b_3l_3)$, and the sum over $l_i$
is now over the range
\begin{equation}\label{eq-l-range}
  0 < |l_i| \leq \frac{x^{\eps/2} H_i}{b_i}.
\end{equation}

To control the remaining factor of $F$, we have the following
estimate, where we denote by $n^{\flat}$ the largest squarefree
divisor of an integer $n\geq 1$ (the \emph{squarefree radical of
  $n$}). Note that $b=(b_1b_2b_3)^{\flat}$.

\begin{lemma}  Let the notation and hypotheses be as above.

\emph{(1)} We have
$$ 
|F( \overline{d}\uple{h}, a \overline{m}; b)| \leq \frac{b_1^{\flat}
  b_2^{\flat}b_3^{\flat}}{b^2}.
$$
\par
\emph{(2)} The sum $F( \overline{d}\uple{h},a\overline{m};b)$ is
independent of $d$ and $m$.
\end{lemma}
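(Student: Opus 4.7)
The plan is to reduce both claims to a prime-by-prime analysis via the Chinese Remainder Theorem, and then evaluate each local factor explicitly. Since $q$ is squarefree and $b\mid q$, the complementary divisor $d=q/b$ is coprime to $b$, so Lemma~\ref{crt} (applied to the definition \eqref{fuple-def}) decomposes
$$
F(\bar{d}\uple{h},a\bar{m};b)=\prod_{p\mid b}F_p,
\qquad
F_p:=\frac{1}{p}\sum_{\substack{n_1,n_2,n_3\in(\Z/p\Z)^\times\\ n_1n_2n_3\equiv c_p\,(p)}} e_p(h_1'n_1+h_2'n_2+h_3'n_3),
$$
where, via CRT, $h_i'\equiv\bar{d}h_i\cdot(\text{CRT weight})\,(p)$ and $c_p\equiv a\bar{m}\cdot(\text{CRT weight})\,(p)$ are invertible or zero modulo $p$ according to whether $p\nmid h_i$ or $p\mid h_i$. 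Note that $p\mid b$ if and only if $p\mid h_i$ for at least one $i$, so we may partition the primes dividing $b$ by the statistic $k_p:=\#\{i:p\mid h_i\}\in\{1,2,3\}$, which equals the exponent of $p$ in $b_1^\flat b_2^\flat b_3^\flat$.

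The key step is to analyse each $F_p$ by eliminating one of the three variables using the constraint $n_1n_2n_3\equiv c_p\,(p)$. If $k_p=1$, say $p\mid h_1$, then after solving for $n_1$ the terms $e_p(h_2'n_2)$ and $e_p(h_3'n_3)$ decouple, and each reduces to a Ramanujan-type sum $\sum_{n\in(\Z/p\Z)^\times}e_p(h_i'n)=-1$, giving $F_p=1/p$. If $k_p=2$, say $p\mid h_1,h_2$, then after solving for $n_3$ only the $e_p(h_3'a\bar{m}\,\overline{dn_1n_2})$ term survives; the change of variable $k=n_1n_2$ collapses the inner sum to $\sum_{k\in(\Z/p\Z)^\times}e_p(\text{nonzero}\cdot\bar{k})=-1$, yielding $F_p=-(p-1)/p$, whose value manifestly does not depend on the nonzero constant involving $d$, $a$, $m$. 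If $k_p=3$ the exponential is trivial modulo $p$, and $F_p=(p-1)^2/p$ simply by counting solutions of $n_1n_2n_3\equiv c_p\,(p)$ in units, which is independent of $c_p\in(\Z/p\Z)^\times$.

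In every case the local factor $F_p$ depends neither on $d$ nor on $m$, which immediately gives (2) after taking the product over $p\mid b$. For (1), the three cases above yield the bounds $|F_p|=1/p$, $|F_p|=(p-1)/p<1$, and $|F_p|=(p-1)^2/p<p$ respectively, each of which is at most $p^{k_p-2}$. Multiplying over $p\mid b$ gives
$$
|F(\bar{d}\uple{h},a\bar{m};b)|\le\prod_{p\mid b}p^{k_p-2}=\frac{b_1^\flat b_2^\flat b_3^\flat}{b^2},
$$
since $\prod_{p\mid b}p^{k_p}=b_1^\flat b_2^\flat b_3^\flat$ and $\prod_{p\mid b}p^2=b^2$. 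The only delicate point is the bookkeeping of CRT weights and the verification that, in the $k_p=2$ case, the surviving inner Ramanujan sum is genuinely constant (hence independent of $d,m$) once the change of variable $k=n_1n_2$ is made; both are routine.
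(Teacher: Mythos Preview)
Your argument is correct and follows essentially the same approach as the paper: reduce via CRT to primes $p\mid b$, eliminate one variable using the constraint $n_1n_2n_3\equiv c_p$, and observe that the resulting factor depends only on the set $\{i:p\mid h_i\}$ (whence (2)) and is bounded by $p^{k_p-2}$ (whence (1)). The paper's proof is slightly terser, simply assuming $c_3\equiv 0$ without loss of generality and leaving the remaining Ramanujan-sum evaluations implicit, whereas you work out the three cases $k_p=1,2,3$ explicitly; but the underlying computation is identical.
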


\begin{proof}
  By further applications of Lemma \ref{crt} it suffices for (1) to
  show that
$$ 
|F( \uple{c}, a; p)| \leq \frac{(c_1,p) (c_2,p) (c_3,p)}{p^2}
$$
whenever $p$ is prime, $\uple{c}=(c_1,c_2,c_3) \in (\Z/p\Z)^3$, with
$c_1c_2c_3 = 0\ (p)$, and $a \in (\Z/p\Z)^\times$. Without loss of
generality we may assume that $c_3 = 0\ (p)$, and then 
$$ 
F(\uple{c},a;p)=\frac{1}{p} \sumsum_{n_1,n_2 \in (\Z/p\Z)^\times} e_p(c_1
n_1 + c_2 n_2 ),
$$ 
from which the result follows by direct computation of Ramanujan
sums (see e.g. \cite[(3.5)]{ik}). Similarly, we see that the value of $F(\uple{c},a;p)$ only depends
on which $c_i$ are divisible by $p$ and which are not, and this gives
(2). 
\end{proof}

This lemma leads to the estimate
\begin{align}
  |\Sigma'_2(Q;a)|&\ll \frac{Q}{H} \sum_{\uple{b}}
  \frac{b_1^{\flat}b_2^{\flat}b_3^{\flat}}{b^2} \sum_{\uple{l}}
  \Bigl|\sum_{\substack{d\in \DIone{I}{bx^\delta}\\(b l_1l_2l_3,d)=1}}
  \eta'_{bd} \sum_{(m,bd)=1} \alpha(m) \hypk_3\Bigl(
  \frac{ab_1b_2b_3l_1l_2l_3}{b^3m};d\Bigr)\Bigr|\nonumber\\
  &\ll \frac{Q}{H} \sum_{\uple{b}}
  \frac{b_1^{\flat}b_2^{\flat}b_3^{\flat}}{b^2}T(\uple{b})
\label{eq-tb}
\end{align}
with
$$
T(\uple{b}):= \sum_{0 < |\ell|\leq x^{3\eps/2}H/b_1b_2b_3} \tau_3(\ell)
\Bigl|\sum_{\substack{d\in b^{-1} \DIone{I}{bx^\delta}\cap \ftd\\(b
    \ell,d)=1}} \eta'_{bd} \sum_{(m,bd)=1} \alpha(m) \hypk_3\Bigl(
\frac{a\ell b_1b_2b_3}{b^3m};d\Bigr)\Bigr|;$$
following \cite{hb-d3} (particularly the arguments on p. 42), we have collected common values of $\ell=l_1l_2l_3$, and also
replaced the bounded coefficients $\eta'_{bd}$, supported on
$\ftd$, with their absolute values.  This is the desired reduction of
Type III estimates to exponential sums.


\subsection{End of the proof}

We now focus on estimating $T(\uple{b})$. First of all, we may assume
that
\begin{equation}\label{bqb}
\frac{Q}{b}\gg 1,\quad\quad x^{3\eps/2}\frac{H}{b_1b_2b_3}\gg 1
\end{equation}
since otherwise $T(\uple{b})=0$. 

Let $y=bx^{\delta}$ and let $S$ be a parameter such that
\begin{equation}\label{q}
  1 \leq S \leq y\frac{Q}{2b}=\frac{x^{\delta}Q}{2}.
\end{equation}
The moduli $d$ in the definition of $T(\uple{b})$ are $y$-densely
divisible and we have $1\leq S\leq dy$ (for $x$ sufficiently large), so that there exists a
factorization $d=rs$ with
$$
y^{-1}S\leq s\leq S,\quad\quad \frac{Q}{bS}\ll r\ll \frac{yQ}{bS},
$$
and $(r,s)=1$ (if $d<S\leq dy$, we take $s=d$ and $r=1$).

Thus we may write
$$
T(\uple{b}) \ll \sum_{\substack{y^{-1} S \leq s \leq S \\ (b\ell,s)=1}} \sum_{0 < |\ell| \leq
  H_{\uple{b}}} \tau_3(\ell)  \Bigl| \sum_{\substack{r \in \Scal_I\\ \frac{Q}{bS}\ll r\ll \frac{yQ}{bS}
	\\ (b
    \ell s,r)=1}} \eta'_{b,r,s} \sum_{ (m,brs)=1} \alpha(m)
\hypk_3\Bigl(\frac{a\ell b_1b_2b_3}{b^3m};rs\Bigr)\Bigr|
$$ 
where $\eta'_{b,r,s}$ is some bounded sequence and
$$ 
H_{\uple{b}} := \frac{x^{3\eps/2} H}{b_1 b_2 b_3}.
$$

We apply the Cauchy-Schwarz inequality to the sum over $s$ and $l$. As
usual, we may insert a smooth coefficient sequence $\psi_{H_{\uple{b}}}$ at scale
$H_{\uple{b}}$, equal to one on $[-H_{\uple{b}},H_{\uple{b}}]$, and
derive
$$
|T(\uple{b})|^2\leq T_1T_2
$$
where
$$
T_1:=\sum_{y^{-1} S \leq s \leq S}\frac{1}{s} \sum_{0 < |\ell| \leq
  H_{\uple{b}}} \tau_3(\ell)^2\lessapprox H_{\uple{b}}
$$
(by Lemma \ref{divisor-bound}) and
$$
T_2:= \sum_{y^{-1} S \leq s \leq S}\sum_{\ell}
s\psi_{H_{\uple{b}}}(\ell)\Bigl|  \sum_{\substack{r \in \Scal_I\\ \frac{Q}{bS}\ll r\ll \frac{yQ}{bS} \\ (b \ell,rs)=(r,s)=1}}
\eta'_{b,r,s} \sum_{ (m,brs)=1} \alpha(m)
\hypk_3\Bigl(\frac{a\ell b_1b_2b_3}{b^3m};rs\Bigr)\Bigr|^2.
$$


We expand the square and find 
$$
|T_2| \leq  \sum_{y^{-1}S\leq s\leq S}s\sumsum_{r_1,r_2} \sumsum_{m_1,m_2}
|\alpha(m_1)| |\alpha(m_2)| |U(r_1,r_2,s,m_1,m_2)|,
$$
where we have omitted the summation conditions
$$ r_i \in \Scal_I; \frac{Q}{bS}\ll r_i \ll \frac{yQ}{bS}; (b \ell, r_i s ) = (r_i,s) = (m_i,br_is) = 1 \hbox{ for } i=1,2$$
on $r_1$, $r_2$ and $m_1$,
$m_2$ for brevity, and where
$$
U(r_1,r_2,s,m_1,m_2):=
\sum_{\ell: (\ell,r_1r_2s)=1}\psi_{H_{\uple{b}}}(\ell)\hypk_3\Bigl(\frac{a\ell b_1b_2b_3}{b^3m_1};r_1s\Bigr)
\overline{\hypk_3\Bigl(\frac{a\ell b_1b_2b_3}{b^3m_2};r_2s\Bigr)}
$$
is exactly the type of sum considered in Corollary~\ref{corr-2}
(recall that  $ab_1b_2b_3$ is coprime to $r_1r_2 s$). 

We first consider the ``diagonal terms'', which here mean the
cases where 
$$
\frac{ab_1b_2b_3}{b^3m_1}r_2^3-\frac{ab_1b_2b_3}{b^3m_2}r_1^3=
\frac{ab_1b_2b_3}{b^3m_1m_2}(m_2r_2^3-m_1r_1^3)=0.
$$ 
Using the Deligne bound $|\hypk_3(x;d)|\lessapprox 1$ when $(d,x)=1$
(Remark~\ref{romeo}), this contribution $T'_2$ is bounded by
\begin{align*}
  T'_2&\lessapprox H_{\uple{b}}\sumsum_{r_1,r_2}\sum_{y^{-1}S\leq
    s\leq
    S}s \sumsum_{\substack{m_1,m_2\\ m_1r_1^3=m_2r_2^3}}|\alpha(m_1)\alpha(m_2)|\\
  &\lessapprox H_{\uple{b}}M\sum_{Q/(bS)\ll r_1\ll
    yQ/(bS)}\Bigl(\frac{Q}{br_1}\Bigr)^2
\end{align*}
since each pair $(r_1,m_1)$ determines $\lessapprox 1$ pairs
$(r_2,m_2)$, and since $s$ is, for each $r_1$, constrained to be $\sim
Q/(br_1)$ by the condition $r_1s\sim Q/b$. Summing, we obtain
\begin{equation}\label{eq-tp2}
T'_2\lessapprox \frac{H_{\uple{b}}MQS}{b}.
\end{equation}


We now turn to the off-diagonal case $m_1 r_1^3 - m_2 r_2^3 \neq 0$.
By Corollary \ref{corr-2}, we have
\begin{multline*}
U(r_1,r_2,s,m_1,m_2)\lessapprox
\Bigl(\frac{H_{\uple{b}}}{[r_1,r_2]s}+1\Bigr)
(s[r_1,r_2])^{1/2}\\
\times (r_1,r_2,m_2-m_1)^{1/2}
(m_1r_1^3-m_2r_2^3,s)^{1/2}
\end{multline*}
in this case. We now sum
these bounds to estimate the non-diagonal contribution $T''_2$ to
$T_2$. This is a straightforward, if a bit lengthy, computation, and
we state the result first:

\begin{lemma}\label{lm-sumup}
We have
$$
T''_2\lessapprox \frac{M^2Q^2}{b^2}\Bigl(
\frac{H_{\uple{b}}b^{1/2}}{Q^{1/2}}\Bigl(\frac{bS}{Q}\Bigr)^{1/2}
+\frac{Q^{1/2}}{b^{1/2}}\Bigl(\frac{x^{\delta}Q}{S}\Bigr)^{1/2} \Bigr).
$$
\end{lemma}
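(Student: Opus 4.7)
The plan is to apply the estimate for $U(r_1,r_2,s,m_1,m_2)$ supplied by Corollary \ref{corr-2} and sum the resulting expression. The bound on $|U|$ decomposes according to the two summands of $(H_{\uple{b}}/[r_1,r_2]s) + 1$, and accordingly I would split $T''_2 = T''_{2,A} + T''_{2,B}$, where $T''_{2,A}$ collects the contributions of $H_{\uple{b}}/[r_1,r_2]s$ and $T''_{2,B}$ those of $1$. Both pieces come multiplied by the outer factor $s$, by a power of $(s[r_1,r_2])^{\pm 1/2}$, and by the gcd factors $(r_1,r_2,m_2-m_1)^{1/2}$ and $(m_1r_1^3 - m_2r_2^3, s)^{1/2}$.

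Because the range $s \in [S/y,S]$ is inflated by a factor $y = bx^\delta$, and the range of $r$ correspondingly contracts, I would first dyadically decompose $s \sim S'$ for $S' \in [S/y, S]$, so that the compatible range for $r_1, r_2$ becomes $r_i \sim R' := Q/(bS')$. For each dyadic piece, three elementary summations are performed. First, fixing $r_1, r_2$ and setting $d = (r_1, r_2)$, the divisor bound yields $\sum_{m_2 \sim M}(d,m_2-m_1)^{1/2} \leq M\sum_{e\mid d}e^{-1/2} \lessapprox M$, and hence $\sum_{m_1, m_2 \sim M}(d, m_2-m_1)^{1/2} \lessapprox M^2$. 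Second, $k := m_1r_1^3 - m_2 r_2^3$ is nonzero (by the off-diagonal restriction) and of polynomial size, so Cauchy-Schwarz combined with Lemma \ref{ram-avg} yield $\sum_{s \sim S'} s^{\nu}(k,s)^{1/2} \lessapprox (S')^{\nu+1}$ for any fixed $\nu \geq 0$. Third, the substitution $d = (r_1,r_2)$, $r_i = dr_i'$ followed by elementary geometric-sum estimates produces $\sum_{r_1, r_2 \sim R'}[r_1,r_2]^{-1/2} \lessapprox R'$ and $\sum_{r_1, r_2 \sim R'}[r_1, r_2]^{1/2} \lessapprox (R')^3$.

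Applying these bounds with $\nu = 1/2$ for term $A$ (arising from $s \cdot s^{-1/2}$) and $\nu = 3/2$ for term $B$ (from $s \cdot s^{1/2}$) yields, for each dyadic scale,
$$
T''_{2,A,\mathrm{dyadic}} \lessapprox H_{\uple{b}}(S')^{3/2}M^2 R' \lessapprox \frac{H_{\uple{b}}M^2 Q (S')^{1/2}}{b},
$$
$$
T''_{2,B,\mathrm{dyadic}} \lessapprox (S')^{5/2}M^2(R')^3 \lessapprox \frac{M^2Q^3}{b^3(S')^{1/2}}.
$$
Summing the dyadic contributions, $T''_{2,A}$ is dominated by $S' \sim S$, giving $\lessapprox H_{\uple{b}}M^2 QS^{1/2}/b$, while $T''_{2,B}$ is dominated by $S' \sim S/y$, giving $\lessapprox M^2Q^3 y^{1/2}/(b^3 S^{1/2}) = M^2 Q^3 x^{\delta/2}/(b^{5/2}S^{1/2})$ upon inserting $y = bx^\delta$. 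Adding the two contributions and factoring out $M^2 Q^2/b^2$ recovers exactly the two summands in the claimed bound.

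The main obstacle is the dyadic bookkeeping: the factor $x^{\delta/2}$ in the second summand originates precisely in the inflated range $y = bx^\delta$ for $s$, which traces back to the dense divisibility hypothesis used to factor $d = rs$. Apart from this, all summations reduce either to gcd sums controlled by Lemma \ref{ram-avg} and the divisor bound, or to elementary estimates on the double sum of $[r_1,r_2]^{\pm 1/2}$, so no substantive new ingredient beyond Corollary \ref{corr-2} is required.
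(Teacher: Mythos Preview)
Your argument is correct and follows essentially the same route as the paper's: bound $U$ via Corollary~\ref{corr-2}, then control the gcd sums over $s$, over $(m_1,m_2)$, and over $(r_1,r_2)$ via the substitution $(r_1,r_2)=d$, $r_i=dt_i$. The only organizational difference is that you dyadically decompose $s\sim S'$, whereas the paper uses directly that $r_1s\sim Q/b\sim r_2s$ forces $r_1\sim r_2$ and $s\sim Q/(br_1)$; the two amount to the same computation.

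One small gap: your claimed bound $\sum_{m_2\sim M}(d,m_2-m_1)^{1/2}\lessapprox M$ omits the diagonal $m_2=m_1$, which contributes $d^{1/2}=(r_1,r_2)^{1/2}$; the correct estimate is $\sum_{m_1,m_2\sim M}(r_1,r_2,m_1-m_2)^{1/2}\lessapprox M^2+M(r_1,r_2)^{1/2}$, as in the paper. The extra $M(r_1,r_2)^{1/2}$ term is harmless (after summing over $r_1,r_2$ it is smaller than the main term by a factor of $M$), so the conclusion stands, but the intermediate step should be stated correctly.
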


We first finish the proof of the Type III estimate using this. We
first derive
$$
T_2=T'_2+T''_2\lessapprox \frac{MQH_{\uple{b}}S}{b}+
\frac{M^2QS^{1/2}H_{\uple{b}}}{b}+ \frac{y^{1/2}M^2Q^3}{b^3S^{1/2}}.
$$
We select the parameter $S$ now, by optimizing it to minimize the sum
of the first and last terms, subject to the constraint $S \leq (yQ)/(2b)$. Precisely, let
$$
S=\min\Bigl(\Bigl(\frac{Q}{b}\Bigr)^{4/3}\frac{y^{1/3}M^{2/3}}{H_{\uple{b}}^{2/3}},
\frac{yQ}{2b}\Bigr).
$$
This satisfies~(\ref{q}) if $x$ is large enough: we have $S\leq
(yQ)/(2b)$ by construction, while $S\geq 1$ (for $x$ large enough)
follows either from $(yQ)/(2b)\gg y/2$ (see~(\ref{bqb})), or from
$$
\Bigl(\frac{Q}{b}\Bigr)^{4}\frac{yM^{2}}{H_{\uple{b}}^{2}}=
\frac{(b_1b_2b_3)^2}{b^2} \frac{(MN)^2x^{\delta-3\eps}}{bQ^2} \gg
x^{2+\delta-3\eps}Q^{-3}\gg x^{1/2+\delta-6\varpi-3\eps}\gg x^\eps
$$
if $\eps>0$ is small enough (using $b\ll Q$ and $\varpi<1/12$). 

This value of $S$ leads to
$$
|T(\uple{b})|^2\lessapprox H_{\uple{b}}\Bigl(
\frac{y^{1/3}H_{\uple{b}}^{1/3}M^{5/3}Q^{7/3}}{b^{7/3}}
+\frac{y^{1/6}H_{\uple{b}}^{2/3}M^{7/3}Q^{5/3}}{b^{5/3}}
+M^2\Bigl(\frac{Q}{b}\Bigr)^{5/2}
\Bigr)
$$
(where the third term only arises if $S=(yQ)/(2b)$), which gives
$$
T(\uple{b})\lessapprox \frac{x^{5\eps/4}}{(b_1b_2b_3)^{1/2}b}
\Bigl(
x^{\delta/6}H^{2/3}M^{5/6}Q^{7/6}+
x^{\delta/12}H^{5/6}M^{7/6}Q^{5/6}+
H^{1/2}MQ^{5/4}
\Bigr)
$$
using the definition of $H_{\uple{b}}$ and the bound $b_i \ge 1$
(to uniformize the three denominators involving $b$ and $\uple{b}$).
\par
We will shortly establish the following elementary fact.

\begin{lemma}\label{lm-euler}
The unsigned series
$$
\trpsum_{b_1,b_2,b_3\geq
  1}\frac{b_1^{\flat}b_2^{\flat}b_3^{\flat}}{(b_1b_2b_3)^{1/2}b^3}
$$
converges to a finite value.
\end{lemma}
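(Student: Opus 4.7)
The plan is to recognize the triple sum as an Euler product and then analyze the asymptotic size of the local factor at each prime. The key starting observation is that the summand
$$
f(b_1,b_2,b_3) := \frac{b_1^{\flat} b_2^{\flat} b_3^{\flat}}{(b_1 b_2 b_3)^{1/2}\, b^3}, \qquad b = (b_1 b_2 b_3)^{\flat},
$$
is jointly multiplicative in the triple $(b_1,b_2,b_3)$. Indeed, writing $a_{i,p} := v_p(b_i)$, one has the local valuations $v_p(b_i^{\flat}) = \min(a_{i,p},1)$, $v_p((b_1b_2b_3)^{1/2}) = (a_{1,p}+a_{2,p}+a_{3,p})/2$, and the crucial formula $v_p(b) = \mathbf{1}[\max_i a_{i,p} \geq 1]$, since the squarefree radical of a product saturates at exponent one. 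Thus the triple series splits as an Euler product $\prod_p L_p$, with local factor
$$
L_p = \sum_{a_1,a_2,a_3 \geq 0} p^{\sum_i \min(a_i,1) - (a_1+a_2+a_3)/2 - 3\,\mathbf{1}[\max_i a_i \geq 1]}.
$$

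Next I would evaluate $L_p$ in closed form by partitioning the summation according to $k := \#\{i : a_i \geq 1\} \in \{0,1,2,3\}$. The term $k=0$ contributes exactly $1$. For $k \geq 1$, the restriction $\max_i a_i \geq 1$ is automatic, supplying a global factor $p^{k-3}$, while the $k$ nonzero coordinates vary independently, each contributing $\sum_{a\geq 1} p^{-a/2} = (p^{1/2}-1)^{-1}$, and the remaining $3-k$ zero coordinates contribute $1$. Summing over the $\binom{3}{k}$ choices of subset yields
$$
L_p = 1 + \sum_{k=1}^{3} \binom{3}{k}\, p^{k-3}\, (p^{1/2}-1)^{-k}.
$$

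Finally, I would estimate $L_p$ for large $p$. The three contributions on the right are, respectively, $3p^{-2}(p^{1/2}-1)^{-1} = O(p^{-5/2})$, $3p^{-1}(p^{1/2}-1)^{-2} = O(p^{-2})$, and $(p^{1/2}-1)^{-3} = p^{-3/2} + O(p^{-2})$; the dominant contribution is the $k=3$ term. Thus $L_p = 1 + O(p^{-3/2})$ uniformly in $p$, while for each small prime $L_p$ is patently a finite positive number (the inner sum is geometric). Since $\sum_p p^{-3/2} < \infty$, the Euler product $\prod_p L_p$ converges absolutely, proving the lemma. There is no serious obstacle here; the only point requiring a moment's attention is the correct bookkeeping of $v_p(b)$, which is not additive in $(b_1,b_2,b_3)$ but saturates at $1$ whenever at least one $a_{i,p}$ is positive—this is precisely what forces the convergence-producing extra factor $p^{-3}$ when $\max_i a_{i,p} \geq 1$.
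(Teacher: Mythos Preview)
Your proof is correct. Both approaches ultimately rest on Euler products, but they are organized differently. The paper decomposes each $b_i$ as $t_i u_i$ with $t_i = b_i^{\flat}$ and $u_i \mid t_i^{\infty}$, then bounds the triple sum from above by $\sum_{b \geq 1} b^{-3}\bigl(\sum_{t\mid b} t^{1/2}\sum_{u\mid t^{\infty}} u^{-1/2}\bigr)^3$ and finishes with the crude divisor-type estimates $\sum_{u\mid t^{\infty}} u^{-1/2} \leq \tau(t)^{O(1)}$ and $\sum_{t\mid b} t^{1/2}\tau(t)^{O(1)} \leq \tau(b)^{O(1)} b^{1/2}$, reducing to $\sum_b \tau(b)^{O(1)} b^{-3/2} < \infty$. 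Your approach instead observes that the summand is jointly multiplicative in $(b_1,b_2,b_3)$ and computes the local factor $L_p$ in closed form, obtaining $L_p = 1 + O(p^{-3/2})$. Your route is cleaner and gives an explicit Euler product for the sum; the paper's route avoids checking joint multiplicativity and the bookkeeping of the $v_p(b)$ saturation, at the cost of working with an upper bound rather than the exact value.
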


Now from~(\ref{eq-tb}) and this lemma, we get
$$
\Sigma'_2(Q;a)\lessapprox \frac{x^{5\eps/4}Q}{H} \Bigl(
x^{\delta/6}H^{2/3}M^{5/6}Q^{7/6}+ x^{\delta/12}H^{5/6}M^{7/6}Q^{5/6}+
H^{1/2}MQ^{5/4} \Bigr),
$$
We now show that this implies \eqref{sqa} under suitable conditions on
$\delta$, $\varpi$ and $\sigma$. Indeed, we have
$$
\frac{x^{5\eps/4}Q}{H} \Bigl( x^{\delta/6}H^{2/3}M^{5/6}Q^{7/6}+
x^{\delta/12}H^{5/6}M^{7/6}Q^{5/6}+ H^{1/2}MQ^{5/4} \Bigr) \lessapprox
MN( E_1+E_2+E_3)
$$
where
\begin{align*}
  E_1&:=\frac{x^{5\eps/4+\delta/6}Q^{13/6}}{H^{1/3}M^{1/6}N}
  =\frac{x^{5\eps/4+\delta/6-1/6}Q^{7/6}}{N^{1/2}} \lessapprox
  Q^{7/6}x^{5\eps/4+\delta/6-3\sigma/4-13/24}
  \\
  E_2&:=\frac{x^{5\eps/4+\delta/12}Q^{11/6}M^{7/6}}{H^{1/6} MN}
  =\frac{x^{5\eps/4+\delta/12+1/6}Q^{4/3}}{N} \lessapprox
  Q^{4/3}x^{5\eps/4+\delta/12-3\sigma/2-7/12}
  \\
  E_3&:=\frac{x^{5\eps/4}Q^{9/4}}{H^{1/2}N}
  =\frac{x^{5\eps/4}Q^{3/4}}{N^{1/2}}\lessapprox
  Q^{3/4}x^{5\eps/4-3/8-3\sigma/4}
\end{align*}
using the definition~(\ref{eq-def-h}) of $H$ and the lower
bound~(\ref{eq-n}) for $N$. Using $Q \lessapprox x^{1/2+2\varpi}$, we see that we will
have $E_1+E_2+E_3\lessapprox x^{-2\eps}$ for some small positive
$\eps>0$ provided
$$
\begin{cases}
  \tfrac{7}{6}(\tfrac{1}{2}+2\varpi)+\tfrac{\delta}{6}-\tfrac{3\sigma}{4}
  - \tfrac{13}{24}<0\\
  \tfrac{4}{3}(\tfrac{1}{2}+2\varpi)+\tfrac{\delta}{12}-\tfrac{3\sigma}{2}
  - \tfrac{7}{12}<0\\
  \tfrac{3}{4}(\tfrac{1}{2}+2\varpi)-\tfrac{3\sigma}{4}
  -\tfrac{3}{8}<0
\end{cases}\quad
\Leftrightarrow \quad\quad
\begin{cases}
\sigma>\tfrac{28}{9}\varpi+\tfrac{2}{9}\delta+\tfrac{1}{18}\\
\sigma>\tfrac{16}{9}\varpi+\tfrac{1}{18}\delta+\tfrac{1}{18}\\
\sigma>2\varpi.
\end{cases}
$$
However, the first condition implies the second and third. Thus we
deduce Theorem~\ref{th-newtypeiii}, provided that we prove the two lemmas
above, which we will now do.

\begin{proof}[Proof of Lemma~\ref{lm-sumup}]
  We will relax somewhat the conditions on $r_1$, $r_2$ and $s$. We
  recall first that
$$
\frac{Q}{bS}\ll r_1,r_2\ll \frac{yQ}{bS}=\frac{x^{\delta}Q}{S}.
$$
Furthermore, the summation conditions imply $r_1s\sim Q/b\sim r_2s$,
and in particular $r_1$ and $r_2$ also satisfy $r_1\sim r_2$. In
addition, as above, we have $s\sim Q/(br_1)$ for a given $r_1$.
\par
Using this last property to fix the size of $s$, we have
\begin{multline*}
  T''_2\lessapprox \frac{Q}{b}\sumsum_{\frac{Q}{bS}\ll r_1\sim r_2\ll
    \frac{yQ}{bS}}\frac{1}{r_1}
  \Bigl(\frac{H_{\uple{b}}(br_1)^{1/2}}{(Q[r_1,r_2])^{1/2}}+
  \frac{(Q[r_1,r_2])^{1/2}}{(br_1)^{1/2}}\Bigr)\\
  \sumsum_{\substack{m_1,m_2\sim M\\ r_1^3m_1\neq r_2^3m_2}}
  (r_1,r_2,m_1-m_2)^{1/2} \sum_{s\sim
    Q/(br_1)}(r_1^3m_1-r_2^3m_2,s)^{1/2}.
\end{multline*}
By Lemma~\ref{ram-avg}, the inner sum is $\lessapprox Q/(br_1)$ for
all $(r_1,r_2,m_1,m_2)$, and similarly, we get
$$
\sumsum_{m_1,m_2\sim M} (r_1,r_2,m_1-m_2)^{1/2}
\lessapprox M^2+M(r_1,r_2)^{1/2},
$$
so that
$$
T''_2\lessapprox \Bigl(\frac{Q}{b}\Bigr)^2\sumsum_{\frac{Q}{bS}\ll
  r_1\sim r_2\ll \frac{yQ}{bS}}\frac{1}{r_1^2}(M^2+M(r_1,r_2)^{1/2})
\Bigl(\frac{H_{\uple{b}}(br_1)^{1/2}}{(Q[r_1,r_2])^{1/2}}+
\frac{(Q[r_1,r_2])^{1/2}}{(br_1)^{1/2}}\Bigr).
$$
We denote $r=(r_1,r_2)$ and write $r_i=rt_i$, and thus obtain
\begin{align*}
  T''_2&\lessapprox \Bigl(\frac{Q}{b}\Bigr)^2 \sum_{r\ll
    \frac{yQ}{bS}}\frac{M^2+r^{1/2}M}{r^2}\sumsum_{\frac{Q}{rbS}\ll
    t_1\sim t_2\ll \frac{yQ}{rbS}}\frac{1}{t_1^2}
  \Bigl(\frac{H_{\uple{b}}b^{1/2}}{(Qt_2)^{1/2}}+
  \frac{(Qt_2)^{1/2}}{b^{1/2}}\Bigr)\\
  &\lessapprox \Bigl(\frac{Q}{b}\Bigr)^2 \sum_{r\ll
    \frac{yQ}{bS}}\frac{M^2+r^{1/2}M}{r^2} \sum_{\frac{Q}{rbS}\ll
    t_2\ll \frac{yQ}{rbS}} \Bigl(
  \frac{H_{\uple{b}}b^{1/2}}{Q^{1/2}t_2^{3/2}} +
  \frac{Q^{1/2}}{b^{1/2}t_2^{1/2}}
  \Bigr)\\
  &\lessapprox \Bigl(\frac{MQ}{b}\Bigr)^2 \Bigl(
  \frac{H_{\uple{b}}b^{1/2}}{Q^{1/2}} \Bigl(\frac{Q}{bS}\Bigr)^{-1/2}+
  \frac{Q^{1/2}}{b^{1/2}}\Bigl(\frac{yQ}{bS}\Bigr)^{1/2} \Bigr),
\end{align*}
as claimed. (Note that it was important to keep track of the condition
$r_1\sim r_2$.)
\end{proof}

\begin{proof}[Proof of Lemma~\ref{lm-euler}]  If we write $t_i := b_i^\flat$, $b_i = t_i u_i$, then we have $t_i | b$ and $u_i | t_i^\infty$ and
$$ 
\frac{b_1^\flat b_2^\flat b_3^\flat}{(b_1b_2b_3)^{1/2} b^3} = \frac{1}{b^3} \prod_{i=1}^3 \frac{t_i^{1/2}}{u_i^{1/2}} $$
and thus we can bound the required series by
$$ \sum_{b \geq 1} \frac{1}{b^3} \Bigl(\sum_{t|b} t^{1/2} \sum_{u|t^\infty} \frac{1}{u^{1/2}}\Bigr)^3.$$
Using Euler products, we have
$$ \sum_{u|t^\infty} \frac{1}{u^{1/2}} \leq \tau(t)^{O(1)}$$
and thus
$$ \sum_{t|b} t^{1/2} \sum_{u|t^\infty} \frac{1}{u^{1/2}} \leq \tau(b)^{O(1)} b^{1/2}$$
and the claim now follows from another Euler product computation.
\end{proof}

\section{An improved Type I estimate}\label{typei-advanced-sec}

In this final section, we prove the remaining Type I estimate from
Section~\ref{typei-ii-sec}, namely Theorem
\ref{newtype-i-ii}\eqref{typeI4-again}. In Section \ref{prelim-red},
we reduced this estimate to the exponential sum estimate of Theorem
\ref{eset}\eqref{typeI4-yetagain}.  

\subsection{First reduction}

The reader is invited to review the definition and notation of
Theorem~\ref{eset}. We consider the sum
$$
\Upsilon:= \sum_{r}\Upsilon_{\ell,r}(b_1,b_2;q_0)
$$
of~(\ref{keyo}) for each $1\leq |\ell|\ll N/R$, where $\Upsilon_{\ell,r}$ was defined in \eqref{eq-phiell} and the sum over $r$
is restricted to $r\in \DI{I}{2}{x^{\delta+o(1)}} \cap [R,2R]$ (the
property that $r$ is doubly densely divisible being part of the
assumptions of \ref{eset}\eqref{typeI4-yetagain}).  Our task is to show the bound
$$ \Upsilon  \lessapprox x^{-\eps} Q^2R N  (q_0,\ell)q_0^{-2}$$
under the hypotheses of Theorem \ref{eset}\eqref{typeI4-yetagain}.
\par
In contrast to the Type I and II estimates of
Section~\ref{typei-ii-sec} (but similarly to the Type III estimate),
we will exploit here the average over $r$, and hence the treatment
will combine some features of all the methods used before.
\par
As before, we denote
\begin{equation}\label{eq-h-bis}
H:=x^{\eps}RQ^2M^{-1}q_0^{-1}.
\end{equation}
We recall that, from~(\ref{eq-h-condition}), we have $H\gg 1$.  We
begin as in Section \ref{second-typei} by exploiting the $x^\delta$-dense
divisibility of $q_0q_1$, which implies the $x^\delta q_0$-dense divisibility of $q_1$ by Lemma \ref{fq}(i). Thus we reduce by dyadic decomposition to
the proof of
\begin{equation}\label{eq-dyadic-2}
\sum_{r}\Upsilon_{U,V}\lessapprox  x^{-\eps}(q_0,\ell)RQ^2Nq_0^{-2}
\end{equation}
(which corresponds to~(\ref{eq-upsilon-dyadic}) with the average over
$r$ preserved) where
$$
\Upsilon_{U,V}:=\sum_{1 \leq |h| \leq H} \sum_{u_1 \sim U} \sum_{v_1
  \sim V} \sum_{\substack{q_2 \sim Q/q_0\\ (u_1v_1,q_0 q_2)= 1}}\Bigl|
\sum_n C(n)\beta(n) \overline{\beta(n+\ell r)}
\Phi_{\ell}(h,n,r,q_0,u_1v_1,q_2)\Bigr|
$$
as in Section~\ref{second-typei}, whenever
\begin{gather}
  q_0^{-1} x^{-\delta-2\eps} Q/H \lessapprox U \lessapprox x^{-2\eps} Q/H \label{s-bound-2}\\
  q_0^{-1} x^{2\eps} H \lessapprox V \lessapprox x^{\delta+2\eps} H \label{t-bound-2}\\
  UV \sim Q/q_0 \label{st-2}
\end{gather}
(which are identical constraints to~(\ref{s-bound}),~(\ref{t-bound})
and~(\ref{st})), and whenever the parameters $(\varpi,\delta,\sigma)$
satisfy the conditions of Theorem~\ref{eset}\eqref{typeI4-yetagain}.  As before, $u_1,v_1$ are understood to be squarefree.
\par

We replace again the modulus by complex numbers $c_{r,h,u_1,v_1,q_2}$
of modulus $\leq 1$, which we may assume to be supported on parameters
$(r,h,u_1,v_1,q_2)$ with
$$
(u_1v_1,q_2)=1
$$
and with
$$
q_0u_1v_1r,\quad q_0q_2r\text{ squarefree}.
$$
(These numbers $c_{r,h,u_1,v_1,q_2}$ are unrelated to the exponent $c$ in Theorem \ref{newtype-i-ii}.)
We then move the sums over $r$, $n$, $u_1$ and $q_2$ outside
and apply the Cauchy-Schwarz inequality as in the previous sections to
obtain
$$
\Bigl|\sum_r\Upsilon_{U,V}\Bigr|^2\leq \Upsilon_1\Upsilon_2
$$
with
$$
\Upsilon_1:=\sum_r\sumsum_{\substack{u_1\sim U\\ q_2\sim Q/q_0}} \sum_n
C(n) |\beta(n)|^2|\beta(n+\ell r)|^2\lessapprox (q_0,\ell)\frac{NQRU}{q_0^2}
$$
(again as in~(\ref{baz})) and
\begin{align*}
  \Upsilon_2&:= \sum_r \sumsum_{\substack{u_1\sim U\\ q_2\sim Q/q_0}}
  \sum_{n}\psi_N(n)C(n) \Bigl| \sum_{v_1\sim V} \sum_{1 \leq |h| \leq
    H} c_{h,r,u_1,v_1,q_2}
  \Phi_{\ell}(h,n, r, q_0, u_1v_1,q_2)\Bigr|^2\\
  &=\sum_r\sumsum_{\substack{u_1\sim U\\q_2\sim Q/q_0}} \sumsum_{v_1,v_2\sim
    V} \sumsum_{1\leq |h_1|,|h_2|\leq H}
  c_{h_1,r,u_1,v_1,q_2}\overline{c_{h_2,r,u_1,v_2,q_2}}
  T_{\ell,r}(h_1,h_2,u_1,v_1,v_2,q_2),
\end{align*}
where $T_{\ell,r}$ is defined by~(\ref{eq-tell}) and $\psi_N$ is a smooth coefficient sequence at scale $N$.
\par

The analysis of $\Upsilon_2$ will now diverge from
Section~\ref{second-typei}. In our setting, the modulus $r$ is doubly
$x^{\delta+o(1)}$-densely divisible. As in the previous section, we
will exploit this divisility to split the average and apply the
Cauchy-Schwarz inequality a second time.
\par
Let $D$ be a parameter such that
\begin{equation}\label{dr}
  1 \lessapprox D \lessapprox x^{\delta}R,
\end{equation}
which will be chosen and optimized later. By definition (see
Definition~\ref{mdd-def}) of doubly densely divisible integers, for
each $r$, there exists a factorization $r=dr_1$ where
$$
x^{-\delta} D \lessapprox d \lessapprox D,
$$
and where $r_1$ is $x^{\delta+o(1)}$-densely divisible (and
$(d,r_1)=1$, since $r$ is squarefree).  As before, in the case $D \geq R$ one can simply take $d=r$ and $r_1=1$.
\par
We consider the sums
$$
\Upsilon_3:= \sum_{\substack{d\sim \Delta\\(d,r_1)=1}}
 \sumsum_{1\leq |h_1|,|h_2|\leq H}
\sumsum_{\substack{v_1,v_2\sim V\\(v_1v_2,dr_1q_0u_1q_2)=1}}
|T_{\ell,dr_1}(h_1,h_2,u_1,v_1,v_2,q_2)|,
$$
with $d$ understood to be squarefree,
for all  $\Delta$ such that
\begin{equation}\label{dbounds}
\max(1,x^{-\delta} D) \lessapprox \Delta \lessapprox D
\end{equation}
and all $(r_1,u_1,q_2)$ such that
\begin{equation}\label{rp}
r_1 \sim R/\Delta,\quad  u_1 \sim U,\quad q_2 \sim Q/q_0,
\end{equation}
and such that $r_1 q_0 u_1q_2$ is squarefree and the integers $r_1$,
$q_0u_1v_1$, $q_0u_1v_2$ and $q_0q_2$ are $x^{\delta+o(1)}$-densely
divisible.
\par
For a suitable choice of $D$, we will establish the bound
\begin{equation}\label{eq-upsilon-3}
  \Upsilon_3\lessapprox (q_0,\ell)x^{-2\eps}\Delta NV^2q_0
\end{equation}
for all such sums. It then follows by dyadic subdivision of the 
variable $d$ and by trivial summation over $r_1$, $u_1$ and $q_2$ that
$$
\Upsilon_2\lessapprox (q_0,\ell)x^{-2\eps}NV^2q_0
\frac{RUQ}{q_0}=(q_0,\ell)x^{-2\eps}NRUV^2Q,
$$
and hence that
$$
\Bigl|\sum_r\Upsilon_{U,V}\Bigr|^2\lessapprox
(q_0,\ell)^2x^{-2\eps} N^2R^2\Bigl(\frac{Q}{q_0}\Bigr)^4,
$$
which gives the desired result.
\par

We first write $\Upsilon_3=\Upsilon'_3+\Upsilon''_3$, where
$\Upsilon'_3$ is the diagonal contribution determined by $h_1
v_2=h_2v_1$.  The number of quadruples $(h_1,v_1,h_2,v_2)$ satisfying
this condition is  $\lessapprox HV$ by the divisor bound, and
therefore a trivial bound $\lessapprox N$ for
$T_{\ell,r}(h_1,h_2,u_1,v_1,v_2,q_2)$ gives
$$
\Upsilon'_3\lessapprox \Delta HNV\lessapprox
(q_0,\ell)x^{-2\eps}\Delta NV^2q_0
$$
by \eqref{t-bound-2}. 
We now write
$$
\Upsilon''_3=\sumsum_{\substack{(h_1,v_1,h_2,v_2)\\
    h_1v_2\neq h_2v_1}} \Upsilon_4(h_1,v_1,h_2,v_2)
$$
where $h_1,v_1,h_2,v_2$ obey the same constraints as in the definition of $\Upsilon_3$, and
$$
\Upsilon_4(h_1,v_1,h_2,v_2):= \sum_{\substack{d\sim \Delta\\(d,r_1)=1}}
|T_{\ell,dr_1}(h_1,h_2,u_1,v_1,v_2,q_2)|.
$$

We will shortly establish the following key estimate.

\begin{proposition}\label{pr-upsilon-4}
  If $\eps>0$ is small enough,
  then we have
$$
\Upsilon_4(h_1,v_1,h_2,v_2)\lessapprox (q_0,\ell)x^{-2\eps}\Delta
NH^{-2}q_0 \ \left(h_1v_2-h_2v_1,q_0q_2r_1u_1[v_1,v_2]\right),
$$
if we take
\begin{equation}\label{eq-def-d}
  D:=x^{-5\eps}\frac{N}{H^4}
\end{equation}
and if
\begin{equation}\label{eq-condition}
\begin{cases}
\tfrac{160}{3}\varpi+16\delta+\tfrac{34}{9}\sigma<1\\
64\varpi+18\delta+2\sigma<1.
\end{cases}
\end{equation}
\end{proposition}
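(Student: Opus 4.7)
\textbf{Proof plan for Proposition~\ref{pr-upsilon-4}.}

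The plan is to exploit the remaining dense divisibility factor $d$ of $r=dr_1$ by performing a second Cauchy--Schwarz (after the one already used to arrive at $T_{\ell,r}$), and then to invoke the Deligne-type estimates of Corollary~\ref{inctrace-q} on the resulting complete exponential sum modulo the composite modulus that appears. First I would introduce unimodular coefficients $\eta_d$ to write
\[\Upsilon_4 \;=\; \sum_{d\sim\Delta,\,(d,r_1)=1} \eta_d \sum_n C(n)\psi_N(n)\,\Phi_\ell(h_1,n,dr_1,q_0,u_1v_1,q_2)\overline{\Phi_\ell(h_2,n,dr_1,q_0,u_1v_2,q_2)}.\]
Swapping orders and applying Cauchy--Schwarz in $n$ (inserting a smooth majorant $\Psi_N\geq|\psi_N|^2$ and using \eqref{baz} to estimate the $|\beta|^2|\beta(\cdot+\ell r)|^2$ factor), I would obtain
\[|\Upsilon_4|^2 \;\lessapprox\; \frac{(q_0,\ell)N}{q_0}\sumsum_{d_1,d_2\sim\Delta} \eta_{d_1}\overline{\eta_{d_2}} \sum_n C(n)\Psi_N(n)\,\Xi(n;d_1,d_2),\]
where $\Xi(n;d_1,d_2)$ is the ratio of the two phase products above. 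The diagonal $d_1=d_2$ contributes (trivially, using $|\Xi|\leq 1$ and \eqref{baz}) the bound $\lessapprox (q_0,\ell)\Delta N^2/q_0^2$, which is acceptable relative to the target once $\Delta$ is chosen as in \eqref{eq-def-d}.

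For the off-diagonal terms $d_1\neq d_2$, the inner sum in $n$ is an incomplete sum with phase modulus
\[\mathfrak{m} \;=\; \frac{d_1 d_2}{(d_1,d_2)}\, r_1\, q_0\, u_1\, [v_1,v_2]\, q_2,\]
which, under \eqref{eq-def-d}, satisfies $N\lessapprox \mathfrak{m}$ in the relevant parameter range. I would then split $C(n)$ into the $\leq(q_0,\ell)$ residue classes modulo $q_0$ it represents and apply the completion of sums (Lemma~\ref{com}) class by class, converting the $n$-sum into a complete sum of length $\mathfrak{m}$ against frequencies $|h|\leq \mathfrak{m}N^{-1}x^{\eps}$. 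By the Chinese Remainder Theorem (Lemma~\ref{crt}) the complete sum factors over the primes $p\mid\mathfrak{m}$. For the primes dividing $r_1$, $d_1 d_2$, $q_0u_1[v_1,v_2]$ I would use the one-variable bounds of Proposition~\ref{weil} and Lemma~\ref{dork}. For the prime-by-prime factor at $q_2$, the ratio $\Xi$ produces a phase of precisely the shape
\[e_{q_2}\!\left(\frac{1}{(y+a n+b)(y+cn+d)} + e\, y\right)\]
(after the Fourier completion introduces the auxiliary variable $y$), with $a-c$ proportional to $\ell(d_1-d_2)r_1\cdot(\text{unit})$, and thus coprime to $q_2$ since $q_0 q_2 r_1$ is squarefree with $(q_0q_2r_1,\ell(d_1-d_2))$ controlled by the gcd factors in the conclusion. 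At this point Corollary~\ref{inctrace-q} applies and yields square-root cancellation in the $n$-variable sum, using the extra flexibility of the factorization of $\mathfrak{m}$ provided by the singly densely divisible factor $r_1$ (this is where the $q$-van der Corput estimate \eqref{vdctrace-1-q} is exploited rather than the weaker \eqref{vdctrace-0-q}).

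After collecting these bounds, summing over $|h|$, over $d_1\neq d_2$, and over the Fourier completion coefficients, the off-diagonal contribution takes the shape
\[\lessapprox \frac{(q_0,\ell)^2 N}{q_0^2}\bigl(\Delta^{3/2}E_1 + \Delta E_2 + \Delta^{1/2}E_3\bigr)\cdot(h_1v_2-h_2v_1,\,q_0q_2 r_1 u_1[v_1,v_2]),\]
where $E_1,E_2,E_3$ are monomials in $R,Q,N,M,H,q_0$ coming respectively from the completion error, from the van der Corput factorization, and from the complete-sum baseline. Balancing the diagonal against the largest of these off-diagonal terms yields exactly the choice $D=x^{-5\eps}N/H^{4}$ in \eqref{eq-def-d}; verifying that the resulting combined estimate is bounded by $(q_0,\ell)x^{-2\eps}\Delta N H^{-2}q_0$ times the gcd factor amounts to a direct inequality among the exponents of $x$, $N$, $Q$, $R$ (substituting $H$ via \eqref{eq-h-bis} and the constraints~\eqref{s-bound-2}--\eqref{st-2}), which reduces to the two linear inequalities~\eqref{eq-condition}.

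The main obstacle I anticipate is the verification step in the middle: one must check that, after the ratio $\Xi(n;d_1,d_2)$ is written out and reduced modulo each prime $p\mid q_2$, the resulting rational function genuinely has the shape required by Theorem~\ref{kf-bound} with non-degenerate parameters (i.e.\ the two poles remain distinct modulo $p$), and that the exceptional cases where $p$ divides the discriminant $\ell(d_1-d_2)r_1$ or $h_1v_2-h_2v_1$ are absorbed into the gcd factor $(h_1v_2-h_2v_1,\,q_0q_2r_1u_1[v_1,v_2])$ in the stated bound, rather than giving a larger loss. The bookkeeping for these gcd contributions (following the model of Proposition~\ref{exse-3} and Lemma~\ref{lm-c-kl}) is the technical heart of the argument; everything else is a careful but routine balancing of parameters.
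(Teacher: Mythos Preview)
Your Cauchy--Schwarz step is the wrong one, and this is a genuine gap rather than bookkeeping. By applying Cauchy--Schwarz in $n$ and duplicating $d$ to $(d_1,d_2)$, you destroy the averaging over $d$: for each fixed off-diagonal pair you are left with a \emph{one}-variable incomplete sum in $n$, and the $\sim\Delta^2$ pairs are then summed trivially. The phase $\Xi(n;d_1,d_2)$ restricted to $q_2$ is, after the computation in Lemma~\ref{lm-clean-up}, of the form $e_{q_2}\bigl(A_1/(n+l_1)+A_2/(n+l_2)+hn\bigr)$ with $l_i=\ell d_i r_1$; this is two simple poles, not the $1/\bigl((y+ax+b)(y+cx+d)\bigr)$ shape of $K_f$, and completion of sums introduces a frequency, not a second summation variable $y$. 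So Corollary~\ref{inctrace-q} does not apply; you only get one-dimensional Weil bounds of size $\mathfrak m^{1/2}$ (or $N^{1/2}\mathfrak m^{1/6}$ if $\mathfrak m$ were densely divisible, which it is not once $[d_1,d_2]$ is included). A back-of-the-envelope check shows the resulting bound misses the target by at least a factor $\Delta^{1/2}$, and the conditions \eqref{eq-condition} are not recovered.

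The paper does something essentially different: it applies a van der Corput shift $n\mapsto n+dl$ (with $1\le l\le L\approx N/\Delta$) \emph{before} Cauchy--Schwarz. Because the $e_d$-component of $\Psi(d,n+dl)$ is independent of $l$ (Lemma~\ref{lm-clean-up}), it cancels after squaring, and one is left with a genuine two-variable sum over $(n,d)$ whose phase is $e_m\bigl(\alpha l/((n+\beta d)(n+(\beta+l)d))\bigr)$ with modulus $m=q_0r_1u_1[v_1,v_2]q_2$ that does \emph{not} contain $d$. Completing in $n$ then yields exactly the $K_f$-type sum over $d$ treated by Corollary~\ref{inctrace-q}, and the van der Corput bound \eqref{vdctrace-1-q} there supplies the extra $\Delta^{1/2}$ saving in the $d$-variable (Proposition~\ref{lode}). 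This two-dimensional Deligne input is the point of the whole section; your scheme never reaches it.
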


Assuming this proposition, we obtain 
$$
\Upsilon''_3\lessapprox (q_0,\ell)x^{-2\eps}\Delta NV^2q_0,
$$
and hence~(\ref{eq-upsilon-3}), by the following lemma, which will be
proved later.


\begin{lemma}\label{lm-gcd} We have
$$
\sumsum_{\substack{(h_1,v_1,h_2,v_2)\\
    h_1v_2\neq h_2v_1}} 
\left(h_1v_2-h_2v_1,q_0q_2r_1u_1[v_1,v_2]\right)
\lessapprox H^2V^2.
$$
\end{lemma}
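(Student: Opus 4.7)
The plan is to reduce the gcd sum to a standard gcd averaging estimate via the change of variable $\Delta := h_1 v_2 - h_2 v_1$. The summation constraints (which appear in the definition of $\Upsilon_3$) impose $(v_1 v_2, q_0 q_2 r_1 u_1) = 1$, so writing $A := q_0 q_2 r_1 u_1$ the modulus factorizes coprimely as
$$
(h_1 v_2 - h_2 v_1,\, q_0 q_2 r_1 u_1 [v_1, v_2]) = (h_1 v_2 - h_2 v_1,\, A)\,(h_1 v_2 - h_2 v_1,\, [v_1, v_2]).
$$
Note that $A$ is of polynomial size in $x$.

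To handle the $[v_1, v_2]$-factor, I would introduce $g := (v_1, v_2)$, write $v_i = g w_i$ with $(w_1, w_2) = 1$ (and $w_1, w_2$ squarefree), and observe that
$$
(h_1 v_2 - h_2 v_1,\, [v_1, v_2]) = g\,(h_1 w_2 - h_2 w_1,\, w_1 w_2) = g\,(h_1, w_1)\,(h_2, w_2),
$$
using squarefreeness and $(w_1, w_2) = 1$ (the second equality follows since each prime $p \mid w_1$ is coprime to $w_2$, so $p \mid h_1 w_2 - h_2 w_1$ iff $p \mid h_1$, and symmetrically for $p \mid w_2$). Since $(g, A) = 1$, the other factor becomes $(h_1 w_2 - h_2 w_1, A)$, and combining with coprimality of $w_1 w_2$ and $A$, the full summand equals $g (h_1 w_2 - h_2 w_1,\, A w_1 w_2)(h_1, w_1)(h_2, w_2)/(w_1, w_2)$-type expression, which simplifies nicely.

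The main step is a parametrization by $\Delta := h_1 w_2 - h_2 w_1 \neq 0$. A key algebraic observation is that for any $(h_1, h_2)$ with $h_1 w_2 - h_2 w_1 = \Delta$, one has $h_1 \equiv \Delta\, \overline{w_2} \mods{w_1}$ and $h_2 \equiv -\Delta\, \overline{w_1} \mods{w_2}$, which forces $(h_1, w_1) = (\Delta, w_1)$ and $(h_2, w_2) = (\Delta, w_2)$, so $(h_1, w_1)(h_2, w_2) = (\Delta, w_1 w_2)$ is the same for every representation. The number of representations of a given $\Delta$ by $(h_1, h_2)$ with $|h_i| \leq H$ is $\lessapprox H/w_1 + 1$ (since $h_1$ is confined to a single residue class modulo $w_1$, with $h_2$ then determined). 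Consequently,
$$
\sum_{h_1, h_2}(h_1, w_1)(h_2, w_2)(h_1 w_2 - h_2 w_1, A) \;\lessapprox\; \Bigl(\frac{H}{w_1} + 1\Bigr) \sum_{0 < |\Delta| \lessapprox HV/g}(\Delta,\, A w_1 w_2),
$$
and Lemma \ref{ram-avg} bounds the last sum by $\lessapprox HV/g$, since $Aw_1w_2$ is of polynomial size.

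Putting everything together and summing first over $w_1, w_2 \sim V/g$ with $(w_1, w_2) = 1$, then over $g \leq V$, gives the target bound $\lessapprox H^2 V^2$. The main obstacle is the careful bookkeeping in this final summation to ensure that the ``$+1$'' terms (which arise when $H/w_1$ is not dominant) are absorbed into the bound, and that the various $x^{o(1)}$ losses from the Ramanujan-style estimate stay within the $\lessapprox$ convention; the crucial input that makes this work is the identity $(h_1, w_1)(h_2, w_2) = (\Delta, w_1 w_2)$, which converts the divisor weights into a single gcd that combines cleanly with $(\Delta, A)$ and hence admits a uniform bound via Lemma \ref{ram-avg}.
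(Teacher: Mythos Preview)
Your factorisation $(h_1v_2-h_2v_1,[v_1,v_2]) = g\,(h_1,w_1)(h_2,w_2)$ and the identity $(h_1,w_1)(h_2,w_2)=(\Delta,w_1w_2)$ are correct and elegant. However, the ``$+1$'' that you flag as a bookkeeping issue is in fact a genuine gap. Carrying your bound through gives, for fixed $g,w_1,w_2$, a contribution $\lessapprox (H/w_1+1)\cdot HV$; summing over $w_1,w_2\sim V/g$ and then over $g$ yields
\[
\sum_{g}\Bigl(\frac{H^2V^2}{g}+\frac{HV^3}{g^2}\Bigr)\lessapprox H^2V^2+HV^3.
\]
But in this section one always has $V\gtrapprox x^{2\eps}H/q_0$ by \eqref{t-bound-2} (and $V\lessapprox x^{\delta+2\eps}H$), so in the main case $q_0=1$ the term $HV^3$ dominates and exceeds the target $H^2V^2$ by a fixed power of $x$. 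The problem is that once you bound the number of representations of $\Delta$ by $H/w_1+1$ and then sum $(\Delta,Aw_1w_2)$ over \emph{all} $|\Delta|\ll HV/g$, you are paying for many $\Delta$ that have no representation at all.

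The paper takes a different route that sidesteps this. It expands $(h_1v_2-h_2v_1,\,w[v_1,v_2])$ as a double divisor sum $\sum_{d\mid w}\sum_{e\mid[v_1,v_2]}$, factors $e=e_1e_2e_3$ according to whether primes of $e$ divide $v_1$, $v_2$, or both, and then changes variables so that $h_1v_2-h_2v_1=e(\lambda_1\nu_2-\lambda_2\nu_1)$ with $\lambda_1\nu_2,\lambda_2\nu_1\ll HV/e$. The crucial point is that $d$ divides the \emph{nonzero} integer $\lambda_1\nu_2-\lambda_2\nu_1$, so automatically $d\ll HV/e$; this forces $(HV/e)/d\gg 1$, and the ``$+1$'' in the pair count is absorbed by the main term $(HV/e)^2/d$. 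Your argument never invokes such a size constraint on the divisor, which is why the secondary term survives. You could likely repair your approach by expanding $(\Delta,Aw_1w_2)$ over divisors and using that each divisor is $\le|\Delta|\ll HV/g$, but this essentially reproduces the paper's mechanism.
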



\subsection{Reduction of Proposition~\ref{pr-upsilon-4} to exponential
  sums}

We now consider a specific choice of parameters $r_1$, $u_1$, $q_2$ and
$(h_1,v_1,h_2,v_2)$, so that $\Upsilon_4=\Upsilon_4(h_1,v_1,h_2,v_2)$
is a sum with two variables which we write as
$$
\Upsilon_4= \sum_{d \sim \Delta} \Bigl|\sum_n \psi_N(n)
C(n)\Psi(d,n)\Bigr|
$$
where $C(n)$ restricts $n$ to the congruence~(\ref{eq-n-congruence}) and
\begin{equation}\label{psidn}
  \Psi(d,n):=  \Phi_{\ell}(h_1, n,dr_1,q_0,u_1v_1,q_2) 
  \overline{ \Phi_{\ell}(h_2,n, dr_1, q_0,u_1v_2,q_2) }.
\end{equation}
We define $D$ by~(\ref{eq-def-d}), and we first check that this
satisfies the constraints~(\ref{dr}). Indeed, we first have
$$
D=x^{-5\eps}\frac{N}{H^4}=\frac{x^{-9\eps}q_0^4NM^4}{Q^8R^4} \ggcurly
x^{-9\eps-16\varpi}\frac{R^4}{N^3}\ggcurly
x^{1/2-\sigma-16\varpi-4\delta-21\eps}
$$
by~(\ref{slop}) and~(\ref{crop}). Under the
condition~(\ref{eq-condition}), this gives $D\ggcurly 1$ if $\eps>0$
is taken small enough.
\par
Moreover, since $H\gg 1$, we have
$$
D=x^{-5\eps}\frac{N}{H^4}\lessapprox x^{-5\eps}N\lessapprox
x^{-2\eps+\delta}R\leq x^{\delta}R.
$$

We apply the van der Corput technique with respect to the modulus $d$.
Let
\begin{equation}\label{ldf}
L := x^{-\eps} \left\lfloor \frac{N}{\Delta} \right\rfloor.
\end{equation}
Note that from \eqref{dr} and \eqref{crop}, it follows that $L
\ggcurly x^{-\eps}NR^{-1} \geq 1$ for $x$ sufficiently large.
\par
For any $l$ with $1\leq l\leq L$, we have
$$
\sum_n \psi_N(n)C(n) \Psi(d,n)= \sum_n \psi_N(n+dl)C(n+dl)
\Psi(d,n+dl)
$$
and therefore
$$
|\Upsilon_4|\leq \frac{1}{L}\sum_{d \sim \Delta} \sum_{n\ll N} \Bigl|
\sum_{l=1}^L \psi_N(n+dl) C(n+dl)\Psi(d,n+dl)\Bigr|.
$$
\par
By the Cauchy-Schwarz inequality, for some smooth coefficient sequence
$\psi_{\Delta}$ at scale $\Delta$, we have
\begin{equation}\label{eq-upsilon-4}
|\Upsilon_4|^2\leq \frac{N\Delta}{L^2} | \Upsilon_5|
\end{equation}
where
$$
\Upsilon_5:=\sum_{d\sim \Delta} \psi_{\Delta}(d) \sum_n \Bigl| \sum_{l=1}^L
\psi_N(n+dl) C(n+dl)\Psi(d,n+dl)\Bigr|^2.
$$
\par

\begin{lemma}\label{lm-clean-up}
Let 
$$
m=q_0r_1u_1[v_1,v_2]q_2.
$$
There exist residue classes $\alpha\ (m)$ and $\beta\
(m)$, independent of $n$ and $l$, such that for all $n$ and $l$ we
have
$$
\Psi(d,n+dl)=\xi(n,d)e_m\Bigl(\frac{\alpha}{d(n+(\beta+l)d)}\Bigr)
$$
where $|\xi(n,d)|\leq 1$. Moreover we have
$(\alpha,m)=(h_1v_2-h_2v_1,m)$.
\end{lemma}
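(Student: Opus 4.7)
The plan is to unpack the phases $\Psi(d,n+dl)$ explicitly via the definition~\eqref{eq-phiell} of $\Phi_\ell$, systematically apply the Chinese Remainder Theorem to separate moduli, and shift denominators to expose the stated form. Writing $\Psi(d,n+dl)$ as a product of six characters with moduli $dr_1$, $q_0u_1v_1$, $q_0u_1v_2$, and $q_2$ (coming from $\Phi_\ell$ and its conjugate), the coprimality conditions on $dr_1 \cdot q_0 u_1 v_1 v_2 q_2$ (all squarefree by the constraints in $\Upsilon_3$) allow an application of Lemma~\ref{crt} to split each $e_{dr_1}(\cdot)$ into its $d$- and $r_1$-modular parts. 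Crucially, the $d$-modular part depends on $n+dl$ only modulo $d$, hence equals its value at $n$; bundling both $d$-parts into a single factor $\xi(n,d)$ takes care of the $l$-independent contribution, with $|\xi(n,d)|\leq 1$ automatic from the unimodularity of additive characters.

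The remaining pieces live modulo the pairwise coprime integers $r_1$, $q_0u_1[v_1,v_2]$, and $q_2$. The two $r_1$-components combine to
$$
e_{r_1}\!\left(\frac{a(h_1v_2-h_2v_1)}{(n+dl)\,q_0u_1v_1v_2q_2 d}\right),
$$
and the two $q_2$-components to
$$
e_{q_2}\!\left(\frac{b_2(h_1v_2-h_2v_1)}{(n+d(l+\ell r_1))\,dr_1q_0u_1v_1v_2}\right).
$$
For the two $e_{q_0u_1v_i}$-components I would use the elementary identity $e_{Nv_1}(X_1)\,\overline{e_{Nv_2}(X_2)}=e_{N[v_1,v_2]}(X_1v_2'-X_2v_1')$ with $N=q_0u_1$, $g=(v_1,v_2)$, $v_i'=v_i/g$, which follows from rewriting $e_{Nv_i}(\cdot)$ modulo the lcm. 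Using $h_1v_2'-h_2v_1'=(h_1v_2-h_2v_1)/g$, the combined piece becomes
$$
e_{q_0u_1[v_1,v_2]}\!\left(\frac{b_1(h_1v_2-h_2v_1)/g}{(n+dl)\,dr_1q_2}\right).
$$
In each of the three summands I would factor out a single $d$ from the denominator to match the target form $\alpha_{\bullet}/(d(n+(\beta_{\bullet}+l)d))$, absorbing residual coprime factors (such as $r_1$ or $q_2$) into the numerator as their inverses modulo the relevant modulus. This yields $\beta\equiv 0 \pmod{r_1 q_0 u_1 [v_1,v_2]}$ and $\beta\equiv \ell r_1 \pmod{q_2}$, which assemble via CRT into a single $\beta\in\Z/m\Z$; the three $\alpha_\bullet$ similarly assemble into $\alpha\in\Z/m\Z$.

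For the gcd assertion $(\alpha,m)=(h_1v_2-h_2v_1,m)$, I would argue prime by prime. For a prime $p\mid m$ with $p\nmid(v_1,v_2)$, every factor appearing in the corresponding $\alpha_\bullet$ aside from $h_1v_2-h_2v_1$ is a unit modulo $p$ (the coprimality follows from the squarefree hypothesis on $q_0u_1v_1v_2r_1q_2$), so $p\mid\alpha_\bullet \iff p\mid h_1v_2-h_2v_1$. The delicate case is $p\mid g=(v_1,v_2)$, where automatically $p\mid v_1,v_2$ and the formula involves $(h_1v_2-h_2v_1)/g$; one resolves this by observing that, in the explicit $\alpha_{q_0u_1[v_1,v_2]}$ expression, $(h_1v_2-h_2v_1)/g=h_1v_2'-h_2v_1'$ and $v_1',v_2'$ are units mod $p$, so the condition $p\mid\alpha$ matches $p\mid h_1v_2-h_2v_1$ up to a $p$-power bookkeeping that one checks directly. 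This step — tracking the CRT combination of two overlapping moduli $q_0u_1v_1$ and $q_0u_1v_2$ when $(v_1,v_2)>1$ — is the only real technical obstacle; the rest is mechanical algebraic reorganization.
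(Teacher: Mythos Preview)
Your proposal is correct and follows essentially the same route as the paper: expand $\Psi(d,n+dl)$ via \eqref{eq-phiell}, split the $e_{dr_1}$ factors by Lemma~\ref{crt} into an $l$-independent $e_d$-part (absorbed into $\xi(n,d)$) and an $e_{r_1}$-part, then reassemble the $r_1$-, $q_0u_1[v_1,v_2]$-, and $q_2$-components by the Chinese Remainder Theorem into a single $e_m(\alpha/(d(n+(\beta+l)d)))$. Your treatment is in fact more explicit than the paper's at the primes $p\mid(v_1,v_2)$; note that for the downstream application only the inequality $(\alpha,m)\mid(h_1v_2-h_2v_1,m)$ is actually needed, which your prime-by-prime analysis readily yields.
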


\begin{proof}
  From the definitions~(\ref{psidn}) and~(\ref{eq-phiell}), if
  $\Psi(d,n)$ does not vanish identically, then we have
\begin{multline*}
  \Psi(d,n+dl)=e_{dr_1}\left( \frac{a(h_1-h_2)}{(n+dl)q_0 u_1v_1q_2}
  \right) e_{q_0 u_1v_1}\left( \frac{b_1h_1}{(n+dl) dr_1 q_2} \right)
  e_{q_0 u_1v_2}\left( -\frac{b_1h_2}{(n+dl) dr_1 q_2} \right)\\
  e_{q_2}\left( \frac{b_2 h_1}{(n+dl+d \ell r_1) dr_1 q_0 u_1v_1} \right)
  e_{q_2}\left(- \frac{b_2 h_2}{(n+dl+d\ell r_1) dr_1 q_0 u_1v_2} \right).
\end{multline*}
By the Chinese Remainder Theorem, the first factor splits into an phase $e_d(\dots)$ that is independent of $l$, and an expression involving $e_{r_1}$, which when combined with the other four factors by another application of the Chinese Remainder Theorem, becomes an expression of
the type
$$
e_m\Bigl(\frac{\alpha }{d(n+ld+\beta d)}\Bigr)
$$
for some residue classes $\alpha$ and $\beta$ modulo $m$ which are
independent of $l$. Furthermore $(\alpha,m)$ is the product of primes
$p$ dividing $m$ such that the product of these four factors is
trivial, which (since $(q_2,q_0u_1[v_1,v_2])=1$) occurs exactly when
$p\mid h_2v_1-h_1v_2$ (recall that $b_1$ and $b_2$ are invertible
residue classes).
\end{proof}

Using this lemma, and the notation introduced there, it follows that
\begin{multline*}
  \Bigl|\sum_{l=1}^L \psi_N(n+dl) C(n+dl) \Psi(d,n+dl)\Bigr|^2 \leq \sum_{1
    \leq l_1,l_2 \leq L} \psi_N(n+dl_1) \psi_N(n+dl_2) C(n+dl_1) C(n+dl_2)\\
\quad\quad\quad\quad\quad\quad\quad\quad\quad\quad\quad\quad
\quad\quad\quad\quad
  e_m\Bigl(\frac{\alpha}{d(n+\beta d+l_1d)}\Bigr)
  e_m\Bigl(-\frac{\alpha}{d(n+\beta d+l_2d)}\Bigr)
  \\
  =\sum_{1 \leq l_1,l_2 \leq L} \psi_N(n+dl_1) \psi_N(n+dl_2)
  e_m\Bigl(\frac{\alpha (l_2-l_1)}{(n+\beta d+l_1d)(n+\beta d+l_2d)}
  \Bigr),
\end{multline*}
and therefore, after shifting $n$ by $dl_1$, writing $l := l_2-l_1$, and splitting $n,d$ into residue classes modulo $q_0$, that
$$ \Upsilon_5 \leq \sum_{n_0, d_0 \in \Z/q_0\Z} C(n_0) \Upsilon_5(n_0,d_0) $$
where
\begin{multline}\label{eq-bound-up5}
  \Upsilon_5(n_0,d_0) := \sumsum_{\substack{|l|\leq L-1\\1\leq l_1
      \leq L}}  \Bigl|\sum_{d=d_0\ (q_0)} \psi_{\Delta}(d)\\
  \times \sum_{n=n_0\ (q_0)} \psi_N(n)
  \psi_N(n+dl)e_m\left(\frac{\alpha l}{(n+\beta d)(n+(\beta+l)d)}
  \right)\Bigr|.
\end{multline}

Note that $m$ is squarefree. Also, as $m$ is the least common multiple of the $x^{\delta+o(1)}$-densely divisible
quantities $r_1$, $q_0 u_1v_1$, $q_0 u_1v_2$, and $q_0 q_2$, Lemma \ref{fq}(ii) implies that $m$ is also
$x^{\delta+o(1)}$-densely divisible.

The contribution of $l=0$ to $\Upsilon_5(n_0,d_0)$ is trivially
\begin{equation}\label{eq-l0}
  \ll \frac{NL\Delta}{q_0^2},
\end{equation}
and this gives a contribution of size
$$
\lessapprox \sqrt{(q_0,\ell)}\frac{N\Delta}{\sqrt{q_0L}}
$$
to $\Upsilon_4$, as can be seen by summing over the $q_0(q_0,\ell)$ permitted residue
classes $(n_0\ (q_0),d_0\ (q_0))$. Using~(\ref{eq-def-d}) we
have
$$
\Delta\lessapprox D=x^{-5\eps}\frac{N}{H^4},
$$
and we see from~(\ref{ldf}) that this contribution is certainly
$$
\lessapprox  (q_0,\ell)x^{-2\eps}\Delta
NH^{-2}q_0
$$
and hence suitable for Proposition~\ref{pr-upsilon-4}.  

Let $\Upsilon_5'(n_0,d_0)$ (resp.\ $\Upsilon'_5$) denote the remaining
contribution to $\Upsilon_5(n_0,d_0)$ (resp. $\Upsilon_5$). 
It will now suffice to show that
\begin{equation}\label{up5-p}
\frac{N\Delta}{L^2} | \Upsilon'_5|\lessapprox \left( (q_0,\ell)x^{-2\eps}\Delta
NH^{-2}q_0 \ \left(h_1v_2-h_2v_1,q_0q_2r_1u_1[v_1,v_2]\right) \right)^2.
\end{equation}

We have
\begin{equation}\label{eq-upsilonp-5}
  \Upsilon'_5(n_0,d_0)=  \sumsum_{\substack{1\leq |l|\leq L-1\\1\leq l_1
      \leq L}} |\Upsilon_6(n_0,d_0)|
\end{equation}
where
\begin{multline}\label{eq-upsilon-6}
  \Upsilon_6(n_0,d_0):=\sum_{d=d_0\ (q_0)} \psi_{\Delta}(d)
  \sum_{n=n_0\ (q_0)} \psi_N(n)\\
  \psi_N(n+dl)e_m\left(\frac{\alpha l}{(n+\beta d)(n+(\beta+l)d)}
  \right).
\end{multline}

For given $l\neq 0$ and $l_1$, the sum $\Upsilon_6(n_0,d_0)$ over $n$
and $d$ in~(\ref{eq-bound-up5}) is essentially an incomplete sum in
two variables of the type treated in
Corollary~\ref{inctrace-q}. However, before we can apply this result,
we must separate the variables $n$ and $d$ in $\psi_{N}(n+dl)$. As in
the previous section, we can do this here using a Taylor expansion.

Let $J\geq 1$ be an integer. Performing a Taylor expansion to order
$J$ we have
$$
\psi_N(n+dl) = \psi\Bigl(\frac{n+dl}{N}\Bigr)= \sum_{j=0}^J
\Bigl(\frac{d}{\Delta}\Bigr)^j\frac{1}{j!} \Bigl(\frac{\Delta l}{N}\Bigr)^j
\psi^{(j)}\Bigl(\frac{n}{N}\Bigr) + O( x^{-\eps J} )
$$
since $dl\ll \Delta L\ll x^{-\eps}N$ by~(\ref{ldf}).  We can absorb
the factor $(\frac{d}{\Delta})^j$ into $\psi_{\Delta}$, and after taking
$J$ large enough depending on $\eps$, we see that we can express $\Upsilon_6(n_0,d_0)$ as
a sum of finitely many sums
$$
\Upsilon'_6(n_0,d_0)= \sum_{d=d_0 (q_0)} \psi_{\Delta}(d) \sum_{n=n_1
  \ (q_0)} \psi'_N(n) e_m\Bigl(\frac{\alpha l}{(n+\beta d)(n+(\beta+l)d)}\Bigr)
$$
for some residue classes $n_1\ (q_0)$,
where $\psi_{\Delta}$ and $\psi'_N$ are coefficient sequences smooth at
scales $\Delta$ and $N$ respectively, possibly different from the previous
ones.

We will prove in Section~\ref{sec-final-exp} the following exponential
sum estimate, using the machinery from Section \ref{deligne-sec}:

\begin{proposition}\label{lode} 
  Let $m$ be a $y$-densely divisible squarefree integer of polynomial
  size for some $y \geq 1$, let $\Delta, N > 0$ be of polynomial size,
  and let $\alpha$, $\beta$, $\gamma_1$, $\gamma_2$, $l\in \Z/m\Z$.
  Let $\psi_{\Delta}, \psi'_N$ be shifted smooth sequences at scale
  $\Delta$, and $N$ respectively.   Then for any divisor $q_0$ of $m$
  and for all residue classes $d_0\ (q_0)$ and $n_0\ (q_0)$, we have
\begin{multline}\label{eq-lode-1}
  \Bigl|\sum_{d=d_0\ (q_0)} \sum_{n=n_0\ (q_0)} \psi_{\Delta}(d) \psi'_N(n)
  e_m\Bigl(
  \frac{\alpha l}{(n+\beta d+\gamma_1)(n+(\beta+l)d+\gamma_2)} \Bigr)\Bigr| \\
  \lessapprox (\alpha l,m) \left(\frac{N}{q_0m^{1/2}}+m^{1/2} \right)
  \left( 1 + \Bigl(\frac{\Delta}{q_0}\Bigr)^{1/2} m^{1/6} y^{1/6} +
    \Bigl(\frac{\Delta}{q_0}\Bigr) m^{-1/2}\right).
\end{multline}
We also have the bound
\begin{multline}\label{eq-lode-2}
  \Bigl|\sum_{d=d_0\ (q_0)} \sum_{n=n_0\ (q_0)} \psi_{\Delta}(d)
  \psi'_N(n) e_m\Bigl(
  \frac{\alpha l}{(n+\beta d+\gamma_1)(n+(\beta+l)d+\gamma_2)} \Bigr)\Bigr| \\
  \lessapprox (\alpha l,m) \left( \frac{N}{q_0m^{1/2}}+ m^{1/2} \right)
  \left( m^{1/2} + \Bigl(\frac{\Delta}{q_0}\Bigr) m^{-1/2}\right).
\end{multline}
\end{proposition}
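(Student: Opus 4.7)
The plan is to complete the inner summation over $n$ first, thereby converting the double sum into a one-dimensional incomplete sum over $d$ whose summand is a Kloosterman-type trace function of precisely the form analyzed in Theorem~\ref{kf-bound}. The bound \eqref{eq-lode-1} should then follow from the van der Corput bound \eqref{vdctrace-1-q} of Corollary~\ref{inctrace-q} applied in the $d$-variable, and the variant \eqref{eq-lode-2} from the P\'olya-Vinogradov bound \eqref{vdctrace-0-q} of the same corollary.

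First I would use the Chinese Remainder Theorem to decouple the residue class constraints. Since $m$ is squarefree and $q_0 \mid m$, write $m = q_0 m_1$ with $(q_0,m_1)=1$. Changing variables $n = n_0 + q_0 n'$, $d = d_0 + q_0 d'$ turns the sum into an analogous one of effective lengths $\Delta/q_0$ and $N/q_0$ modulo $m_1$; the phase modulo $q_0$ becomes an outer constant of modulus $1$, and the residue class restrictions disappear. Then I would factor out the greatest common divisor of $\alpha l$ with $m_1$, which accounts for the prefactor $(\alpha l,m)$ in the bound and leaves a non-degenerate phase on a smaller squarefree modulus which inherits the $y$-dense divisibility of $m$ (up to a harmless factor absorbed into constants). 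In particular this produces the coprimality $(l, m_1') = 1$ that will play the role of $(a-c,q)=1$ in Corollary~\ref{inctrace-q}.

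The next step is to apply Lemma~\ref{com} to the inner sum over $n'$ (with $d'$ fixed). This yields, up to a negligible error,
$$\frac{N}{q_0 m_1} \sum_{|h|\lessapprox q_0 m_1/N} \hat\Psi(h) \sum_{n'\in \Z/m_1\Z} e_{m_1}\Bigl(\tilde f(n',d') - h n'\Bigr),$$
where $\tilde f(n',d') = \tfrac{\alpha l}{(n' + \beta d' + \gamma_1')(n' + (\beta+l)d' + \gamma_2')}$. The inner complete sum in $n'$ is exactly $m_1^{1/2}$ times the trace function $K(d'; m_1)$ of Corollary~\ref{inctrace-q}, with auxiliary parameters $(a,b,c,d,e)=(\beta,\gamma_1',\beta+l,\gamma_2',-h)$, so that $a-c=-l$ is coprime to $m_1$ by the previous reduction; the affine shift by $\gamma_i'$ is handled by a change of variable. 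Swapping the $h$- and $d'$-summations, each fixed $h$ produces an incomplete sum $\sum_{d'} \psi_{\Delta/q_0}(d') K(d'; m_1)$ to which Corollary~\ref{inctrace-q} directly applies. The prefactor $N/(q_0 m^{1/2}) + m^{1/2}$ in the final bound arises from combining $m_1^{1/2}$ from the complete-sum normalization with the length $N/(q_0 m_1)$ of nonzero frequencies (first term) and the trivial contribution of the zero frequency (second term); the three terms $1 + (\Delta/q_0)^{1/2} m^{1/6} y^{1/6} + (\Delta/q_0)m^{-1/2}$ match exactly the three regimes of \eqref{vdctrace-1-q} with $r \asymp m^{1/3} y^{1/3}$, $s \asymp m^{2/3} y^{-1/3}$. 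For \eqref{eq-lode-2} the same scheme goes through, using \eqref{vdctrace-0-q} instead, which is superior when no useful dense factorization of $m$ is available.

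The conceptual work is already done, so the main obstacle is bookkeeping rather than depth. One has to track the interaction of $q_0$ with the modulus carefully through the Chinese Remainder Theorem reduction; keep control of how the shifts $\gamma_1,\gamma_2$ and the common factor $(\alpha l,m)$ propagate through the completion of sums; and, crucially, verify that after these reductions the complete sum in $n'$ really matches the template of Theorem~\ref{kf-bound}, so that $K(\cdot;m_1)$ is an admissible trace function associated to an irreducible sheaf of bounded conductor containing no polynomial phase of degree $\le 2$, which is what makes Corollary~\ref{inctrace-q} applicable.
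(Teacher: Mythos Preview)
Your approach is the paper's: reduce to $q_0=1$ and $(\alpha l,m)=1$ (the paper carries out the latter by inclusion--exclusion over primes dividing $(\alpha l,m)$ rather than a direct ``factoring out'', but this is indeed bookkeeping), complete the $n$-sum to produce the trace function $K_h(\cdot;m)$ of Corollary~\ref{inctrace-q}, and then bound the incomplete $d$-sum. One correction to your account of \eqref{eq-lode-1}: the three terms $1 + (\Delta/q_0)^{1/2} m^{1/6} y^{1/6} + (\Delta/q_0) m^{-1/2}$ do not all come from a single application of \eqref{vdctrace-1-q} with a fixed factorization; the paper instead splits into three ranges of $\Delta$ and uses, respectively, the pointwise bound $|K_h|\lessapprox 1$, the van der Corput bound \eqref{vdctrace-1-q} with a $y$-dense factorization $m=rs$, $r\asymp (my)^{1/3}$, and the P\'olya--Vinogradov bound \eqref{vdctrace-0-q}.
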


\begin{remark} Suppose $q_0=1$ for simplicity.
  In practice, the dominant term on the right-hand side of \eqref{eq-lode-2} will be $(\alpha l,m) m^{1/2} \
  \Delta^{1/2} m^{1/6} y^{1/6}$, which in certain regimes improves
  upon the bound of $((\alpha l,m)^{-1/2} m^{1/2}) \ \Delta$ that is
  obtained by completing the sums in the variable $n$ only without
  exploiting any additional cancellation in the variable $d$.
\par
Note that if the phase 
$$
\frac{\alpha l}{(n+\beta
  d+\gamma_1)(n+(\beta+l)d+\gamma_2)}
$$ 
was of the form $f(d)+g(n)$ for some non-constant rational functions
$f$ and $g$, then the two-dimensional sum would factor into the
product of two one-dimensional sums, and then the estimates we claim
would basically follow from the one-dimensional bounds in
Proposition \ref{inc}.  However, no such splitting is available, and
so we are forced to use the genuinely multidimensional theory arising
from Deligne's proof of the Riemann Hypothesis over finite fields.
\end{remark}

Applying Proposition \ref{lode}, we have
$$
\Upsilon'_6(n_0,d_0)\lessapprox (\alpha l,m) \left(m^{1/2} +
  \frac{N/q_0}{m^{1/2}}\right) \left( 1 + (\Delta/q_0)^{1/2} m^{1/6}
  x^{\delta/6} + \frac{\Delta/q_0}{m^{1/2}}\right)
$$
as well as
$$
\Upsilon'_6(n_0,d_0)\lessapprox (\alpha l,m) \left(m^{1/2} +
  \frac{N/q_0}{m^{1/2}}\right) \left(m^{1/2} +
  \frac{\Delta/q_0}{m^{1/2}}\right).
$$ 
Distinguishing the cases $N/q_0 \leq m$ and $N/q_0 > m$, and summing
over the finitely many cases of $\Upsilon'_6(n_0,d_0)$ that give
$\Upsilon_6(n_0,d_0)$, we see that
$$
\Upsilon_6(n_0,d_0)\lessapprox (\alpha l,m) \left\{ m^{1/2} \left( 1 +
    \Bigl(\frac{\Delta}{q_0}\Bigr)^{1/2} m^{1/6} x^{\delta/6} +
    \frac{\Delta/q_0}{m^{1/2}}\right) + \frac{N/q_0}{m^{1/2}}
  \left(m^{1/2} + \frac{\Delta/q_0}{m^{1/2}}\right)\right\}.
$$
Note that $(\alpha l,m)\leq (\alpha,m)(l,m)$ and hence, summing over $l$ and
$l_1$ in~(\ref{eq-upsilonp-5}) (using Lemma~\ref{ram-avg}), we get
$$
\Upsilon'_5(n_0,d_0)\lessapprox (\alpha,m)L^2 \left\{ m^{1/2} +
  \Bigl(\frac{\Delta}{q_0}\Bigr)^{1/2} m^{2/3} x^{\delta/6} +
  \frac{\Delta}{q_0} + \frac{N}{q_0} + \frac{N\Delta}{q_0^2m}\right\}.
$$
Next, summing over the $\leq (q_0,\ell)q_0$ residue classes $(n_0,d_0)$
allowed by the congruence restriction~(\ref{eq-n-congruence}), we get
$$
\Upsilon'_5\lessapprox (q_0,\ell)
(\alpha,m)L^2 \left\{ q_0m^{1/2} + (q_0\Delta)^{1/2} m^{2/3} x^{\delta/6} +
  \Delta + N + \frac{N\Delta}{q_0m}\right\},
$$
and finally by  inserting some additional factors of $q_0$ and $(q_0,\ell)$, we derive
\begin{align*}
\frac{N\Delta}{L^2} | \Upsilon'_5| &\lessapprox (q_0,\ell)(\alpha,m)N\Delta \left\{ q_0m^{1/2} +
    (q_0\Delta)^{1/2} m^{2/3} x^{\delta/6} + \Delta + N +
    \frac{N\Delta}{q_0m}\right\}\\
  &\lessapprox (q_0,\ell)^2(\alpha,m)^2q_0N\Delta \left\{ \Delta^{1/2}
    m^{2/3} x^{\delta/6} + \Delta + N + \frac{N\Delta}{m}\right\}.
\end{align*}
In fact, since $\Delta \lessapprox D \lessapprox N$, we see
that
$$
\frac{N\Delta}{L^2} | \Upsilon'_5| \lessapprox (q_0,\ell)^2(\alpha,m)^2q_0N\Delta \left\{
  \Delta^{1/2} m^{2/3} x^{\delta/6} + N + \frac{N\Delta}{m}\right\}.
$$
\par
We have $m=q_0r_1u_1[v_1,v_2]q_2$ (see Lemma~\ref{lm-clean-up}) and
therefore (using~(\ref{st-2}) and~(\ref{t-bound-2})) we can bound $m$
from above and below by
$$
m\ll q_0\times\frac{R}{\Delta}\times U\times V^2\times
\frac{Q}{q_0}\sim \frac{Q^2RV}{\Delta}\lessapprox
x^{\delta+2\eps}\frac{Q^2RH}{\Delta}
$$
and
$$
m\ggcurly q_0\times \frac{R}{\Delta} \times U\times V
\times\frac{Q}{q_0}\sim \frac{Q^2R}{q_0\Delta},
$$
which leads to
\begin{align*}
\frac{N\Delta}{L^2} | \Upsilon'_5| &\lessapprox (q_0,\ell)^2(\alpha,m)^2q_0^2N\Delta \left\{
    x^{5\delta/6+4\eps/3} \frac{(Q^2RH)^{2/3}}{\Delta^{1/6}} + N
    + \frac{N\Delta^2}{Q^2R}\right\}\\
&  = (q_0,\ell)^2(\alpha,m)^2q_0^2\frac{(N\Delta)^2}{H^4} \left\{
    x^{5\delta/6+2\eps} \frac{H^4(Q^2RH)^{2/3}}{N\Delta^{7/6}} + \frac{H^4}{\Delta} +
    \frac{H^4\Delta}{Q^2R}\right\}
\end{align*}
up to admissible errors. Since
$$
\Delta^{-1}\lessapprox \frac{x^{\delta}}{D} =
x^{\delta+5\eps}\frac{H^4}{N}, \quad\quad \Delta\lessapprox D
= x^{-5\eps}\frac{N}{H^4},
$$
this leads to
$$
\frac{N\Delta}{L^2} | \Upsilon'_5| \lessapprox
(q_0,\ell)^2(\alpha,m)^2q_0^2\frac{(N\Delta)^2}{H^4} \left\{
  x^{2\delta+8\eps} \frac{H^{28/3}Q^{4/3}R^{2/3}}{N^{13/6}} +
  \frac{x^{\delta+5\eps}H^8}{N} + \frac{x^{-5\eps}N}{Q^2R}\right\}
$$
up to admissible errors.  From the assumptions~(\ref{slop})
and~(\ref{crop2}), we have
$$
N\lessapprox x^{1/2}\lessapprox QR,
$$
and thus 
$$
\frac{x^{-5\eps}N}{Q^2R}\lessapprox x^{-5\eps}Q^{-1}\lessapprox
x^{-5\eps}.
$$
On the other hand, from the value of $H$ (see~(\ref{eq-h-bis})) we get
\begin{align*}
 x^{2\delta+8\eps} \frac{H^{28/3}Q^{4/3}R^{2/3}}{N^{13/6}}&
\lessapprox  x^{2\delta+18\eps}\frac{R^{10}Q^{20}}{M^{28/3}N^{13/6}}
\lessapprox  x^{-28/3+2\delta+18\eps}R^{10}Q^{20}N^{43/6}
\\
 \frac{x^{\delta+5\eps}H^8}{N}&\lessapprox
 x^{\delta+13\eps}\frac{R^8Q^{16}}{NM^8}
\lessapprox x^{-8+\delta+13\eps}N^7Q^{16}R^8.
\end{align*}
Using the other conditions $x^{1/2} \lessapprox QR \lessapprox
x^{1/2+2\varpi}$, and
$$
R\ggcurly x^{-3\eps-\delta}N,\quad\quad N\ggcurly x^{1/2-\sigma}
$$
these quantities are in turn bounded respectively by
\begin{align*}
  x^{2\delta+8\eps} \frac{H^{28/3}Q^{4/3}R^{2/3}}{N^{13/6}}& \leq
  x^{2/3+2\delta+40\varpi+18\eps}\frac{N^{43/6}}{R^{10}} \ll
  x^{2/3+12\delta+40\varpi-17/6(1/2-\sigma)+48\eps}
  \\
  \frac{x^{\delta+5\eps}H^8}{N}&\leq
  x^{\delta+32\varpi+13\eps}\frac{N^7}{R^8} \ll
  x^{9\delta+32\varpi+37\eps-(1/2-\sigma)}.
\end{align*}
Thus, by taking $\eps>0$ small enough, we obtain \eqref{up5-p} (and hence
Proposition~\ref{pr-upsilon-4}) provided
$$
\begin{cases}
\tfrac{2}{3}+12\delta+40\varpi-\tfrac{17}{6}(\tfrac{1}{2}-\sigma)<0\\
9\delta+32\varpi-(\tfrac{1}{2}-\sigma)<0.
\end{cases}\quad
\Leftrightarrow \quad\quad
\begin{cases}
\tfrac{160}{3}\varpi+16\delta+\tfrac{34}{9}\sigma<1\\
64\varpi+18\delta+2\sigma<1.
\end{cases}
$$
These are exactly the conditions claimed in
Proposition~\ref{pr-upsilon-4}.

\subsection{Proof of Lemma~\ref{lm-gcd}}

This is a bit more complicated than the corresponding lemmas in
Sections \ref{typeii-sec}-\ref{second-typei} because the quantity
$m=q_0q_2r_1u_1[v_1,v_2]$ depends also on $v_1$ and $v_2$.

We let $w := q_0q_2r_1 u_1$, so that $m = w[v_1,v_2]$ and $w$ is
independent of $(h_1,h_2,v_1,v_2)$ and coprime with $[v_1,v_2]$.

Since $(w,[v_1,v_2])=1$, we have
$$
(h_1v_2-h_2v_1,w[v_1,v_2]) =\sum_{\substack{d\mid h_1v_2-h_2v_1\\d\mid
    w[v_1,v_2]}}\varphi(d)\leq \sum_{d\mid w}d \sum_{\substack{e\mid
    [v_1,v_2]\\ de\mid h_1v_2-h_2v_1}}e,
$$
and therefore
\begin{align*}
  \sumsum_{\substack{(h_1,v_1,h_2,v_2)\\
      h_1v_2\neq h_2v_1}}
  \left(h_1v_2-h_2v_1,q_0q_2r_1u_1[v_1,v_2]\right) &\leq
  \sumsum_{\substack{(h_1,v_1,h_2,v_2)\\
      h_1v_2\neq h_2v_1}} \sum_{d\mid w}d \sum_{\substack{e\mid
      [v_1,v_2]\\ de\mid h_1v_2-h_2v_1}}e\\
  &\leq \sum_{d\mid w}d \sum_{\substack{(d,e)=1\\e\ll V^2\\e\text{ squarefree}}}e
  \sum_{\substack{([v_1,v_2],w)=1\\de\mid h_1v_2-h_2v_1\\e\mid [v_1,v_2]\\
      h_1v_2\neq h_2v_1}}1.
\end{align*}
The variable $d$ is unrelated to the modulus $d$ appearing previously in this section.

Let $d$, $e$ be integers occuring in the outer sums, and
$(h_1,h_2,v_1,v_2)$ satisfying the other summation conditions. Then
$e$ is squarefree, and since $e\mid [v_1,v_2]$ and $e\mid
h_1v_2-h_2v_1$, any prime dividing $e$ must divide one of $(v_1,v_2)$,
$(h_1,v_1)$ or $(h_2,v_2)$ (if it does not divide both $v_1$ and
$v_2$, it is coprime to one of them, and $h_1v_2-h_2v_1=0\ (p)$ gives
one of the other divisibilities). Thus if we factor $e=e_1e_2e_3$
where
$$
e_1:=\prod_{\substack{p\mid e\\p\mid v_1\\p\nmid v_2}}p,\quad
e_2:=\prod_{\substack{p\mid e\\p\nmid v_1\\p\mid v_2}}p,\quad
e_3:=\prod_{\substack{p\mid e\\p\mid (v_1,v_2)}}p,
$$
then these are coprime and we have
$$
e_1\mid h_1,\quad e_2\mid h_2,\quad e_1e_3\mid v_1,\quad
e_2e_3\mid v_2.
$$
We write
$$
h_1=e_1\lambda_1,\quad h_2=e_2\lambda_2,\quad 
v_1=e_1e_3\nu_1,\quad v_2=e_2e_3\nu_2.
$$
Then we get
$$
h_1v_2-h_2v_1=e(\lambda_1\nu_2-\lambda_2\nu_1),
$$
and since $de\mid h_1v_2-h_2v_1$, it follows that $d\mid
\lambda_1\nu_2-\lambda_2\nu_1$. 
\par
Now fix some $e\ll V^2$. For each choice of factorization
$e=e_1e_2e_3$, the number of pairs $(\lambda_1\nu_2,\lambda_2\nu_1)$
that can be associated to this factorization as above for some
quadruple $(h_1,h_2,v_1,v_2)$ is $\ll (HV/e)^2/d$, since each product
$\lambda_1\nu_2$, $\lambda_2\nu_1$ is $\ll HV/e$, and $d$ divides the
difference. By the divisor bound, this gives $\lessapprox (HV)^2/de^2$
for the number of quadruples $(h_1,h_2,v_1,v_2)$. Summing over $d\mid
w$ and $e$, we get a total bound
$$
\lessapprox (HV)^2\tau(w)\sum_{e\ll V^2}e^{-1}\lessapprox H^2V^2,
$$
as desired.

\subsection{Proof of Proposition~\ref{lode}}\label{sec-final-exp}

It remains to establish Proposition \ref{lode}.  We begin with the
special case when $e=1$ and $(\alpha l,m)=1$. For simplicity, we
denote
$$
f(n,d)=\frac{\alpha l}{(n+\beta d+\gamma_1)(n+(\beta+l)d+\gamma_2)}.
$$
By completion of the sum over $n$ (see Lemma \ref{com}(i)), we have
\begin{align}
  \sum_d \sum_n \psi_{\Delta}(d) \psi_N(n) e_m(f(n,d)) &\lessapprox
  \Bigl(\frac{N}{m}+1\Bigr) \sup_{h \in\Z/m\Z} \Bigl|\sum_d
  \psi_{\Delta}(d)\sum_{n \in \Z/m\Z} e_m(f(n,d)+hn)\Bigr|\nonumber
  \\
& = \Bigl(\frac{N}{\sqrt{m}}+\sqrt{m}\Bigr) \sup_{h
    \in\Z/m\Z} \Bigl|\sum_d \psi_{\Delta}(d)K_h(d;m)\Bigr|,\label{eq-complete}
\end{align}
where, for each $h\in \Z/m\Z$, we define
$$
K_h(d;m) := \frac{1}{\sqrt{m}} \sum_{n \in \Z/m\Z} e_m(f(n,d)+hn).
$$

By the first part of Corollary \ref{inctrace-q} (i.e., \eqref{vdctrace-0-q}), we get
\begin{equation}\label{light-toast}
  \left|\sum_d \psi_{\Delta}(d) K_h(d;m)\right| 
\lessapprox m^{1/2} + \Delta m^{-1/2},
\end{equation}
and this combined with~(\ref{eq-complete}) implies the second
bound~(\ref{eq-lode-2}) (in the case $e=1$, $(\alpha l,m)=1$, that is).  Furthermore,
it also implies the first bound~(\ref{eq-lode-1}) for
$\Delta>m^{2/3}y^{-1/3}$.

In addition, from the Chinese Remainder Theorem (Lemma \ref{crt})
and \eqref{klo}, we deduce the pointwise bound
\begin{equation}\label{kqd}
|K_h(d,m)| \lessapprox 1
\end{equation}
which implies the trivial bound
$$
\Bigl|\sum_d \psi_{\Delta}(d) K_h(d;m)\Bigr| \lessapprox 1+\Delta,
$$
which gives~(\ref{eq-lode-1}) for $\Delta\leq m^{1/3}y^{1/3}$. Thus we
can assume that
$$
m^{1/3}y^{1/3}\leq \Delta\leq m^{2/3}y^{-1/3}\leq m.
$$
We can then use the $y$-dense divisibility of $m$ to factor $m=m_1m_2$
where
\begin{gather*}
y^{-2/3}m^{1/3}\leq m_1\leq y^{1/3}m^{1/3}\\
y^{-1/3}m^{2/3}\leq m_2\leq y^{2/3}m^{2/3}.
\end{gather*}
Now the second part of Corollary \ref{inctrace-q} (i.e., \eqref{vdctrace-1-q}) gives
$$ 
\Bigl|\sum_d \psi_{\Delta}(d) K_h(d;m)\Bigr| \lessapprox
\Delta^{1/2}m_1^{1/2} + \Delta^{1/2}m_2^{1/4}\lessapprox
\Delta^{1/2}m^{1/6}y^{1/6},
$$
which together with~(\ref{eq-complete}) gives~(\ref{eq-lode-1}). 
\par
This
finishes the proof of Proposition~\ref{lode} for the special case
$e=1$ and $(\alpha l,m)=1$. The extension to a divisor $e\mid m$ is done
exactly as in the proof of Corollary~\ref{dons} in
Section~\ref{exp-sec}.

We now reduce to the case $(\alpha l,m)=1$. Let
\begin{align*}
  m' &:= m / (\alpha l,m)\\
  y' &:= y (\alpha l,m)\\
  \alpha' &:= \alpha/(\alpha l,m) = \frac{\alpha/(\alpha,m)}{(\alpha
    l,m)/(\alpha,m)},
\end{align*}
where one computes the reciprocal of $(\alpha l,m)/(\alpha,m)$ inside
$\Z/m'\Z$, so that $\alpha'$ is viewed as an element of $\Z/m'\Z$. The integer $m'$ is $y'$-densely
divisible by Lemma~\ref{fq} (ii), and it is also squarefree, and of
polynomial size. We have $(a'l,m')=1$, and furthermore
\begin{multline*}
  \sum_d \sum_n \psi_{\Delta}(d) \psi_N(n) e_m(f(n,d)) = \sum_d \sum_n
  \psi_{\Delta}(d) \psi_N(n) e_{m'}(f'(n,d))\\
  \times \prod_{p|(\alpha l,m)} (1 - \onef_{p | (n+\beta
    d+\gamma_1)(n+(\beta+l)d+\gamma_2)})
\end{multline*}
where
$$
f'(n,d)=\frac{\alpha' l}{(n+\beta d+\gamma_1)(n+(\beta+l)d+\gamma_2)}
$$
(here we use the convention explained at the end of
Section~\ref{ssec-exp-prelim} that leads to $e_p(\alpha x)=1$ if $p$
is prime, $\alpha=0\ (p)$ and $x=+\infty\in\P^1(\Z/p\Z)$).
\par
Denote
$$
g(n,d)= (n+\beta d+\gamma_1)(n+(\beta+l)d+\gamma_2).
$$
Then, expanding the product (as in inclusion-exclusion), we get
$$
\sum_d \sum_n \psi_{\Delta}(d) \psi_N(n) e_m(f(n,d)) =
\sum_{\delta\mid (\alpha l,m)} \mu(\delta) \sumsum_{\substack{d,n\\
    \delta\mid g(n,d)}}\psi_{\Delta}(d)\psi_N(n) e_{m'}(f'(n,d))
$$
(this usage of $\delta$ is unrelated to prior usages of $\delta$ in this section).
Splitting the sum over $n$ and $d$ in residue classes modulo $\delta$,
this sum is then equal to
$$
\sum_{\delta\mid (\alpha l,m)}\mu(\delta)
\sumsum_{\substack{(d_0,n_0)\in(\Z/\delta\Z)^2\\g(n_0,d_0)=0\
    (\delta)}} \sum_{n=n_0\ (n)} \sum_{d=d_0\
  (\delta)}\psi_{\Delta}(d)\psi_N(n) e_{m'}(f'(n,d)).
$$
For each choice of $(n_0,d_0)$, we can apply the case previously
proved of Proposition~\ref{lode} to deduce
$$
\sum_{n=n_0\ (n)} \sum_{d=d_0\ (\delta)}\psi_{\Delta}(d)\psi_N(n)
e_{m'}(f'(n,d)) \lessapprox \Bigl(\sqrt{m'} +
\frac{N}{\delta\sqrt{m'}}\Bigr) \Bigl( 1 +
\frac{\Delta^{1/2}}{\delta^{1/2}} (m'y')^{1/6} +
\frac{\Delta}{\delta \sqrt{m'}}\Bigr)
$$ 
and
$$
\sum_{n=n_0\ (n)} \sum_{d=d_0\ (\delta)}\psi_{\Delta}(d)\psi_N(n)
e_{m'}(f'(n,d)) \lessapprox
\Bigl(\sqrt{m'} +
\frac{N}{\delta\sqrt{m'}}\Bigr) \Bigl( \sqrt{m'}+
\frac{\Delta}{\delta \sqrt{m'}}\Bigr).
$$
Moreover, by the Chinese Remainder Theorem, there are $\lessapprox
\delta$ solutions $(n_0,d_0)\in(\Z/\delta\Z)^2$ of $g(n_0,d_0)=0\
(\delta)$, and there we find
$$
\sum_d \sum_n \psi_{\Delta}(d) \psi_N(n) e_m(f(n,d))
\lessapprox
\sum_{\delta\mid (\alpha l,m)}\delta 
\Bigl(\sqrt{m'} +
\frac{N}{\delta\sqrt{m'}}\Bigr) \Bigl( 1 +
\frac{\Delta^{1/2}}{\delta^{1/2}} (m'y')^{1/6} +
\frac{\Delta}{\delta \sqrt{m'}}\Bigr)
$$
and
$$
\sum_d \sum_n \psi_{\Delta}(d) \psi_N(n) e_m(f(n,d))
\lessapprox
\sum_{\delta\mid (\alpha l,m)}\delta 
\Bigl(\sqrt{m'} +
\frac{N}{\delta\sqrt{m'}}\Bigr) \Bigl( \sqrt{m'}+
\frac{\Delta}{\delta \sqrt{m'}}\Bigr).
$$
It is now elementary to check that these give the bounds of
Proposition~\ref{lode} (note that $m'y'=my$).



\end{document}